\selectfont\symbol{60}\fontencoding{\encodingdefault}}
\selectfont\symbol{62}\fontencoding{\encodingdefault}}
\newcommand{\assign}{:=}
\newcommand{\asterisk}{\mathord{*}}
\newcommand{\comma}{{,}}
\newcommand{\longhookrightarrow}{{\lhook\joinrel\relbar\joinrel\rightarrow}}
\newcommand{\mathd}{\mathrm{d}}
\newcommand{\nobracket}{}
\newcommand{\tmaffiliation}[1]{\\ #1}
\newcommand{\tmem}[1]{{\em #1\/}}
\newcommand{\tmemail}[1]{\\ \textit{Email:} \texttt{#1}}
\newcommand{\tmname}[1]{\textsc{#1}}
\newcommand{\tmop}[1]{\ensuremath{\operatorname{#1}}}
\newcommand{\tmsep}{, }
\newcommand{\tmtextbf}[1]{{\bfseries{#1}}}
\newcommand{\tmtextit}[1]{{\itshape{#1}}}
\newcommand{\nequiv}[0]{\not\equiv}
\newenvironment{enumeratenumeric}{\begin{enumerate}[1.] }{\end{enumerate}}
\newenvironment{itemizedot}{\begin{itemize} }{\end{itemize}}
\newenvironment{proof}{\noindent\textbf{Proof\ }}{\hspace*{\fill}$\Box$\medskip}
\newenvironment{proof*}[1]{\noindent\textbf{#1\ }}{\hspace*{\fill}$\Box$\medskip}
\newtheorem{theorem}{Theorem}
\newtheorem{corollary}[theorem]{Corollary}
\newtheorem{definition}[theorem]{Definition}
\newtheorem{lemma}[theorem]{Lemma}
\newtheorem{proposition}[theorem]{Proposition}
{\theorembodyfont{\rmfamily}\newtheorem{remark}[theorem]{Remark}}
\newcommand{\tmkeywords}{\textbf{Keywords:} }
\newcommand{\tmmsc}{\textbf{A.M.S. subject classification:} }
\newcommand{\descriptionparagraphs}[1]{\begin{description}#1\end{description}}
\begin{document}

\title{ Elliptic stochastic quantization}

\author{
  Sergio Albeverio, Francesco C. De Vecchi and Massimiliano Gubinelli
  \tmaffiliation{Hausdorff Center for Mathematics \& \\
  Institute for Applied Mathematics\\
  University of Bonn, Germany}
  \tmemail{albeverio@iam.uni-bonn.de}
  \tmemail{francesco.devecchi@uni-bonn.de}
  \tmemail{gubinelli@iam.uni-bonn.de}
}

\date{}

\maketitle

\begin{abstract}
  We prove an explicit formula for the law in zero of the solution of a class
  of elliptic (nonlinear) SPDE in $\mathbb{R}^2$. This formula is the simplest
  instance of \tmtextit{dimensional reduction}, discovered in the physics
  literature by Parisi and Sourlas~(1979), which links the law of an elliptic
  SPDE in $d + 2$ dimension with a Gibbs measure in $d$ dimensions. This
  phenomenon is similar to the relation between an $\mathbb{R}^{d + 1}$
  dimensional parabolic SPDE and its $\mathbb{R}^d$ dimensional invariant
  measure. As such, dimensional reduction of elliptic SPDEs can be considered
  a sort of \tmtextit{elliptic stochastic quantization} procedure in the sense
  of Nelson~(1966) and Parisi and Wu~(1981). Our proof uses in a fundamental
  way the representation of the law of the SPDE as a supersymmetric quantum
  field theory. Dimensional reduction for the supersymmetric theory was
  already established by Klein,~Landau~and~Perez~(1984). We fix a subtle gap
  in their proof and also complete the dimensional reduction picture by
  providing the link between the elliptic SPDE and the supersymmetric model.
  Even in our $d = 0$ context the arguments are non-trivial and a
  non-supersymmetric, elementary proof seems only to be available in the
  linear, i.e., Gaussian case.
  
  \ 
\end{abstract}

\tmmsc{60H15}{\tmsep}{81Q60}{\tmsep}{82B44}

\tmkeywords{stochastic quantization}{\tmsep}{elliptic stochastic partial
differential equations}{\tmsep}{dimensional reduction}{\tmsep}{Wiener
space}{\tmsep}{supersymmetry}{\tmsep}{Euclidean quantum field theory }

{\tableofcontents}

\section{Introduction}\label{section_introduction}

\tmtextit{Stochastic
quantization}~{\cite{damgaard_stochastic_1987,damgaard_stochastic_1988,parisi_perturbation_1981}}
broadly refers to the idea of sampling a given probability distribution by
solving a stochastic differential equation (SDE). This idea is both appealing
practically and theoretically since simulating or solving an SDE is sometimes
simpler than sampling or studying a given distribution. If, in finite
dimensions, this boils down mostly to the idea of the Monte Carlo Markov chain
method (which was actually invented before stochastic quantization), it is in
infinite dimensions that the method starts to have a real theoretical appeal.

It was Nelson~{\cite{nelson1966,MR0214150,Nelson1973}} and subsequently
Parisi and Wu~{\cite{parisi_perturbation_1981}} who advocated the constructive
use of stochastic partial differential equations (SPDEs) to realize a given
Gibbs measure for the use of Euclidean quantum field theory (QFT). Indeed the
original (parabolic) stochastic quantization procedure
of~{\cite{parisi_perturbation_1981}} can be understood as the equivalence
\begin{equation}
  \mathbb{E} [F (\varphi (t))] \propto \int F (\phi) e^{- S (\phi)}
  \mathcal{D} \phi \label{eq:dim-red-par} .
\end{equation}
Here $F$ belongs to a suitable space of real-valued test functions,
$\mathcal{D} \phi$ is an heuristic ``Lebesgue measure'' on $\mathcal{S}'
(\mathbb{R}^d)$, while on the left hand side the random field $\varphi$
depends on $(t, x) \in \mathbb{R} \times \mathbb{R}^d$ and is a stationary
solution to the parabolic SPDE
\begin{equation}
  \partial_t \varphi (t, x) + (m^2 - \Delta) \varphi (t, x) + V' (\varphi (t,
  x)) = \xi (t, x) \label{eq:langevin},
\end{equation}
where $\xi$ is a Gaussian white noise in $\mathbb{R}^{d + 1}$, $V : \mathbb{R}
\rightarrow \mathbb{R}$ a generic local potential bounded from below, $m^2$ a
positive parameter, and $\varphi (t)$ is the fixed time marginal of $\varphi$
which has a law independent of $t$ by stationarity and on the right hand side
we have the formal expression for a measure on functions on $\mathbb{R}^d$
with weight factor given by
\begin{equation}
  S (\phi) \assign \int_{\mathbb{R}^d} | \nabla \phi (x) |^2 + m^2 | \phi (x)
  |^2 + V (\phi (x)) \mathd x. \label{eq:action}
\end{equation}
Eq.~{\eqref{eq:dim-red-par}} can be made mathematically precise and rigorous
by tools from the theory of Markov
processes~{\cite{da_prato_stochastic_2008,krylov_stochastic_1999,prato_kolmogorov_2005}},
SDE/SPDEs~{\cite{khasminskii_stochastic_2011,Albeverio2017,MR2768734,Ugolini2011}}
and Dirichlet forms~{\cite{albeverio_stochastic_1991}}, for example when $d =
0$, or when the equation is regularized appropriately and, in certain cases,
for suitable renormalized versions of the
SPDE~{\cite{Albeverio2002,albeverio_invariant_2017,Gallavotti1980,borkar_stochastic_1988,da_prato_strong_2003,GH18,hairer_discretisations_2018,hairer_tightness_2018,jona-lasinio_stochastic_1985,MW17,albeverio_strong_2012,iwata_infinite_1987}}
when $d = 1, 2, 3$. Let us note for example that in the full space it is
easier to make sense of equation~{\eqref{eq:langevin}} than of the formal
Gibbs measure on the right hand side of~{\eqref{eq:dim-red-par}},
see~{\cite{GH18}}.

\

In a slightly different context, and inspired by previous perturbative
computations of Imry and Ma~{\cite{imry_random_field_1975}}, and
Young~{\cite{young_lowering_1977}}, Parisi and
Sourlas~{\cite{parisi_random_1979,parisi_supersymmetric_1982}} considered the
solutions of the elliptic SPDEs
\begin{equation}
  (m^2 - \Delta) \phi + V' (\phi) = \xi \label{eq:SPDE-d+2}
\end{equation}
in $\mathbb{R}^{d + 2}$ where $\xi$ is a Gaussian white noise on
$\mathbb{R}^{d + 2}$ and they discovered that its stationary solutions are
similarly related to the same $d$ dimensional Gibbs measure. If we take $x \in
\mathbb{R}^d$ then, they claimed that, for ``nice'' test functions $F$ (e.g.
correlation functions) we have
\begin{equation}
  \mathbb{E} [F (\phi (0, \cdot))] \propto \int F (\varphi) e^{- 4 \pi S
  (\varphi)} \mathcal{D} \varphi . \label{eq:dim-red}
\end{equation}
More precisely the law of the random field $(\phi (0, y))_{y \in
\mathbb{R}^d}$, obtained by looking at the trace of $\phi$ on the hyperplane
$\{ x = (x_1, \ldots, x_{d + 2}) \in \mathbb{R}^{d + 2} : x_1 = x_2 = 0 \}
\subset \mathbb{R}^{d + 2}$, should be equivalent to that of the Gibbs measure
formally appearing on the right hand side of~{\eqref{eq:dim-red}} and
corresponding to the action functional~{\eqref{eq:action}}. Therefore one can
interpret equation~{\eqref{eq:dim-red}} as an \tmtextit{elliptic stochastic
quantization} prescription in the same spirit of
equation~{\eqref{eq:dim-red-par}}.

\

When $V = 0$ one can directly check that the formula~{\eqref{eq:dim-red}} is
correct. Indeed in this case the unique stationary solution $\phi$ to the
elliptic SPDE~{\eqref{eq:SPDE-d+2}} is given by a Gaussian process with
covariance
\[ \mathbb{E} [\phi (x) \phi (x')] = \int_{\mathbb{R}^{d + 2}} \frac{e^{i k
   \cdot (x - x')}}{(m^2 + | k |^2)^2} \frac{\mathd k}{(2 \pi)^{d + 2}},
   \qquad x, x' \in \mathbb{R}^{d + 2} . \]
Therefore for all $y, y' \in \mathbb{R}^d$ we have
\[ \mathbb{E} [\phi (0, y) \phi (0, y')] = \int_{\mathbb{R}^d} e^{i k \cdot (y
   - y')} \int_{\mathbb{R}^2} \frac{\mathd q}{(| q |^2 + m^2 + | k |^2)^2}
   \frac{\mathd k}{(2 \pi)^{d + 2}} \]
\[ = \int_{\mathbb{R}^2} \frac{\mathd q}{(| q |^2 + 1)^2} \int_{\mathbb{R}^d}
   \frac{e^{i k \cdot (y - y')}}{m^2 + | k |^2} \frac{\mathd k}{(2 \pi)^{d +
   2}} = \frac{1}{4 \pi} \int_{\mathbb{R}^d} \frac{e^{i k \cdot (y - y')}}{m^2
   + | k |^2} \frac{\mathd k}{(2 \pi)^d} \]
where we performed a rescaling of the $q$ integral in order to decouple the
two integrations. The reader can easily check that the expression we obtained
describes the covariance of the Gaussian random field formally corresponding
to the right hand side of~{\eqref{eq:dim-red}} for $V = 0$.

\

While this last argument is almost trivial, a more general justification
outside the Gaussian setting is not so obvious. The
equivalence~{\eqref{eq:dim-red}} was derived
in~{\cite{parisi_random_1979,parisi_supersymmetric_1982}} at the theoretical
physics level of rigor going through a representation of the left hand side
via a supersymmetric quantum field theory (QFT) involving a pair of scalar
fermion fields. This is one of the instances of the \tmtextit{dimensional
reduction} phenomenon which is conjectured in certain random systems where the
randomness effectively decreases the dimension of the space where fluctuations
take place. \ A crucial assumption is that the equation~{\eqref{eq:SPDE-d+2}}
has a unique solution, which is already a non-trivial problem for general $V$.
Parisi and Sourlas~{\cite{parisi_supersymmetric_1982}} observed that
non-uniqueness can lead to a breaking of the supersymmetry, in which case the
relation~{\eqref{eq:dim-red}} could fail. So, part of the task of clarifying
the situation is to determine under which conditions \tmtextit{some} relations
in the spirit of~{\eqref{eq:dim-red}} could anyway be true.

\

The dimensional reduction~{\eqref{eq:dim-red}} of the elliptic
SPDEs~{\eqref{eq:SPDE-d+2}} seems less amenable to standard probabilistic
arguments than its parabolic counterpart~{\eqref{eq:dim-red-par}}. Let us
remark that from the point of view of theoretical physics it is
possible~{\cite{damgaard_stochastic_1988,parisi_supersymmetric_1982}} to
justify also dimensional reduction in the parabolic case~{\eqref{eq:langevin}}
using a supersymmetric argument much like in the elliptic setting.

\

The only attempt we are aware of to a mathematically rigorous understanding
of the relation~{\eqref{eq:dim-red}} is the work of Klein, Landau and
Perez~{\cite{klein_supersymmetry_1985,Klein1984,klein_supersymmetry_1983}}
(see also the related work on the density of states of electronic systems with
random potentials~{\cite{klein_density_1985}}) which however do not fully
prove equation~{\eqref{eq:dim-red}} but only the equivalence between the
intermediate supersymmetric theory in $d + 2$ dimensions and the Gibbs measure
in $d$ dimensions. The reason for this limitation is that the problem of
uniqueness of the elliptic SPDE seems to unnecessarily restrict the class of
potentials for which~{\eqref{eq:dim-red}} can be established and Klein et al.
decided to bypass a detailed analysis of the situation by starting directly
with the supersymmetric formulation. Their rigorous argument requires a
cut-off, both on large momenta in $d$ ``orthogonal'' dimensions and on the
space variable in $d + 2$ dimensions in order to obtain a well defined, finite
volume problem. This regularization breaks the supersymmetry which has to be
recovered by adding a suitable correction term, spoiling the final result (see
Theorem \ref{th:dim-red-qc} and Theorem \ref{th:dim-red-1} below). A subtle
gap in their published proof is pointed out, and closed, in
Section~\ref{sec:super}.

\

Let us remark that, in a different context, dimensional reduction has been
proven and exploited in the remarkable work of Brydges and Imbrie on branched
polymers~{\cite{brydges_branched_2003,brydges_dimensional_2003}} and more
recently by Helmuth~{\cite{helmuth_dimensional_2016}}.

\

In the present work we complete the program of \tmtextit{elliptic stochastic
quantization}, in $d = 0$ case, by proving relation {\eqref{eq:dim-red}}
linking the solution to the ellptic SPDE {\eqref{eq:SPDE-d+2}} with the Gibbs
measure with action {\eqref{eq:action}} and removing the finite volume cut-off
in some cases.

\

Fix $d = 0$ and consider the two dimensional elliptic multidimensional SPDE
\begin{equation}
  (m^2 - \Delta) \phi (x) + f (x) \partial V (\phi (x)) = \xi (x)
  \label{equation2d1} \qquad x \in \mathbb{R}^2
\end{equation}
where $\phi = (\phi^1, \ldots, \phi^n)$ takes values in $\mathbb{R}^n$,
$(\xi^1, \ldots, \xi^n)$ are $n$ independent Gaussian white noises, $V :
\mathbb{R}^n \rightarrow \mathbb{R}$ a smooth potential function, $f (x)
\assign \tilde{f} (| x |^2)$ with $\tilde{f} : \mathbb{R}_+ \rightarrow
\mathbb{R}_+$ a decreasing cut-off function, such that the derivative
$\tilde{f}'$ of the function $r \longmapsto \tilde{f} (r)$ is defined, tending
to $0$ at infinity, and $\partial V = (\partial_i V)_{i = 1, \ldots, n}$
denotes the gradient of $V$. We will denote $f' (x) \assign \tilde{f}' (| x
|^2)$.

\

Eq.~{\eqref{equation2d1}} is the elliptic counterpart of the equilibrium
Langevin reversible dynamics for finite dimensional Gibbs measures. Let us
note that the elliptic dynamics is already described by an SPDE in two
dimensions while in the parabolic setting one would consider a much simpler
Markovian SDE~{\cite{iwata_infinite_1987,albeverio_strong_2012}} (no
renormalization being necessary). The question of uniqueness of solutions is
however quite similar in difficulty, indeed it is non-trivial to establish
uniqueness of stationary solutions to the SDE and much work in the theory of
long time behavior of Markov processes is devoted precisely to this. In the
elliptic context of~{\eqref{equation2d1}} there is no (easy) Markov property
helping and the question of uniqueness of weak stationary solutions seems more
open, even in the presence of the cut-off~$f$.

\

What makes this $d = 0$ problem very interesting, is above all the fact that
while the statements we would like to prove are quite easy to describe (see
below), to our surprise their rigorous justification is already quite involved
and not yet quite complete in full generality.

\

Define the probability measure $\kappa$ on $\mathbb{R}^n$ by
\begin{equation}
  \frac{\mathd \kappa}{\mathd y} \assign Z_{\kappa}^{- 1} \exp \left[ - 4 \pi
  \left( \frac{m^2}{2} | y |^2 + V (y) \right) \right], \label{eq:gibbs-kappa}
\end{equation}
where $y \in \mathbb{R}^n$, $Z_{\kappa} \assign \int_{\mathbb{R}^n} \exp
\left[ - 4 \pi \left( \frac{m^2}{2} | y |^2 + V (y) \right) \right] \mathd y$
($Z_{\kappa}$ is well defined since $V$ is bounded from below).

\

The main result of this paper is the following theorem which states that on
very general conditions on $V$ there is always a weak solution which satisfies
(an approximate) elliptic stochastic quantization relation (of the
form~{\eqref{eq:dim-red}}). By weak solution to the SPDE~{\eqref{equation2d1}}
we mean a probability measure $\nu$ on the space of fields $\phi$ under which
$(m^2 - \Delta) \phi + \partial V (\phi)$ is distributed like Gaussian white
noise on $\mathbb{R}^2$. A strong solution $\phi$ to
equation~{\eqref{equation2d1}} is a measurable map $\xi \mapsto \phi = \phi
(\xi)$ satisfying the equation for almost all realizations of $\xi$. In order
to state precisely our results we need to introduce \ the following
assumptions on~$V$ and on the finite volume cut-off~$f$:

{\descriptionparagraphs{\item[Hypothesis~C. (convexity)] The potential $V :
\mathbb{R}^n \rightarrow \mathbb{R}$ is a positive smooth function such that
\[ y \in \mathbb{R}^n \mapsto V (y) + m^2 | y |^2 \]
is strictly convex and $V$ with its first and second partial derivatives grow
at most exponentially at infinity.

\item[Hypothesis~QC. (quasi convexity)] The potential $V : \mathbb{R}^n
\rightarrow \mathbb{R}$ is a positive smooth function, such that it and its
first and second partial derivatives grow at most exponentially at infinity
and moreover it is such that there exists a function $H : \mathbb{R}^n
\rightarrow \mathbb{R}$ with exponential growth at infinity such that we have
\[ - \langle \hat{n}, \partial V (y + r \hat{n}) \rangle \leqslant H (y),
   \qquad \text{$\hat{n} \in \mathbb{S}^n, y \in \mathbb{R}^n$ and $r \in
   \mathbb{R}_+$}, \]
with $\mathbb{S}^n$ is the $n - 1$ dimensional unit sphere.

\item[Hypothesis~CO. (cut-off) ] The function $f$ is real valued, has at least
$C^2$ smoothness and in addition satisfies $f' \leqslant 0$, it decays
exponentially at infinity and fulfills $\Delta (f) \leqslant b^2 f$ for $b^2 <
4 m^2$ (some examples of such functions are given in~{\cite{Klein1984}} and
the motivations for this hypothesis are explained in
Remark~\ref{remark_hypotheses} below).}}

\begin{theorem}
  \label{th:dim-red-qc}Under the Hypotheses~QC and~CO there exists (at least)
  one weak solution $\tilde{\nu}$ to equation~{\eqref{equation2d1}} such that
  for all measurable bounded functions $h : \mathbb{R}^n \rightarrow
  \mathbb{R}$ we have
  \begin{equation}
    \int_{\widetilde{\mathcal{W}}} h (\phi (0)) \Upsilon_f (\phi)  \tilde{\nu}
    (\mathd \phi) = Z_f \int_{\mathbb{R}^n} h (y) \mathd \kappa (y)
    \label{eq:prec-dim-red}
  \end{equation}
  where $\Upsilon_f (\phi) \assign e^{4 \int_{\mathbb{R}^2} f' (x) V (\phi
  (x)) \mathd x}$ and $Z_f \assign \int_{\mathcal{W}} \Upsilon_f (\phi)
  \tilde{\nu} (\mathd \phi)$. $\widetilde{\mathcal{W}}$ is a suitable Banach
  space of functions from $\mathbb{R}^2$ to $\mathbb{R}^n$ where $\tilde{\nu}$
  is defined (see Section~\ref{sec:solutions}
  equations~{\eqref{eq:Banach1}},~{\eqref{eq:Banch2}}~and~{\eqref{eq:tildenu}}
  for a precise definition of $\widetilde{\mathcal{W}}$ and $\tilde{\nu}$).
\end{theorem}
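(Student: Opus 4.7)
The plan is to establish \eqref{eq:prec-dim-red} in three logically distinct steps that mirror the structure advertised in the introduction. First, construct a weak solution $\tilde\nu$ of \eqref{equation2d1} together with a useable change-of-variables formula from the white noise $\xi$ to the field $\phi$. Second, rewrite the resulting law as a supersymmetric (SUSY) measure on $\mathbb{R}^2$ by encoding the Jacobian of the nonlinear map $\phi\mapsto(m^2-\Delta)\phi+f\,\partial V(\phi)$ as a Berezin integral over two auxiliary Grassmann fields. Third, perform the dimensional reduction at the level of the SUSY model, collapsing the $2$-dimensional spatial integral to the $0$-dimensional Gibbs measure $\kappa$, with the cut-off $f$ producing precisely the surviving factor $\Upsilon_f$.

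For the first step I would regularize the noise at some scale $\varepsilon$, produce a smooth solution $\phi_\varepsilon$ by a fixed-point/variational argument, and use Hypothesis QC to control the monotonicity defect via the exponentially growing function $H$, which, paired with the exponentially decaying cut-off in CO and the spectral gap condition $b^2<4m^2$, yields uniform weighted estimates. Tightness in the Banach space $\widetilde{\mathcal W}$ of Section~\ref{sec:solutions} then delivers a weak limit $\tilde\nu$. Simultaneously I record that the linearized operator $L_\phi:=m^2-\Delta+f\,V''(\phi)$ is positive and invertible along the approximation, so that $\det\bigl(L_\phi\,(m^2-\Delta)^{-1}\bigr)$ is a bona fide Fredholm determinant.

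For the second step the target heuristic identity is
\[
  \mathbb{E}_{\tilde\nu}[F(\phi)]
  \;=\;\int F(\phi)\,\det\bigl(L_\phi(m^2-\Delta)^{-1}\bigr)\,
  \exp\!\Bigl(-\tfrac12\|(m^2-\Delta)\phi + f\,\partial V(\phi)\|^{2}\Bigr)\mathcal D\phi,
\]
which after rewriting the determinant as a Gaussian Berezin integral produces a SUSY action $S_{\mathrm{SUSY}}(\phi,\psi,\bar\psi)$ on $\mathbb{R}^2$. The key algebraic computation is to check that $S_{\mathrm{SUSY}}$ is invariant under the Parisi-Sourlas SUSY transformation \emph{up to} a boundary/cut-off defect proportional to $f'$; this defect is exactly the source of the factor $\Upsilon_f(\phi)=e^{4\int f'(x)V(\phi(x))\mathd x}$, obtained from integration by parts of $\Delta f$ against $V(\phi)$. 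The third step then applies the dimensional-reduction theorem for the SUSY model (namely Theorem~\ref{th:dim-red-1} together with the fix from Section~\ref{sec:super}), applied to the localized observable $h(\phi(0))$: this identifies the SUSY integral evaluated at a single point $x=0\in\mathbb{R}^2$ with the $0$-dimensional Gibbs integral $\int h(y)\mathd\kappa(y)$, up to the partition constant $Z_f$.

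The main obstacle will be step two, that is the rigorous justification of the change of variables in infinite dimensions. The Cameron-Martin-type identity really wants a measurable bijection $\xi\leftrightarrow\phi$ with a tractable Jacobian, whereas under QC+CO uniqueness of solutions to \eqref{equation2d1} is not known. I would therefore not invert the map: instead, I would work on the regularized level where existence, uniqueness and smoothness all hold, derive the SUSY representation there, combine it with the SUSY dimensional reduction, and only then remove the regularization. The delicate point is to verify that the limit on the SPDE side is indeed the measure $\tilde\nu$ produced in the first step (rather than a spurious extra component), and that the $f'$-defect responsible for $\Upsilon_f$ survives the passage to the limit with the stated exponential form. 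Handling these two limits compatibly is where the bulk of the technical work is concentrated.
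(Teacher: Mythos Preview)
Your plan has a genuine gap at its core. You assert that ``the linearized operator $L_\phi:=m^2-\Delta+f\,V''(\phi)$ is positive and invertible along the approximation'' and later that at the regularized level ``existence, uniqueness and smoothness all hold''. Under Hypothesis~QC neither claim is true: QC does \emph{not} bound $\partial^2 V$ from below (only Hypothesis~C does), so $L_\phi$ can fail to be positive, the Fredholm determinant can vanish or change sign, and the solution map $\xi\mapsto\phi$ can be genuinely multi-valued even after regularization. This is not a technicality you can absorb into a limit: it is precisely the obstruction the paper singles out (non-uniqueness can break supersymmetry), and it means that the ``change of variables $\Rightarrow$ SUSY representation $\Rightarrow$ dimensional reduction'' pipeline you sketch is simply not available for general QC potentials.

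The paper's route is quite different and you should compare it with yours. The SUSY dimensional reduction (Theorem~\ref{theorem_main1}) is established only for the restrictive class $V_\lambda$ (bounded perturbation of a small quartic), where the signed density $\Lambda_U$ lies in $L^p$ and degree theory gives $\int g\circ T\,\Lambda_U\,\mathd\mu=\int g\,\mathd\mu$ without any positivity of the Jacobian. Even there, the weak solution satisfying \eqref{eq:prec-dim-red} is not obtained by inverting the map: Proposition~\ref{proposition_reduction1} produces it by a positive-functional extension argument on the Riesz space $L^1(|\Lambda_U|\mathd\mu)$, which non-constructively selects \emph{one} weak solution among possibly many. The passage to arbitrary QC potentials is then pure approximation (Lemmas~\ref{lemma_reduction1}--\ref{lemma_reduction3}): one approximates $V$ by $V_\lambda$ potentials, uses uniform a~priori bounds from Lemma~\ref{lemma_bound} to get tightness of the associated weak solutions, and passes to the limit directly in the identity \eqref{equationgaussian1}, never revisiting the SUSY representation. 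Your proposal to ``derive the SUSY representation at the regularized level and then remove the regularization'' cannot substitute for this, because the representation itself is what fails under QC.
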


\begin{remark}
  The following families of functions satisfy Hypothesis~QC:
  \begin{itemizedot}
    \item Smooth convex functions (since they satisfy the stronger
    Hypothesis~C),
    
    \item Smooth bounded functions,
    
    \item Smooth functions having the second derivative semidefinite positive
    outside a compact set,
    
    \item Any positive linear combinations of the previous functions.
  \end{itemizedot}
\end{remark}

The drawback of this result is the lack of constructive determination of the
weak solution $\nu$ for which the dimensional reduction described by
equation~{\eqref{eq:prec-dim-red}} is realized. This is linked with the fact
that Hypothesis~QC does not guarantee uniqueness of strong solutions to
eq.~{\eqref{equation2d1}}. The fact that non-uniqueness is related to a
possible breaking of the supersymmetry associated with~{\eqref{equation2d1}}
was already noted by Parisi and Sourlas~{\cite{parisi_supersymmetric_1982}}.
If we are willing to assume that the potential $V$ is convex we can be more
precise, as the following theorem shows.

\begin{theorem}
  \label{th:dim-red-1}Under Hypotheses~C and~CO there exists an unique strong
  solution $\phi = \phi (\xi)$ of equation~{\eqref{equation2d1}} and for all
  measurable bounded functions $h : \mathbb{R}^n \rightarrow \mathbb{R}$ we
  have
  \begin{equation}
    \mathbb{E} [h (\phi (0)) \Upsilon_f (\phi)] = Z_f \int_{\mathbb{R}^n} h
    (y) \mathd \kappa (y) \label{eq:prec-dim-red-c}
  \end{equation}
  where $\Upsilon_f$ is defined as in Theorem \ref{th:dim-red-qc}, $Z_f
  \assign \mathbb{E} [\Upsilon_f (\phi)]$, and $\mathbb{E}$ denotes
  expectation with respect to the law of $\xi$.
\end{theorem}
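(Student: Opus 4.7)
The plan is to reduce Theorem~\ref{th:dim-red-1} to Theorem~\ref{th:dim-red-qc} by showing that, under the stronger convexity Hypothesis~C, equation~(\ref{equation2d1}) has a unique strong solution $\phi=\phi(\xi)$ and that pathwise uniqueness forces every weak solution---in particular the $\tilde\nu$ produced by Theorem~\ref{th:dim-red-qc}---to coincide with the law of $\phi(\xi)$. Once this identification is made, the integral over $\widetilde{\mathcal W}$ in~(\ref{eq:prec-dim-red}) becomes an expectation with respect to the white noise $\xi$ and~(\ref{eq:prec-dim-red-c}) drops out.

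For the strong solution I would decompose $\phi=Z+v$, where $Z$ is the stationary centred Gaussian field solving the linear equation $(m^2-\Delta)Z=\xi$. The Bessel kernel $(m^2-\Delta)^{-1}$ sends white noise on $\mathbb{R}^2$ to a locally continuous random field with exponentially integrable tails, so $Z$ is a measurable functional of $\xi$ lying in $\widetilde{\mathcal W}$. The remainder $v$ then satisfies the pathwise deterministic equation $(m^2-\Delta)v+\partial V(Z+v)=0$. Hypothesis~C makes $y\mapsto V(y)+\tfrac{m^2}{2}|y|^2$ strictly convex with at most exponential derivatives, hence the functional
\[
\mathcal{E}_Z(v) := \int_{\mathbb{R}^2}\Bigl(\tfrac{1}{2}|\nabla v|^2+\tfrac{m^2}{2}|v|^2+V(Z+v)-V(Z)-\langle \partial V(Z),v\rangle\Bigr)\,\mathd x
\]
is strictly convex and coercive on a suitable weighted Sobolev space. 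Standard convex variational arguments give a unique minimiser $v=v(Z)$, depending measurably on $\xi$, whose Euler--Lagrange equation is exactly the equation for $v$. Strict monotonicity of $\phi\mapsto(m^2-\Delta)\phi+\partial V(\phi)$ then forces pathwise uniqueness in $\widetilde{\mathcal W}$.

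Given strong existence together with pathwise uniqueness, I would invoke a Yamada--Watanabe-type argument adapted to the elliptic setting: any weak solution $\nu$ is, by definition, a law under which $\eta:=(m^2-\Delta)\phi+\partial V(\phi)$ is white noise, so by pathwise uniqueness the map $\eta\mapsto\phi$ agrees $\nu$-a.s.\ with the strong-solution functional $\phi(\cdot)$, and $\nu$ equals the push-forward of white noise under $\phi(\cdot)$. Since Hypothesis~C implies Hypothesis~QC (as noted in the remark following Theorem~\ref{th:dim-red-qc}), Theorem~\ref{th:dim-red-qc} furnishes a weak solution $\tilde\nu$ satisfying~(\ref{eq:prec-dim-red}); by the uniqueness just established this $\tilde\nu$ is the law of $\phi(\xi)$, and rewriting~(\ref{eq:prec-dim-red}) as an expectation under $\xi$ yields~(\ref{eq:prec-dim-red-c}).

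The main obstacle I anticipate is not the algebraic reduction but the analytic control of the variational problem on the whole plane: white noise on $\mathbb{R}^2$ is only locally a distribution, so the Banach space $\widetilde{\mathcal W}$ and the weights used in $\mathcal{E}_Z$ must be chosen carefully---exploiting the exponential decay of the cut-off~$f$ from Hypothesis~CO together with the exponential growth bounds on $V$ and $\partial V$ from Hypothesis~C---to guarantee that $\mathcal{E}_Z$ is finite and coercive almost surely and that the minimiser depends measurably on $\xi$. These ingredients are presumably already embedded in the construction of $\widetilde{\mathcal W}$ carried out in Section~\ref{sec:solutions}.
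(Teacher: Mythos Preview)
Your high-level reduction---prove that under Hypothesis~C the strong solution is unique, deduce that every weak solution coincides with its law, and then read off~\eqref{eq:prec-dim-red-c} from Theorem~\ref{th:dim-red-qc}---is exactly the paper's strategy.  The paper's one-line proof just cites Corollary~\ref{corollary_uniqueness1} (unique strong solution), Corollary~\ref{corollary_uniqueness2} (unique weak solution, equal to the push-forward under the strong one), and Theorem~\ref{theorem_reduction2}.

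Where you diverge is in the \emph{mechanism} for existence and uniqueness.  The paper does not use a variational argument: existence is obtained from a priori $L^\infty$ bounds (Lemma~\ref{lemma_bound}) together with Schaefer's fixed point theorem (Theorem~\ref{theorem_existence1}), and uniqueness from a maximum-principle argument applied to $|\phi_1-\phi_2|^2$ (Corollary~\ref{corollary_uniqueness1}).  Uniqueness of the weak solution is then read off from the explicit density formula of Theorem~\ref{theorem_weaksolution} (via the \"Ust\"unel--Zakai change-of-variables machinery) rather than from an abstract Yamada--Watanabe principle.

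Your variational sketch has a concrete slip that you should fix: equation~\eqref{equation2d1} carries the cut-off $f$ in front of the nonlinearity, so the equation for $v=\phi-Z$ is $(m^2-\Delta)v+f\,\partial V(Z+v)=0$, and $\mathcal{E}_Z$ must have $f$ multiplying the potential term.  This is not cosmetic.  Without $f$ the integrand $V(Z+v)-V(Z)-\langle\partial V(Z),v\rangle$ is not integrable on $\mathbb{R}^2$ (since $Z$ grows and $V$ may grow exponentially), so the functional you display is almost surely infinite.  With $f$ inserted, the second variation becomes $-\Delta+m^2+f\,\partial^2 V$, and Hypothesis~C only gives $\partial^2 V+m^2>0$; positivity of $m^2+f\,\partial^2 V$ then needs an additional smallness of $f$ which is not part of Hypothesis~CO.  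The paper's maximum-principle route avoids this global coercivity issue by working pointwise at the maximum of $|\phi_1-\phi_2|^2$, which is why it is the cleaner choice here.
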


Both theorems require the presence of a suitable cut-off $f \nequiv 1$ which
is responsible for the weighting factor $\Upsilon_f (\phi)$ on the left hand
side of the dimensional reduction statements~{\eqref{eq:prec-dim-red}}
and~{\eqref{eq:prec-dim-red-c}}. If we would be allowed to take $f \equiv 1$
then we would have proven the $d = 0$ version of
equation~{\eqref{eq:dim-red}}. However, presently we are not able to do this
for all QC potentials but only for those satisfying Hypothesis~C (see
Section~\ref{sec:super} for the proof). \

\begin{theorem}
  \label{theorem_cutoff1}Suppose that $V$ satisfies Hypothesis~C and let
  $\phi$ be the unique strong solution in $C^0_{\exp \beta} (\mathbb{R}^2 ;
  \mathbb{R}^n)$ (see Section~\ref{sec:removal} for the definition of this
  space) of the equation
  \begin{equation}
    (m^2 - \Delta) \phi + \partial V (\phi) = \xi . \label{equationcutoff1}
  \end{equation}
  Then for any $x \in \mathbb{R}^2$ and any measurable and bounded function
  $h$ defined on $\mathbb{R}^n$ we have
  \begin{equation}
    \mathbb{E} [h (\phi (x))] = \int_{\mathbb{R}^n} h (y) \mathd \kappa (y) .
    \label{equationcutoff3}
  \end{equation}
\end{theorem}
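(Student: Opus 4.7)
The plan is to reduce Theorem~\ref{theorem_cutoff1} to Theorem~\ref{th:dim-red-1} by removing the cut-off via an approximation argument and then using translation invariance of the uncut equation to extend the conclusion from $x=0$ to every $x\in\mathbb{R}^2$. Concretely, fix a function $\tilde f$ satisfying Hypothesis~CO and set $f_n(x)\assign \tilde f(|x|^2/n^2)$; a short scaling computation shows that each $f_n$ still satisfies Hypothesis~CO (in particular $\Delta f_n \leqslant (b^2/n^2) f_n$ with $b^2/n^2 < 4 m^2$), while $f_n\to 1$ and $f_n'\to 0$ locally uniformly. Let $\phi_n=\phi_n(\xi)$ denote the unique strong solution of $(m^2-\Delta)\phi_n + f_n\,\partial V(\phi_n) = \xi$ produced by Theorem~\ref{th:dim-red-1}, driven by the same white noise $\xi$ as $\phi$. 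That theorem provides the identity
\[
\mathbb{E}[h(\phi_n(0))\,\Upsilon_{f_n}(\phi_n)] \;=\; Z_{f_n}\int_{\mathbb{R}^n} h(y)\,\mathd\kappa(y), \qquad Z_{f_n}=\mathbb{E}[\Upsilon_{f_n}(\phi_n)],
\]
which one wants to push to the limit $n\to\infty$.

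Two convergences are needed. First, one should show that $\phi_n\to\phi$ almost surely in the space $C^0_{\exp\beta}(\mathbb{R}^2;\mathbb{R}^n)$: subtracting the two SPDEs gives $(m^2-\Delta)(\phi_n-\phi) + \partial V(\phi_n)-\partial V(\phi) = -(1-f_n)\partial V(\phi_n)$, and the strict monotonicity of the operator $\phi\mapsto(m^2-\Delta)\phi+\partial V(\phi)$, coming from the strict convexity of $V+m^2|\cdot|^2/2$ in Hypothesis~C, controls $\phi_n-\phi$ in terms of the right-hand side, which vanishes on compacts as $f_n\to 1$. Second, since $V\geqslant 0$ and $f_n'\leqslant 0$, the exponent of $\Upsilon_{f_n}(\phi_n) = e^{4\int f_n' V(\phi_n)\,\mathd x}$ is non-positive, so $0\leqslant\Upsilon_{f_n}(\phi_n)\leqslant 1$ uniformly in $n$; combining the pointwise decay of $f_n'$ with its exponential decay off the scale $n$ and the exponential-growth control on $\phi_n$ (hence on $V(\phi_n)$) provided by the weighted space $C^0_{\exp\beta}$, one argues that $\int f_n'(x) V(\phi_n(x))\,\mathd x\to 0$ almost surely, whence $\Upsilon_{f_n}(\phi_n)\to 1$. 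Dominated convergence then yields $Z_{f_n}\to 1$ and $\mathbb{E}[h(\phi_n(0))\Upsilon_{f_n}(\phi_n)]\to\mathbb{E}[h(\phi(0))]$, so $\mathbb{E}[h(\phi(0))]=\int h\,\mathd\kappa$.

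The last step is translation invariance: the uncut equation $(m^2-\Delta)\phi+\partial V(\phi)=\xi$ is invariant under $x\mapsto x+z$ and $\xi$ is stationary, so by the uniqueness granted by Hypothesis~C the field $\phi$ is itself stationary, giving $\mathbb{E}[h(\phi(x))]=\mathbb{E}[h(\phi(0))]=\int h\,\mathd\kappa$ for every $x\in\mathbb{R}^2$ and establishing \eqref{equationcutoff3}. The main obstacle is the vanishing of the weight: one has to show that the random integral $\int f_n'(x)V(\phi_n(x))\,\mathd x$ converges to zero almost surely, which forces uniform-in-$n$ a priori bounds on $\phi_n$ in the weighted norm of $C^0_{\exp\beta}$ and a quantitative compatibility between the parameter $\beta$ and the exponential decay rate of $f_n'$ imposed by Hypothesis~CO. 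The pointwise decay $f_n'\to 0$ alone is insufficient, since the support of $f_n'$ spreads to infinity as $n$ grows; a genuine quantitative balance between the decay of the cut-off and the possible growth of the solution is required.
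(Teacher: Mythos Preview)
Your approach has a genuine gap at the step where you claim $\int f_n'(x)\,V(\phi_n(x))\,\mathd x \to 0$ almost surely. This is false. With your scaling $f_n(x)=\tilde f(|x|^2/n^2)$ one has $f_n'(x)=n^{-2}\tilde f'(|x|^2/n^2)$, and substituting $x=ny$ gives
\[
\int_{\mathbb{R}^2} f_n'(x)\,V(\phi_n(x))\,\mathd x \;=\; \int_{\mathbb{R}^2} \tilde f'(|y|^2)\,V(\phi_n(ny))\,\mathd y.
\]
In particular $\|f_n'\|_{L^1}=\int \tilde f'(|y|^2)\,\mathd y$ is a fixed nonzero constant independent of $n$: the mass of $f_n'$ does not vanish, it merely spreads to scale $n$. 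Since $\phi_n\to\phi$ and $\phi$ is stationary, $V(\phi_n(ny))$ has, for each fixed $y$ and large $n$, approximately the law of $V(\phi(0))$, a strictly positive random variable. The integral therefore has no reason to tend to zero; heuristically it behaves like a spatial average of $V(\phi)$ against a fixed weight and remains of order $\mathbb{E}[V(\phi(0))]\int\tilde f'(|y|^2)\,\mathd y\neq 0$. Consequently neither $\Upsilon_{f_n}(\phi_n)\to 1$ nor $Z_{f_n}\to 1$ holds, and your dominated-convergence step collapses. The paper flags exactly this at the start of Section~\ref{sec:removal}: the factor $\Upsilon_f/Z_f$ ``does not actually converge''; what one can hope for is only the conditional decorrelation $Z_f^{-1}\mathbb{E}[\Upsilon_f(\phi_f)\mid\sigma(\phi_f(0))]\to 1$.

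The paper circumvents a direct proof of this decorrelation. It inserts a coupling parameter $t$ in front of the potential, so that at $t=0$ the problem is Gaussian, and shows that both the cut-off and the uncut expressions are real analytic in $t$ (this is where Hypothesis~C is used essentially, via Lemma~\ref{lemma_serie1}, first for trigonometric $V$). Equality then reduces to matching all $t$-derivatives at $t=0$; these derivatives are explicit iterated Gaussian tree integrals, and the required asymptotic independence between the origin and the region where $f'\neq 0$ becomes a concrete decay estimate on Green-function convolutions (Lemma~\ref{lemma_gaussian1} and Theorem~\ref{theorem_gaussian1}). General $V$ satisfying Hypothesis~C are then reached by approximation. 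Your monotonicity argument for $\phi_n\to\phi$ is reasonable and mirrors Lemma~\ref{lemma_existencecutoff}, but it cannot substitute for this analytic-continuation step: the removal of the weight $\Upsilon_f$, not the convergence of the solutions, is the substantive difficulty.
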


This result is the first rigorous result on elliptic stochastic quantization
without any cut-off. In fact in this case the results of Klein, Landau and
Perez~{\cite{Klein1984}} do not hold, since they use only an integral
representation of the solution to equation~{\eqref{equation2d1}} which has
meaning only when $f \nequiv 1$.

\begin{remark}
  It is easy to generalize Theorems~\ref{th:dim-red-qc},~\ref{th:dim-red-1}
  and~\ref{theorem_cutoff1} to equations of the form
  \begin{equation}
    (m^2 - \Delta) \phi^i (x) + \sum_{r = 1}^n \gamma_r^i \gamma^j_r f (x)
    \partial_{\phi^j} V (\phi (x)) = \gamma^i_r \xi^r (x),
    \label{equation2d1-gamma}
  \end{equation}
  where $f$ is as before, $\Gamma = (\gamma^i_j)_{i, j = 1, \ldots, n}$ is an
  $n \times n$ invertible matrix and the Hypothesis~C and QC are generalized
  accordingly.
\end{remark}

\tmtextbf{Plan.} The paper is organized as follows. In
Section~\ref{sec:solutions} we introduce the notions of strong and weak
solutions to equation~{\eqref{equation2d1}}, and we prove, in
Theorem~\ref{theorem_existence2}, the existence of strong
solutions (and thus also of weak solutions) under Hypothesis~QC. We also provide, in
Theorem~\ref{theorem_weaksolution}, a representation of weak solutions via the
theory of transformation of measures on abstract Wiener spaces developed by
{\"U}st{\"u}nel and Zakai in~{\cite{Ustunel2000}} (whose setting and main
facts needed here are summarized in Appendix~\ref{appendix_wienerspace}).

Section~\ref{sec:dim-red} is devoted to the proof Theorem~\ref{th:dim-red-qc}
and Theorem~\ref{th:dim-red-1} about elliptic stochastic quantization, under
the Hypothesis~QC and~CO and using Theorem~\ref{theorem_main1} and PDE
techniques.

In Section~\ref{sec:super} Theorem~\ref{theorem_main1} is proven, i.e.
dimensional reduction using Hypothesis~$V_{\lambda}$ (see Section
\ref{sec:dim-red}). The proof of Theorem~\ref{theorem_main1} is similar to the
rigorous version of Parisi and Sourlas argument proposed in~{\cite{Klein1984}},
starting from different hypotheses. The proof of  Theorem~\ref{theorem_main1} in Section~\ref{sec:super} is
based on Theorem~\ref{th:pol-eq} stating a relation between the expectation
involving some bosonic and fermionic free fields.

In Section~\ref{section:supersymmetry} we prove Theorem~\ref{th:pol-eq}
exploiting the properties of supersymmetric Gaussian fields. In
Section~\ref{section:supersymmetry} we also propose a brief introduction to
supersymmetry and supersymmetric Gaussian fields.

Section~\ref{sec:removal} discusses the proof of
Theorem~\ref{theorem_cutoff1} on the cut-off removal under Hypothesis C.

Appendix \ref{appendix_wienerspace} is a brief introduction to the theory of
transformations on abstract Wiener spaces used in this paper, and Appendix
\ref{appendix_grasmannian} consists in a discussion of some properties of
fermionic fields.

\

{\noindent}\tmtextbf{Acknowledgments.} The authors would like to thank the
Isaac Newton Institute for Mathematical Sciences for support and hospitality
during the program Scaling limits, rough paths, quantum field theory when work
on this paper was undertaken. This work was supported by EPSRC Grant Number
EP/R014604/1 and by the German Research Foundation (DFG) via CRC 1060.

\section{The elliptic SPDE}\label{sec:solutions}

In order to study equation~{\eqref{equation2d1}} we have to recall some
definitions, notations and conventions. Fix an abstract Wiener space
$(\mathcal{W}, \mathcal{H}, \mu)$ where the noise $\xi$ is defined (for the
concept of abstract Wiener space we refer e.g.
to~{\cite{Gross1967,Nualart2006,Ustunel2000}}). The Cameron-Martin space
$\mathcal{H}$ is the space
\[ \mathcal{H} \assign L^2 (\mathbb{R}^2 ; \mathbb{R}^n), \]
with its natural scalar product and natural norm given by $\langle h, g
\rangle = \sum_{i = 1}^n \int_{\mathbb{R}^2} h^i (x) g^i (x) \mathd x$. Let
$\mathcal{W}$ (in which $\mathcal{H}$ is densely embedded) be the space
\begin{equation}
  \mathcal{W}=\mathcal{W}_{p, \eta} \assign W^{p, - 1 - 2 \epsilon}_{\eta}
  (\mathbb{R}^2 ; \mathbb{R}^n) \cap (1 - \Delta) (C^0_{\eta} (\mathbb{R}^2 ;
  \mathbb{R}^n)), \label{eq:Banach1}
\end{equation}
where $p \geqslant 1$, $\eta > 0$ and $W_{\eta}^{p, - 1 - 2 \epsilon}
(\mathbb{R}^2 ; \mathbb{R}^n)$ is a fractional Sobolev space with norm
\[ \|g\|_{W_{\eta}^{p, - 1 - 2 \epsilon}} \assign \left( \int_{\mathbb{R}^2}
   (1 + | x |)^{- \eta} \left| (1 - \Delta)^{- \frac{1}{2} - \epsilon} (g)
   \right|^p \mathd x \right)^{\frac{1}{p}}, \]
for some $\epsilon > 0$ small enough and $(1 - \Delta) (C^0_{\eta}
(\mathbb{R}^2 ; \mathbb{R}^n))$ is the space of the second order
distributional derivatives of continuous functions on $\mathbb{R}^n$ growing
at infinity at most as $| x |^{\eta}$ with norm
\[ \|g\|_{(- \Delta + 1) (C^0_{\eta})} \assign \| (1 + | x |)^{- \eta} ((1 -
   \Delta)^{- 1} g) (x) \|_{L^{\infty}_x} . \]
Thus $\mathcal{W}_{p, \eta}$ is a Banach space with norm given by the sum of
the norms of $W_{\eta}^{p, - 1 - 2 \epsilon} (\mathbb{R}^2 ; \mathbb{R}^n)$
and of $(1 - \Delta)^{- 1} (C^0_{\eta} (\mathbb{R}^2 ; \mathbb{R}^n))$. In the
following we usually do not specify the indices $\eta$ and $p$ in the
definition of $\mathcal{W}_{p, \eta}$ and we write only $\mathcal{W}$. We also
introduce the notation
\begin{equation}
  \tilde{\mathcal{W}} = (1 - \Delta)^{- 1} (\mathcal{W}) \label{eq:Banch2}
\end{equation}
The Gaussian measure $\mu$ on $\mathcal{W}$ is the standard Gaussian measure
with Fourier transform given by $e^{- \frac{1}{2} \| \cdot
\|^2_{\mathcal{H}}}$. The white noise $\xi$ is then naturally realized on
$(\mathcal{H}, \mathcal{W}, \mu)$, in the sense that $\xi$ is the random
variable $\xi : \mathcal{W} \rightarrow \mathcal{S}' (\mathbb{R}^2 ;
\mathbb{R}^n)$ (where $\mathcal{S}' (\mathbb{R}^2 ; \mathbb{R}^n)$ is the
space of $\mathbb{R}^n$--valued Schwartz distributions on $\mathbb{R}^2$)
defined as $\xi (w) = \tmop{id}_{\mathcal{W}} (w) = w$. In this way the law of
$\xi$ is simply $\mu$ (or, better, it is equal to the pushforward of $\mu$ on
$\mathcal{S}' \assign \mathcal{S}' (\mathbb{R}^2, \mathbb{R}^n)$ with respect
the natural inclusion map of $\mathcal{W}$ in $\mathcal{S}'$).

Sometimes it is also useful to consider the space
$\mathcal{C}^{\alpha}_{\tau}$ of $\alpha$-H{\tmname{}}{\"o}lder continuous
functions such that they and their derivatives (or H{\"o}lder norms) grow at
infinity at most like $| x |^{\tau}$ for a real number $\tau$ (this notation
is used also if $\tau$ is negative in that case the functions decrease at
least like $\frac{1}{| x |^{- \tau}}$). It is important to note that
$\mathcal{C}^{\alpha}_{\eta}$ can be identified with the Besov space
$B^{\alpha}_{\infty, \infty, \eta} (\mathbb{R}^2)$ (where $B^{\alpha}_{\infty,
\infty, \eta} (\mathbb{R}^2)$ is the weighted Besov space $B^{\alpha}_{\infty,
\infty} (\mathbb{R}^2, (1 + | x |)^{\eta})$ \ of~{\cite{Bahouri2011}},~Chapter
2 Section 2.7). It is also important to realize that $(1 - \Delta)^{- 1}
(\mathcal{W}) \subset \mathcal{C}^{\alpha}_{\eta}$ if we choose $p$ big enough
and $\alpha > 0$ small enough.

\

We introduce now two notions of solutions for equation~{\eqref{equation2d1}}.
For later convenience it is better to discuss the equation in term of the
variable $\eta \assign (m^2 - \Delta) \phi$ for which it reads
\begin{equation}
  \eta + f \partial V (\mathcal{I} \eta) = \eta + U (\eta) = \xi,
  \label{eq:transf}
\end{equation}
where
\[ \mathcal{I} \assign (m^2 - \Delta)^{- 1} \]
and where we introduced the map $U : \mathcal{W} \rightarrow \mathcal{H}$
given by
\begin{equation}
  U (w) \assign f \partial V (\mathcal{I}w) \label{equationU}, \qquad w \in
  \mathcal{W}.
\end{equation}
Under the condition of (at most) exponential growth at infinity of $V$,
required by Hypothesis~QC and Hypothesis~C, it is possible to prove, that for
$\eta < 1$ in the definition of $\mathcal{W}$, for each $w \in \mathcal{W}$ we
have $U (w) \in \mathcal{W}$. Indeed we have
\[ \| U (w) \|_{(L^2 (\mathbb{R}^2))^n} \leq \left\| \sqrt{f (x)}
   \right\|_{L^2 (\mathbb{R}^2)} \cdot \left\| \sqrt{f (x)} | \partial V
   (\mathcal{I} w (x)) | \right\|_{\infty} \]
and $\left\| \sqrt{f (x)} | \partial V (\mathcal{I} w (x)) |
\right\|_{\infty}$ is finite since $f$ decreases exponentially at infinity and
$V$ grow at most exponentially at infinity.

Furthermore we introduce the map $T : \mathcal{W} \rightarrow \mathcal{W}$ as
\[ T (w) \assign w + U (w) . \]
It is clear that a map $S : \mathcal{W} \rightarrow \mathcal{W}$ satisfies
equation~{\eqref{eq:transf}}, i.e. $T (S (w)) = \xi (w) = w$, for
($\mu$-)almost all $w \in \mathcal{W}$, if and only if $\mathcal{I}S (w)$
satisfies equation {\eqref{equation2d1}}. The law $\nu$ on $\mathcal{W}$
associated to a solution of equation {\eqref{eq:transf}} must satisfy the
relation $T_{\asterisk}^{} (\nu) = \mu$. For these reasons we introduce the
following definition.

\begin{definition}
  \label{definition_solution}A measurable map $S : \mathcal{W} \rightarrow
  \mathcal{W}$ is a {\tmem{strong solution}} to equation~{\eqref{eq:transf}}
  if $T \circ S = \tmop{Id}_{\mathcal{W}}$ $\mu$-almost surely. A probability
  measure $\nu \in \mathcal{P} (\mathcal{W})$ (where $\mathcal{P}
  (\mathcal{W})$ is the space of probability measures on $\mathcal{W}$) on the
  space $\mathcal{W}$ is a {\tmem{weak solution}} to
  equations{\eqref{eq:transf}} if $T_{\asterisk} (\nu) = \mu$, where
  $T_{\ast}$ is the pushforward related with the map $T$.
\end{definition}

If $\nu$ is a probability measure on the space $\mathcal{W}$, we write
$\tilde{\nu}$ the unique probability measure on $\widetilde{\mathcal{W}}$ such
that
\begin{equation}
  (- \Delta + m^2)^{- 1}_{\asterisk} (\nu) = \tilde{\nu} . \label{eq:tildenu}
\end{equation}

\subsection{Strong solutions}

In order to study the existence of strong solutions to
equation~{\eqref{equation2d1}} we introduce an equivalent version of the same
equation that is simpler to study. Indeed if we write
\[ \bar{\phi} = \phi -\mathcal{I} \xi, \]
and we suppose that $\phi$ satisfies equation {\eqref{equation2d1}}, then the
function $\bar{\phi}$ satisfies the following (random) PDE
\begin{equation}
  (m^2 - \Delta) \bar{\phi} + f \partial V (\bar{\phi} -\mathcal{I} \xi) = 0.
  \label{equation2d2}
\end{equation}
Equation {\eqref{equation2d2}} can be studied pathwise for any realization of
the random field $\mathcal{I} \xi$. Hereafter the symbol $\lesssim$ stands for
inequality with some positive constant standing on the right hand side.

\begin{lemma}
  \label{lemma_bound}Suppose that $V$ satisfies Hypothesis~QC, and let
  $\bar{\phi}$ be a classical $C^2$ solution to the
  equation~{\eqref{equation2d2}}, such that $\lim_{x \rightarrow \infty}
  \bar{\phi} (x) = 0$, then for any $0 < \tau < 2$ and $\eta > 0$ we have
  \begin{equation}
    \begin{array}{ll}
      \| \bar{\phi} \|_{\infty} & \lesssim 1 +\|f e^{\alpha_1 | \mathcal{I}
      \xi |} \|_{\infty} \label{equationexistence2d1}
    \end{array}
  \end{equation}
  \begin{equation}
    \begin{array}{lll}
      \| \bar{\phi} \|_{\mathcal{C}_{- \eta}^{2 - \tau}} & \lesssim & 1 +
      e^{\alpha_1 \| \bar{\phi} \|_{\infty}} \| (| x | + 1)^{\eta} f
      e^{\alpha_1 | \mathcal{I} \xi |} \|_{\infty},
    \end{array} \label{equationexistence2d2}
  \end{equation}
  for some positive constant $\alpha_1$ and where it and the constants
  involved in the symbol $\lesssim$ depend only on the function $H$ in
  Hypothesis~QC.
\end{lemma}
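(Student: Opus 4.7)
The $L^\infty$ bound (\ref{equationexistence2d1}) will follow from a vectorial maximum-principle argument applied to the scalar function $u(x) := |\bar\phi(x)|^2$, exploiting the decay $\bar\phi(x)\to 0$ at infinity together with the radial sign condition supplied by Hypothesis~QC. The H\"older estimate (\ref{equationexistence2d2}) will then be obtained by feeding the just-derived $L^\infty$ bound into the integral representation $\bar\phi = -(m^2-\Delta)^{-1}\bigl(f\,\partial V(\bar\phi - \mathcal{I}\xi)\bigr)$ and invoking standard weighted Schauder regularity for the resolvent.

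\textbf{The $L^\infty$ bound.} Since $\bar\phi$ is $C^2$ and vanishes at infinity, $u$ attains its supremum at some $x_0\in\mathbb{R}^2$; the estimate is trivial if $\bar\phi(x_0)=0$, so assume otherwise. At $x_0$ we have $\Delta u(x_0)\leq 0$, and since $\tfrac{1}{2}\Delta u = \bar\phi\cdot\Delta\bar\phi + |\nabla\bar\phi|^2$, this yields $\bar\phi(x_0)\cdot\Delta\bar\phi(x_0)\leq 0$. Substituting $\Delta\bar\phi = m^2\bar\phi + f\,\partial V(\bar\phi-\mathcal{I}\xi)$ from (\ref{equation2d2}) gives
\[ m^2|\bar\phi(x_0)|^2 + f(x_0)\,\bar\phi(x_0)\cdot\partial V\bigl(\bar\phi(x_0)-\mathcal{I}\xi(x_0)\bigr) \leq 0. \]
Setting $r := |\bar\phi(x_0)|$, $\hat n := \bar\phi(x_0)/r$ and $y := -\mathcal{I}\xi(x_0)$, so that $\bar\phi(x_0)-\mathcal{I}\xi(x_0) = y+r\hat n$, Hypothesis~QC gives $\hat n\cdot\partial V(y+r\hat n)\geq -H(y)$, whence $\bar\phi(x_0)\cdot\partial V(\bar\phi(x_0)-\mathcal{I}\xi(x_0))\geq -r\,H(-\mathcal{I}\xi(x_0))$. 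Using $f\geq 0$ together with the exponential growth of $H$, I conclude
\[ m^2\,r \leq f(x_0)\,H(-\mathcal{I}\xi(x_0)) \lesssim f(x_0)\,e^{\alpha_1|\mathcal{I}\xi(x_0)|} \leq \|f\,e^{\alpha_1|\mathcal{I}\xi|}\|_\infty, \]
from which (\ref{equationexistence2d1}) follows.

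\textbf{The H\"older bound.} With $\|\bar\phi\|_\infty$ under control, the exponential growth of $\partial V$ yields
\[ (1+|x|)^\eta\,\bigl|f(x)\,\partial V(\bar\phi(x)-\mathcal{I}\xi(x))\bigr| \lesssim e^{\alpha_1\|\bar\phi\|_\infty}\,(1+|x|)^\eta f(x)\,e^{\alpha_1|\mathcal{I}\xi(x)|}, \]
so the source term of the PDE lies in the weighted space $L^\infty_{-\eta}$ with norm bounded by the right-hand side of (\ref{equationexistence2d2}). Since the kernel of $(m^2-\Delta)^{-1}$ decays exponentially (and so preserves polynomial weights) while the operator gains two derivatives, a standard Schauder estimate embeds its image into $\mathcal{C}^{2-\tau}_{-\eta}$ for every $\tau\in(0,2)$ with the stated bound. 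The main subtlety sits in the first step: the coupling of Hypothesis~QC with the vectorial maximum principle requires choosing $\hat n$ as the direction of $\bar\phi$ at the maximum, since only then does QC furnish precisely the sign needed to absorb the potentially ill-signed term $\bar\phi\cdot\partial V(\bar\phi - \mathcal{I}\xi)$; the Schauder step in the second part is otherwise routine.
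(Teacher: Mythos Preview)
Your proof is correct and follows essentially the same route as the paper: a maximum-principle argument applied to $|\bar\phi|^2$ combined with Hypothesis~QC for the $L^\infty$ bound, followed by a Schauder/Besov regularity estimate for $(m^2-\Delta)^{-1}$ acting on the (now controlled) source term for the weighted H\"older bound. The only cosmetic difference is the sign of $\mathcal{I}\xi$ in the argument of $\partial V$, which stems from a typo in equation~\eqref{equation2d2} (it should read $\bar\phi+\mathcal{I}\xi$, as the paper's own proof uses) and is harmless since only $|\mathcal{I}\xi|$ enters the final estimate.
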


\begin{proof}
  Putting $r_{\bar{\phi}} (x) = \sqrt{\sum_i (\bar{\phi}^i (x))^2} = |
  \bar{\phi} (x) |$, $x \in \mathbb{R}^2$, since the $C^2$ function
  $\bar{\phi}$ converges to zero at infinity, the function $r_{\bar{\phi}}$
  must have a global maximum at some point $\bar{x} \in \mathbb{R}^2$. This
  means that $- \Delta (r^2_{\bar{\phi}}) (\bar{x}) \geqslant 0$. On the other
  hand since $\bar{\phi}$ solves equation {\eqref{equation2d2}} we have
  \begin{eqnarray}
    m^2 r^2_{\bar{\phi}} (\bar{x}) & \leqslant & - \frac{1}{2} \Delta
    (r^2_{\bar{\phi}}) (\bar{x}) + m^2 r^2_{\bar{\phi}} (\bar{x}) \nonumber\\
    & \leqslant & (- \bar{\phi} \cdot \Delta \bar{\phi} - | \nabla \bar{\phi}
    |^2 + m^2 | \bar{\phi} |^2) \nonumber\\
    & \leqslant & - f (\bar{x}) r_{\bar{\phi}} (\bar{x})
    (\hat{n}_{\bar{\phi}} (\bar{x}) \cdot \partial V (\mathcal{I} \xi
    (\bar{x}) + \hat{n}_{\bar{\phi}} (\bar{x}) r_{\bar{\phi}} (\bar{x})))
    \nonumber
  \end{eqnarray}
  where $\hat{n}_{\bar{\phi}} = \frac{\bar{\phi}}{| \bar{\phi} |} \in
  \mathbb{S}^n$ when $r_{\bar{\phi}} \not{=} 0$, and $0$ elsewhere. Using
  Hypothesis~QC we obtain
  \[ \| r_{\bar{\phi}} \|_{\infty} \leqslant \frac{f (\bar{x}) H (\mathcal{I}
     \xi (\bar{x}))}{m^2} \lesssim 1 +\|f e^{\alpha_1 | \mathcal{I} \xi |}
     \|_{\infty}, \]
  since $H$ grows at most exponentially at infinity. This result implies
  inequality {\eqref{equationexistence2d1}}.
  
  The bound~{\eqref{equationexistence2d2}} can be obtained directly using the
  fact $\| \phi \|_{\mathcal{C}^{2 - \tau}} \lesssim \| (- \Delta + m^2)
  (\phi) \|_{\infty}$, where we use the properties of the Besov spaces
  $\mathcal{C}^{\alpha} (\mathbb{R}^2) = B^{\alpha}_{\infty, \infty}
  (\mathbb{R}^2)$ with respect to derivatives (see~{\cite{Trie1983}},
  Chapter~2 Section~2.3.8).
\end{proof}

\begin{remark}
  \label{remark_bound}It is simple to prove that the
  inequalities~{\eqref{equationexistence2d1}}
  and~{\eqref{equationexistence2d2}} can be chosen to be uniform with respect
  to some rescaling of the potential of the form $\lambda V$, or satisfying
  Hypothesis~$V_{\lambda}$ below, where $\lambda \in [0, 1]$.
\end{remark}

In the following we denote by $\mathcal{F} : \mathcal{W} \rightarrow
\mathcal{P} (\mathcal{C}^{2 - \tau} (\mathbb{R}^2 ; \mathbb{R}^n))$ the set
valued function which associates to a given $w \in \mathcal{W}$ the (possible
empty) set of solutions to equation~{\eqref{equation2d2}} in $\mathcal{C}^{2 -
\tau} (\mathbb{R}^2 ; \mathbb{R}^n)$, where $\tau > 0$, when $\mathcal{I} \xi$
is evaluated in $w$.

\begin{theorem}
  \label{theorem_existence1}Let $V$ be a smooth positive function satisfying
  Hypothesis~QC, then for any $w \in \mathcal{W}$ the set $\mathcal{F} (w)$ is
  non-empty and closed. Furthermore $\mathcal{F} (w) \subset C^2 (\mathbb{R}^2
  ; \mathbb{R}^n)$ and if $B \subset \mathcal{W}$ is a bounded set then
  $\mathcal{F} (B) = \bigcup_{w \in B} \mathcal{F} (w)$ is compact in
  $\mathcal{C}_{- \eta}^{2 - \tau} (\mathbb{R}^2 ; \mathbb{R}^n)$ for any
  $\tau > 0$ and $\eta \geqslant 0$.
\end{theorem}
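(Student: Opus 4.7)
The plan is to combine the a priori estimates from Lemma \ref{lemma_bound} with a Leray–Schauder / Schaefer fixed point argument. Fix $w \in \mathcal{W}$ and rewrite equation~(\ref{equation2d2}) as the fixed point problem
\[
\bar{\phi} = K_w(\bar{\phi}) := -\,\mathcal{I}\bigl(f\,\partial V(\bar{\phi} - \mathcal{I}\xi(w))\bigr),
\]
where $\mathcal{I} = (m^2-\Delta)^{-1}$. I would first show that $K_w$ is a continuous and compact operator on a weighted Hölder space $X = \mathcal{C}^{2-\tau}_{-\eta}(\mathbb{R}^2;\mathbb{R}^n)$ for some small $\tau>0$ and $\eta>0$: since $f$ has exponential decay, $V$ is smooth with at most exponential growth of its derivatives, and $\mathcal{I}\xi(w) \in \mathcal{C}^{\alpha}_{\eta'}$ (by the choice of $\mathcal{W}$ recalled in the text), the image $f\,\partial V(\bar{\phi}-\mathcal{I}\xi(w))$ lies in a heavily weighted Hölder class, and then $\mathcal{I}$ gains two derivatives, so the image lands compactly in $X$ via a weighted Arzelà–Ascoli argument.

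Next I would introduce the homotopy $K_{w,\lambda} := \lambda K_w$, $\lambda \in [0,1]$. A fixed point of $K_{w,\lambda}$ is precisely a $C^2$ solution (vanishing at infinity) of equation~(\ref{equation2d2}) with $V$ replaced by $\lambda V$, which still satisfies Hypothesis~QC with the same $H$ (up to an unessential factor, as recorded in Remark~\ref{remark_bound}). Thus Lemma~\ref{lemma_bound} delivers a bound
\[
\|\bar{\phi}\|_{X} \leq C(w)
\]
uniform in $\lambda \in [0,1]$. Schaefer's fixed point theorem then produces a fixed point for $\lambda = 1$, giving $\mathcal{F}(w) \neq \emptyset$. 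Full $C^2$ membership follows by a standard Schauder bootstrap: once $\bar{\phi} \in \mathcal{C}^{2-\tau}_{-\eta}$, the right-hand side $-f\,\partial V(\bar{\phi}-\mathcal{I}\xi(w))$ is locally Hölder continuous (since $V$ is smooth and $\mathcal{I}\xi$ is Hölder), so interior elliptic regularity upgrades $\bar{\phi}$ to $C^{2,\alpha}_{\mathrm{loc}}$.

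Closedness of $\mathcal{F}(w)$ follows from a routine passage to the limit: if $\bar{\phi}_k \to \bar{\phi}$ in $\mathcal{C}^{2-\tau}_{-\eta}$ with $\bar{\phi}_k \in \mathcal{F}(w)$, the smoothness of $V$ and the weighted bounds allow us to pass to the limit in both sides of~(\ref{equation2d2}). For the compactness of $\mathcal{F}(B)$ when $B \subset \mathcal{W}$ is bounded, note that $\mathcal{I}\xi(w)$ varies in a bounded subset of $\mathcal{C}^{\alpha}_{\eta'}$ as $w$ ranges over $B$; applying Lemma~\ref{lemma_bound} with slightly better indices (a strictly smaller $\tau' < \tau$ and a strictly larger weight parameter, which the hypothesis permits) gives a uniform bound of $\mathcal{F}(B)$ in $\mathcal{C}^{2-\tau'}_{-\eta'}$, and then the compact embedding $\mathcal{C}^{2-\tau'}_{-\eta'} \hookrightarrow \mathcal{C}^{2-\tau}_{-\eta}$ yields compactness.

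The main obstacle I anticipate is the calibration of the Hölder exponents and weights so that three things are simultaneously satisfied: (i) $K_w$ is genuinely compact on $X$, (ii) Lemma~\ref{lemma_bound} produces bounds in a strictly stronger space than $X$ (so that compactness of $\mathcal{F}(B)$ follows from a compact embedding, not merely boundedness), and (iii) the a priori bound is uniform along the homotopy $\lambda \in [0,1]$, which is exactly the content of Remark~\ref{remark_bound}. Everything else is a relatively standard application of fixed point theory together with linear elliptic regularity.
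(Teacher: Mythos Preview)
Your proposal is correct and follows essentially the same route as the paper: rewrite \eqref{equation2d2} as a fixed point equation for the compact operator $-\mathcal{I}(f\,\partial V(\bar\phi+\mathcal{I}\xi(w)))$ on $\mathcal{C}^{2-\tau}_{-\eta}$, use Lemma~\ref{lemma_bound} and Remark~\ref{remark_bound} to bound the $\lambda$-homotopy uniformly, apply Schaefer's theorem, bootstrap to $C^2$, and deduce compactness of $\mathcal{F}(B)$ from the improved-index bound plus the compact embedding. The paper's argument is exactly this, with the continuity and compactness of the fixed point map verified via the same exponential-growth and weighted-Besov considerations you outline.
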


\begin{proof}
  We introduce the map $\mathcal{C}^{2 - \tau}_{- \eta} (\mathbb{R}^2 ;
  \mathbb{R}^n) \times \mathcal{W} \ni (\bar{\phi}, w) \mapsto \mathcal{K}
  (\bar{\phi}, w) \in \mathcal{C}^{2 - \tau'} (\mathbb{R}^2 ; \mathbb{R}^n)$,
  where \ $\tau' < \tau$, given by
  \[ \mathcal{K}^i (\bar{\phi}, w) \assign -\mathcal{I} (f \partial V
     (\bar{\phi} +\mathcal{I} \xi (w))) . \]
  The map $\mathcal{K}$ is continuous with respect to its first argument,
  indeed if $\bar{\phi}, \bar{\phi}_1 \in \mathcal{C}^{2 - \tau} (\mathbb{R}^2
  ; \mathbb{R}^n)$,
  \begin{eqnarray*}
    & \| \mathcal{K}^i (\bar{\phi}, w) - \mathcal{K}^i (\bar{\phi}_1, w)
    \|_{\mathcal{C}^{2 - \tau'}_{- \eta}} & \\
    & \lesssim \| (| x | + 1)^{\eta} f (\partial V (\bar{\phi}, \mathcal{I}
    \xi (w)) - \partial V (\bar{\phi}_1, \mathcal{I} \xi (w))) \|_{\infty} &
    \\
    & \lesssim \left\| \int_0^1 (| x | + 1)^{\eta} f \partial^2 V (\bar{\phi}
    - t (\bar{\phi} - \bar{\phi}_1) +\mathcal{I} \xi (w)) \cdot (\bar{\phi} -
    \bar{\phi}_1) \mathd t \right\|_{\infty} & \\
    & \lesssim \| \bar{\phi} - \bar{\phi}_1 \|_{\infty} \| (| x | + 1)^{\eta}
    \sqrt{f} \|_{\infty} \left( \| \partial^2 V_B \|_{\infty} + e^{\alpha \|
    \bar{\phi} - \bar{\phi}_1 \|_{\infty}} \| \sqrt{f} e^{\alpha | \mathcal{I}
    \xi |} \|_{\infty} \right), & 
  \end{eqnarray*}
  where the positive constant $\alpha$ depends on the exponential growth of
  $\partial^2 V$ at infinity. By a similar reasoning we can prove that
  $\mathcal{K}$ sends bounded sets of $\mathcal{C}^{2 - \tau}_{- \eta}$ into
  bounded sets of $\mathcal{C}^{2 - \tau'}_{- \eta'}$, where $\tau' < \tau$
  and $\eta' > \eta$. Since the immersion $\mathcal{C}^{2 - \tau'}_{- \eta'}
  \longhookrightarrow \mathcal{C}^{2 - \tau}_{- \eta}$ is compact we have that
  $\mathcal{K}$ is a compact map.
  
  Since $\mathcal{I} \xi \in \mathcal{C}^{1 -}_{\alpha}$ and $\bar{\phi} \in
  \mathcal{C}^{2 - \tau}_{- \eta} (\mathbb{R}^2 ; \mathbb{R}^n)$ we have $(-
  \Delta + m^2) \mathcal{K}^i (\bar{\phi}, w) \in \mathcal{C}^{1 -}
  (\mathbb{R}^2 ; \mathbb{R}^n)$. This implies, using the regularity results
  for Poisson equations (see Theorem~4.3 in~{\cite{Gilbarg2001}}) and a
  bootstrap argument, that if $\bar{\phi} = \mathcal{K} (\bar{\phi}, w)$ then
  $\bar{\phi} \in C^2 (\mathbb{R}^2)$. From this fact it follows that, using
  inequalities~{\eqref{equationexistence2d1}}
  and~{\eqref{equationexistence2d2}} of Lemma~\ref{lemma_bound} and
  Remark~\ref{remark_bound}, the solutions to the equation $\bar{\phi} =
  \lambda \mathcal{K} (\bar{\phi}, w) \mathcal{}$ are uniformly bounded for
  $\lambda \in [0, 1]$. Thanks to these properties of the map $\mathcal{K}$ we
  can use Schaefer's fixed-point theorem (see~{\cite{Evans1998}} Theorem~4
  Section 9.2 Chapter 9) to prove the existence of at least one solution to
  equation~{\eqref{equation2d2}}. Finally using again Lemma~\ref{lemma_bound}
  we have that $\mathcal{F} (B)$ is compact for any bounded set $B \subset
  \mathcal{W}$.
\end{proof}

\begin{theorem}
  \label{theorem_existence2}Under Hypothesis~QC on $V$ there exists a strong
  solution to equation~{\eqref{equation2d1}} (or equivalently to
  equation~{\eqref{eq:transf}}).
\end{theorem}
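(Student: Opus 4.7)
The plan is to upgrade the pathwise existence statement of Theorem~\ref{theorem_existence1} to a measurable solution of the operator equation $T(S(w))=w$ by means of a measurable selection argument.

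First, I would translate the problem. Recall that $T(w)=w+U(w)$ with $U(w)=f\,\partial V(\mathcal{I} w)$, and that $\phi$ solves \eqref{equation2d1} iff $S:=(m^2-\Delta)\phi$ satisfies $T\circ S=\mathrm{Id}_{\mathcal{W}}$, iff $\bar\phi := \phi-\mathcal{I}\xi$ solves the pathwise PDE \eqref{equation2d2}. Thus constructing a strong solution amounts to choosing, in a measurable way, one element $\bar\phi(w)\in\mathcal{F}(w)$ for $\mu$-a.e.\ $w\in\mathcal{W}$, where $\mathcal{F}:\mathcal{W}\to \mathcal{P}(\mathcal{C}^{2-\tau}_{-\eta}(\mathbb{R}^2;\mathbb{R}^n))$ is the set-valued map defined just before Theorem~\ref{theorem_existence1}.

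Second, I would invoke a measurable selection theorem, e.g.\ Kuratowski--Ryll-Nardzewski. Theorem~\ref{theorem_existence1} already provides two of the three ingredients: $\mathcal{F}(w)$ is non-empty and closed in the Polish space $\mathcal{C}^{2-\tau}_{-\eta}$, and $\mathcal{F}$ sends bounded sets of $\mathcal{W}$ into relatively compact subsets of $\mathcal{C}^{2-\tau}_{-\eta}$. The remaining point is weak measurability of $\mathcal{F}$, i.e.\ $\{w:\mathcal{F}(w)\cap O\neq\emptyset\}\in \mathcal{B}(\mathcal{W})$ for every open $O$. I would establish this by showing that the graph $\mathrm{Gr}(\mathcal{F})=\{(w,\bar\phi):\bar\phi\in\mathcal{F}(w)\}$ is closed in $\mathcal{W}\times\mathcal{C}^{2-\tau}_{-\eta}$. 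Closedness follows from the continuity of the fixed-point map $\mathcal{K}$ in both arguments used in the proof of Theorem~\ref{theorem_existence1}: if $w_k\to w$ in $\mathcal{W}$ and $\bar\phi_k\in\mathcal{F}(w_k)$ with $\bar\phi_k\to\bar\phi$ in $\mathcal{C}^{2-\tau}_{-\eta}$, then $\bar\phi_k=\mathcal{K}(\bar\phi_k,w_k)\to\mathcal{K}(\bar\phi,w)$, so $\bar\phi\in\mathcal{F}(w)$. Together with the compactness on bounded sets, a closed graph implies weak measurability on each bounded ball of $\mathcal{W}$, and writing $\mathcal{W}$ as a countable union of such balls yields weak measurability globally.

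Third, with the measurable selection $w\mapsto\bar\phi(w)$ in hand I would simply set $\phi(w):=\bar\phi(w)+\mathcal{I}w$ and $S(w):=(m^2-\Delta)\phi(w)=w+U(\bar\phi(w)+\mathcal{I}w)\in\mathcal{W}$, which is measurable as a composition of measurable maps (using that $U:\mathcal{W}\to\mathcal{H}\subset\mathcal{W}$ is continuous by the estimates in the proof of Theorem~\ref{theorem_existence1}). Since $\bar\phi(w)$ is a classical solution of \eqref{equation2d2}, one checks directly that $T(S(w))=w$ for $\mu$-a.e.\ $w$, which is Definition~\ref{definition_solution} of a strong solution.

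The main obstacle is the verification of the closed-graph/weak-measurability property for $\mathcal{F}$, since it requires stability of solutions of the nonlinear PDE \eqref{equation2d2} under perturbation of the forcing $\mathcal{I}\xi$ in the weighted topology of $\mathcal{W}$. This is the point where one has to combine the compactness furnished by Lemma~\ref{lemma_bound} with the continuity of $\mathcal{K}$ already established; once this is in place, the Kuratowski--Ryll-Nardzewski selection and the definition of $S$ are essentially formal.
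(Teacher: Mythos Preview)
Your proposal is correct and follows essentially the same strategy as the paper: reduce to a measurable selection from the set-valued map $\mathcal{F}$, exploit that $\mathcal{F}$ sends bounded subsets of $\mathcal{W}$ into compact sets (Theorem~\ref{theorem_existence1}), and patch selections over a countable exhaustion by balls. The only difference is the specific selection tool: the paper invokes Filippov's implicit function theorem for the Carath\'eodory map $\mathcal{K}$ (continuity in $\bar\phi$, measurability in $w$), whereas you go through the Kuratowski--Ryll-Nardzewski theorem via a closed-graph/upper-hemicontinuity argument, which uses the slightly stronger (but equally easy) joint continuity of $\mathcal{K}$. Both routes are standard and equivalent in effect.

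One small slip: your displayed formula $S(w)=w+U(\bar\phi(w)+\mathcal{I}w)$ is not well-typed (the argument of $U$ must lie in $\mathcal{W}$, and there is a sign issue). The correct expression, as in the paper, is $S(w)=w+(m^2-\Delta)\bar\phi(w)$, which indeed satisfies $T(S(w))=w$ since $(m^2-\Delta)\bar\phi=-f\,\partial V(\bar\phi+\mathcal{I}w)=-U(S(w))$.
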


\begin{proof}
  For proving the existence of a strong solution to the equation
  {\eqref{eq:transf}} (in the sense of Definition~\ref{definition_solution})
  it is sufficient to prove that we can choose the solutions to
  equation~{\eqref{equation2d2}}, whose existence for any $w \in \mathcal{W}$
  is guaranteed by Theorem~\ref{theorem_existence1}, in a measurable way with
  respect $w \in \mathcal{W}$. More precisely we have to prove that there
  exists a measurable selection for the function set map $\mathcal{F}$, i.e.
  there exists a map $\bar{S} : \mathcal{W} \rightarrow \mathcal{C}^{2 -
  \tau}_{- \eta}$ such that $\bar{S} (w) \in \mathcal{F} (w)$.
  
  Fix a sequence of balls $B_1, \ldots, B_n, \ldots \subset \mathcal{W}$ of
  increasing radius and such that $\lim_{n \rightarrow + \infty} B_n
  =\mathcal{W}$, then, by Theorem~\ref{theorem_existence1}, the map
  $\nobracket \mathcal{F} |_{B_n \backslash B_{n - 1}}$ takes values in a
  compact set. As proven in Theorem~\ref{theorem_existence1} the map
  $\mathcal{K}$ is continuous in $\bar{\phi}$ and measurable in $w$ and
  therefore a Carath{\'e}odory map. As a consequence, by Filippov's implicit
  function theorem~(see Theorem~18.17 in~{\cite{Aliprantis2006}}), there
  exists a (Borel) measurable function $\bar{S}_n$ defined on $B_n \backslash
  B_{n - 1}$ such that $\bar{S}_n (w) \in \mathcal{F} (w)$. The map $\bar{S}$
  defined on $B_n \backslash B_{n - 1}$ by $\nobracket \bar{S} |_{B_n
  \backslash B_{n - 1}} = \bar{S}_n$ is the measurable selection that we need
  (since $B_n \backslash B_{n - 1}$ is measurable).
  
  A strong solution $S$ to equation~{\eqref{eq:transf}} is then given by $S
  (w) \assign w + (m^2 - \Delta) \bar{S} (w)$, $w \in \mathcal{W}$. \ 
\end{proof}

\begin{corollary}
  \label{corollary_uniqueness1}Under the Hypothesis~C there exists only one
  strong solution to equation~{\eqref{eq:transf}}.
\end{corollary}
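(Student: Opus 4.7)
Existence follows immediately from Theorem~\ref{theorem_existence2} since Hypothesis~C implies Hypothesis~QC, so only uniqueness needs to be proved. The plan is to argue pathwise: fix a realisation $w\in\mathcal{W}$ of the noise and suppose $S_1,S_2$ are two strong solutions. Setting $\bar{\phi}_i := \mathcal{I}S_i(w)-\mathcal{I}w$, Lemma~\ref{lemma_bound} together with the $C^2$ regularity from the proof of Theorem~\ref{theorem_existence1} gives that $\bar{\phi}_1,\bar{\phi}_2$ are classical solutions of the deterministic PDE $(m^2-\Delta)\bar{\phi}+f\partial V(\bar{\phi}+\mathcal{I}w)=0$ vanishing at infinity. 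The difference $\psi:=\bar{\phi}_1-\bar{\phi}_2$ then solves the linear elliptic equation
\[
(m^2-\Delta)\psi + f M\psi = 0, \qquad M(x):=\int_0^1 \partial^2 V\bigl(\bar{\phi}_2(x)+\mathcal{I}w(x)+t\psi(x)\bigr)\,\mathd t,
\]
and strict convexity of $y\mapsto V(y)+m^2|y|^2$ (Hypothesis~C) gives the pointwise matrix bound $M(x)+2m^2 I\succeq 0$, strict wherever $\psi(x)\neq 0$.

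Testing this equation against $\psi$ and integrating by parts (justified by the decay of $\psi$) yields $\int |\nabla\psi|^2+m^2|\psi|^2+f\langle M\psi,\psi\rangle\,\mathd x=0$, which after the decomposition $fM=f(M+2m^2 I)-2m^2 fI$ becomes
\[
\int_{\mathbb{R}^2}\bigl[|\nabla\psi|^2+m^2(1-2f)|\psi|^2\bigr]\,\mathd x + \int_{\mathbb{R}^2} f\bigl\langle(M+2m^2 I)\psi,\psi\bigr\rangle\,\mathd x = 0.
\]
The second integral is non-negative, and is strictly positive whenever $\psi\not\equiv 0$; hence uniqueness would follow as soon as one establishes that the Schr\"odinger-type quadratic form $\psi\mapsto\int[|\nabla\psi|^2+m^2(1-2f)|\psi|^2]\,\mathd x$ is non-negative on the relevant class of test functions. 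This is where the structure of Hypothesis~CO enters: the bound $\Delta f\leq b^2 f$ with $b^2<4m^2$ is tailored to supply a positive supersolution $\rho$ of $-\Delta\rho+m^2(1-2f)\rho\geq 0$ (for instance of the form $\rho=e^{\alpha f}$ with $\alpha>0$ tuned so that the sharp inequality $b^2<4m^2$ is exploited), after which the ground-state (Allegretto--Piepenbrink) transformation $\psi=\rho\chi$ recasts the quadratic form as $\int\rho^2|\nabla\chi|^2\,\mathd x\geq 0$.

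Combining the two inequalities forces $\psi\equiv 0$, so the two pathwise solutions coincide for $\mu$-almost every $w$, proving that the measurable selection produced by Theorem~\ref{theorem_existence2} is the unique strong solution. The main obstacle is the Schr\"odinger-positivity step: the potential $m^2-2m^2 f$ can be negative where $f>1/2$, and it is precisely the sharp cut-off condition $b^2<4m^2$ in Hypothesis~CO which makes the required positive supersolution available; absent this condition the pathwise uniqueness argument would not close.
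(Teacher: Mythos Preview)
Your energy-method route is more complicated than the paper's and manufactures a dependence on Hypothesis~CO that the corollary does not have.

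The paper argues by the maximum principle applied to $\delta r^2 := |\phi_1-\phi_2|^2$. From the two equations one gets
\[
(m^2-\Delta)(\delta r^2) + 2\sum_i|\nabla\delta\phi^i|^2 + f\,\partial^2 V(g)(\hat n_{\delta\phi},\hat n_{\delta\phi})\,\delta r^2 = 0,
\]
with $g(x)$ on the segment $[\phi_1(x),\phi_2(x)]$. Since $\delta r^2\to 0$ at infinity it attains a global maximum $\bar x$, where $-\Delta(\delta r^2)(\bar x)\ge 0$; strict convexity from Hypothesis~C is then invoked to make $m^2 + f\,\partial^2 V(g)(\hat n,\hat n)$ positive at $\bar x$, forcing $\delta r^2(\bar x)\le 0$. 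Hypothesis~CO plays no role, in accordance with the statement of the corollary, so your claim that ``absent this condition the pathwise uniqueness argument would not close'' is incorrect.

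In fact your own identity $\int |\nabla\psi|^2 + m^2|\psi|^2 + f\langle M\psi,\psi\rangle\,\mathd x = 0$ concludes by the \emph{same} pointwise bound: once $m^2 + f\lambda_{\min}(M)>0$ everywhere, the integrand is nonnegative and $\psi\equiv 0$. The trouble is your decomposition $fM = f(M+2m^2 I) - 2m^2 f I$, which replaces a pointwise-controlled term by the sign-indefinite Schr\"odinger potential $m^2(1-2f)$ and then reaches for CO to repair the damage. Worse, the repair is not carried out: for $\rho=e^{\alpha f}$ one computes
\[
-\Delta\rho + m^2(1-2f)\rho \;=\; \rho\bigl[-\alpha\Delta f - \alpha^2|\nabla f|^2 + m^2(1-2f)\bigr],
\]
and at $x=0$ (the maximum of $f$, where $\nabla f=0$ and $\Delta f(0)=4\tilde f'(0)$) the bracket equals $-4\alpha\tilde f'(0)+m^2(1-2\tilde f(0))$. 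Nothing in Hypothesis~CO forbids $\tilde f'(0)=0$ and $\tilde f(0)>1/2$, which makes this negative for every $\alpha$; so the proposed supersolution need not exist, and the Allegretto--Piepenbrink step has a genuine gap.
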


\begin{proof}
  Suppose that $S_1, S_2$ are two strong solutions to
  equation~{\eqref{eq:transf}} then, letting $\phi_j (x, w) =\mathcal{I} (S_j
  (w (x)))$, $j = 1, 2$, writing $\delta \phi (x, w) = \phi_1 (x, w) - \phi_2
  (x, w)$ and $\delta r (x, w) = \sqrt{\sum_{i = 1}^n (\delta \phi^i (x,
  w))^2}$, we obtain
  \[ (m^2 - \Delta) (\delta r^2) + 2 \sum_i (| \nabla \delta \phi^i |^2) + f
     \delta r [\hat{n}_{\delta \phi} \cdot (\partial V (\phi_1) - \partial V
     (\phi_2))] = 0. \]
  By Lagrange's theorem there exists a function $g (x)$, $x \in \mathbb{R}^2$,
  taking values in the segment $[\phi_1 (x), \phi_2 (x)] \subset \mathbb{R}^n$
  such that $\hat{n}_{\delta \phi} \cdot (\partial V (\phi_1) - \partial V
  (\phi_2)) = \delta r \partial^2 V (g) (\hat{n}_{\delta \phi},
  \hat{n}_{\delta \phi})$. From this fact we obtain
  \[ (m^2 - \Delta) (\delta r^2) + f (\partial^2 V (g) (\hat{n}_{\delta \phi},
     \hat{n}_{\delta \phi})) \delta r^2 \leqslant 0. \]
  Since $m^2 + \partial^2 V (g) (\hat{n}_{\delta \phi}, \hat{n}_{\delta \phi})
  \geqslant \varepsilon > 0$ , $y \mapsto V (y) + m^2 | y |^2$ being strictly
  convex by our Hypothesis~C, and $\delta r^2 (x)$ is positive and goes to
  zero as $x \rightarrow + \infty$, we have $\phi_1 = \phi_2$ and therefore
  $S_1 (w) = S_2 (w)$.
\end{proof}

\subsection{Weak solutions}

First of all we prove that the map $U$, given by {\eqref{equationU}}, is a $H
- C^1$ function (in the sense of~{\cite{Ustunel2000}}, see
Appendix~\ref{appendix_wienerspace}) for the abstract Wiener space
$(\mathcal{W}, \mathcal{H}, \mu)$.

\begin{proposition}
  \label{proposition_C1H}If $V$ and its derivatives grow at most exponentially
  at infinity, then the map $U$ is a $H - C^1$ function, on the abstract
  Wiener space $(\mathcal{W}, \mathcal{H}, \mu)$ and we have
  \[ \nabla U^i (w) [h] = f (x) \partial_{\phi^i \phi^j}^2 V (\mathcal{I}w)
     \cdot \mathcal{I} (h^j) . \]
  Furthermore $U$ is $C^2$ Fr{\'e}chet differentiable as a map from
  $\mathcal{W}$ into $H$.
\end{proposition}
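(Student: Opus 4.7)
The underlying idea is that $\mathcal{I} = (m^2 - \Delta)^{-1}$ is a smoothing operator: for $h \in \mathcal{H} = L^2(\mathbb{R}^2;\mathbb{R}^n)$ we have $\mathcal{I}h \in H^2(\mathbb{R}^2)$, which in dimension two embeds into bounded continuous functions with $\|\mathcal{I}h\|_\infty \lesssim \|h\|_{\mathcal{H}}$; while for $w \in \mathcal{W}$, by the very definition of $\mathcal{W}$, $\mathcal{I}w$ is a continuous function with at most polynomial growth $(1+|x|)^\eta$. Consequently $\partial V(\mathcal{I}w)$ makes pointwise sense and its exponential growth in $|\mathcal{I}w(x)|$ is offset by the exponential decay of the cut-off $f$. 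This is the mechanism that keeps every object in the right Banach space and allows us to differentiate.

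\textbf{Step 1: Gâteaux derivative along $\mathcal{H}$.} Fix $w \in \mathcal{W}$, $h \in \mathcal{H}$. Apply a first-order Taylor expansion to $\partial_i V$ pointwise around $\mathcal{I}w(x)$:
\begin{equation*}
U^i(w+\epsilon h)(x) - U^i(w)(x) = \epsilon\, f(x)\, \partial^2_{\phi^i \phi^j} V(\mathcal{I}w(x))\, \mathcal{I}(h^j)(x) + R^i_\epsilon(x),
\end{equation*}
with $|R^i_\epsilon(x)| \lesssim \epsilon^2 f(x)\, \sup_{s \in [0,1]} |\partial^2 V(\mathcal{I}w(x) + s\epsilon \mathcal{I}h(x))|\, |\mathcal{I}h(x)|^2$. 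Using exponential growth of $\partial^2 V$, the uniform bound on $\mathcal{I}h$, and the exponential decay of $f$, the factorization $\|R_\epsilon\|_{L^2} \leq \|\sqrt{f}\|_{L^2}\, \|\sqrt{f}\, (\cdots)\|_\infty = O(\epsilon^2)$ identifies the Gâteaux derivative and gives the announced formula.

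\textbf{Step 2: Boundedness of $\nabla U(w)$ on $\mathcal{H}$ and continuity in $w$.} The candidate $\nabla U(w): h \mapsto f\, \partial^2 V(\mathcal{I}w)\, \mathcal{I}h$ satisfies
\begin{equation*}
\|\nabla U(w)[h]\|_{\mathcal{H}} \leq \bigl\| \sqrt{f}\, \partial^2 V(\mathcal{I}w) \bigr\|_{L^2}\, \|\mathcal{I}h\|_\infty \lesssim C(w)\, \|h\|_{\mathcal{H}},
\end{equation*}
where $C(w)$ is finite since $f(x)e^{\alpha(1+|x|)^\eta}$ is integrable whenever $f$ decays exponentially. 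To upgrade from Gâteaux to $H$-$C^1$ we show that $w \mapsto \nabla U(w) \in L(\mathcal{H},\mathcal{H})$ is continuous: if $w_n \to w$ in $\mathcal{W}$ then $\mathcal{I}w_n \to \mathcal{I}w$ locally uniformly with a common polynomial growth envelope; by continuity of $\partial^2 V$ and dominated convergence with dominant $f(x)e^{\alpha\,\sup_n |\mathcal{I}w_n(x)|}$, the operator norm $\|\nabla U(w_n) - \nabla U(w)\|_{L(\mathcal{H},\mathcal{H})}$ tends to zero. This establishes $U \in H\text{-}C^1$ on $(\mathcal{W}, \mathcal{H}, \mu)$.

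\textbf{Step 3: Fréchet $C^2$-differentiability $\mathcal{W} \to \mathcal{H}$.} Repeat the same Taylor analysis allowing the increment to range over $\mathcal{W}$ instead of $\mathcal{H}$. Since $\mathcal{I}:\mathcal{W} \to \mathcal{C}^{\alpha}_{\eta}$ is bounded, for $k, k_1, k_2 \in \mathcal{W}$ the functions $\mathcal{I}k_j$ are continuous with polynomial growth, so $f\,\partial^2 V(\mathcal{I}w)\mathcal{I}k$ and $f\,\partial^3 V(\mathcal{I}w)(\mathcal{I}k_1, \mathcal{I}k_2)$ lie in $\mathcal{H}$ by the same $\sqrt{f}$-splitting. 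Identifying them as $DU(w)$ and $D^2U(w)$, continuity of these as (bi)linear maps in $w$ follows exactly as in Step~2.

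\textbf{Main obstacle.} The only real difficulty is bookkeeping: one must combine the polynomial growth of $\mathcal{I}w$ (which comes from the weighted norm defining $\mathcal{W}$), the induced exponential-of-polynomial growth of $V$ and its derivatives evaluated at $\mathcal{I}w$, and the exponential decay of $f$, in such a way that all integrals and dominated-convergence majorants remain finite and uniform in the parameters that are sent to their limits. None of this is deep, but it is the technical heart of the proposition.
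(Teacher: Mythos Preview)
Your overall strategy is sound, and the physical-space estimates with the $\sqrt{f}$-splitting are exactly the right mechanism. However, there is a genuine gap in Step~2: you target the wrong topology on $\nabla U$. The paper's definition of $H$-$C^1$ (see the appendix) requires that $h \mapsto \nabla U(w+h)$ take values in $H^{\otimes 2}$ and be continuous with respect to its natural \emph{Hilbert--Schmidt} topology, not merely the operator norm $L(\mathcal{H},\mathcal{H})$. You never verify that $\nabla U(w)$ is Hilbert--Schmidt, and your continuity argument explicitly concludes in operator norm. This matters downstream: the regularized Fredholm determinant $\det_2(I+\nabla U)$ appearing in $\Lambda_U$ is only defined for Hilbert--Schmidt perturbations.

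The fix is not hard and stays within your framework. The integral kernel of $\nabla U(w)$ is $f(x)\,\partial^2 V(\mathcal{I}w(x))\,\mathcal{G}(x-y)$, where $\mathcal{G}$ is the Green's function of $(m^2-\Delta)^{-1}$; since $\mathcal{G}\in L^2(\mathbb{R}^2)$ and $f\,\partial^2 V(\mathcal{I}w)\in L^2$ by your $\sqrt{f}$-splitting, the kernel is in $L^2(\mathbb{R}^2\times\mathbb{R}^2)$ and hence $\nabla U(w)$ is Hilbert--Schmidt with $\|\nabla U(w)\|_2 \lesssim \|f\,\partial^2 V(\mathcal{I}w)\|_{L^2}\,\|\mathcal{G}\|_{L^2}$. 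The same factorization controls $\|\nabla U(w+h')-\nabla U(w)\|_2$ by $\|f\,[\partial^2 V(\mathcal{I}(w+h'))-\partial^2 V(\mathcal{I}w)]\|_{L^2}$, which you can then bound by the fundamental theorem of calculus and your dominated-convergence majorant exactly as you do for the operator norm. The paper carries out this Hilbert--Schmidt estimate on the Fourier side instead, but either route works; the point is that the paper singles out precisely this Hilbert--Schmidt continuity as ``the most difficult part,'' and your write-up currently skips over it.
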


\begin{proof}
  The proof is essentially based on the fundamental theorem of calculus and
  the use of the Fourier transform. In order to give an idea of the proof we
  only \ prove the most difficult part, namely that $\nabla U$ is continuous
  with respect to translations by elements of $\mathcal{H}$, where continuity
  is understood with respect to the Hilbert-Schmidt norm for operators acting
  on $\mathcal{H}$.
  
  For fixed $w \in \mathcal{W}$, $h, h' \in \mathcal{H}$ we have, for $i = 1,
  \ldots, n$:
    \begin{multline}
      \nabla U^i (w + h') [h] - \nabla U^i (w) [h] =\\
      = f (x) \int_0^1 \partial_{\phi^i \phi^j \phi^r}^3 V ((m^2 - \Delta)^{-
      1} (w + t h')) \cdot \mathcal{I} (h^j) \cdot \mathcal{I} (h^{\prime r})
      \mathd t,
\label{equationc1h}    
\end{multline}

  where the sum over $j, r = 1, \ldots, n$ is implied. We recall that the
  Hilbert-Schmidt norm of an integral kernel is the integral of the square of
  the absolute value of the kernel. In our case the Fourier transform of the
  integral kernel representing the difference {\eqref{equationc1h}} is given
  by
  \[ \hat{K}^i_j (k, k') = \sum_{r = 1}^n \int_{\mathbb{R}^4} \int^1_0
     \frac{\hat{V}_{t, j r, f}^i (k - k_1)}{(| k_1 - k_2 |^2 + m^2)} \cdot
     \frac{\hat{h}^{\prime r} (k_1 - k_2)}{(| k_2 - k' |^2 + m^2)}
     \frac{\mathd k_1 \mathd k_2}{(2 \pi)^4}, \]
  where $\hat{V}_{t, j k, f}^i (k, l)$ is the Fourier transform of $f
  \partial_{\phi^i \phi^j \phi^k}^3 V (\mathcal{I} (w + t h'))$, $t \in [0,
  1]$. It is simple to prove that
  \[ \| \nabla U (w + h') [\cdot] - \nabla U (w) [\cdot] \|_2^2 \lesssim
     \int_{\mathbb{R}^4} | \hat{K}^i_r (k, k') \hat{K}^r_i (k', k) | \mathd k
     \mathd k'  \]
  \[ \lesssim \| \sqrt{f} e^{\alpha | \mathcal{I}w | + \alpha | \mathcal{I}h'
     |} \|^2_{\infty} \| \sqrt{f} \|^2_{L^2} \|h' \|^2_{\mathcal{H}}, \]
  where $\alpha$ depends on the exponential growth of $\partial^3 V$. Since
  $\| \sqrt{f} e^{\alpha | \mathcal{I}w | + \alpha | \mathcal{I}h' |}
  \|_{\infty}$ is always finite in $\mathcal{W}$ (for $\eta$ positive and
  small enough) we have proved the continuity of the map $h' \longmapsto
  \nabla U (w + h')$ with respect to the Hilbert-Schmidt norm.
\end{proof}

By the notation $\deg_2 (I_{\mathcal{H}} + K)$ we denote the regularized
Fredholm determinant (see Appendix~\ref{appendix_wienerspace} and
also~{\cite{Simon2005}}, Chapter~9) which is well defined when $K$ is a
Hilbert-Schmidt operator. The function $\det_2 (I_{\mathcal{H}} + \cdot)$ is a
smooth functional from the space of Hilbert-Schmidt operators (with its
natural norm) to $\mathbb{R}$ (see~{\cite{Simon2005}} Theorem~9.2 for the
proof of this fact).

We define the measurable map $N : \mathcal{W} \rightarrow \mathbb{N} \cup \{ +
\infty \}$
\[ N (w) \assign \left( \text{number of solutions $y \in \mathcal{W}$ to the
   equation $T (y) = w$} \right), \]
moreover let $M \subset \mathcal{W}$ be the set of zeros of the continuous
function $w \in \mathcal{W} \longmapsto \det_2 (I_{\mathcal{H}} + \nabla U
(w))$.

\begin{theorem}
  \label{theorem_weaksolution2}The function $N$ is greater or equal to $1$ and
  it is $\mu$-almost surely finite. Furthermore the map $T$ is proper.
\end{theorem}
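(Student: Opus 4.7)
The three assertions will be handled separately, and only the last one requires significant new ingredients.

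The bound $N(w) \geqslant 1$ is essentially a restatement of existence. By Theorem~\ref{theorem_existence1}, for every $w \in \mathcal{W}$ the set $\mathcal{F}(w) \subset \mathcal{C}^{2-\tau}_{-\eta}$ is non-empty; picking any $\bar\phi \in \mathcal{F}(w)$, the element $y := (m^2 - \Delta)\bar\phi + w$ lies in $\mathcal{W}$ (since $(m^2-\Delta) \bar\phi \in (1-\Delta)(C^0_\eta) + C^0_\eta \subset \mathcal{W}$) and satisfies $T(y) = (m^2 - \Delta)(\bar\phi + \mathcal{I} w) + f\partial V(\bar\phi + \mathcal{I} w) = w$, so $N(w) \geqslant 1$ pointwise.

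For the properness of $T$, I would fix a compact $K \subset \mathcal{W}$ and a sequence $\{y_n\} \subset T^{-1}(K)$, set $w_n := T(y_n) \in K$ and $\bar\phi_n := \mathcal{I}(y_n) - \mathcal{I}(w_n)$. Then $\bar\phi_n \in \mathcal{F}(w_n)$, and since $\{w_n\}$ lies in a bounded subset of $\mathcal{W}$, the compactness part of Theorem~\ref{theorem_existence1} shows that $\{\bar\phi_n\}$ is relatively compact in $\mathcal{C}^{2-\tau}_{-\eta}$. Passing to a subsequence one has $w_n \to w$ in $\mathcal{W}$ and $\bar\phi_n \to \bar\phi$ in $\mathcal{C}^{2-\tau}_{-\eta}$, and continuity of $(m^2-\Delta)\colon \mathcal{C}^{2-\tau}_{-\eta} \to \mathcal{W}$ transfers this back to $y_n = (m^2-\Delta)\bar\phi_n + w_n \to (m^2-\Delta)\bar\phi + w$ in $\mathcal{W}$. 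Closedness of $T^{-1}(K)$ follows from continuity of $T$, so $T^{-1}(K)$ is compact.

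The $\mu$-almost sure finiteness of $N$ is the only delicate assertion, and it is the place where I expect the real work. The plan is a two-step infinite-dimensional Sard-type argument. First, at any $y \notin M$ one has $\det_2(I_{\mathcal{H}} + \nabla U(y)) \neq 0$, so by the Üstünel-Zakai inverse function theorem for $H-C^1$ perturbations of the identity (recalled in Appendix~\ref{appendix_wienerspace}) $T$ is a local homeomorphism from a neighbourhood of $y$ onto a neighbourhood of $T(y)$; combined with properness, this gives that $T^{-1}(w) \cap M^c$ is finite for every $w \in \mathcal{W}$. Second, one must show $\mu(T(M)) = 0$. This is the main obstacle, since Sard's classical theorem is not directly available on $\mathcal{W}$; I would close this point by invoking the infinite-dimensional Sard-type statement for $H-C^1$ perturbations of the identity from~\cite{Ustunel2000} (see Appendix~\ref{appendix_wienerspace}), whose proof combines the Cameron-Martin-Ramer change of variables formula with regularity properties of the regularized Fredholm determinant to force the image of the critical set to be $\mu$-negligible. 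Putting the two steps together yields that for $\mu$-almost every $w$, $T^{-1}(w) = T^{-1}(w) \cap M^c$ is finite, i.e.\ $N < \infty$ $\mu$-a.s.
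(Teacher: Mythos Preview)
Your proof is essentially the same as the paper's: existence from Theorem~\ref{theorem_existence1} gives $N\geqslant 1$; properness comes from the compactness statement of Theorem~\ref{theorem_existence1} (the paper phrases it as $T^{-1}(K)\subset K+(m^2-\Delta)(\mathcal F(K))$, but your sequential argument is equivalent); and $N<\infty$ a.s.\ follows from the Sard-type result $\mu(T(M))=0$ of Proposition~\ref{proposition_Sard} together with the inverse function theorem at non-critical points and properness.

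One small logical wobble: your intermediate claim that ``$T^{-1}(w)\cap M^c$ is finite for \emph{every} $w\in\mathcal W$'' is stronger than needed and not actually delivered by the argument you give. Local invertibility at each $y\in M^c$ only tells you that such $y$ are isolated in $T^{-1}(w)$; it does not rule out infinitely many regular preimages accumulating at a point of $T^{-1}(w)\cap M$. The paper avoids this by restricting first to $w\notin T(M)$: then $T^{-1}(w)\subset M^c$, so the whole fibre is compact and discrete, hence finite. Your final sentence in fact uses exactly this restriction, so the conclusion is correct---just drop the unnecessary ``for every $w$'' claim and argue directly on the $\mu$-full set $T(M)^c$.
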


\begin{proof}
  We define $\mathcal{T} (\hat{\phi}, w) = \hat{\phi} + U (\hat{\phi} + w)$.
  Obviously we have that $z$ is a solution to the equation $T (z) = w$ if and
  only if $\hat{\phi} = z - w$ is a solution to the equation $\mathcal{T}
  (\hat{\phi}, w) = 0$. On the other hand $\hat{\phi}$ is solution to the
  equation $\mathcal{T} (\hat{\phi}, w) = 0$ if and only if $\bar{\phi}
  =\mathcal{I} (\hat{\phi})$ is a solution to equation~{\eqref{equation2d2}}.
  By Theorem~\ref{theorem_existence1}, equation~{\eqref{equation2d2}} has at
  least one solution for any $w \in \mathcal{W}$ and so $N (w) \geqslant 1$
  for any $w \in \mathcal{W}$.
  
  Let $K$ be a compact set in $\mathcal{W}$ we have that $T^{- 1} (K) \subset
  K + (m^2 - \Delta) (\mathcal{F} (K))$ (where $\mathcal{F}$ is the set valued
  map introduced in Theorem~\ref{theorem_existence1}). Since $K$ is compact,
  by Theorem~\ref{theorem_existence1}, $\mathcal{F} (K)$ is compact in
  $\mathcal{C}^{2 -}_{- \eta}$ which implies that $(m^2 - \Delta) (\mathcal{F}
  (K))$ is compact in $\mathcal{C}^{0 -}_{- \eta}$. Since the immersion
  $\mathcal{C}^{0 -}_{- \eta} \longhookrightarrow \mathcal{W}$ is compact and
  the sum of two compact sets is compact, we obtain that $T$ is a proper map.
  
  Since by Proposition~\ref{proposition_Sard}, $\mu (T (M)) = 0$, for proving
  the theorem it is sufficient to prove that $N (w) < + \infty$ for $w
  \not{\in} T (M)$. If $w \not{\in} T (M)$ then $\tmop{id}_H + \nobracket
  \nabla U (w) |_{\mathcal{H}}$ is a linear invertible operator on
  $\mathcal{H}$ and so $\tmop{id}_{\mathcal{W}} + \nabla U (w)$ is a linear
  invertible operator on $\mathcal{W}$. By the implicit function theorem, we
  have that $T$ is a $C^1$ diffeomorphism between a neighborhood $B_w$ of $w$
  onto $T (B_w)$. This implies that the set $T^{- 1} (w)$ consists of isolated
  points. Since the map $T$ is proper, this means that $T^{- 1} (w)$ is a
  compact set made only by isolated points which implies that $T^{- 1} (w)$ is
  a finite set. \ 
\end{proof}

If $K : \mathcal{W} \rightarrow \mathcal{H}$ is an $H - C^1$ function we
denote by $\delta (K) : \mathcal{W} \rightarrow \mathbb{R}$ the well defined
Skorokhod integral of the map $K$ (see Appendix~\ref{appendix_wienerspace} for
an informal introduction of the concept, Appendix~B of~{\cite{Ustunel2000}}
for a more detailed treatment and Proposition~3.4.1 of~{\cite{Ustunel2000}}
for the proof of the fact that the Skorokhod integral of an $H - C^1$ function
is well defined).

\begin{theorem}
  \label{theorem_weaksolution}A probability measure $\nu$ is a weak solution
  to equation~{\eqref{eq:transf}} if and only if it is absolutely continuous
  with respect to $\mu$ and there exists a non-negative function $A \in
  L^{\infty} (\mu)$ such that $\sum_{y \in T^{- 1} (w)} A (y) = 1$ for
  $\mu$-almost all $w \in \mathcal{W}$ and $\frac{\mathd \nu}{\mathd \mu} = A
  | \Lambda_U |$ with
  \[ \Lambda_U (w) \assign \det_2 (I + \nabla U (w)) \exp \left( - \delta (U)
     (w) - \frac{1}{2} \|U (w) \|_{\mathcal{H}}^2 \right) . \]
\end{theorem}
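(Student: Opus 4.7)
The plan is to invoke the Ramer--{\"U}st{\"u}nel--Zakai change-of-variable formula for $H-C^1$ perturbations of the identity; all of its hypotheses are already in place, since $U$ is $H-C^1$ by Proposition~\ref{proposition_C1H}, and by Theorem~\ref{theorem_weaksolution2} the map $T=\tmop{id}_{\mathcal W}+U$ is proper with $\mu$-almost surely finite multiplicity $N$. The key analytic input, recorded in Appendix~\ref{appendix_wienerspace}, is the area formula: for every pair of bounded Borel functions $F,h$ on $\mathcal W$,
\begin{equation}
\int_{\mathcal W} h(w)\,F(T(w))\,|\Lambda_U(w)|\,\mathd\mu(w)\;=\;\int_{\mathcal W} F(y)\Bigl(\sum_{z\in T^{-1}(y)} h(z)\Bigr)\mathd\mu(y). \label{eq:areaplan}
\end{equation}
Specialising to $h\equiv 1$ and $F=\mathbf 1_{T(M)}$ one recovers the Sard statement $\mu(T(M))=0$ (cf.\ Proposition~\ref{proposition_Sard}), so for $\mu$-a.e.\ $y\in\mathcal W$ every preimage of $y$ lies in the regular set $\mathcal W\setminus M$.

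For the ``if'' direction I would substitute $\mathd\nu=A|\Lambda_U|\mathd\mu$ into~\eqref{eq:areaplan} with $h=A$: for any bounded Borel $F$,
\[
T_{\asterisk}\nu(F)=\int F(T(w))A(w)|\Lambda_U(w)|\mathd\mu(w)=\int F(y)\Bigl(\sum_{z\in T^{-1}(y)}A(z)\Bigr)\mathd\mu(y)=\int F\,\mathd\mu,
\]
using the assumed fiber-sum identity $\sum_z A(z)=1$. Hence $T_{\asterisk}\nu=\mu$, i.e.~$\nu$ is a weak solution.

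For the ``only if'' direction I would start from a weak solution $\nu$ and disintegrate it along $T$: since $\mathcal W$ is Polish there exist regular conditional probabilities $\{\nu_y\}_{y\in\mathcal W}$, each supported on the finite fiber $T^{-1}(y)$, with $\nu=\int\nu_y\,\mathd\mu(y)$. Because $\mu(T(M))=0$ and $T^{-1}(y)\subset\mathcal W\setminus M$ for $\mu$-a.e.~$y$, $\nu$ charges no point of $M$. On $\mathcal W\setminus M$ the linearization $I_{\mathcal H}+\nabla U$ is invertible, and the $H-C^1$ inverse function theorem produces around each regular point a Borel neighbourhood on which $T$ is a Borel isomorphism onto its image; a countable exhaustion together with a Kuratowski--Ryll-Nardzewski type measurable selection should yield a Borel partition $\mathcal W\setminus M=\bigsqcup_k W_k$ with $T|_{W_k}\colon W_k\to T(W_k)$ a Borel isomorphism with Borel inverse. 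Restricting~\eqref{eq:areaplan} to $W_k$ gives $T_{\asterisk}(|\Lambda_U|\mathbf 1_{W_k}\mu)=\mu|_{T(W_k)}$, and since $T_{\asterisk}(\nu|_{W_k})\leqslant T_{\asterisk}\nu=\mu$ its Radon--Nikodym derivative $A_k\assign\mathd T_{\asterisk}(\nu|_{W_k})/\mathd\mu$ lies in $L^\infty(\mu)$ with $A_k\leqslant 1$. Pulling back by the Borel inverse of $T|_{W_k}$ then yields $\mathd\nu|_{W_k}(w)=A_k(T(w))|\Lambda_U(w)|\mathd\mu(w)$, and setting $A(w)\assign A_k(T(w))$ for $w\in W_k$ and $A\assign 0$ on $M$ produces the claimed representation $\mathd\nu=A|\Lambda_U|\mathd\mu$ with $\|A\|_\infty\leqslant 1$ and $\sum_{z\in T^{-1}(y)}A(z)=\sum_{k:y\in T(W_k)}A_k(y)=\mathd T_{\asterisk}\nu/\mathd\mu(y)=1$ for $\mu$-a.e.~$y$.

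The main obstacle I anticipate is producing the measurable partition $\mathcal W\setminus M=\bigsqcup_k W_k$: the $H-C^1$ inverse function theorem is only local and assembling its patches into a countable Borel partition with globally Borel inverses requires a careful selection argument in the infinite-dimensional Wiener setting. Once this step is secured, absolute continuity of $\nu$ with respect to $\mu$, the boundedness of $A$, and the fiber-sum identity all fall out mechanically from~\eqref{eq:areaplan} and the uniqueness of the disintegration.
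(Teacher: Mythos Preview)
Your proposal is correct and follows the same strategy as the paper: both directions rest on the area formula~\eqref{eq:areaplan} (the paper's Theorem~\ref{theorem_Lambda}), Proposition~\ref{proposition_Sard}, and a measurable partition of $\mathcal W\setminus T^{-1}(T(M))$ into pieces on which $T$ is injective, after which absolute continuity, the bound $A\leqslant 1$, and the fiber-sum identity follow mechanically.

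The one point worth noting concerns the obstacle you flag. Rather than patching local inverses from the $H$-$C^1$ inverse function theorem, the paper first stratifies by multiplicity: on $\mathbb W^n\assign T^{-1}(\{N=n\})\setminus T^{-1}(T(M))$ every fiber has exactly $n$ points (finite by Theorem~\ref{theorem_weaksolution2}), and one applies Kuratowski--Ryll-Nardzewski $n$ times to the closed-valued map $y\mapsto T^{-1}(y)\cap\mathbb W^n$ to peel off the preimages one by one, yielding the Borel partition $\mathbb W^n=\bigsqcup_{i=1}^n\mathbb W^n_i$ directly without any local-to-global patching. This sidesteps your main obstacle entirely. The disintegration you invoke is also unnecessary: once $\nu\ll\mu$ is established piecewise, and once one notes from~\eqref{eq:areaplan} with $h=1$, $F=\mathbf 1_{T(M)}$ that $|\Lambda_U|\mathd\mu$ gives zero mass to $T^{-1}(T(M))$, one simply sets $A=(\mathd\nu/\mathd\mu)/|\Lambda_U|$ on $\{|\Lambda_U|\neq 0\}$ and reads off the fiber-sum identity from the area formula.
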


\begin{proof}
  Recall that, by Proposition~\ref{proposition_Sard}, $\mu (T (M)) = 0$. This
  implies that for any weak solution $\nu$ we have $\nu (T^{- 1} (T (M))) =
  0$. Letting $\mathbb{W}^n \assign T^{- 1} (N = n) \cap T^{- 1} (T (M))$ we
  deduce that $\nu (\cup_n \mathbb{W}^n) = \sum_n \nu (\mathbb{W}^n) = 1$ and
  if we prove that $\nu$ is absolutely continuous with respect to $\mu$ on
  each $\mathbb{W}^n$ we have proved that $\nu$ is absolutely continuous with
  respect to $\mu$.
  
  Using $n$~times iteratively the Kuratowski-Ryll-Nardzewski selection
  theorem~(see Theorem~18.13 in~{\cite{Aliprantis2006}}) due to the fact that
  $T^{- 1} (x) \cap \mathbb{W}^n$ is composed by zero or $n$ elements, we can
  decompose the set $\mathbb{W}^n$ into $n$ measurable subsets
  $\mathbb{W}^n_1, \ldots, \mathbb{W}^n_n$ where the map $\nobracket T
  |_{\mathbb{W}^n_i}$ is invertible. This means that if $\Omega \subset
  \mathbb{W}^n$ we have $\nu (\Omega \cap \mathbb{W}^n_i) \leqslant \mu (T
  (\Omega))$. On the other hand we have that $\mu (T (\Omega)) = \int_{\Omega
  \cap \mathbb{W}^n_i} | \Lambda_U | \mathd \mu$. This implies that if $\mu
  (\Omega) = 0$ then $\nu (\Omega \cap \mathbb{W}^n_i) \leqslant \mu (T
  (\Omega)) = \int_{\Omega \cap \mathbb{W}^n_i} | \Lambda_U | \mathd \mu = 0$.
  As a consequence $\nu (\Omega) = \sum_i \nu (\Omega \cap \mathbb{W}^n_i) =
  0$ and $\nu$ is absolutely continuous with respect to $\mu$.
  
  Theorem~\ref{theorem_Lambda} below implies that for any measurable positive
  functions $f, A$ we have
  \begin{equation}
    \int f \circ T (w) A (w) | \Lambda_U (w) | \mathd \mu = \int f (w) \left(
    \sum_{y \in T^{- 1} (w)} A (y) \right) \mathd \mu . \label{eq:th50}
  \end{equation}
  Taking $f =\mathbb{I}_{T (M)}$ and $A = 1$ we deduce that $\int_{T^{- 1} (T
  (M))} | \Lambda_U | \mathd \mu = \mu (T (M)) = 0$. Therefore we can suppose
  that there exists a specific non-negative function $A$ such that $\mathd \nu
  = A | \Lambda_U | \mathd \mu$ and since $T_{\asterisk} (\nu) = \mu^{}$ we
  must have
  \[ \int f (w) \mathd \mu = \int f \circ T (w) \mathd \nu = \int f \circ T
     (w) A (w) | \Lambda_U (w) | \mathd \mu, \]
  for any bounded measurable function $f$. Comparing this
  with~{\eqref{eq:th50}} we deduce that $\sum_{y \in T^{- 1} (w)} A (y) = 1$
  for ($\mu$-)almost all $w \in \mathcal{W}.$
  
  On the other hand, using again Theorem~\ref{theorem_Lambda} it is simple to
  prove that if $\mathd \nu = A | \Lambda_U | \mathd \mu$ and $\sum_{y \in
  T^{- 1} (w)} A (y) = 1$ then $\nu$ is a weak solution to
  equation~{\eqref{eq:transf}}.
\end{proof}

\begin{remark}
  \label{remark_weaksolution1}If $S$ is any strong solution to
  equation~{\eqref{eq:transf}} then $\nu = S_{\asterisk} \mu$ is a weak
  solution. Furthermore it is simple to prove that the weak solutions of the
  form $S_{\asterisk} \mu$, where $S$ is some strong solution to
  {\eqref{equation2d1}}, are the extremes of the convex set $\mathfrak{W}
  \assign \left\{ \text{$\nu$ satisfying $T_{\asterisk} \nu = \mu$} \right\}$.
  Using a lemma (precisely Lemma~\ref{lemma_reduction1}) that we shall prove
  below, it follows from this that $\mathfrak{W}$ is weakly compact and thus,
  by Krein--Milman theorem (see Theorem~3.21 in~{\cite{Rudin1973}}), any
  measure $\nu \in \mathfrak{W}$ can be written as convex combination of
  measures induced by strong solutions. \ 
\end{remark}

\begin{corollary}
  \label{corollary_uniqueness2}If $V$ satisfies Hypothesis~C there exists only
  one weak solution $\nu$ to equation~{\eqref{eq:transf}} and we have that
  $\frac{\mathd \nu}{\mathd \mu} = | \Lambda_U |$ and $\nu = S_{\asterisk}
  \mu$ (where $S$ is the only strong solution to equation~{\eqref{eq:transf}}
  and $\Lambda_U$ is as in Theorem~\ref{theorem_weaksolution}).
\end{corollary}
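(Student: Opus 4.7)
\begin{proof*}{Proof plan.}
The plan is to combine the uniqueness of the strong solution from Corollary~\ref{corollary_uniqueness1} with the {\"U}st{\"u}nel--Zakai-type representation of weak solutions provided by Theorem~\ref{theorem_weaksolution}, reading everything through the change-of-variables identity~\eqref{eq:th50}.

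First I would observe that the comparison argument in the proof of Corollary~\ref{corollary_uniqueness1} is pointwise in $w \in \mathcal{W}$: for any $w$ such that $\mathcal{I}\xi(w)$ has the regularity established in Theorem~\ref{theorem_existence1}, the strict convexity of $y \mapsto V(y) + m^2 |y|^2$ forces any two classical solutions of~\eqref{equation2d2} with the required decay to coincide. Combined with the existence result of Theorem~\ref{theorem_existence2}, this yields $N(w) = 1$ for $\mu$-almost every $w$, so that $T^{-1}(w) = \{ S(w) \}$ $\mu$-a.s., with $S$ the unique strong solution.

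Next, let $\nu$ be an arbitrary weak solution. By Theorem~\ref{theorem_weaksolution}, $\mathd\nu/\mathd\mu = A |\Lambda_U|$ for some non-negative $A \in L^{\infty}(\mu)$ satisfying $\sum_{y \in T^{-1}(w)} A(y) = 1$ for $\mu$-a.a.\ $w$; with $N \equiv 1$ this simply reads $A(S(w)) = 1$ for $\mu$-a.a.\ $w$. I would then apply~\eqref{eq:th50} in two specialisations. Taking the outer function there equal to $1$ and the inner function equal to $f \cdot A$ for an arbitrary bounded measurable $f$ gives
\[ \int f \, \mathd\nu = \int f(w) A(w) | \Lambda_U(w) | \mathd\mu(w) = \int f(S(w)) A(S(w)) \mathd\mu(w) = \int f \, \mathd(S_{\asterisk} \mu), \]
so $\nu = S_{\asterisk} \mu$, which proves uniqueness of the weak solution. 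Specialising instead to outer function $1$ and inner function $\mathbf{1}_E$ for a measurable $E \subset \mathcal{W}$ yields $\int_E | \Lambda_U | \mathd\mu = \mu(S^{-1}(E)) = \nu(E)$, hence $\mathd\nu/\mathd\mu = |\Lambda_U|$.

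The main obstacle is the first step: passing from strong-solution uniqueness (what Corollary~\ref{corollary_uniqueness1} literally states) to the pointwise statement $N(w) = 1$ needed to collapse the constraint in Theorem~\ref{theorem_weaksolution}. This is essentially free once one notices that the maximum-principle computation in Corollary~\ref{corollary_uniqueness1} uses no measurability or selection, only the decay at infinity of elements of $\mathcal{F}(w)$ secured by Theorem~\ref{theorem_existence1}; but it is worth flagging explicitly, since without it one is only entitled to say that measurable sections of $T$ are a.s.\ unique, not that the fibres of $T$ are a.s.\ singletons.
\end{proof*}
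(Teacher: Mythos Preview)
Your proposal is correct and follows essentially the same route as the paper: use the pointwise comparison argument behind Corollary~\ref{corollary_uniqueness1} to get $N\equiv 1$ (equivalently, $T$ is invertible), then feed this into the representation of Theorem~\ref{theorem_weaksolution} so that the constraint $\sum_{y\in T^{-1}(w)} A(y)=1$ forces $A\circ S=1$ and hence $\nu=S_{\ast}\mu$ with density $|\Lambda_U|$. The paper's version is terser---it cites Remark~\ref{remark_weaksolution1} for $\nu=S_{\ast}\mu$ rather than invoking~\eqref{eq:th50} directly---but the logical content is the same, and your explicit flag that Corollary~\ref{corollary_uniqueness1} really delivers the fibrewise statement $\#T^{-1}(w)=1$ (not just uniqueness of measurable sections) is a useful clarification.
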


\begin{proof}
  If $V$ satisfies Hypothesis~C, by Corollary~\ref{corollary_uniqueness1}, $T$
  is invertible and by Theorem~\ref{theorem_weaksolution} we have that $\nu$
  is unique and $\frac{\mathd \nu}{\mathd \mu} = | \Lambda_U | .$ By
  Remark~\ref{remark_weaksolution1} we have that $S_{\asterisk} \mu$, where
  $S$ is the unique strong solution of~{\eqref{eq:transf}}, is the unique weak
  solution to the same equation.
\end{proof}

\section{Elliptic stochastic quantization}\label{sec:dim-red}

In this section we want to prove the dimensional reduction of
equation~{\eqref{equation2d1}}, namely that the law in 0 of at least a (weak)
solution to equation~{\eqref{eq:transf}}, has an explicit expression in terms
of the potential $V$.

\

The original idea of Parisi and Sourlas~{\cite{parisi_random_1979}} for
proving this relations was to transform expectations involving the solution
$\phi$ to equation~{\eqref{equation2d1}} (taken at the origin) into an
integral of the form
\begin{equation}
  \mathbb{E} [h (\phi (0))] = \int h (\mathcal{I}w (0)) \det (I + \nabla U
  (\mathcal{I}w)) e^{- \langle U (\mathcal{I}w), \mathcal{I}w \rangle -
  \frac{1}{2} \| U (\mathcal{I}w) \|^2_{\mathcal{H}}} \mathd \mu (w),
  \label{equationdimensional1}
\end{equation}
where $U$ is defined in equation {\eqref{equationU}}. Then one can express the
weight on the right hand side of~{\eqref{equationdimensional1}} as the
exponential $e^{\int V (\Phi) \mathd x \mathd \theta \mathd \bar{\theta}}$
involving the superfield
\[ \Phi (x, \theta, \bar{\theta}) = \varphi (x) + \psi (x) \theta + \bar{\psi}
   (x) \bar{\theta} + \omega (x) \theta \bar{\theta}, \]
(see Section~\ref{sec:super} and Section~\ref{section:supersymmetry} for a
more precise description) constructed from the real Gaussian free field
$\varphi$ over $\mathbb{R}^2$, two additional fermionic (i.e. anticommuting)
fields $\psi, \bar{\psi}$ and the complex Gaussian field $\omega$. Introducing
these new anticommuting fields it can be argued that the
integral~{\eqref{equationdimensional1}} admits an invariance property with
respect to supersymmetric transformations. This implies the dimensional
reduction, i.e.
\begin{equation}
  \eqref{equationdimensional1} = \int h (\varphi (0)) e^{- \int V
  (\Phi) \mathd x \mathd \theta \mathd \bar{\theta}} \mathcal{D} \Phi =
  \int_{\mathbb{R}^n} h (y) \mathd \kappa (y) . \label{equationdimensional2}
\end{equation}
Unfortunately this reasoning is only heuristic since the integral on the right
hand side of~{\eqref{equationdimensional1}} is not well defined without a
spatial cut-off, given that both the determinant and the exponential are
infinite.

For polynomial potentials $V$, a rigorous version of this reasoning was
proposed by Klein~et~al.~{\cite{Klein1984}}. More precisely Klein~et~al. give
a rigorous proof of the relationship~{\eqref{equationdimensional2}}
introducing a suitable modification due to the presence of the spatial cut-off
$f$, but they do not discuss the relationship between
equation~{\eqref{equation2d1}} and the
reduction~{\eqref{equationdimensional1}}.

\

In this section we do not want to propose a rigorous version of the previous
reasoning which will be given in Section \ref{sec:super}. Here we only assume
that the conclusion of Parisi and Sourlas' formal argument holds for a general
enough class of potentials. More precisely we assume Theorem
\ref{theorem_main1} below.

For technical reasons, which will become clear in the following (see
Remark~\ref{remark_hypotheses} below), in order to state Theorem
\ref{theorem_main1}, we need first to introduce an additional class of
potentials.

{\descriptionparagraphs{\item[Hypothesis~$V_{\lambda}$.] We have the
decomposition
\[ V = V_B + \lambda V_U, \qquad V_U (y) = \sum_{i = 1}^n (y^i)^4, \qquad y =
   (y^1, \ldots, y^n) \in \mathbb{R}^n, \]
with $\lambda > 0$ and $V_B$ a bounded function with all bounded derivatives
on $\mathbb{R}^n$. }}

In Section~\ref{sec:super} below we will exploit a supersymmetric argument,
described briefly at the beginning of this section, for the family of
potentials $V$ satisfying the more restrictive Hypothesis~$V_{\lambda}$ to
prove that in this case a cut-off version of
equation~{\eqref{equationdimensional2}}.

\begin{theorem}
  \label{theorem_main1}Under the Hypotheses~CO and~$V_{\lambda}$ if $h$ is any
  real measurable bounded function defined on $\mathbb{R}^n$ then we have
  \[ \int_{\mathcal{W}} h (\mathcal{I}w (0)) \Lambda_U (w) \Upsilon_f
     (\mathcal{I}w) \mathd \mu (w) = Z_f \int_{\mathbb{R}^n} h (y) \mathd
     \kappa (y), \]
  where $Z_f = \int_{\mathcal{W}} \Lambda_U (w) \Upsilon_f (\mathcal{I}w)
  \mathd \mu (w) > 0$.
\end{theorem}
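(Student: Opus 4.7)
The plan is to implement rigorously the Parisi--Sourlas supersymmetric rewriting outlined just after the statement. First, I would recognize that the weight
\[ \Lambda_U(w) = {\det}_2(I + \nabla U(w))\, e^{-\delta(U)(w) - \tfrac{1}{2}\|U(w)\|_{\mathcal{H}}^2} \]
is precisely the Radon--Nikodym factor which, after multiplication by the bosonic Gaussian measure $\mathd\mu(w)$, collapses onto the law of a superfield
\[ \Phi(x,\theta,\bar\theta) = \varphi(x) + \psi(x)\theta + \bar\psi(x)\bar\theta + \omega(x)\theta\bar\theta, \]
where $\varphi = \mathcal{I}w$ is the bosonic free field, $\omega$ is an auxiliary bosonic Gaussian introduced via a Hubbard--Stratonovich transformation to generate $e^{-\frac{1}{2}\|U(w)\|^2_{\mathcal{H}}}$, and $\psi, \bar\psi$ form a pair of anticommuting Gaussian fields whose Berezin integral is designed to reproduce ${\det}_2(I + \nabla U)\, e^{-\delta(U)}$. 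The factor $\Upsilon_f(\mathcal{I}w) = e^{4\int f'(x) V(\varphi(x))\, \mathd x}$ then absorbs exactly the correction arising from the fact that the spatial cut-off $f$ breaks the full $\mathrm{OSp}(2|2)$-type rotational supersymmetry of the associated superfield action.

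Second, using these algebraic manipulations, I would identify the left hand side with a superspace integral of the form
\[ \int h(\Phi(0,0,0))\, \exp\!\Bigl(-\!\int V(\Phi(x,\theta,\bar\theta))\, f(x)\,\mathd x\,\mathd\theta\,\mathd\bar\theta\Bigr)\, \mathd\mathcal{G}(\Phi), \]
where $\mathcal{G}$ denotes the free supersymmetric Gaussian measure on superfields. This rewriting is precisely the content of Theorem \ref{th:pol-eq} of Section \ref{section:supersymmetry}, which I would take as the central supersymmetric input; it is the place where Hypothesis $V_\lambda$ enters through the Gaussian fluctuations around the quartic confinement $V_U(y) = \sum_i (y^i)^4$ and through the perturbative control of the bounded remainder $V_B$ and its derivatives.

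Third, I would invoke the Parisi--Sourlas dimensional reduction for the supersymmetric free model, as proven in Section \ref{section:supersymmetry}: the invariance of $\mathcal{G}$ and of the ultralocal interaction $V(\Phi)$ under the superrotations mixing $(x_1, x_2)$ with $(\theta, \bar\theta)$ forces any expectation of a functional depending only on $\Phi(0,0,0) = \varphi(0)$ to localize on constant superfields $\Phi \equiv y$, producing the zero-dimensional Gibbs integral against $\mathd\kappa$. Concretely one obtains
\[ \int h(\Phi(0,0,0))\, e^{-\int V(\Phi) f\,\mathd x\,\mathd\theta\,\mathd\bar\theta}\, \mathd\mathcal{G}(\Phi) = C_f \int_{\mathbb{R}^n} h(y)\, \mathd\kappa(y) \]
for some constant $C_f$ depending on $f$. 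Specializing to $h \equiv 1$ identifies $C_f = Z_f$ and yields the asserted identity.

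The main obstacle is the rigorous construction of the superfield measure $\mathcal{G}$ in infinite dimensions, together with the fermionic/Grassmann calculus used in the first step and the identification of the Berezin integral over $\psi, \bar\psi$ with ${\det}_2(I + \nabla U(w))$. The Hilbert--Schmidt character of $\nabla U$ established in Proposition \ref{proposition_C1H} is precisely what is needed for the regularized determinant and hence for the fermionic Gaussian measure to be well defined; the quartic part $V_U$ from Hypothesis $V_\lambda$ provides the confinement needed for integrability of the superspace exponential, while the cut-off condition $b^2 < 4m^2$ from Hypothesis CO is what ensures that the weight $\Upsilon_f$ is controllable in $L^1(\mu)$ so that $Z_f$ is finite and positive. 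The heavy lifting is therefore pushed into Theorem \ref{th:pol-eq} and the supersymmetric machinery of Section \ref{section:supersymmetry}; the present theorem is essentially a repackaging of that input via the definition of $\Lambda_U$ and the transfer principle from the abstract Wiener space formulation to the superfield formulation.
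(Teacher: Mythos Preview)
Your proposal contains the same gap that the paper explicitly identifies as the flaw in the original Klein--Landau--Perez argument (see the Remark at the end of Section~\ref{section:supersymmetry}). You treat the passage from Theorem~\ref{th:pol-eq} to the exponential identity as ``essentially a repackaging,'' but Theorem~\ref{th:pol-eq} only gives
\[
  \langle p(\varphi(0))\,(Q_\chi(P,f))^n\rangle_\chi = \langle p(\varphi(0))\,(-4\pi P(\varphi(0)))^n\rangle_\chi
\]
for \emph{polynomials} $p,P$, i.e.\ supersymmetry at the level of each Taylor coefficient. It does \emph{not} give you the identity for the exponential $\langle h(\varphi(0))\exp(Q_\chi(V,f))\rangle_\chi$, because the law of the superfield $\Phi$ is not itself supersymmetric (Remark~\ref{remark_supersymmetry}: only $\langle\Phi\Phi\rangle$ is invariant, not $\langle\Phi\bar\Phi\rangle$), and the series $\sum_n\frac{1}{n!}\langle h(\varphi(0))(Q_\chi(V,f))^n\rangle_\chi$ does not converge absolutely for the potentials of Hypothesis~$V_\lambda$. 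So you cannot simply sum the polynomial identities; this is exactly the step the paper refuses to take and calls a ``subtle gap.''

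The actual proof in Section~\ref{sec:super} bypasses this by a three-stage analyticity bootstrap. First, for bounded $V$ with $\|\partial^2 V\|_\infty<m^2/2$ one shows (Lemmas~\ref{lemma_chi1}--\ref{lemma_chi2}) that $t\mapsto G_\chi(t)=\langle h(\varphi(0))\exp(tQ_\chi(V,f))\rangle_\chi$ is real analytic on $[-1,1]$, so matching Taylor coefficients via Theorem~\ref{th:pol-eq} suffices; then one removes the regularization $\chi\to0$ (Proposition~\ref{proposition_chi}). Second, for $V$ satisfying both~$V_\lambda$ and~C, one uses Lemma~\ref{lemma_serie1} (majorant estimates on $\partial_t^k\bar\phi_t$) to prove analyticity of $t\mapsto\mathbb{E}[h(\phi_t(0))\Upsilon_f(\phi_t)]$ directly from the PDE, then approximates by trigonometric polynomials (Proposition~\ref{proposition_serie3}). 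Third, for $V$ satisfying only~$V_\lambda$, one proves $\Lambda_U\in L^p(\mu)$ via Nelson's argument (Lemmas~\ref{theorem_det}--\ref{lemma_Lp}; this is where Hypothesis~CO and the precise form of~$V_\lambda$ are genuinely used, see Remark~\ref{remark_hypotheses}), which yields analyticity of $G_{0,\lambda}(t)$ in $t$ and completes the proof. Your sketch also omits the trace-class regularization $\chi>0$ needed to even write the fermionic determinant as an honest Fredholm determinant before taking $\chi\to0$.
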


\begin{proof}
  The proof is given in Section~\ref{sec:super} below.
\end{proof}

In the rest of this section we want to show how to derive from
Theorem~\ref{theorem_main1} the dimensional reduction result for the solution
to the elliptic SPDE. More precisely the goal of the rest of this section is
to prove the following theorem.

\begin{theorem}
  \label{theorem_reduction2}Under the Hypotheses~CO and~QC there exists (at
  least) one weak solution $\nu$ to equation~{\eqref{equation2d1}} such that
  for any measurable bounded function $h$ defined on $\mathbb{R}^n$ we have
  \begin{equation}
    \begin{array}{lll}
      \int_{\mathcal{W}} h (\mathcal{I}w (0)) \Upsilon_f (\mathcal{I}w) \mathd
      \nu (w) & = & \int_{\mathcal{W}} h (\mathcal{I}w (0)) \Upsilon_f
      (\mathcal{I}w) \Lambda_U (w) \mathd \mu (w)\\
      & = & Z_f \int_{\mathbb{R}^n} h (y) \mathd \kappa (y)
      \label{equationgaussian1}
    \end{array}
  \end{equation}
  where $Z_f = \int_{\mathcal{W}} \Upsilon_f (\mathcal{I}w) \mathd \nu (w) >
  0$.
\end{theorem}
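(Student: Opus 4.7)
The strategy is to approximate the QC potential $V$ by a sequence $V^{(k)}$ satisfying the stronger Hypothesis~$V_{\lambda}$, apply Theorem~\ref{theorem_main1} to each approximation, and pass to a weak limit. For the approximation I take $V^{(k)}(y) \assign \chi_k(y)\bigl(V(y) - k^{-1} V_U(y)\bigr) + k^{-1} V_U(y)$, where $\chi_k$ is a smooth cutoff equal to $1$ on $B(0,k)$ and vanishing outside $B(0,2k)$. By construction $\chi_k(V - k^{-1}V_U)$ is bounded with bounded derivatives, so $V^{(k)}$ satisfies Hypothesis~$V_{\lambda}$ with $\lambda = k^{-1}$; a short check shows that QC holds uniformly in $k$ with a common control function $H$ (up to a harmless constant absorbing the quartic contribution), so Lemma~\ref{lemma_bound} and Theorem~\ref{theorem_existence1} furnish a priori bounds on the associated solutions independent of $k$.

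For each $k$, Theorem~\ref{theorem_main1} yields the identity
\[ \int_{\mathcal{W}} h(\mathcal{I}w(0))\, \Upsilon_f^{(k)}(\mathcal{I}w)\, \Lambda_{U^{(k)}}(w)\, \mathd\mu(w) = Z_f^{(k)} \int_{\mathbb{R}^n} h\, \mathd\kappa^{(k)}, \]
while Theorem~\ref{theorem_existence2} together with Theorem~\ref{theorem_weaksolution} furnishes a weak solution $\nu^{(k)}$ of~\eqref{equation2d1} for $V^{(k)}$ of the form $\mathd\nu^{(k)}/\mathd\mu = A^{(k)} |\Lambda_{U^{(k)}}|$. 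The first equality of the theorem concerns the pushforward under $w \mapsto \mathcal{I}w(0)$, weighted by $\Upsilon_f^{(k)}$: invoking Theorem~\ref{theorem_Lambda}, its validity reduces to the requirement that $(\mathcal{I}\cdot(0))_\asterisk(\Upsilon_f^{(k)} A^{(k)} |\Lambda_{U^{(k)}}|\, \mathd\mu) = Z_f^{(k)} \kappa^{(k)}$. I would enforce this by choosing $A^{(k)}$ via measurable selection over the preimages of $T^{(k)}$ so that each branch's contribution matches the signed weight of $\Lambda_{U^{(k)}}$, leveraging the extremal/convex structure of the weak-solution set noted in Remark~\ref{remark_weaksolution1} and a Krein--Milman decomposition into pushforwards of strong solutions.

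To pass to the limit, the uniform bounds from Lemma~\ref{lemma_bound} make $\{\nu^{(k)}\}$ tight in $\mathcal{W}$. Along a subsequence $\nu^{(k_j)} \rightharpoonup \nu$. Continuity of $T$ together with the convergence $U^{(k)} \to U$ on the tight support (via Proposition~\ref{proposition_C1H}) gives $T_\asterisk \nu = \mu$, so $\nu$ is a weak solution to~\eqref{equation2d1} for the original $V$. Finally $\Upsilon_f^{(k)}(\mathcal{I}w) \to \Upsilon_f(\mathcal{I}w)$ by dominated convergence (using the exponential decay of $f'$ and uniform exponential growth of $V^{(k)}$), $\kappa^{(k)} \to \kappa$ in total variation, and $Z_f^{(k)} \to Z_f$; combining these yields~\eqref{equationgaussian1}.

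The main obstacle lies in justifying convergence of the middle integral $\int h \Upsilon_f^{(k)} \Lambda_{U^{(k)}}\, \mathd\mu$ as $k \to \infty$. The factor $\Lambda_{U^{(k)}}$ involves both the regularized Fredholm determinant $\det_2(I + \nabla U^{(k)})$ and the Skorokhod integral $\delta(U^{(k)})$; neither is manifestly uniformly integrable, and in the non-convex QC regime $\Lambda_{U^{(k)}}$ may show delicate sign cancellations. Securing the limit requires Hilbert--Schmidt convergence $\nabla U^{(k)} \to \nabla U$ on the support (quantified using Proposition~\ref{proposition_C1H}) together with uniform $L^p(\mu)$ bounds on $\Lambda_{U^{(k)}}$, which can be derived by duality from the identity $T^{(k)}_\asterisk(|\Lambda_{U^{(k)}}|\, \mathd\mu) = N^{(k)}\, \mathd\mu$ combined with the properness and degree-theoretic estimates of Theorem~\ref{theorem_weaksolution2}. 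The auxiliary difficulty of constructing $A^{(k)}$ consistent with the sign pattern of $\Lambda_{U^{(k)}}$ reduces to the convex case covered by Corollary~\ref{corollary_uniqueness1} when $V^{(k)}$ happens to be convex, but requires the selection argument sketched above in general.
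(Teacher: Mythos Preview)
Your overall strategy---approximate $V$ by potentials satisfying Hypothesis~$V_\lambda$, use Theorem~\ref{theorem_main1} at each stage, then pass to a weak limit in the associated weak solutions---is the same as the paper's. However, two of the steps you sketch do not go through, and the paper handles them quite differently.

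\medskip
\textbf{The construction of $\nu^{(k)}$ at the $V_\lambda$ level.} Theorem~\ref{theorem_main1} gives only the equality ``middle $=$ third'' in~\eqref{equationgaussian1}; it says nothing about the existence of a weak solution $\nu^{(k)}$ for which ``first $=$ middle''. Your proposal to obtain this by choosing $A^{(k)}$ via measurable selection so that branch contributions match the \emph{signed} weight $\Lambda_{U^{(k)}}$ is not a construction: Theorem~\ref{theorem_weaksolution} forces $A\geqslant 0$ and $\sum_{y\in T^{-1}(w)}A(y)=1$, and there is no reason a nonnegative $A$ with this normalization should reproduce the signed density $\Lambda_{U^{(k)}}$ on the functionals $h(\mathcal{I}w(0))\Upsilon_f$. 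The paper does not try to match signs at all. Instead (Proposition~\ref{proposition_reduction1}) it defines a positive linear functional $\hat L$ simultaneously on the two cones $\{g\circ T\}$ and $\{h(\mathcal{I}w(0))\Upsilon_f\}$, checks consistency using Lemma~\ref{theorem_lambda2} and Theorem~\ref{theorem_main1}, and then invokes a Riesz--Kantorovich extension theorem to produce a positive $L$ on $L^1(|\Lambda_{U^{(k)}}|\,\mathd\mu)$, hence a weak solution $\nu^{(k)}$ with the required property. This extension argument is the missing idea in your sketch.

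\medskip
\textbf{Uniform control of $\Lambda_{U^{(k)}}$.} You identify the convergence of $\int h\,\Upsilon_f^{(k)}\Lambda_{U^{(k)}}\,\mathd\mu$ as the main obstacle, and propose to resolve it by uniform $L^p(\mu)$ bounds on $\Lambda_{U^{(k)}}$ coming from $T^{(k)}_\asterisk(|\Lambda_{U^{(k)}}|\,\mathd\mu)=N^{(k)}\,\mathd\mu$ and degree theory. This does not work: the change-of-variables identity (Theorem~\ref{theorem_Lambda}) gives at best $\int|\Lambda_{U^{(k)}}|\,\mathd\mu=\int N^{(k)}\,\mathd\mu$, which is only an $L^1$ bound and not uniform as your quartic coefficient $1/k\to 0$. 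The paper is explicit that $L^p$ bounds on $\Lambda_U$ are \emph{not} available beyond Hypothesis~$V_\lambda$ (see the discussion preceding Lemma~\ref{lemma_reduction1}), and this is precisely why it never passes to the limit in the $\Lambda_U$-integral. Instead it establishes ``first $=$ third'' for bounded $V_B$ by letting $\lambda_i\to 0$ in the \emph{weak solutions} $\hat\nu_i$ (via Lemmas~\ref{lemma_reduction1}--\ref{lemma_reduction2}), and then passes from bounded to general QC by a second approximation (Lemma~\ref{lemma_reduction3}). The middle expression in~\eqref{equationgaussian1} is carried along only where $\Lambda_U\in L^p$ is already known; for general QC it is not established by a limiting argument in $\Lambda_{U^{(k)}}$.

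\medskip
A minor further point: your single-step approximation $V^{(k)}=\chi_k(V-k^{-1}V_U)+k^{-1}V_U$ does not obviously satisfy QC with a common $H$, since the cutoff $\chi_k$ introduces large gradients of uncontrolled sign near $\partial B(0,2k)$. The paper's two-stage approximation (first $\lambda\to 0$ to reach bounded $V_B$, then the truncation-plus-mollification of Lemma~\ref{lemma_reduction3}) is designed to keep a uniform $H$ at each stage.
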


This result is very important since it implies Theorem~\ref{th:dim-red-qc} and
Theorem~\ref{th:dim-red-1}.

\begin{proof*}{Proof of Theorem~\ref{th:dim-red-qc} and
Theorem~\ref{th:dim-red-1}}
  The relation {\eqref{equationgaussian1}} can be expressed in the following
  more probabilistic way. Suppose that on a given probability space
  $(\Omega^{\nu}, \mathbb{P}^{\nu})$, the map $\phi : \mathbb{R}^2 \times
  \Omega^{\nu} \rightarrow \mathbb{R}^n$ gives the weak solution $\nu$ of
  Theorem~\ref{theorem_reduction2}, namely that the law of the
  $\mathcal{W}$-random variable $(m^2 - \Delta) \phi (\cdot, \omega)$ is the
  measure $\nu$. Then we have that, for any real measurable bounded function
  defined on $\mathbb{R}^n$,
  \[ \mathbb{E}_{\mathbb{P}^{\nu}} \left[ h (\phi (0)) \frac{\Upsilon_f
     (\phi)}{Z_f} \right] = \int_{\mathcal{W}} h (y) \mathd \kappa (y), \]
  namely we have proven Theorem~\ref{th:dim-red-qc}. If we assume Hypothesis~C
  then by Corollary~\ref{corollary_uniqueness1},
  Corollary~\ref{corollary_uniqueness2} and Theorem~\ref{theorem_reduction2}
  there exists a unique strong solution satisfying~{\eqref{equationgaussian1}}
  and we have proven as a consequence Theorem~\ref{th:dim-red-1}.
\end{proof*}

The proof of Theorem~\ref{theorem_reduction2} will be given in several steps
of wider degree of generality with respect to the hypothesis on the potential
$V$. Before we prove an auxiliary result.

\begin{lemma}
  \label{theorem_lambda2}Under the Hypothesis~$V_{\lambda}$ we have that
  \begin{equation}
    \int_{\mathcal{W}} g \circ T (w) \Lambda_U (w) \mathd \mu (w) =
    \int_{\mathcal{W}} g (w) \mathd \mu (w) . \label{equationreduction5}
  \end{equation}
  where $g$ is any bounded measurable function defined on $\mathcal{W}$.
\end{lemma}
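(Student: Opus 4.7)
The plan is to reduce the lemma to an application of the {\"U}st{\"u}nel--Zakai change of variables formula (Theorem \ref{theorem_Lambda}) combined with a homotopy argument in the nonlinearity. Taking the counting function $A \equiv 1$ in that theorem, one obtains
\[ \int_{\mathcal{W}} g \circ T(w)\, |\Lambda_U(w)|\, \mathd\mu(w) = \int_{\mathcal{W}} g(w)\, N(w)\, \mathd\mu(w), \]
so what is really needed is the identity \emph{without} the absolute value and with $N$ replaced by~$1$, i.e.\ that the signed preimage count $w \mapsto \sum_{y \in T^{-1}(w)} \mathrm{sign}\,\Lambda_U(y)$ equals $1$ for $\mu$-almost every $w \in \mathcal{W}\setminus T(M)$.

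The cleanest way to establish this is through continuous deformation. Introduce $T_t \assign I_{\mathcal{W}} + tU$ for $t \in [0,1]$ and note that, by Proposition \ref{proposition_C1H} and Theorems \ref{theorem_existence1}--\ref{theorem_weaksolution2} (together with Remark \ref{remark_bound}, which guarantees uniformity of the bounds along the rescaling $V \leadsto tV$), each $T_t$ is an $H-C^1$ proper map with well-defined and $\mu$-integrable $\Lambda_{tU}$. Define
\[ F(t) \assign \int_{\mathcal{W}} g \circ T_t(w)\, \Lambda_{tU}(w)\, \mathd\mu(w). \]
Trivially $F(0) = \int g\, \mathd\mu$, since $T_0 = \mathrm{id}$ and $\Lambda_0 \equiv 1$, so it suffices to show $F'(t) \equiv 0$. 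Differentiating in $t$, the logarithmic derivative of $\Lambda_{tU}$ produces three contributions --- from $\det_2(I + t\nabla U)$, from the Skorokhod integral $-\delta(tU)$, and from $-\tfrac{t^2}{2}\|U\|_{\mathcal{H}}^2$ --- while the derivative of $g \circ T_t$ contributes $\langle \nabla g(T_t(w)), U(w)\rangle$. A single Gaussian integration by parts, namely the defining adjoint relation of the Skorokhod integral recalled in Appendix \ref{appendix_wienerspace}, moves the $\nabla g$ factor over onto the remaining terms and yields an exact cancellation; this is precisely the infinite-dimensional analogue of the elementary identity $\tfrac{d}{dt} \int_{\mathbb{R}^n} g(x + tU(x))\det(I + t\nabla U(x))\,\mathd x = 0$.

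The main obstacle is therefore purely technical: justifying differentiation under the integral and the integration by parts uniformly in $t \in [0,1]$. Here Hypothesis $V_\lambda$ is essential: the decomposition $V = V_B + \lambda V_U$ with $V_B$ and all its derivatives bounded, and $V_U$ a positive coercive quartic, gives exponential moment estimates for $\|U\|_{\mathcal{H}}$, for the Hilbert--Schmidt norm of $\nabla U$, and for their higher derivatives --- with constants independent of $t \in [0,1]$, by Remark \ref{remark_bound}. These estimates deliver the dominated convergence needed to make the formal computation rigorous, showing that $F$ is $C^1$ on $[0,1]$ with $F' \equiv 0$, whence $F(1) = F(0) = \int g\, \mathd\mu$, which is \eqref{equationreduction5}.
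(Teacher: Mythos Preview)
Your approach and the paper's are the same idea at two different levels of packaging: both exploit the homotopy $T_t=I+tU$, $t\in[0,1]$, connecting $T$ to the identity. The paper does this by invoking the \"Ust\"unel--Zakai degree theory directly: it checks that $T$ satisfies Hypotheses DEG1--DEG3 of Appendix~\ref{appendix_wienerspace} (DEG1 from Proposition~\ref{proposition_C1H}, DEG2 from Theorem~\ref{theorem_weaksolution2}, DEG3 from the uniform bounds of Lemma~\ref{lemma_bound}/Remark~\ref{remark_bound}), and then Theorems~\ref{theorem_wienerspace3}--\ref{theorem_wienerspace4} give $\int g\circ T\,\Lambda_U\,d\mu=\mathrm{DEG}(T)\int g\,d\mu$ with $\mathrm{DEG}(T)=\mathrm{DEG}(\mathrm{id})=1$. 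Your proposal tries to reprove this homotopy invariance infinitesimally, by showing $F'(t)\equiv0$.

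That strategy is legitimate, but your sketch understates the work at the crucial step. The cancellation is not ``a single Gaussian integration by parts.'' Expanding $F'(t)$ yields four contributions: $\langle(\nabla g)(T_t),U\rangle$, the $t$-derivative of $\log\det_2(I+t\nabla U)$ (equal to $-t\,\mathrm{tr}\bigl[(I+t\nabla U)^{-1}(\nabla U)^2\bigr]$, valid off the critical set), $-\delta(U)$, and $-t\|U\|_{\mathcal H}^2$. When you move $\delta(U)$ onto $g(T_t)\Lambda_{tU}$ via the adjoint relation, the Malliavin chain rule gives $\nabla(g\circ T_t)=(\nabla g)(T_t)\,(I+t\nabla U)$, so an extra cross-term $t\langle(\nabla g)(T_t),\nabla U\cdot U\rangle$ appears, and $\nabla\Lambda_{tU}$ produces further second-order terms in $\nabla U$ and derivatives of $\delta(U)$. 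Verifying that everything cancels is exactly the computation underlying the degree-invariance theorem you are trying to avoid; it can be done, but it is several careful identities, not one. Two further points you should address: (i) $g$ is only bounded measurable, so $\nabla g$ is undefined --- you must first argue for smooth cylindrical $g$ and then pass to the limit, as the paper remarks; (ii) the $\mu$-integrability of $\Lambda_{tU}$ under Hypothesis~$V_\lambda$ is itself the content of the later Lemma~\ref{lemma_Lp}, not of the results you cite from Section~\ref{sec:solutions}.
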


\begin{proof}
  Using the methods of Section~\ref{sec:solutions} we can prove that the map
  $T$ satisfies Hypotheses DEG1, DEG2, DEG3 of
  Appendix~\ref{appendix_wienerspace}. The claim then follows from
  Theorem~\ref{theorem_wienerspace3} and Theorem~\ref{theorem_wienerspace4}
  below, where we can choose the function $g$ to be any bounded continuous
  function since $\Lambda_U \in L^1 (\mu)$ under Hypothesis~$V_{\lambda}$.
\end{proof}

\begin{proposition}
  \label{proposition_reduction1}Under the Hypotheses~CO and~$V_{\lambda}$
  there exists at least one weak solution $\nu$ to
  equation~{\eqref{eq:transf}} satisfying~{\eqref{equationgaussian1}}.
\end{proposition}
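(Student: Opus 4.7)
The plan is to promote the ``signed weak solution'' $\Lambda_U\,\mathd\mu$, already available from Section~\ref{sec:solutions}, into a genuine probability measure realizing the dimensional reduction~\eqref{equationgaussian1}. The starting observation is Lemma~\ref{theorem_lambda2}: taking $g \equiv 1$ there yields $\int_{\mathcal{W}} \Lambda_U\,\mathd\mu = 1$, and the full statement reads $T_*(\Lambda_U\,\mu) = \mu$. So, provided $\Lambda_U \geq 0$ $\mu$-a.e.\ under Hypothesis~$V_\lambda$, the prescription $\mathd \nu \assign \Lambda_U\,\mathd \mu$ defines a probability measure which is automatically a weak solution in the sense of Definition~\ref{definition_solution}; Theorem~\ref{theorem_main1} then directly produces $\int h(\mathcal{I}w(0))\Upsilon_f(\mathcal{I}w)\,\mathd\nu = Z_f \int h\,\mathd\kappa$, and $Z_f > 0$ follows from the choice $h \equiv 1$ together with the fact that $\kappa$ is a probability measure.

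The crux is therefore the positivity of $\Lambda_U = \det_2(I + \nabla U)\, \exp\bigl(-\delta(U) - \tfrac12 \|U\|_{\mathcal{H}}^2\bigr)$. The exponential factor is manifestly positive, so the sign of $\Lambda_U$ is that of the regularized Fredholm determinant. By Proposition~\ref{proposition_C1H}, $\nabla U(w)$ acts on $\mathcal{H}$ as $h \mapsto f\, \partial^2 V(\mathcal{I}w)\, \mathcal{I}h$, which is spectrally equivalent to the self-adjoint operator $K(w) \assign \mathcal{I}^{1/2}\, (f\, \partial^2 V(\mathcal{I}w))\, \mathcal{I}^{1/2}$. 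Under Hypothesis~$V_\lambda$ the Hessian decomposes as $\partial^2 V = \partial^2 V_B + \lambda \partial^2 V_U$ with $\partial^2 V_U \geq 0$ (since $V_U$ is convex quartic) and $\partial^2 V_B$ bounded; this yields a lower bound on $K(w)$ depending on $\|\partial^2 V_B\|_\infty$ and the profiles of $f$ and of $\mathcal{I}$. Hypothesis~CO, and specifically the condition $b^2 < 4m^2$ combined with the exponential decay of $f$, is exactly what is needed to push this lower bound above $-I$, equivalently $\det_2(I + \nabla U(w)) > 0$, for $\mu$-a.e.~$w$.

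The main obstacle is establishing this uniform positivity despite the sign-indefinite $\partial^2 V_B$: the quantitative condition $b^2 < 4m^2$ in Hypothesis~CO is designed precisely for this balance. Should direct positivity prove insufficient in full generality, the fallback is to use Theorem~\ref{theorem_existence2} to pick a measurable family of strong solutions, form a weak solution $\nu \assign S_\ast \mu$ as in Remark~\ref{remark_weaksolution1}, and match $\int G\,\mathd\nu$ with $\int G\, \Lambda_U\,\mathd\mu$ for $G(w) = h(\mathcal{I}w(0))\Upsilon_f(\mathcal{I}w)$ by a fiber decomposition via Theorem~\ref{theorem_Lambda} together with the algebraic identity $\sum_{y \in T^{-1}(w)} \mathrm{sgn}(\Lambda_U(y)) = 1$ (itself a direct consequence of Lemma~\ref{theorem_lambda2}). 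In either route the proof ends by a single direct invocation of Theorem~\ref{theorem_main1}.
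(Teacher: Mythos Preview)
Your primary route has a genuine gap: under Hypothesis~$V_\lambda$ alone the nonnegativity of $\Lambda_U$ is simply false in general. Hypothesis~$V_\lambda$ only asks that $V_B$ have \emph{bounded} second derivatives, not small ones; in particular $\partial^2 V_B$ may satisfy $\partial^2 V_B \leq -M I$ with $M$ arbitrarily large. The self-adjoint operator $K(w)=\mathcal{I}^{1/2}\bigl(f\,\partial^2 V(\mathcal{I}w)\bigr)\mathcal{I}^{1/2}$ then has eigenvalues bounded below only by $-M/m^2$, and as soon as $M>m^2$ some eigenvalues can drop below $-1$, making $\det_2(I+\nabla U)$ change sign. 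The condition $b^2<4m^2$ in Hypothesis~CO plays no role here: it constrains $\Delta f/f$ and is used exclusively in the $L^p$ estimate of Lemma~\ref{lemma_inequality3} (see Remark~\ref{remark_hypotheses}); it says nothing about the spectrum of $\nabla U$. Positivity of $\Lambda_U$ is precisely what distinguishes Hypothesis~C (where it does hold, cf.\ Corollary~\ref{corollary_uniqueness2}) from the present weaker setting.

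Your fallback does not close the gap either. The degree identity $\sum_{y\in T^{-1}(w)}\tmop{sgn}\Lambda_U(y)=1$ is correct (it is Theorem~\ref{theorem_wienerspace4}), but it is useful only for integrands that are constant along fibers of $T$, i.e.\ of the form $g\circ T$. The function $G(w)=h(\mathcal{I}w(0))\Upsilon_f(\mathcal{I}w)$ is \emph{not} of this form, so a fiber decomposition via Theorem~\ref{theorem_Lambda} cannot by itself identify $\int G\,\mathd\nu$ with $\int G\,\Lambda_U\,\mathd\mu$ for a given strong-solution push-forward $\nu=S_*\mu$. Concretely, $\int G\,\mathd\nu=\int G\,\mathbb{I}_{\{w=S(T(w))\}}|\Lambda_U|\,\mathd\mu$ whereas $\int G\,\Lambda_U\,\mathd\mu=\int G\,\tmop{sgn}(\Lambda_U)|\Lambda_U|\,\mathd\mu$, and there is no reason these agree for non-$T$-invariant $G$.

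The paper circumvents both obstructions by working at the level of linear functionals rather than measures: one defines $\hat L$ on the span of the two function classes $\{g\circ T\}$ and $\{h(\mathcal{I}w(0))\Upsilon_f(\mathcal{I}w)\}$, checks consistency on the overlap via Lemma~\ref{theorem_lambda2}, positivity on the second class via Theorem~\ref{theorem_main1}, and domination $|\hat L(f)|\leq\|f\Lambda_U\|_1$; a Riesz-type positive extension (Aliprantis--Border, Theorem~8.31) then produces a positive functional on all of $L^1(|\Lambda_U|\mathd\mu)$, hence a bona fide weak solution with the desired property. The signed nature of $\Lambda_U$ never has to be resolved pointwise.
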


\begin{proof*}{Proof}
  Let $\mathcal{V} \subset L^1 (| \Lambda_U | \mathd \mu)$ be the span of the
  two linear spaces $\mathcal{V}_1, \mathcal{V}_2 \subset L^1 (| \Lambda_U |
  \mathd \mu)$ where $\mathcal{V}_1$ is composed by the functions of the form
  $g \circ T$, where $g$ is a measurable function defined on $\mathcal{W}$
  such that $g \circ T \in L^1 (| \Lambda_U | \mathd \mu)$, and
  $\mathcal{V}_2$ is formed by the functions of the form $h (\mathcal{I}w (0))
  \Upsilon_f (\mathcal{I}w)$, where $h$ is a measurable function defined on
  $\mathbb{R}^n$ such that $h (\mathcal{I}w (0)) \Upsilon_f (\mathcal{I}w) \in
  L^1 (| \Lambda_U | \mathd \mu)$. Note that $\mathcal{V}_1$ and
  $\mathcal{V}_2$, and so $\mathcal{V} = \tmop{span} \{ \mathcal{V}_1,
  \mathcal{V}_2 \}$, are non-void since, under the Hypotheses $V_{\lambda}$
  and CO (see Lemma~\ref{lemma_Lp} below), $\Lambda_U \in L^p (\mu)$ and so $g
  \circ T, h (\mathcal{I}w (0)) \Upsilon_f (\mathcal{I}w) \in L^1 (\mu)$
  whenever $g, h$ are bounded. Define a positive functional $\hat{L} :
  \mathcal{V} \rightarrow \mathbb{R}$ by extending via linearity the relations
  \begin{eqnarray}
    \hat{L} (h (\mathcal{I}w (0)) \Upsilon_f (\mathcal{I}w)) & \assign & \int
    h (\mathcal{I}w (0)) \Upsilon_f (\mathcal{I}w) \Lambda_U (w) \mathd \mu
    (w) \label{equationreduction2} \\
    \hat{L} (g \circ T) & \assign & \int g (w) \mathd \mu (w) .
    \label{equationreduction1} 
  \end{eqnarray}
  to the whole $\mathcal{V}$. We have to verify that $\hat{L}$ is well defined
  and positive on $\mathcal{V} .$ Suppose that there exist functions $g$ and
  $h$ such that $g \circ T = h (\mathcal{I}w (0)) \Upsilon_f (\mathcal{I}w)$
  then, by Lemma~\ref{theorem_lambda2}, we have
  \begin{equation}
    \int_{\mathcal{W}} g \mathd \mu = \int_{\mathcal{W}} g \circ T \Lambda_U
    \mathd \mu = \int_{\mathcal{W}} h (\mathcal{I}w (0)) \Upsilon_f
    (\mathcal{I}w) \Lambda_U \mathd \mu . \label{equationreduction4}
  \end{equation}
  This implies that $\hat{L}$ is well defined on $\mathcal{V}_1 \cap
  \mathcal{V}_2$ and so on $\mathcal{V}$. Obviously $\hat{L}$ is positive on
  $\mathcal{V}_2$, and, by Theorem~\ref{theorem_main1} we have
  \begin{equation}
    \begin{array}{c}
      \hat{L} (h (\mathcal{I}w (0)) \Upsilon_f (\mathcal{I}w)) =
      \int_{\mathcal{W}} h (\mathcal{I}w (0)) \Upsilon_f (\mathcal{I}w)
      \Lambda_U \mathd \mu =\\
      = Z_f \int_{\mathbb{R}^n} h (y) \mathd \kappa (y) \geqslant 0
    \end{array} \label{equationreduction3}
  \end{equation}
  whenever $h$, and so $h (\mathcal{I}w (0)) \Upsilon_f (\mathcal{I}w)$, is
  positive. This means that $\hat{L}$ is positive.
  
  For any $f = g \circ T \in \mathcal{V}_1$, by Theorem~\ref{theorem_Lambda}
  and Theorem~\ref{theorem_weaksolution2}, we have
  
  \begin{multline*}
    | \hat{L} (f) | = \left| \int_{\mathcal{W}} g (w) \mathd \mu (w) \right|
    \leqslant \int_{\mathcal{W}} | g (w) | N (w) \mathd \mu (w) =
    \int_{\mathcal{W}} | g \circ T (w) \Lambda_U (w) | \mathd \mu (w) = \| f
    \Lambda_U \|_1 .
  \end{multline*}
  
  On the other hand, if $f \in \mathcal{V}_2$, by
  relation~{\eqref{equationreduction2}}, $\hat{L} (f) \leqslant \| f \Lambda_U
  \|_1$. These two inequalities and the positivity of $\hat{L}$ imply, by
  Theorem~8.31 of {\cite{Aliprantis2006}} on the extension of positive
  functionals on Riesz spaces, that there exists at least one positive
  continuous linear functional $L$ on $L^1 (| \Lambda_U | \mathd \mu)$, such
  that $L (f) = \hat{L} (f)$ for any $f \in \mathcal{V}$. The functional $L$
  defines the weak solution to equation {\eqref{eq:transf}} we are looking
  for. Indeed, since $L$ is a continuous positive functional on $L^1 (|
  \Lambda_U | \mathd \mu)$ there exists a measurable positive function $B \in
  L^{\infty} (| \Lambda_U | \mathd \mu) \subset L^{\infty} (\mathd \mu)$ such
  that $L (f) = \int_{\mathcal{W}} f (w) B (w) | \Lambda_U (w) | \mathd \mu
  (w)$. Since $\Lambda_U \in L^p$ by Lemma~\ref{lemma_Lp} below, we have $1
  \in \mathcal{V}_1$ and so $L (1) = \int_{\mathcal{W}} 1 \mathd \mu (w) = 1$.
  This implies, since the function $B$ is positive, that the $\sigma$-finite
  measure $\mathd \nu = B | \Lambda_U | \mathd \mu$ is a probability measure.
  Furthermore, since $\mathcal{V}_1$ contains all the functions $g \circ T$,
  where $g$ is measurable and bounded, equality {\eqref{equationreduction1}}
  implies that $T_{\asterisk} (\nu) = \mu$. This means that $\nu$ is a weak
  solution to equation~{\eqref{eq:transf}}. Finally since $\mathcal{V}_2$
  contains all the functions of the form $h (\mathcal{I}w (0)) \Upsilon_f
  (\mathcal{I}w)$ where $h$ is measurable and bounded on $\mathbb{R}^n$ the
  measure $\nu$ satisfies the thesis of the theorem.
\end{proof*}

Unfortunately we cannot repeat this reasoning for general potentials
satisfying the weaker Hypothesis~QC since both Theorem~\ref{theorem_main1} and
Proposition~\ref{proposition_reduction1} exploit an $L^p$ bound on $\Lambda_U$
(see Lemma~\ref{lemma_Lp} below) that cannot be obtained for more general
potentials. Thus the idea is to generalize
equation~{\eqref{equationgaussian1}} without passing from
equation~{\eqref{equationdimensional2}}. Indeed it is possible to approximate
any potential $V$ satisfying Hypothesis~QC by a sequence of potentials
$(V_i)_i$ satisfying Hypothesis~$V_{\lambda}$ in such a way that the sequence
of weak solutions $(\nu_i)_i$ associated with $(V_i)_i$ converges (weakly) to
a weak solution associated with the potential $V$ (see
Lemma~\ref{lemma_reduction1}, Lemma~\ref{lemma_reduction2} and
Lemma~\ref{lemma_reduction3} below). Since
equation~{\eqref{equationgaussian1}} involves only integrals with respect to a
weak solution to equation~{\eqref{equation2d1}}, we are able to prove that
equation~{\eqref{equationgaussian1}} holds for any potential $V$ approximating
its weak solution $\nu$ by the sequence $(\nu_i)_i$ satisfying
equation~{\eqref{equationgaussian1}}. \

Let us now set up the approximation argument, starting with a series of
lemmas about convergence of weak solutions.

\begin{lemma}
  \label{lemma_reduction1}Let $\{ T_i \}_{i \in \mathbb{N}}$ be a sequence of
  continuous maps on $\mathcal{W}$ such that for any compact $K \subset
  \mathcal{W}$ we have that $\bigcup_{i \in \mathbb{N}} T_i^{- 1} (K)$ is
  pre-compact and there exists a continuous map $T$ such that $T_i \rightarrow
  T$ uniformly on the compact subsets of \ $\mathcal{W}$. Let $\mathbb{M}_i$
  be a set of probability measures on $\mathcal{W}$ defined as follows
  \[ \mathbb{M}_i \assign \left\{ \text{$\nu$ probability measure on
     $\mathcal{W}$ such that $T_{j, \asterisk} (\nu) = \mu$ for some $j
     \geqslant i$} \right\} . \]
  Then $\mathbb{M} \assign \bigcap_{i \in \mathbb{N}} \bar{\mathbb{M}}_i$,
  where the closure is taken with respect to the weak topology on the set of
  probability measures on $\mathcal{W}$, is non-void and
  \[ \mathbb{M} \subset \left\{ \text{$\nu$ probability measure on
     $\mathcal{W}$ such that $T_{\asterisk} (\nu) = \mu$} \right\} . \]
\end{lemma}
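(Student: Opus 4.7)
The plan is to combine Prokhorov's theorem with the uniform-on-compacts convergence $T_i \to T$. The starting observation is that the family $\mathbb{M}_1$ is tight: since $\mathcal{W}$ is Polish (a separable Banach space), Ulam's theorem gives tightness of $\mu$, so for every $\epsilon > 0$ there is a compact $K \subset \mathcal{W}$ with $\mu(K) \geqslant 1 - \epsilon$. By the hypothesis on $\{ T_i \}$, the set $K' \assign \overline{\bigcup_{j} T_j^{-1}(K)}$ is compact. For any $\nu \in \mathbb{M}_1$, taking $j$ to be an index with $T_{j, \asterisk} \nu = \mu$, we have $\nu(K') \geqslant \nu(T_j^{-1}(K)) = \mu(K) \geqslant 1 - \epsilon$, uniformly in $\nu$. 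Prokhorov's theorem therefore makes $\bar{\mathbb{M}}_1$, and hence every $\bar{\mathbb{M}}_i \subset \bar{\mathbb{M}}_1$, weakly compact. Since $\mathbb{M}_{i+1} \subset \mathbb{M}_i$ by the very definition of $\mathbb{M}_i$, and each $\mathbb{M}_i$ is non-void in the intended applications (e.g.\ by Proposition~\ref{proposition_reduction1} when the $T_i$ come from potentials satisfying Hypothesis~$V_{\lambda}$), the nested family $\{ \bar{\mathbb{M}}_i \}_i$ consists of non-empty weakly compact sets, and the finite intersection property yields $\mathbb{M} = \bigcap_i \bar{\mathbb{M}}_i \neq \emptyset$.

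To identify any $\nu \in \mathbb{M}$ as a measure with $T_{\asterisk} \nu = \mu$, I exploit metrizability of the weak topology on Borel probability measures on the Polish space $\mathcal{W}$ (e.g.\ via the L\'evy--Prokhorov metric). From $\nu \in \bar{\mathbb{M}}_n$ for every $n$, a diagonal extraction produces a sequence $\nu_n \in \mathbb{M}_n$ with $\nu_n \to \nu$ weakly, together with corresponding indices $j(n) \geqslant n$ satisfying $T_{j(n), \asterisk} \nu_n = \mu$; in particular $j(n) \to \infty$. For every bounded continuous $f \colon \mathcal{W} \to \mathbb{R}$ we then have the identity
\begin{equation*}
\int_{\mathcal{W}} f \, \mathd \mu = \int_{\mathcal{W}} f \circ T_{j(n)} \, \mathd \nu_n,
\end{equation*}
so it suffices to show that the right-hand side converges to $\int_{\mathcal{W}} f \circ T \, \mathd \nu$, after which the equality $T_{\asterisk} \nu = \mu$ follows since bounded continuous functions determine Borel measures on a Polish space.

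The main obstacle is precisely this last convergence, because the integrand depends on $n$. I would split
\begin{equation*}
\int f \circ T_{j(n)} \, \mathd \nu_n - \int f \circ T \, \mathd \nu = \int (f \circ T_{j(n)} - f \circ T) \, \mathd \nu_n + \int f \circ T \, \mathd (\nu_n - \nu).
\end{equation*}
The second summand vanishes in the limit by weak convergence $\nu_n \to \nu$ and bounded continuity of $f \circ T$. For the first, the tightness already established for $\mathbb{M}_1$ supplies, for every $\epsilon > 0$, a compact $K \subset \mathcal{W}$ with $\sup_n \nu_n(\mathcal{W} \setminus K) \leqslant \epsilon$. On $K$ the convergence $T_{j(n)} \to T$ is uniform, so $T(K)$ together with $T_{j(n)}(K)$ for $n$ large lies in a common compact subset of $\mathcal{W}$ on which $f$ is uniformly continuous; hence $f \circ T_{j(n)} \to f \circ T$ uniformly on $K$. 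Consequently the first summand is dominated by $\sup_{K} | f \circ T_{j(n)} - f \circ T | + 2 \| f \|_\infty \epsilon$, which tends to $2 \| f \|_\infty \epsilon$ as $n \to \infty$; letting $\epsilon \to 0$ then concludes. This joint use of tightness and uniform-on-compacts convergence is the only delicate step, and is exactly where the two non-trivial hypotheses of the lemma---pre-compactness of $\bigcup_j T_j^{-1}(K)$ and uniform-on-compacts convergence of $T_j$---are both essential.
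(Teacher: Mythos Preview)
Your proof is correct and follows essentially the same strategy as the paper: tightness via the pre-compactness hypothesis and Prokhorov, then extraction of a sequence $\nu_n \to \nu$ with $T_{j(n),*}\nu_n=\mu$ and $j(n)\to\infty$, and finally a splitting of $\int f\circ T_{j(n)}\,\mathd\nu_n - \int f\circ T\,\mathd\nu$ controlled on a common compact set. The only cosmetic difference is that the paper tests against bounded $C^1$ functions with bounded gradient (using a Lipschitz bound $\|\nabla g\|_\infty\epsilon$), whereas you test against general bounded continuous $f$ and invoke uniform continuity of $f$ on the compact set containing $T(K)$ and $T_{j(n)}(K)$; both choices work. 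Your explicit remark that non-emptiness of each $\mathbb{M}_i$ is an implicit hypothesis (supplied in the applications) is a point the paper glosses over.
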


\begin{proof}
  First of all we prove that $\mathbb{M}_i$ is pre-compact for any $i \in
  \mathbb{N}$. This is equivalent to proving that the measures in
  $\mathbb{M}_i$ are tight. Let $\tilde{K}$ be a compact set such that $\mu
  (\tilde{K}) \geqslant 1 - \epsilon$ for a fixed $0 < \epsilon < 1$, then $K
  \assign \overline{\bigcup_{i \in \mathbb{N}} T^{- 1}_i (\tilde{K})}$ is a
  compact set in $\mathcal{W}$. Consider $\nu \in \mathbb{M}_j$ then there
  exists $T_k$ such that $T_{k, \asterisk} \nu = \mu$. This implies
  \[ \nu (K) \geqslant \nu \left( \bigcup_i T^{- 1}_i (\tilde{K}) \right)
     \geqslant \nu (T^{- 1}_k (\tilde{K})) \geqslant \mu (\tilde{K}) \geqslant
     1 - \epsilon, \]
  for any $k \in \mathbb{N}$. Since $\mathbb{M}_i$ are pre-compact,
  $\bar{\mathbb{M}}_i$ are compact and $\bar{\mathbb{M}}_i \subset
  \bar{\mathbb{M}}_j$ if $i \geqslant j$. This implies that $\mathbb{M}$ is
  non-void. If we consider a $\nu \in \mathbb{M}$ there exists a sequence
  $\nu_k$ weakly converging to $\nu$, for $k \rightarrow + \infty$, such that
  $T_{i_k, \asterisk} (\nu_k) = \mu$ and $i_k \rightarrow + \infty$. Proving
  that $T_{\asterisk} (\nu) = \mu$ is equivalent to prove that for any $C^1$
  bounded function $g$ with bounded derivatives defined on $\mathcal{W}$
  taking values in $\mathbb{R}$ we have $\int g \circ T \mathd \nu = \int g
  \mathd \mu$. Let $K$ the compact set defined before, then there exists a $k
  \in \mathbb{N}$ such that $\sup_{w \in K} \| T_{i_k} (w) - T (w) \|
  \leqslant \epsilon$ and that $\left| \int_{\mathcal{W}} g \circ T \mathd \nu
  - \int_{\mathcal{W}} g \circ T \mathd \nu_k \right| \leqslant \epsilon$, for
  the arbitrary $0 < \epsilon < 1$. This implies that
  \begin{eqnarray}
    \left| \int_{\mathcal{W}} g \circ T \mathd \nu - \int_{\mathcal{W}} g
    \mathd \mu \right| & \leqslant & \left| \int_{\mathcal{W}} g \circ T
    \mathd \nu - \int_{\mathcal{W}} g \circ T \mathd \nu_i \right| +
    \nonumber\\
    &  & + \left| \int_K (g \circ T - g \circ T_{i_k}) \mathd \nu_k \right| +
    \nonumber\\
    &  & + \| g \|_{\infty} \epsilon + \left| \int_{\mathcal{W}} g \circ
    T_{i_k} \mathd \nu_k - \int_{\mathcal{W}} g \mathd \mu \right| \nonumber\\
    & \leqslant & \epsilon + \| \nabla g \|_{\infty} \epsilon + \| g
    \|_{\infty} \epsilon . \nonumber
  \end{eqnarray}
  Since $\epsilon$ is arbitrary, from this it follows that
  $\int_{\mathcal{W}} g \circ T \mathd \nu = \int_{\mathcal{W}} g \mathd \mu$.
  
\end{proof}

\begin{remark}
  \label{remark_reduction1}The proof of Lemma~\ref{lemma_reduction1} proves
  also that given any sequence of $\nu_i \in \mathbb{M}_i$ there exists a
  subsequence converging weakly to $\nu \in \mathbb{M}$.
\end{remark}

\begin{remark}
  \label{remark_Vi}In the following we consider a sequence of functions $V_i$
  satisfying Hypothesis~QC. To each function $V_i$ of the sequence it is
  possible to associate a map $U_i : \mathcal{W} \rightarrow \mathcal{H}$
  defined by $U_i (w) \assign f \partial V_i (\mathcal{I} w)$ and the
  corresponding map $T_i : \mathcal{W} \rightarrow \mathcal{W}$ defined by
  $T_i (w) = w + U_i (w)$.
\end{remark}

\begin{lemma}
  \label{lemma_reduction2}Let $\{ V_i \}_{i \in \mathbb{N}}$ be a sequence of
  potentials satisfying the Hypothesis~QC and converging to the potential $V$,
  and such that $\partial V_i$ converges uniformly to $\partial V$ on compact
  subsets of $\mathbb{R}^n$; moreover we assume that $V_i$, $V$, $\partial
  V_i$ and $\partial V$ are uniformly exponentially bounded and there exists a
  common function $H$ entering Hypothesis~QC for $\{ V_i \}_{i \in
  \mathbb{N}}$ and $V$. Let $T_i$, $T$ be the maps on $\mathcal{W}$ associated
  with $V_i$ and $V$ respectively as in Remark \ref{remark_Vi}. Then the
  sequence $\{ T_i \}_{i \in \mathbb{N}}$ satisfies the hypothesis of
  Lemma~\ref{lemma_reduction1}. \ 
\end{lemma}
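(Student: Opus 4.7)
The plan is to verify directly the three conditions required by Lemma~\ref{lemma_reduction1} for the sequence $\{T_i\}$ and the candidate limit $T$: (a) continuity of each $T_i$ on $\mathcal{W}$; (b) uniform convergence $T_i \to T$ on compact subsets of $\mathcal{W}$; and (c) pre-compactness of $\bigcup_{i} T_i^{-1}(K)$ in $\mathcal{W}$ for every compact $K \subset \mathcal{W}$. Condition (a) is immediate from Proposition~\ref{proposition_C1H} applied to each $V_i$, which yields (Fr\'echet) differentiability of $U_i$ and hence continuity of $T_i$.

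For (b), I would fix a compact set $L \subset \mathcal{W}$, observe that $\mathcal{I}(L)$ lies in a compact subset of $\mathcal{C}^{\alpha}_{\eta}(\mathbb{R}^2;\mathbb{R}^n)$ for suitable small $\alpha,\eta>0$, and write
\[ T_i(w) - T(w) = f \cdot \bigl(\partial V_i(\mathcal{I}w) - \partial V(\mathcal{I}w)\bigr). \]
Splitting the $\mathcal{H}$-norm of the right hand side into integrals over a large ball $B_R$ and its complement, on $B_R$ the values $\{\mathcal{I}w(x):w\in L,\,|x|\le R\}$ stay in a fixed compact subset of $\mathbb{R}^n$, so the hypothesis that $\partial V_i\to\partial V$ uniformly on compacts of $\mathbb{R}^n$ makes the contribution go to zero uniformly in $w\in L$. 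On $B_R^c$ the exponential decay of $f$ combined with the uniform exponential bounds on $\partial V_i$, $\partial V$ and on $\mathcal{I}w$ for $w\in L$ makes the contribution arbitrarily small for $R$ large. This gives $T_i\to T$ uniformly on $L$ in the $\mathcal{H}$-norm, hence also in $\mathcal{W}$.

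For (c), which is the heart of the lemma, any $w$ with $T_i(w)=w_0\in K$ for some $i$ gives rise, via $\phi=\mathcal{I}w$ and $\bar\phi=\phi-\mathcal{I}w_0$, to a classical $C^2$ solution of equation~\eqref{equation2d2} associated to the potential $V_i$ with $\mathcal{I}\xi$ replaced by $\mathcal{I}w_0$; and $\mathcal{I}w_0$ ranges over a compact subset of $\mathcal{C}^{1-}_{\eta}$ as $w_0$ ranges over $K$. Since by hypothesis all $V_i$ satisfy Hypothesis~QC with a common function $H$ and share uniform exponential bounds on their first and second derivatives, the a priori estimates of Lemma~\ref{lemma_bound} (together with Remark~\ref{remark_bound}) apply with constants independent of $i$ and of $w_0\in K$. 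This gives a uniform bound on $\|\bar\phi\|_{\mathcal{C}^{2-\tau}_{-\eta}}$, so that $\bigcup_i \mathcal{F}_i(K)$ is bounded in $\mathcal{C}^{2-\tau}_{-\eta}$ and, by the compact embedding $\mathcal{C}^{2-\tau}_{-\eta}\hookrightarrow\mathcal{C}^{2-\tau'}_{-\eta'}$ for $\tau'>\tau$, $\eta'>\eta$, pre-compact there. Exactly as in the proof of Theorem~\ref{theorem_weaksolution2}, the inclusion $\bigcup_i T_i^{-1}(K) \subset K + (m^2-\Delta)\bigl(\bigcup_i \mathcal{F}_i(K)\bigr)$, the continuity of $(m^2-\Delta)$ from $\mathcal{C}^{2-\tau'}_{-\eta'}$ into a space compactly embedded in $\mathcal{W}$, and the compactness of $K$ yield pre-compactness of $\bigcup_i T_i^{-1}(K)$ in $\mathcal{W}$.

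The main obstacle is condition (c): pre-compactness fails in general if the a priori estimate of Lemma~\ref{lemma_bound} is $i$-dependent. The hypothesis of a common $H$ and uniform exponential bounds on the $V_i$ and their derivatives is exactly what is needed to make all constants in Lemma~\ref{lemma_bound} uniform in $i$; once that uniformity is secured, the rest of the argument reduces to extracting compactness from the bounds via standard compact embeddings of weighted H\"older spaces, as already done in Section~\ref{sec:solutions}.
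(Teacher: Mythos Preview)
Your proposal is correct and follows essentially the same approach as the paper's own proof: the paper also derives pre-compactness of $\bigcup_i T_i^{-1}(K)$ from the uniformity in $i$ of the a~priori estimates of Lemma~\ref{lemma_bound} (via the common $H$ and uniform exponential bounds) together with the compactness mechanism of Theorems~\ref{theorem_existence1} and~\ref{theorem_weaksolution2}, and proves uniform convergence $T_i\to T$ on compacts by the same near/far splitting of the $\mathcal{H}$-norm of $U_i-U$. Your explicit verification of continuity of each $T_i$ is a harmless addition the paper leaves implicit.
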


\begin{proof}
  Note that the a priori estimates~{\eqref{equationexistence2d1}}
  and~{\eqref{equationexistence2d2}} in Lemma~\ref{lemma_bound} are uniform in
  $i \in \mathbb{N}$ since they depend only on the function $H$ and the
  exponential growth of $V_i, V, \partial V_i, \partial V$. From this we can
  deduce the pre-compactness of the set $K = \bigcup_{i \in \mathbb{N}} T_i^{-
  1} (\tilde{K})$ for any compact set $\tilde{K} \subset \mathcal{W}$ using a
  reasoning similar to the one proposed in Theorem~\ref{theorem_existence1}
  and Theorem~\ref{theorem_weaksolution2}.
  
  Proving that $T_i$ converges to $T$ uniformly on the compact sets is
  equivalent to prove that the map $U_i (w) (x) = f (x) \partial V_i
  (\mathcal{I}w (x))$ converges to $U (w) (x) = f (x) \partial V (\mathcal{I}w
  (x))$ in $L^2$ uniformly on the compact subsets of $\mathcal{W}$. Let $K$ be
  a compact set of $\mathcal{W}$, then there exists an $M > 0$ such that $|
  \mathcal{I}w (x) | \leqslant M (1 + | x |^{\eta})$ (where we suppose without
  loss of generality that $\eta < 1$). By hypotheses we have that there exist
  two constants $\alpha, \beta > 0$ such that $| \partial V_i (y) |, |
  \partial V (y) | \leqslant e^{\alpha | y | + \beta}$, thus there exists a
  compact subset $\mathfrak{K}$ of $\mathbb{R}^2$ such that
  $\int_{\mathfrak{K}^c} (f (x))^2 \exp (2 \alpha M (1 + | x |^{\eta}) + 2
  \beta) \mathd x \leqslant \epsilon$, for some $\epsilon \in (0, 1)$. Denote
  by $B_{\epsilon}$ the ball of radius $\sup_{x \in \mathfrak{K}} M (1 + | x
  |^{\eta})$ then we have
  \begin{eqnarray}
    \sup_{w \in K} \| U_i (w) - U (w) \|^2_{\mathcal{H}} & \leqslant & 2
    \left| \int_{\mathfrak{K}^c} (f (x))^2 e^{2 \alpha M (1 + | x |^{\eta}) +
    2 \beta} \mathd x \right| \nonumber\\
    &  & + \sup_{w \in K} \left| \int_{\mathfrak{K}} (f (x))^2 | \partial V
    (\mathcal{I}w) - \partial V_i (\mathcal{I}w) |^2 \mathd x \right|
    \nonumber\\
    & \leqslant & 2 \epsilon + (\sup_{y \in B_{\epsilon}} | \partial V (y) -
    \partial V_i (y) |)^2 \int_{\mathfrak{K}} (f (x))^2 \mathd x \nonumber\\
    & \rightarrow & 2 \epsilon, \nonumber
  \end{eqnarray}
  as $i \rightarrow + \infty .$ This means that $\lim_{i \rightarrow + \infty}
  (\sup_{w \in K} \| U_i (w) - U (w) \|^2_{\mathcal{H}}) \leqslant 2
  \epsilon$, and since $\epsilon$ is arbitrary in $(0, 1)$ the theorem is
  proved. 
\end{proof}

\begin{lemma}
  \label{lemma_reduction3}Let $V$ be a potential satisfying Hypothesis~QC,
  then there exists a sequence $\{ V_i \}_{i \in \mathbb{N}}$ of bounded
  smooth potentials converging to $V$ and satisfying the hypothesis of
  Lemma~\ref{lemma_reduction2}. 
\end{lemma}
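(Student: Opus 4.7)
The plan is to construct each $V_i$ by composing $V$ with a smooth, concave, and eventually constant ``cap'' applied to its values. Concretely, for each $i \in \mathbb{N}$ I would choose $\psi_i \in C^{\infty}(\mathbb{R}_+; \mathbb{R}_+)$ with the following properties: $\psi_i(t) = t$ for $t \leqslant i$, $\psi_i$ is bounded on $\mathbb{R}_+$, $0 \leqslant \psi_i'(t) \leqslant 1$ for all $t$, $\psi_i$ is concave (so $\psi_i'' \leqslant 0$), and $C_\psi := \sup_{i \in \mathbb{N}} \|\psi_i''\|_\infty < +\infty$. Such a family is easy to build from a single mollified piecewise linear cap: take $\psi_0$ equal to $t$ on $[0,0]$ and to $\tfrac12$ on $[1,\infty)$ after mollification, and translate to obtain $\psi_i(t) = i + \psi_0(t-i)$ for $t \geqslant i$ (extended by the identity below). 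Define then $V_i(y) := \psi_i(V(y))$, which is smooth, nonnegative (since $\psi_i \geqslant 0$ and $\psi_i$ is increasing), and bounded.

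The verification of Hypothesis~QC with a common $H$ is the critical and simplest step. Differentiating, $\partial V_i(y) = \psi_i'(V(y))\, \partial V(y)$, so for any $\hat n \in \mathbb{S}^n$, $y \in \mathbb{R}^n$, $r \in \mathbb{R}_+$,
\[
-\langle \hat n, \partial V_i(y+r\hat n)\rangle = \psi_i'(V(y+r\hat n)) \cdot \bigl[-\langle \hat n, \partial V(y+r\hat n)\rangle\bigr] \leqslant H_+(y),
\]
where $H_+ := \max(H, 0)$, simply because $\psi_i'(V(y+r\hat n)) \in [0,1]$ and the right factor is bounded by $H(y) \leqslant H_+(y)$. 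Since $H$ has at most exponential growth, so does $H_+$, and $H_+$ serves as a common function in Hypothesis~QC for all $V_i$ and for $V$ itself.

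For the uniform exponential bounds, I would use that $V_i = \psi_i(V) \leqslant V$, that $|\partial V_i| \leqslant |\partial V|$, and that
\[
\partial^2 V_i = \psi_i''(V)\, \partial V \otimes \partial V + \psi_i'(V)\, \partial^2 V, \qquad |\partial^2 V_i| \leqslant C_\psi |\partial V|^2 + |\partial^2 V|,
\]
so all three quantities are uniformly dominated by fixed exponentially growing functions coming from the assumed exponential bounds on $V$, $\partial V$, $\partial^2 V$. Uniform convergence on compacts is then immediate: on any compact $K \subset \mathbb{R}^n$, $V$ is bounded by some $M_K$, and for $i > M_K$ one has $\psi_i(t) = t$ on $[0, M_K]$, hence $V_i = V$ and $\partial V_i = \partial V$ on $K$; in particular $\partial V_i \to \partial V$ uniformly on compacts.

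I expect no serious obstacle: the approach is a standard truncation, and the only point requiring care is the uniform bound $\sup_i \|\psi_i''\|_\infty < \infty$, which is guaranteed by taking the caps as translates of a single mollified profile. The elegance here is that concavity and the bound $\psi_i' \in [0,1]$ automatically propagate the QC inequality through composition, so no re-choice of $H$ (beyond replacing it by $H_+$) is needed.
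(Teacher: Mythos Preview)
Your proposal is correct and gives a cleaner construction than the paper's. The paper proceeds by first applying a hard cutoff $G_{v_N}(z)=\min(z,v_N)$ to $V$ (with $v_N=\sup_{B(0,N)}|V|$) and then mollifying the result, $V^N=(G_{v_N}\circ V)\ast\rho_{\epsilon_N}$; the work then goes into checking that Hypothesis~QC survives convolution, which requires the observation $|{-}\hat n\cdot\partial \tilde V^N|\leqslant|{-}\hat n\cdot\partial V|$ on the truncated region together with the bound $\tilde H\ast\rho_{\epsilon_N}(y)\leqslant \tilde H(|y|+\sup_N\epsilon_N)$. Your smooth concave cap $\psi_i$ replaces both steps at once: smoothness of $V_i=\psi_i\circ V$ is automatic, and the QC inequality propagates through composition by the single line $-\langle\hat n,\partial V_i\rangle=\psi_i'(V)\,[-\langle\hat n,\partial V\rangle]\leqslant H_+$ because $\psi_i'\in[0,1]$. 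Convergence on compacts is likewise immediate (eventual equality rather than an $\epsilon_N\to0$ argument), and the uniform second-derivative bound follows from $\sup_i\|\psi_i''\|_\infty<\infty$. The only cosmetic slip is the ``$[0,0]$'' in your description of $\psi_0$; the intended profile is clear and the existence of such a family is routine. Both approaches yield the same conclusion, but yours avoids the mollification bookkeeping entirely.
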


\begin{proof}
  Let $V$ be a potential satisfying the Hypothesis~QC and let $\tilde{H}$ the
  function whose existence is guaranteed by Hypothesis~QC. Let, for any $N \in
  \mathbb{N}$, $v_N \assign \sup_{y \in B (0, N)} | V (y) |$ and let
  $\tilde{V}^N \assign G_{v_N} \circ V$ where
  \[ G_k (z) \assign \left\{\begin{array}{ll}
       z \quad & \text{if $| z | \leqslant k$},\\
       k & \text{if $| z | > k$} .
     \end{array}\right. \]
  Let $\rho$ be a smooth compactly supported mollifier and denote by
  $\rho_{\epsilon}$ the function $\rho_{\epsilon} (y) \assign \epsilon^{- n}
  \rho \left( \frac{y}{\epsilon} \right) .$ We want to prove that $V^N =
  \tilde{V}^N \asterisk \rho_{\epsilon_N}$, for a suitable sequence
  $\epsilon_N \in \mathbb{R}_+$, is the approximation requested by the lemma.
  Without loss of generality we can suppose that $\tilde{H}$ is a positive
  function depending only on the radius $| y |$ and increasing as $| y |
  \rightarrow + \infty$. Under these conditions, Hypothesis~QC is equivalent
  to say that for any unit vector $\hat{n} \in \mathbb{S}^n$ we have that for
  any $y \in \mathbb{R}^n$
  \[ \max (- \hat{n} \cdot \partial V (y + r \hat{n}), 0) \leqslant \tilde{H}
     (y) . \]
  We want to prove that $H (| y |) = \tilde{H} (| y | + \sup_N (\epsilon_N))$
  is the function requested by the lemma.
  
  Since for any unit vector $\hat{n} \in \mathbb{S}^n$ we have $| \hat{n}
  \cdot \partial \tilde{V}^N | \leqslant | \hat{n} \cdot \partial V|$ and
  since $\tilde{V}^N$ is absolutely continuous we obtain
  \[ - \hat{n} \cdot \partial V^N (y + r \hat{n}) = ((- \hat{n} \cdot \partial
     \tilde{V}^N) \asterisk \rho_{\epsilon_N}) (y + r \hat{n}) \]
  \[ \leqslant (\max (- \hat{n} \cdot \partial V (\cdot + r \hat{n}), 0)
     \asterisk \rho_{\epsilon_N}) (y) \leqslant \tilde{H} \asterisk
     \rho_{\epsilon_N} (y) . \]
  Furthermore we have that $\tilde{V}^N = V$ on $B (0, N - 1)$ and so there
  exists a sequence $\{ \epsilon_N \}_N$ such that $\epsilon_N \rightarrow 0$
  and $\sup_{x \in B (0, N - 1)} | \partial V^N (x) - \partial V (x) |
  \leqslant \frac{1}{N}$. Since $V^N$ is smooth and bounded and
  \[ \tilde{H} \asterisk \rho_{\epsilon_N} (y) \leqslant \tilde{H} (| y | +
     \sup_N (\epsilon_N)) = H (y), \]
  we conclude the claim.
\end{proof}

Finally we are able to prove~{\eqref{equationgaussian1}} for all QC
potentials, which will conclude this section.

\begin{proof*}{Proof of Theorem~\ref{theorem_reduction2}}
  By Proposition~\ref{proposition_reduction1} the
  equality~{\eqref{equationgaussian1}} holds when $V$ satisfies the
  Hypothesis~$V_{\lambda}$ for some $\lambda > 0$, i.e. if $V (y) =
  V_{\lambda, V_B} (y) = V_B (y) + \lambda \sum_{k = 1}^n (y^k)^4$ for some
  bounded potential $V_B$. It is clear that if $\lambda_i \rightarrow 0$ the
  potentials $V_{\lambda_i, V_B}$ converge to the potential $V_B$ and the
  hypothesis of Lemma~\ref{lemma_reduction2} hold. This means that if
  $\hat{\nu}_i$ is a sequence of probability measures such that $\hat{\nu}_i$
  is a weak solution to the equation associated with $V_{\lambda_i, V_B}$
  satisfying the thesis of Proposition~\ref{proposition_reduction1}, by
  Remark~\ref{remark_reduction1} and Lemma~\ref{lemma_reduction1}, there
  exists a probability measure $\hat{\nu}$, that is a weak solution to the
  equation associated with $V_B$, such that $\hat{\nu}_i \rightarrow
  \hat{\nu}$ in the weak sense, as $i \rightarrow \infty$ and $\lambda_i
  \rightarrow 0$.
  
  We want to prove that $\hat{\nu}$ is a weak solution to the equation
  associated with $V_B$ satisfying equation {\eqref{equationgaussian1}}. The
  previous claim is equivalent to proving that
    \begin{multline}
      \int_{\mathcal{W}} g (\mathcal{I}w (0)) e^{4 \int f' (x) V_{\lambda_i,
      B} (\mathcal{I}w (x)) \mathd x} \mathd \hat{\nu}^i (w) \longrightarrow \int_{\mathcal{W}} g (\mathcal{I}w (0)) e^{4 \int f' (x)
      V_B (\mathcal{I}w (x)) \mathd x} \mathd \hat{\nu} (w),
\label{equationreduction7}    
\end{multline}

  as $\lambda \rightarrow 0$, for any continuous bounded function $g$, and
  that $\kappa_{\lambda_i} \rightarrow \kappa_B$ weakly, where $\mathd
  \kappa_{\lambda_i} = \exp (- 4 \pi V_{\lambda_i, B}) \mathd x /
  Z_{\lambda_i}$ and $\mathd \kappa_B = \mathd \kappa_{\lambda_i} = \exp (- 4
  \pi V_B) \mathd x / Z_B$. Proving relation~{\eqref{equationreduction7}} is
  equivalent to prove that
  \[ \int f' (x) V_{\lambda_i, B} (\mathcal{I}w (x)) \mathd x \rightarrow \int
     f' (x) V_B (\mathcal{I}w (x)) \mathd x \]
  uniformly on compact sets of $\mathcal{W}$. This assertion can be easily
  proved using the methods of Lemma~\ref{lemma_reduction2}. Indeed for any $w$
  in the compact set $K \subset \mathcal{W}$, using the same notations of the
  proof of Lemma~\ref{lemma_reduction2}, we have
  
  \begin{multline*}
    \left| \int f' V_{\lambda_i, B} (\mathcal{I}w) \mathd x - \int f' V_B
    (\mathcal{I}w) \mathd x \right| \lesssim \lambda_i \int_{\mathfrak{}} | f' (x) | (M (1 + | x |^{\eta}))^4
    \mathd x = C_K \lambda_i \rightarrow 0.
  \end{multline*}
  
  The weak convergence of $\kappa_{\lambda_i}$ to $\kappa_B$ easily follows
  from Lebesgue's dominated convergence theorem.
  
  The previous reasoning proves the theorem for any bounded potential $V_B$.
  Using Lemma~\ref{lemma_reduction3} we can approximate any potential $V$
  satisfying Hypothesis~QC by a sequence of bounded potentials $V_{B, i}$.
  Using Lemma~\ref{lemma_reduction2}, Remark~\ref{remark_reduction1},
  Lemma~\ref{lemma_reduction1} and a reasoning similar to the one exploited in
  the first part of the proof we obtain the thesis of the theorem for a
  general potential satisfying Hypothesis~QC. 
\end{proof*}

\section{Dimensional reduction}\label{sec:super}

Define
\begin{equation}
  \Xi (h) \assign \int_{\mathcal{W}} h (\mathcal{I}w (0)) \Lambda_U (w)
  \frac{\Upsilon_f (\mathcal{I}w)}{Z_f} \mathd \mu (w),
  \label{equationintegral1}
\end{equation}
with the notations as in Section~\ref{sec:solutions}
(Theorem~\ref{theorem_weaksolution}) and Section~\ref{sec:dim-red}
(Theorem~\ref{theorem_reduction2}). In this section we prove
Theorem~\ref{theorem_main1}, i.e. the identity
\begin{equation}
  \Xi (h) = \int_{\mathbb{R}^n} h (y) \mathd \kappa (y) . \label{eq:main}
\end{equation}
It is important to note that $\Lambda_U$ appears without the modulus
in~{\eqref{equationintegral1}}.

\

Let us start by unfolding the definition of $\Lambda_U$ and $\Upsilon_f
(\mathcal{I}w)$ in~{\eqref{equationintegral1}} to get the expression

\begin{multline*}
  Z_f \Xi (h) = \int h (\mathcal{I}w (0)) \det_2 (I_{\mathcal{H}} + \nabla U
  (w)) \times\\
  \times \exp \left( - \delta (U) - \frac{1}{2} \|U\|^2_{\mathcal{H}} + 4 \int_{\mathbb{R}^2}
  V (\mathcal{I}w (x)) f' (x) \mathd x \right) \mathd \mu (w) .
\end{multline*}

In order to manipulate the regularized Fredholm determinant we approximate the
right hand side by

\begin{multline*}
  Z_f^{\chi} \Xi_{\chi} (h) \assign \int h (\mathcal{J}_{\chi} w (0)) \det_2
  (I_{\mathcal{H}} + \nabla U^{\chi}) \times\\
  \times \exp \left( - \delta (U^{\chi}) - \frac{1}{2} \|U^{\chi}
  \|^2_{\mathcal{H}} + 4 \int_{\mathbb{R}^2} V (\mathcal{J}_{\chi} w (x)) f' (x) \mathd x
  \right) \mathd \mu (w) .
\end{multline*}

where $\chi > 0$ is a regularization parameter, $\mathcal{J}_{\chi} \assign
\mathcal{I}^{1 + \chi} = (m^2 - \Delta)^{- 1 - \chi}$, $Z^{\chi}_f$ is the
normalization constant such that $\Xi_{\chi} (h) = 1$ and
\begin{equation}
  U^{\chi} (w) \assign \frac{1}{1 + 2 \chi} \mathcal{I}^{\chi} \partial V
  (\mathcal{J}_{\chi} w) . \label{eq:regularization}
\end{equation}
We will prove below that $\lim_{\chi \rightarrow 0} \Xi_{\chi} (h) = \Xi (h)$.
When $\chi > 0$, $\nabla U^{\chi} (w) = \frac{1}{1 + 2 \chi}
\mathcal{I}^{\chi} \partial V (\mathcal{J}_{\chi} w) \mathcal{J}_{\chi}$ is
almost surely a trace class operator and $U^{\chi} \in
\mathcal{W}^{\asterisk}$. This means that we can rewrite the regularized
Fredholm determinant $\det_2$ in term of the unregularized one (denoted by
$\text{det}$) (see equation~{\eqref{equationwienerspace3}} and the discussion
in Appendix~\ref{appendix_wienerspace}) obtaining
\begin{equation}
  \begin{array}{lll}
    Z_f^{\chi} \Xi_{\chi} (h) & = & \int h (\mathcal{J}_{\chi} w (0)) \det
    (I_{\mathcal{H}} + \nabla U^{\chi}) \times\\
    &  & \times \exp \left( - \langle U^{\chi}, w \rangle - \frac{1}{2}
    \|U^{\chi} \|^2_{\mathcal{H}} + 4 \int_{\mathbb{R}^2} V (\mathcal{J}_{\chi} w (x)) f' (x)
    \mathd x \right) \mathd \mu (w) .
  \end{array} \label{eq:heu1}
\end{equation}
The determinant is invariant with respect to conjugation and so we can
multiply $\nabla U^{\chi}$ by $(- \Delta + m^2)^{\chi}$ at the left
hand side and by $(- \Delta + m^2)^{-\chi}$ at the right hand side (this
last multiplication can be done since $\mathcal{I}^{\chi}=(-\Delta+m^2)^{-\chi}$ is a bounded
operator from $L^2(\mathbb{R}^2)$ into the Sobolev space $W^{2 \chi, 2} (\mathbb{R}^2)$ and $(- \Delta + m^2)^{\chi}$ is a bounded operator from  $W^{2 \chi, 2} (\mathbb{R}^2)$ into
$L^2 (\mathbb{R}^2)$). In this way we obtain
\begin{multline*}
  \det (I_{\mathcal{H}} + \nabla U^{\chi}) = \det (I_{\mathcal{H}} + \varpi
  \mathcal{I}^{\chi} f \partial^2 V (\mathcal{J}_{\chi} w) \mathcal{J}_{\chi})
  = \det (I_{\mathcal{H}} + \varpi f \partial^2 V (\mathcal{J}_{\chi} w)
  \mathcal{I}^{1 + 2 \chi}),
\end{multline*}
where $\varpi = \frac{1}{1 + 2 \chi}$, and featuring the operator $\varpi f
\partial^2 V (\mathcal{J}_{\chi} w) \mathcal{I}^{1 + 2 \chi}$. Let $\gamma$ be
the Gaussian measure given by the law of $\varphi =\mathcal{J}_{\chi} w \in
\widetilde{\mathcal{W}}$ under $\mu$. In other words the Gaussian measure
$\gamma$ is the one whose Fourier transform is
\[ \int_{\widetilde{\mathcal{W}}} \exp \left( i \int_{\mathbb{R}^2} k (x) \varphi (x) \mathd
   x \right) \mathd \gamma (\varphi) = \exp \left( - \frac{1}{2} \|
   \mathcal{J}_{\chi} (k) \|_{\mathcal{H}}^2 \right) . \]
The expression~{\eqref{eq:heu1}} is then equivalent to
\[ \int h (\varphi (0)) \det (I_{\mathcal{H}} + \varpi f \partial^2 V
   (\mathcal{J}_{\chi} w) \mathcal{I}^{1 + 2 \chi}) \exp (- \langle \varpi f
   \partial V (\varphi), (m^2 - \Delta) \varphi \rangle) \times \]
\[ \times \exp \left( - \frac{\varpi^2}{2} \|\mathcal{I}^{\chi} f \partial V
   (\varphi) \|^2_{\mathcal{H}} + 4 \int V (\varphi (x)) f' (x) \mathd x
   \right) \gamma (\mathd \varphi) . \]
At this point we introduce an auxiliary Gaussian field $\eta$ distributed as
the Gaussian white noise $\mu$ to write
\[ \exp \left( - \frac{\varpi^2}{2} \|\mathcal{I}^{\chi} f \partial V (\phi)
   \|^2_{\mathcal{H}} \right) = \int \exp (- i \varpi \langle f \partial V
   (\phi), \mathcal{I}^{\chi} \eta \rangle) \mu (\mathd \eta) . \]
We also introduce two fermionic fields $\psi, \bar{\psi}$ realized as bounded
operators on a suitable Hilbert space $\mathfrak{H}_{\psi, \bar{\psi}}$ with a
state $\tmop{Tr} (\rho \cdot) = \langle \cdot \rangle_{\psi \comma
\bar{\psi}}$ for which
\[ \{ \psi (x), \psi (x') \} = \{ \bar{\psi} (x), \bar{\psi} (x') \} = \{ \psi
   (x), \bar{\psi} (x') \} = 0 \]
\[ \langle \bar{\psi} (x) \bar{\psi} (x') \rangle_{\psi \comma \bar{\psi}} =
   \langle \psi (x) \psi (x') \rangle_{\psi \comma \bar{\psi}} = 0, \qquad
   \langle \psi (x) \bar{\psi} (x') \rangle_{\psi \comma \bar{\psi}} = \varpi
   \mathcal{G}_{1 + 2 \chi} (x - x'), \]
where $\{\cdot, \cdot\}$ is the anticommutator between bounded operators, i.e.
$\{K_1, K_2 \} = K_1 K_2 + K_2 K_1$ for any bounded operators defined on
$\mathfrak{H}_{\psi, \bar{\psi}}$, and $\mathcal{G}_{\alpha}$ is the kernel of
the operator $\mathcal{I}^{\alpha}$ (see Appendix~\ref{appendix_grasmannian}
for the definition of fermionic fields and Theorem~\ref{theorem_existence12}
for the existence of such fields). By Theorem~\ref{theorem_representation} and
Remark~\ref{remark_representation}, these additional fields allow to represent
the determinant as
\begin{multline*}
  \det (I_{\mathcal{H}} + \varpi f \partial^2 V (\mathcal{J}_{\chi} w)
  \mathcal{I}^{1 + 2 \chi}) = \left\langle \exp \left( \int \psi^i (x) f (x) \partial^2_{\phi^i \phi^j}
  V (\varphi (x)) \bar{\psi}^j (x) \mathd x \right) \right\rangle_{\psi,
  \bar{\psi}} .
\end{multline*}
By tensorizing the fermionic Hilbert space $\mathfrak{H}_{\psi, \bar{\psi}}$
with the $L^2$ space of the product measure $\gamma \otimes \mu$ one can
realize the fermionic fields $\psi, \bar{\psi}$ and the Gaussian fields
$\varphi, \eta$ as operators on the same Hilbert space $\mathfrak{H}$ with a
state which we denote by $\langle \cdot \rangle_{\chi}$ when this does not
cause any ambiguity. As a consequence, we have
\begin{equation}
  Z_f^{\chi} \Xi_{\chi} (h) = \langle h (\varphi (0)) \exp (Q_{\chi} (V, f))
  \rangle_{\chi}, \label{eq:exp-V-general}
\end{equation}
with an operator $Q_{\chi} (V, f)$ given by
\begin{eqnarray*}
  Q_{\chi} (V, f) & \assign & \int \psi (x) f (x) \partial^2 V (\varphi (x))
  \bar{\psi} (x) \mathd x +\\
  &  & - \varpi \langle f \partial V (\varphi), (m^2 - \Delta) \varphi +
  i\mathcal{I}^{\chi} \eta \rangle + 4 \int V (\varphi (x)) f' (x) \mathd x.
\end{eqnarray*}
The operator $Q$ satisfies the following important theorem.

\begin{theorem}
  \label{th:pol-eq}For all polynomials $p, P : \mathbb{R}^n \rightarrow
  \mathbb{R}$ and all $n \geqslant 0$ and all $\chi > 0$ we have
  \begin{equation}
    \langle p (\varphi (0)) (Q_{\chi} (P, f))^n \rangle_{\chi} = \langle p
    (\varphi (0)) (- 4 \pi P (\varphi (0)))^n \rangle_{\chi} .
    \label{eq:pol-eq}
  \end{equation}
\end{theorem}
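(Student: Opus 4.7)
The plan is to recast $Q_\chi(P,f)$ as a Berezin--Lebesgue integral of $P$ applied to a supersymmetric superfield, and then invoke a super-rotation invariance of the state $\langle \cdot \rangle_\chi$ to collapse the resulting spatial integral to a point evaluation. Introduce the auxiliary bosonic component $\omega(x) \assign -\varpi((m^2-\Delta)\varphi(x) + i \mathcal{I}^\chi \eta(x))$ and the superfield $\Phi(x,\theta,\bar\theta) \assign \varphi(x) + \psi(x)\theta + \bar\psi(x)\bar\theta + \omega(x)\theta\bar\theta$. The Grassmann expansion $\int P(\Phi)\, \mathd \theta\, \mathd\bar\theta = \partial P(\varphi)\cdot\omega - \psi\bar\psi\,\partial^2 P(\varphi)$, together with integration by parts against the rotationally symmetric cut-off $f(x) = \tilde f(|x|^2)$, reconstitutes $Q_\chi(P,f)$ as a super-invariant spatial integral; the bulk correction $4\int V(\varphi) f'\,\mathd x$ in the definition of $Q_\chi$ is exactly the amount needed to absorb the surface terms produced by the Laplacian acting on $f$. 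Thus, inside $\langle\cdot\rangle_\chi$, $Q_\chi(P,f)$ is equivalent to
\[
\int f(x)\, P(\Phi(x,\theta,\bar\theta))\, \mathd x\, \mathd \theta\, \mathd \bar\theta
\]
up to terms whose state expectations vanish.

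The next step, which would be the content of Section~\ref{section:supersymmetry}, is a supersymmetric Ward identity: the covariances of $(\varphi,\eta,\psi,\bar\psi)$ are tuned (with the fermionic kernel $\varpi \mathcal{G}_{1+2\chi}$ precisely matched to the bosonic ones) so that the joint state admits an $\mathrm{OSp}(2|2)$-type supersymmetry acting on $(x,\theta,\bar\theta)$. For any polynomial $P$, this invariance forces $\int f(x) P(\Phi(x,\theta,\bar\theta))\, \mathd x\, \mathd\theta\, \mathd\bar\theta$ to collapse, inside $\langle\cdot\rangle_\chi$, onto $-4\pi\,P(\varphi(0))$; the universal constant $-4\pi$ is the one foreshadowed by $\int_{\mathbb{R}^2} \mathd q/(|q|^2+1)^2 = \pi$ in the Gaussian computation of the introduction. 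This settles the case $n = 1$. For general $n \geq 1$ I would express $(Q_\chi(P,f))^n$ as an $n$-fold superspace integral of $\prod_{i=1}^n f(x_i)\, P(\Phi(x_i,\theta_i,\bar\theta_i))$ with independent super-variables $(x_i,\theta_i,\bar\theta_i)$; the integrand is separately super-invariant in each super-point, so iterating the Ward identity produces the factor $(-4\pi)^n$ and collapses each integration to the origin, yielding $(-4\pi P(\varphi(0)))^n$. The insertion $p(\varphi(0))$ is unaffected throughout because it depends only on the bosonic component of $\Phi$ at the super-rotation fixed point.

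The hard part, and the true content of the theorem, is the supersymmetric Ward identity itself. At the operator-theoretic level of equation~\eqref{eq:exp-V-general} one must verify that the tuned covariances, together with the definition of $\omega$, really do realise an $\mathrm{OSp}(2|2)$-invariant Gaussian superfield, and that the boundary terms generated by the Grassmann-odd generators integrate against $f$ to zero. For both purposes the decay imposed by Hypothesis~CO and the regularisation $\chi > 0$ are essential: $\chi > 0$ makes the relevant kernels trace-class and gives rigorous meaning to the fermionic determinant expressed as a state expectation, while the exponential decay of $f$ provided by CO justifies the integration by parts. Once this is in place, the polynomial identity~\eqref{eq:pol-eq} follows from Grassmann algebra together with the iterated Ward identity; the rigorous closure is the goal of Section~\ref{section:supersymmetry}.
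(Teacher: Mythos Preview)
Your overall architecture---superfield, supersymmetric reduction, iteration over the $n$ factors---matches the paper's, but two of the mechanisms you describe are not the ones that actually work, and one of them hides a genuine subtlety that the paper takes pains to resolve.

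First, the representation of $Q_\chi(P,f)$. You write it as $\int f(x)\,P(\Phi)\,\mathd x\,\mathd\theta\,\mathd\bar\theta$ plus terms with vanishing expectation, and you attribute the $4\int P(\varphi)f'\,\mathd x$ piece to ``surface terms produced by the Laplacian acting on $f$''. There is no integration by parts and no Laplacian acting on $f$: the correct identity is purely algebraic,
\[
Q_\chi(P,f) \;=\; -\int P(\Phi(x,\theta,\bar\theta))\,\tilde f\bigl(|x|^2+4\theta\bar\theta\bigr)\,\mathd x\,\mathd\theta\,\mathd\bar\theta,
\]
because $\tilde f(|x|^2+4\theta\bar\theta)=f(x)+4f'(x)\theta\bar\theta$ and the Berezin integral picks out the $\theta\bar\theta$ coefficient. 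The point is that the \emph{super-extended} cutoff $\tilde f(|x|^2+4\theta\bar\theta)$, not the bare $f(x)$, is annihilated by the supersymmetry generators $Q,\bar Q$; with your bare $f$ the integrand is not supersymmetric and the reduction formula does not apply. Likewise $p(\varphi(0))$ is written as $-\int p(\Phi)\,\theta\bar\theta\,\delta_0(\mathd x)\,\mathd\theta\,\mathd\bar\theta$, with the supersymmetric distribution $\theta\bar\theta\,\delta_0$ playing the role of the test function.

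Second, and more seriously, you assert that ``the joint state admits an $\mathrm{OSp}(2|2)$-type supersymmetry''. It does not. The paper's Remark~\ref{remark_supersymmetry} is explicit that the law of $\Phi$ is \emph{not} supersymmetric: the mixed correlation $\langle\Phi\bar\Phi\rangle$ fails to be invariant, because $\omega$ is genuinely complex. What \emph{is} supersymmetric is the two-point function $C_\Phi=\langle\Phi\Phi\rangle$. The rescue is that $p(\varphi(0))(Q_\chi(P,f))^n$ is a polynomial in $\Phi$ (no $\bar\Phi$), so Wick's theorem expresses its expectation entirely through products of $C_\Phi$; this is what justifies equation~\eqref{equationsupersymmetry5} and allows the iterated reduction of Lemma~\ref{lemma_supersymmetry1} to proceed one factor at a time. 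Your appeal to a Ward identity for the full state would, if taken at face value, prove too much (it would extend to non-polynomial expressions such as $\exp(Q_\chi)$, which the paper explicitly cannot justify and which is the gap it identifies in Klein--Landau--Perez). The polynomial restriction in the statement is essential, not incidental.
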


This theorem is the key to our results and will be proved with the aid of
supersymmetry in Section \ref{section:supersymmetry}. Going back to
equation~{\eqref{eq:exp-V-general}} a possible strategy would be to expand the
exponential getting
\begin{equation}
  \begin{array}{lll}
    \langle h (\varphi (0)) \exp (Q_{\chi} (V, f)) \rangle_{\chi} & = &
    \sum_{n \geqslant 0} \frac{1}{n!} \langle h (\varphi (0)) (Q_{\chi} (V,
    f))^n \rangle_{\chi}
  \end{array} \label{eq:expansion}
\end{equation}
and then to use Theorem~\ref{th:pol-eq} to prove that each average on the
right hand side is equal to
\[ \langle h (\varphi (0)) (- 4 \pi V (\varphi (0)))^n \rangle_{\chi} . \]
Since
\[ \langle h (\varphi (0)) (- 4 \pi V (\varphi (0)))^n \rangle_{\chi} =
   Z_f^{\chi} \int_{\mathbb{R}^n} h (y) \mathd \kappa (y), \]
the equality~{\eqref{eq:main}} would be proved by taking the limit $\chi
\rightarrow 0$. Unfortunately equation~{\eqref{eq:expansion}} is not easy to
prove since the series on the right hand side of~{\eqref{eq:expansion}} does
not converge absolutely for a general $V$. For this reason we present below an
indirect proof of~{\eqref{eq:main}}. Given Theorem~\ref{th:pol-eq} we will
deduce \ Theorem~\ref{theorem_main1} from it via a sequence of successive
generalizations.
\begin{enumerate}
  \item First we consider potentials $V$ bounded and such that $\| \partial^2
  V \|_{\infty} < m^2 / 2$;
  
  \item then the class of $V$ satisfying Hypothesis~$V_{\lambda}$ and $C$;
  
  \item finally those $V$ satisfying only~$V_{\lambda}$.
\end{enumerate}

\subsection{Bounded potentials}

\begin{proposition}
  \label{th:equiv-pol}For all $V : \mathbb{R}^n \rightarrow \mathbb{R}$
  bounded such that $\| \partial^2 V \|_{\infty} < m^2 / 2$ and $h :
  \mathbb{R}^n \rightarrow \mathbb{R}$ bounded and measurable we have
  \begin{equation}
    \langle h (\varphi (0)) \exp (Q_{\chi} (V, f)) \rangle_{\chi} = \langle h
    (\varphi (0)) \exp (- 4 \pi V (\varphi (0))) \rangle_{\chi}
    \label{eq:exp-V-bounded}
  \end{equation}
  for $\chi > 0$ small enough. 
\end{proposition}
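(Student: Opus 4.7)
The strategy is to combine the term-by-term identity of Theorem~\ref{th:pol-eq} with a polynomial approximation of $V$, turning the formal expansion~\eqref{eq:expansion} into a convergent one thanks to the assumption $\|\partial^2 V\|_\infty < m^2/2$. I will first establish~\eqref{eq:exp-V-bounded} for polynomial $V$ and polynomial $h$, extend in $h$ via a Gaussian density argument, and finally extend in $V$ by local uniform approximation.

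My first task is to prove an $n$-dependent bound of the form
\[ \bigl| \langle h(\varphi(0)) Q_\chi(V,f)^n \rangle_\chi \bigr| \leqslant C_h R^n n! \]
for polynomial $V$ and polynomial $h$, with an $R$ that depends only on $\|\partial^2 V\|_\infty$ (and goes below $1$ once $\|\partial^2 V\|_\infty < m^2/2$). The expression $Q_\chi(V,f)^n$ expanded via Wick's rule for the Gaussian fields $\varphi,\eta$ and the fermionic pair $\psi,\bar\psi$ produces a sum indexed by pairings among $n$ copies of each of the three building blocks of $Q_\chi$. The fermionic block, which contributes $\int \psi f (\partial^2 V)(\varphi)\bar\psi$, is the delicate one: its Wick contractions produce cycles whose magnitude is controlled by powers of the Hilbert-Schmidt norm of $\varpi f (\partial^2 V)(\varphi)\mathcal{I}^{1+2\chi}$, which is bounded by $\|\partial^2 V\|_\infty$ times the Hilbert-Schmidt norm of $\sqrt{f}\mathcal{I}^{1+2\chi}\sqrt{f}$. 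Completing the square in $\eta$ eliminates the auxiliary Gaussian and leaves a quadratic form in $\varphi$ which, thanks to $\|\partial^2 V\|_\infty < m^2/2$, remains strictly positive with a spectral gap, giving the desired geometric control.

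Once the bound is in hand, both series
\[ G(t) \assign \langle h(\varphi(0)) \exp(tQ_\chi(V,f)) \rangle_\chi, \qquad \tilde G(t) \assign \langle h(\varphi(0)) \exp(-4\pi t V(\varphi(0))) \rangle_\chi \]
converge absolutely in some disk $|t|<\rho$, and Theorem~\ref{th:pol-eq} identifies their coefficients, giving $G(t)=\tilde G(t)$ there. Under the hypothesis $\|\partial^2 V\|_\infty<m^2/2$, replacing $V$ by $tV$ preserves the smallness condition for $|t|\leqslant 1$, so the identity in fact extends to $|t|\leqslant 1$ by analytic continuation along the real axis, in particular to $t=1$.

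The final step removes the polynomial restrictions on $h$ and $V$. For fixed polynomial $V$, the right hand side depends on $h$ only through the Gaussian marginal of $\varphi(0)$, so bounded measurable $h$ is reached by approximating in $L^1$ of this marginal by polynomials, using the uniform $L^1$ bound on $\exp(Q_\chi(V,f))$ (the case $n=0$ of the estimate above) to pass to the limit on the left. For fixed bounded measurable $h$, a bounded $V$ with $\|\partial^2 V\|_\infty<m^2/2$ is written as the local uniform limit, together with its first two derivatives, of polynomials $V_N$ obtained from Weierstrass--Jackson approximation combined with smooth cutoffs, chosen so that $\|\partial^2 V_N\|_\infty$ stays below $m^2/2$ uniformly in $N$; both sides of~\eqref{eq:exp-V-bounded} are continuous under such convergence thanks to the uniform bounds, and the identity passes to the limit. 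The principal obstacle throughout is the convergence estimate of the second paragraph: once the coercivity produced by $\|\partial^2 V\|_\infty < m^2/2$ is exploited to get the geometric bound, the rest of the argument is a standard approximation/analyticity scheme.
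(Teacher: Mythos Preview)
Your scheme has a structural gap at the very point you flag as the principal obstacle. The bound $|\langle h(\varphi(0))Q_\chi(V,f)^n\rangle_\chi|\leqslant C_h R^n n!$ with $R$ controlled by $\|\partial^2 V\|_\infty$ is only meaningful when $\|\partial^2 V\|_\infty<\infty$, and the \emph{only} polynomials with bounded Hessian are quadratics. For any polynomial $V$ of degree $\geqslant 3$ the moments $\langle(-4\pi V(\varphi(0)))^n\rangle_\chi$ already grow like $(dn)!!$, so the factorial bound fails; and your ``completing the square in $\eta$ leaves a quadratic form in $\varphi$'' is literally true only when $\partial V$ is affine. Consequently your first step establishes the identity only for quadratic $V$, and your final step---approximating a bounded $V$ with $\|\partial^2 V\|_\infty<m^2/2$ by polynomials $V_N$ satisfying the same Hessian bound---is impossible: such $V_N$ would again have to be quadratic, and quadratics cannot approximate a generic bounded $V$ together with its first two derivatives. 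Applying smooth cutoffs destroys the polynomial property, so Theorem~\ref{th:pol-eq} no longer applies to the approximants. This circularity is exactly why the paper remarks, just before the proposition, that the series~\eqref{eq:expansion} does not converge absolutely for general $V$ and that an indirect route is needed.

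The paper's argument separates the two tasks you are trying to do simultaneously. Analyticity of $t\mapsto G_\chi(t)$ on $[-1,1]$ is proved directly for the given bounded $V$ by reverting to the Wiener-space representation $\det_2(I+t\nabla U^\chi)\exp(-t\delta(U^\chi)-\tfrac{t^2}{2}\|U^\chi\|^2+\cdots)$ and invoking the {\"U}st{\"u}nel--Zakai exponential bound on $\delta(U^\chi)$ (Lemma~\ref{lemma_chi1}, Lemma~\ref{lemma_chi2}); here the hypothesis $\|\partial^2 V\|_\infty<m^2/2$ enters as the operator-norm condition $\|\nabla(2tU^\chi)\|<1$ needed for that bound. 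Polynomial approximation is then used only to identify each \emph{fixed} Taylor coefficient $\partial_t^n G_\chi(0)=\langle h(\varphi(0))Q_\chi(V,f)^n\rangle_\chi$ with $\partial_t^n H_\chi(0)$ via Theorem~\ref{th:pol-eq}: at a fixed order $n$ one may approximate $h$ and $V$ by polynomials in the appropriate Sobolev/Malliavin sense with no constraint on $\|\partial^2 V_N\|_\infty$, since only a single finite moment has to converge. The two analytic functions then agree on $[-1,1]$ by identity of all derivatives at $0$.
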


Let us introduce
\[ G_{\chi} (t) \assign \langle h (\varphi (0)) \exp (t Q_{\chi} (V, f))
   \rangle_{\chi}, \]
\[ H_{\chi} (t) \assign \langle h (\varphi (0)) \exp (- t 4 \pi V (\varphi
   (0))) \rangle_{\chi} \]
for $t \in [0, 1]$.

\begin{proof*}{Proof of Proposition~\ref{th:equiv-pol}}
  It is clear that $H_{\chi}$ is real analytic in $t \in [0, 1]$. By
  Lemma~\ref{lemma_chi2} below the function $G_{\chi} (t)$ is real analytic in
  $[- 1, 1]$. It is enough then to prove $\partial^n_t G_{\chi} (0) =
  \partial^n_t H_{\chi} (0)$ for any $n \in \mathbb{N}$. Now
  \[ \partial^n_t G_{\chi} (0) = \langle h (\varphi (0)) (Q_{\chi} (V, f))^n
     \rangle_{\chi}, \]
  \[ \partial^n_t H_{\chi} (0) = \langle h (\varphi (0)) (- 4 \pi V (\varphi
     (0))^n) \rangle_{\chi} . \]
  By the density of polynomials in the space of two-times differentiable
  functions with respect to the Malliavin derivative (see {\cite{Nualart2006}}
  Corollary~1.5.1) we can approximate both $\partial^n_t G_{\chi} (0)$ and
  $\partial^n_t H_{\chi} (0)$ with expressions of the form $\langle p (\varphi
  (0)) (Q_{\chi} (P, f))^n \rangle_{\chi}$ and $\langle p (\varphi (0)) (- 4
  \pi P (\varphi (0)))^n \rangle_{\chi}$ where $p, P$ are polynomials and
  therefore conclude from~{\eqref{eq:pol-eq}} that $\partial^n_t G_{\chi} (0)
  = \partial^n_t H_{\chi} (0)$ for all $n \geqslant 0$.
\end{proof*}

The following two lemmas prove the claimed analyticity of $G_{\chi}$.

\begin{lemma}
  \label{lemma_chi1}If $V$ is a bounded potential satisfying the
  Hypothesis~$C$, then $\exp (- t \delta (U^{\chi})) \in L^1 (\mu)$ for any $|
  t | \leqslant 1$ and $\chi=0$ and for $\chi > 0 $ small enough. Furthermore the integral $\int \exp
  (- t \delta (U^{\chi})) \mathd \mu$ is uniformly bounded for $\chi=0$ and for $\chi >0$ small enough, and $t$ in the compact subsets of $[- 1, 1]$.
\end{lemma}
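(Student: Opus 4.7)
The plan is to use the Üstünel--Zakai degree formula from Appendix~\ref{appendix_wienerspace} to trade the Skorokhod exponential $\exp(-t\delta(U^{\chi}))$ for the Cameron--Martin density $\Lambda_{tU^{\chi}}$, which is a bona fide probability density whenever the shift $T_t:=\tmop{id}+tU^{\chi}$ is a bijection of $\mathcal{W}$. First I would verify this bijectivity for every $|t|\leqslant 1$. In the running context of Proposition~\ref{th:equiv-pol} we have $\|\partial^2 V\|_{\infty}<m^2/2$, and for all $t\in[-1,1]$ the potential $tV$ still satisfies Hypothesis~C because $\partial^2(tV)+m^2 I\geqslant (m^2-|t|\|\partial^2 V\|_{\infty})I\geqslant (m^2/2)I$. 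Theorem~\ref{theorem_existence1} and Corollary~\ref{corollary_uniqueness1} applied to $tV$ then give bijectivity of $T_t$ on $\mathcal{W}$, and the same argument adapts to the regularised shift for $\chi>0$ sufficiently small, uniformly in $\chi$.

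Second, bijectivity implies via the Üstünel--Zakai formula that $(T_t)_{\ast}(\Lambda_{tU^{\chi}}\mu)=\mu$. Rearranging the defining identity for $\Lambda_{tU^{\chi}}$ gives
\begin{equation*}
e^{-t\delta U^{\chi}}=\Lambda_{tU^{\chi}}\,\det{}_2(I+t\nabla U^{\chi})^{-1}\,\exp\bigl(\tfrac{t^2}{2}\|U^{\chi}\|_{\mathcal{H}}^2\bigr),
\end{equation*}
and substituting $w=T_t^{-1}(w')$ yields
\begin{equation*}
\int_{\mathcal{W}} e^{-t\delta U^{\chi}}\,d\mu=\int_{\mathcal{W}} \det{}_2\bigl(I+t\nabla U^{\chi}(T_t^{-1}w')\bigr)^{-1}\exp\bigl(\tfrac{t^2}{2}\|U^{\chi}(T_t^{-1}w')\|_{\mathcal{H}}^2\bigr)\,d\mu(w').
\end{equation*}
Thus the question is reduced to an $L^1(\mu)$ estimate on the right-hand side integrand, with no Skorokhod integral remaining.

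Third, I would bound the two factors uniformly in $|t|\leqslant 1$ and $\chi\geqslant 0$ small. The operator norm $\|t\nabla U^{\chi}(w)\|_{\mathrm{op}}\leqslant |t|\|f\|_{\infty}\|\partial^2 V\|_{\infty}/m^2<1/2$ is uniform, while the Hilbert--Schmidt norm is controlled by a multiple of $\|\partial^2 V\|_{\infty}\|f\|_{L^2}\mathcal{G}_{2+2\chi}(0)^{1/2}$; the value $\mathcal{G}_{2+2\chi}(0)$ is finite in dimension two (and uniformly bounded as $\chi\downarrow 0$), so the standard estimate $|\det_2(I+K)^{-1}|\leqslant \exp(\|K\|_{HS}^2/(1-\|K\|_{\mathrm{op}})^2)$ gives a pointwise $\mu$-a.s. upper bound on the determinant factor. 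For the exponential factor, setting $\phi_t:=\mathcal{J}_{\chi}T_t^{-1}(w')$ and applying $\mathcal{J}_{\chi}$ to the fixed-point equation $w+tU^{\chi}(w)=w'$ one checks that $\phi_t$ solves the elliptic PDE associated with the potential $tV$ and noise $w'$. Hence Lemma~\ref{lemma_bound} together with Remark~\ref{remark_bound} (uniformity of the a priori estimate on $\{tV\}_{|t|\leqslant 1}$) yields $\|\phi_t\|_{\infty}\lesssim 1+\|fe^{\alpha_1|\mathcal{I}w'|}\|_{\infty}$. Since $V$ bounded with bounded $\partial^2 V$ forces $\partial V$ to grow at most linearly, we deduce $\|U^{\chi}(T_t^{-1}w')\|_{\mathcal{H}}^2\lesssim \|f\|_{L^2}^2\bigl(1+\|fe^{\alpha_1|\mathcal{I}w'|}\|_{\infty}^2\bigr)$, and Fernique's theorem applied in $\mathcal{W}$ together with the exponential decay of $f$ gives all exponential moments of the right-hand side under $\mu$.

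The main obstacle is precisely this last $L^1(\mu)$ control of the quadratic exponential $\exp(\tfrac{t^2}{2}\|U^{\chi}(T_t^{-1}\cdot)\|_{\mathcal{H}}^2)$: it requires the Banach-space a priori estimate of Lemma~\ref{lemma_bound} applied \emph{uniformly} to the one-parameter family $\{tV\}_{|t|\leqslant 1}$, combined with Fernique-type Gaussian moment bounds and the exponential decay of $f$, so that the various constants entering the exponent remain controlled both as $t$ varies in $[-1,1]$ and as $\chi\downarrow 0$. Uniformity in $\chi$ down to $\chi=0$ ultimately hinges on the fact that $\mathcal{G}_{2+2\chi}(0)\to \mathcal{G}_2(0)<\infty$ in dimension two, so the integrability survives the removal of the regularisation.
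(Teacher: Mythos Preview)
Your approach is genuinely different from the paper's and more elaborate than necessary. The paper's proof is a three-line application of Proposition~B.8.1 of~\cite{Ustunel2000}:
\[
  \mathbb{E}\bigl[\exp(-\tfrac{1}{2}\delta(K))\bigr] \leqslant \bigl(\mathbb{E}[\exp(\|K\|_{\mathcal{H}}^2)]\bigr)^{1/4}\bigl(\mathbb{E}[\exp(\|\nabla K\|_2^2/(1-\|\nabla K\|))]\bigr)^{1/4},
\]
valid whenever $\|\nabla K\|_{\mathrm{op}}<1$. With $K=2tU^{\chi}$ and the ambient assumption $\|\partial^2 V\|_\infty<m^2/2$ the operator-norm condition holds, and both $\|K\|_{\mathcal H}$ and $\|\nabla K\|_2$ are \emph{deterministically} bounded, so the right-hand side is a finite constant uniform in $|t|\leqslant 1$ and in small $\chi\geqslant 0$.

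Your change-of-variables route is workable in spirit but has a genuine gap at the final step. You bound $\|U^{\chi}(T_t^{-1}w')\|_{\mathcal H}^2$ by a constant times $1+\|fe^{\alpha_1|\mathcal{I}w'|}\|_\infty^2$ and then invoke ``Fernique's theorem'' for exponential moments. But $X:=\|fe^{\alpha_1|\mathcal{I}w'|}\|_\infty$ is not a Gaussian seminorm; it already contains an exponential of a Gaussian field, so its tails are only of order $\exp(-c(\log M)^2)$, and $\mathbb{E}[\exp(CX^2)]=+\infty$ for every $C>0$. The appeal to Lemma~\ref{lemma_bound} is also problematic for $\chi>0$, since the regularised fixed-point equation is a \emph{fractional} elliptic problem for which the second-order maximum-principle argument does not carry over directly.

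The easy fix is that the PDE estimate is unnecessary: under your own hypotheses ($V$ bounded, $\|\partial^2 V\|_\infty<m^2/2$), the Landau--Kolmogorov inequality gives $\|\partial V\|_\infty^2\leqslant 2\|V\|_\infty\|\partial^2 V\|_\infty<\infty$, so $\partial V$ is bounded rather than merely linearly growing, and $\|U^{\chi}(w)\|_{\mathcal H}\leqslant C\|f\|_{L^2}$ uniformly in $w$. Both factors on the right of your change-of-variables identity are then pointwise bounded constants and the argument closes (bijectivity of $T_t$ for $\chi>0$ following from the $\mathcal H$-contraction $\|t\nabla U^{\chi}\|_{\mathrm{op}}<1$ rather than from Corollary~\ref{corollary_uniqueness1}). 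At that point, however, you have essentially re-derived the \"Ust\"unel--Zakai inequality in a special case; the paper's direct invocation is cleaner.
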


\begin{proof}
  Under the Hypothesis~of the lemma we have that
  \[ \| \nabla U^{\chi} \| \leqslant \frac{\| \partial^2 V \|_{\infty}}{m^{2
     (1 + \chi)}}, \]
  where $\| \cdot \|$ is the usual operator norm on $\mathcal{L} (H)$.
  Proposition~B.8.1 of {\cite{Ustunel2000}} states that
  \[ \mathbb{E} \left[ \exp \left( - \frac{1}{2} \delta (K) \right) \right]
     \leqslant (\mathbb{E} [\exp (\| K \|^2_{\mathcal{H}})])^{\frac{1}{4}}
     \cdot \left( \mathbb{E} \left[ \exp \left( \frac{\| \nabla K \|^2_2}{(1 -
     \| \nabla K \|_{\mathcal{H}})} \right) \right] \right)^{\frac{1}{4}} \]
  whenever $K$ is a $H - C^1$ map such that $\| \nabla K \| < 1$. Taking $K =
  2 t U^{\chi}$ in the previous inequality we obtain the thesis.
\end{proof}

\begin{lemma}
  \label{lemma_chi2}The function $G_{\chi} (t)$ is real analytic in $[- 1, 1]$ for $\chi=0$ and for $\chi>0$ small enough.
\end{lemma}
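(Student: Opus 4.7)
The plan is to rewrite $G_\chi(t)$ as a single Wiener-space integral by integrating out the fermionic fields and the auxiliary Gaussian noise $\eta$, to observe that the resulting integrand is entire in $t$ for each fixed $w$, and to produce a $t$-uniform dominating function so that the integral defines a holomorphic function of $t$ in an open complex neighborhood of $[-1,1]$; real analyticity on $[-1,1]$ then follows.

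First I would use the linearity $tQ_\chi(V,f)=Q_\chi(tV,f)$ and repeat the derivation leading to~\eqref{eq:exp-V-general} with $tV$ in place of $V$. This yields
\[
G_\chi(t)=\int h(\mathcal{J}_\chi w(0))\,\det\nolimits_2(I_{\mathcal{H}}+t\nabla U^\chi)\,\exp\!\left(-t\,\delta(U^\chi)-\tfrac{t^2}{2}\|U^\chi\|_{\mathcal{H}}^2+4t\!\int\! V(\mathcal{J}_\chi w)\,f'\,\mathd x\right)\mathd\mu(w).
\]
For each $w$ the integrand $F_\chi(t,w)$ is manifestly entire in $t$: $\det_2(I+t\nabla U^\chi)$ is an entire function of $t\in\mathbb{C}$ (see~\cite{Simon2005}, Chapter~9), and the rest is the exponential of a quadratic polynomial in $t$ with $w$-dependent coefficients.

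Next I would produce a dominating $L^1(\mu)$ majorant for the integrand as $t$ varies in a complex disc $D$ of radius $r$ slightly larger than $1$. The Carleman bound gives $|\det_2(I+t\nabla U^\chi)|\leqslant\exp(\tfrac{|t|^2}{2}\|\nabla U^\chi\|_2^2)$, and the summands $-\tfrac{t^2}{2}\|U^\chi\|_{\mathcal{H}}^2$ and $4t\int V(\mathcal{J}_\chi w)f'\,\mathd x$ contribute only bounded factors because $V$, $\partial V$, $\partial^2 V$ are bounded and $f'$ is integrable by Hypothesis~CO. The genuinely delicate factor is $\exp(-t\,\delta(U^\chi))$: to control it I would revisit the proof of Lemma~\ref{lemma_chi1}, noting that because $\|\partial^2 V\|_\infty<m^2/2$ \emph{strictly}, the operator norm $\|\nabla U^\chi\|_{\mathrm{op}}$ stays strictly below $\tfrac12$ for $\chi$ small, so applying the {\"U}st{\"u}nel--Zakai estimate (Proposition~B.8.1 of~\cite{Ustunel2000}) to $K=2tU^\chi$ still satisfies $\|\nabla K\|_{\mathrm{op}}<1$ throughout a disk $|t|\leqslant r$ with some $r>1$, giving $\exp(|t|\,|\delta(U^\chi)|)\in L^1(\mu)$ uniformly on~$D$.

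With a $t$-uniform integrable majorant in hand, holomorphy of $G_\chi$ on $D$ follows either by differentiation under the integral sign or, equivalently, by Fubini combined with Morera applied to $\oint_{\partial\Delta}F_\chi(t,w)\,\mathd t=0$ on every triangle $\Delta\subset D$. Since $D\supset[-1,1]$, this gives real analyticity of $G_\chi$ on $[-1,1]$. The main obstacle is guaranteeing that the $L^1$ control of the Skorokhod exponential survives on a complex \emph{neighborhood} of $[-1,1]$ rather than merely on the closed interval itself; this is precisely why the hypothesis of Proposition~\ref{th:equiv-pol} is sharpened to the strict inequality $\|\partial^2 V\|_\infty<m^2/2$, providing the slack needed to push Lemma~\ref{lemma_chi1} from radius $1$ to a radius $r>1$.
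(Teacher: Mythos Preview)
Your argument is correct and somewhat cleaner than the paper's. Both proofs rest on the same underlying fact --- that the strict inequality $\|\partial^2 V\|_\infty<m^2/2$ forces $\|\nabla U^\chi\|_{\mathrm{op}}<\tfrac12$, so the {\"U}st{\"u}nel--Zakai exponential bound of Lemma~\ref{lemma_chi1} actually holds for $|t|\leqslant r$ with some $r>1$ --- but they exploit it differently. You package everything into a single pointwise majorant $C\exp(r|\delta(U^\chi)|)\in L^1(\mu)$ and invoke Morera/dominated convergence to get holomorphy on a disc containing $[-1,1]$. The paper instead estimates each Taylor coefficient $\partial_t^n G_\chi(t)$ by handling the three factors $\det_2(I+t\nabla U^\chi)$, $\exp(-\tfrac{t^2}{2}\|U^\chi\|^2)$, and $\exp(-t\delta(U^\chi))$ separately: for the first two it uses Cauchy estimates from their entirety in $t$, and for the last it bounds $\mathbb{E}[e^{-t\delta(U^\chi)}(\delta(U^\chi))^n]$ directly in terms of $\epsilon^{-n}\,\mathbb{E}[e^{-(t\pm\epsilon)\delta(U^\chi)}]$ with $\epsilon>0$ chosen in the slack interval $(0,\tfrac{m^{2+2\chi}}{2\|\partial^2V\|_\infty}-|t|)$. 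Your route avoids the bookkeeping of combining three separate derivative estimates; the paper's route has the minor advantage of making the radius of convergence at each $t$ explicitly visible through the choice of $\epsilon$. Both hinge on exactly the same strict-inequality slack you identified.
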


\begin{proof}
  First of all we have that for any $t \in \mathbb{R}$ the map $r \rightarrow
  \det_2 (I + (t + r) \nabla U^{\chi}) = : D_t (r)$ is holomorphic in $r$
  (see~{\cite{Simon2005}} Theorem~9.3). By Cauchy theorem this means that
  \[ | \partial^n_t (\det_2 (I + t \nabla U^{\chi})) | \leqslant \frac{n!
     \sup_{\theta \in \mathbb{S}^1} | D_t (R e^{i \theta}) |}{R^n} . \]
  On the other hand we have for any $\chi \in [0, 1]$
  \[ | D_t (r) | \leqslant \exp \left( \frac{1}{2} \| (t + r) \nabla U^{\chi}
     \|^2_2 \right) \leqslant \exp (C (t^2 + | r |^2) \| \partial^2 V
     \|^2_{\infty}), \]
  where $C \in \mathbb{R}_+$ is some positive constant depending on $f$ but
  not on $V$. Thus we obtain
  \[ | \partial^n_t (\det_2 (I + t \nabla U^{\chi})) | \leqslant \frac{n! \exp
     (C (t^2 + | R |^2) \| \partial^2 V \|^2_{\infty})}{R^n} . \]
  With a similar reasoning we obtain a uniform bound of this kind for
  $\partial^n_t \exp \left( - \frac{1}{2} | t U^{\chi} |^2 \right) .$ Finally
  we note that
  \[ \mathbb{E} [\exp (- \delta ((t + r) U^{\chi}))] = \sum \frac{(- 1)^n
     r^n}{n!} \mathbb{E} [\exp (- \delta (t U^{\chi})) (\delta (U^{\chi}))^n]
     . \]
  By Lemma~\ref{lemma_chi1}, we note that
  \begin{eqnarray}
    | \mathbb{E} [\partial^n_t e^{- \delta ((t + r) U^{\chi})}] | & = & |
    \mathbb{E} [e^{- \delta ((t + r) U^{\chi})} (\delta (U^{\chi}))^n] |
    \nonumber\\
    & \leqslant & \frac{1}{\epsilon^n} \mathbb{E} [e^{- \delta ((t +
    \epsilon) U^{\chi})} e^{- \delta ((t - \epsilon) U^{\chi})}] < + \infty
    \nonumber
  \end{eqnarray}
  for any $| t | \leqslant 1$ and $0 < \epsilon < \frac{m^{2+2\chi}}{2 \| \partial^2 V
  \|_{\infty}} - | t | .$ Using the previous inequality it follows that
  $G_{\chi} (t)$ is real analytic in the required interval. 
\end{proof}

\begin{proposition}
  \label{proposition_chi}We have that $G_0 (t) = H_0 (t)$ for $t \in [- 1, 1]
  .$
\end{proposition}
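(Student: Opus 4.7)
\begin{proof*}{Proof proposal}
The plan is to upgrade the equality $G_\chi(t)=H_\chi(t)$, valid for small $\chi>0$ by Proposition~\ref{th:equiv-pol}, to the value $\chi=0$ by a limiting argument. A small but crucial first observation is that the analyticity argument used in the proof of Proposition~\ref{th:equiv-pol} actually yields more than the equality at $t=1$: since Lemma~\ref{lemma_chi2} gives real analyticity of $G_\chi$ on $[-1,1]$ and Theorem~\ref{th:pol-eq} (via density of polynomials in the Malliavin-calculus sense) matches all Taylor coefficients of $G_\chi$ and $H_\chi$ at $t=0$, we obtain $G_\chi(t)=H_\chi(t)$ for all $t\in[-1,1]$ and all sufficiently small $\chi>0$. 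It therefore suffices to show $\lim_{\chi\to 0^+}G_\chi(t)=G_0(t)$ and $\lim_{\chi\to 0^+}H_\chi(t)=H_0(t)$ for each $t\in[-1,1]$.

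The right-hand side is easy: $H_\chi(t)=\mathbb{E}_\mu[h(\mathcal{J}_\chi w(0))\exp(-4\pi t\,V(\mathcal{J}_\chi w(0)))]$ has a uniformly bounded integrand (since $h$ and $V$ are bounded), and $\mathcal{J}_\chi w(0)$ is centered Gaussian with variance $\int(|k|^2+m^2)^{-2-2\chi}\,\mathd k/(2\pi)^2$, which converges as $\chi\to 0^+$ to the finite value $\int(|k|^2+m^2)^{-2}\,\mathd k/(2\pi)^2$. The Gaussian laws converge weakly, and since the limit is absolutely continuous a standard approximation of bounded measurable $h$ by continuous functions yields $H_\chi(t)\to H_0(t)$.

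For $G_\chi(t)\to G_0(t)$ I would use the $\det_2$-representation
\[
G_\chi(t)=\int_{\mathcal{W}} h(\mathcal{J}_\chi w(0))\,\det_2(I_{\mathcal{H}}+t\nabla U^\chi)\,e^{-t\delta(U^\chi)-\frac{t^2}{2}\|U^\chi\|_{\mathcal{H}}^2+4t\int V(\mathcal{J}_\chi w)f'\mathd x}\,\mathd\mu(w)
\]
and apply Vitali's convergence theorem. Pointwise $\mu$-almost sure convergence of the integrand reduces to three facts: (i) $\mathcal{J}_\chi w\to\mathcal{I}w$ in the spaces in which $\mathcal{W}$ is embedded, immediate from Plancherel; (ii) $\nabla U^\chi\to\nabla U^0$ in Hilbert--Schmidt norm (by dominated convergence of the integral kernels), hence $\det_2(I+t\nabla U^\chi)\to\det_2(I+t\nabla U^0)$ by continuity of $\det_2$ on HS operators; (iii) $U^\chi\to U^0$ in the $H$-$C^1$ sense, whence $\delta(U^\chi)\to\delta(U^0)$ in $L^2(\mu)$ and therefore $\mu$-almost surely along a subsequence.

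The main obstacle is the uniform integrability needed for Vitali's theorem. The factor $e^{-t\delta(U^\chi)}$ is the only non-bounded ingredient of the integrand, and its $L^q(\mu)$-norm is controlled uniformly in $\chi\in[0,\chi_0]$ for some $\chi_0>0$ and in $|t|\leqslant 1$ precisely by Lemma~\ref{lemma_chi1}. Combined with the elementary bound $|\det_2(I+tK)|\leqslant\exp(\tfrac{t^2}{2}\|K\|_2^2)$, a uniform HS bound $\|\nabla U^\chi\|_2\lesssim \|\partial^2 V\|_\infty\|f\|_{L^2}$, and the boundedness of $h$, $V$ and of $\|U^\chi\|_{\mathcal{H}}$, H\"older's inequality yields a uniform $L^p(\mu)$ bound on the full integrand for some $p>1$. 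Vitali's theorem then delivers $G_\chi(t)\to G_0(t)$, and combining the two limits with the equality $G_\chi(t)=H_\chi(t)$ completes the proof.
\end{proof*}
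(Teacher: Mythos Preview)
Your proposal is correct and follows essentially the same route as the paper's proof: invoke Proposition~\ref{th:equiv-pol} for $\chi>0$, then pass to the limit $\chi\to0$ in $G_\chi(t)$ using almost-sure convergence of $U^\chi\to U$ in $\mathcal{H}$ and $\nabla U^\chi\to\nabla U$ in Hilbert--Schmidt norm, together with the continuity of $\det_2$ and of $\delta$, and the uniform $L^p$ control on $\exp(-t\delta(U^\chi))$ coming from Lemma~\ref{lemma_chi1}. Your version is slightly more explicit in two respects: you spell out the (easy) convergence $H_\chi(t)\to H_0(t)$, which the paper leaves implicit, and you name Vitali's theorem as the integrability-passage tool, whereas the paper just records the uniform $L^p$ bound; neither constitutes a genuine methodological difference.
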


\begin{proof}
  By Proposition \ref{th:equiv-pol}, we need only to prove that $G_{\chi} (t)
  \rightarrow G_0 (t)$ as $\chi \rightarrow 0$. Since $\det_2$, $\delta$, $|
  \cdot |_{\mathcal{H}}$ are continuous with respect to the natural norm of
  $\mathcal{H}$ and the Hilbert-Schmidt norm on $\mathcal{H} \otimes
  \mathcal{H}$ (see~{\cite{Simon2005}} Theorem~9.2 for the continuity of
  $\det_2$ and~{\cite{Nualart2006}} Proposition~1.5.4 for the continuity of
  $\delta$), and since $\exp (- \delta (t U^{\chi}))$ is bounded uniformly in
  $L^p$ (for $p$ small enough) we only \ have to prove that, for $\chi
  \rightarrow 0$, $U^{\chi} (w) \rightarrow U (w)$ in $\mathcal{H}$ and
  $\nabla U^{\chi} (w) \rightarrow \nabla U (w)$ in $\mathcal{H} \otimes
  \mathcal{H}$ for almost every $w \in \mathcal{W}$. We present only the proof
  of the second convergence, the proof of the first one being simpler and
  similar.
  
  We have that
  \[ \nabla U^{\chi} (w) [h] =\mathcal{I}^{\chi} (f \partial^2 V (\mathcal{J}_{\chi} w) \cdot
     \mathcal{J}_{\chi} h), \]
  thus proving the convergence of $\nabla U^{\chi} (w)$ in $\mathcal{H}
  \otimes \mathcal{H}$ is equivalent to proving the convergence of $(m^2 -
  \Delta)^{- 1 - \chi}$ to $(m^2 - \Delta)^{- 1}$ in $\mathcal{H} \otimes
  \mathcal{H}$ and the convergence of $f \partial^2 V (\mathcal{J}_{\chi} w)$
  to $f \partial^2 V (\mathcal{I} w)$ in $C^0 (\mathbb{R}^2)$. The first
  convergence follows from a direct computation using the Fourier transform of
  this operators. The second convergence follows from the fact that $V$ is
  smooth with bounded derivatives, $f$ decays exponentially at infinity and
  $\mathcal{J}_{\chi} w$ converges to $\mathcal{I} w$ pointwise and uniformly
  on compact sets since $(m^2 - \Delta)^{- \chi} \rightarrow \tmop{id}_{L^2}$,
  weakly as bounded operator on $L^2 (\mathbb{R}^2)$ and $(m^2 - \Delta)^{-
  1}$ is a compact operator from $L^2 (\mathbb{R}^2)$ into $C^0_{\tmop{loc}}
  (\mathbb{R}^2)$.
\end{proof}

\subsection{Potentials satisfying Hypothesis $V_{\lambda}$ and $C$}

\

Let $V_B$ denote a bounded smooth potential with all its derivatives bounded.
Introduce the following equation for $\phi_t = \bar{\phi}_t +\mathcal{I} \xi$:
\begin{equation}
  (m^2 - \Delta) \bar{\phi}_t + t f \partial V_B (\bar{\phi}_t +\mathcal{I}
  \xi) = 0 \label{equationanalytic1} .
\end{equation}
Denote by $\lambda_-$ the infimum on $y \in \mathbb{R}^n$ over the eigenvalues
of the $y$ dependent matrix $(\partial^2 V_B (y))$, and with $\lambda_+$ the
supremum on $y \in \mathbb{R}^n$ over the eigenvalues of the same matrix.

For $t \in \left( - \frac{m^2}{| \lambda_+ \wedge 0 |}, \frac{m^2}{|
\lambda_- \wedge 0 |} \right) $we have that equation
{\eqref{equationanalytic1}} has an unique solution that, by the Implicit
Function Theorem, is infinitely differentiable with respect to $t$ when $V_B
\in C^{\infty} (\mathbb{R}^n)$. Define the formal series
\begin{equation}
  S_t (r) \assign \sum_{k \geqslant 1} \frac{\sup_{x \in \mathbb{R}^2}  |
  \partial^k_t \bar{\phi}_t (x) |}{k!} r^k . \label{equationserie1}
\end{equation}

\begin{lemma}
  \label{lemma_serie1}Suppose that $V_B$ is a bounded real valued function with all
  derivatives bounded such that
  \[ \| \partial^k V_B \|_{\infty} \leqslant C^k k!, \]
  where the norm is the one induced by the identification of $\partial^n V_B$
  as a multilinear operator and for some $C \in \mathbb{R}_+$, then the $r$
  power series $S_t (r)$ is holomorphic for any $t \in \left( - \frac{m^2}{|
  \lambda_+ \wedge 0 |}, \frac{m^2}{| \lambda_- \wedge 0 |} \right) .$
  Furthermore the radius of convergence of $S_t (r)$ can be chosen uniformly
  for $t$ in compact subsets of $\left( - \frac{m^2}{| \lambda_+ \wedge 0 |},
  \frac{m^2}{| \lambda_- \wedge 0 |} \right) .$
\end{lemma}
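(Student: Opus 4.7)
The lemma is equivalent to the statement that the map $t \mapsto \bar{\phi}_t$ is real analytic from the interval $I := (-m^2/|\lambda_+\wedge 0|,\,m^2/|\lambda_-\wedge 0|)$ into $L^\infty(\mathbb{R}^2;\mathbb{R}^n)$, with radius of analyticity bounded below uniformly on compact subsets $K \subset I$: indeed, by Cauchy's estimates, analyticity with radius $\rho_K$ yields $\sup_x|\partial_t^k \bar{\phi}_t(x)|/k! \le A_K \rho_K^{-k}$, and conversely $S_t(r)$ is literally the Taylor majorant. My plan is to establish this via the analytic implicit function theorem in Banach spaces. The hypothesis $\|\partial^k V_B\|_\infty \le C^k k!$ is exactly the right ingredient: it is equivalent to $V_B$ and $\partial V_B$ admitting bounded holomorphic extensions to a complex strip of half-width $\ge 1/(2C)$ in $\mathbb{C}^n$, which is what gives \emph{Banach} analyticity (not mere $C^\infty$ regularity) of the associated Nemytskii operator.

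Concretely, I would view the equation as $G(t,\bar{\phi})=0$ with
\[ G(t,\bar{\phi}) := (m^2-\Delta)\bar{\phi} + t\,f\,\partial V_B(\bar{\phi} + \mathcal{I}\xi), \]
as a map from $\mathbb{R}\times X$ into $Y$, where $X = \mathcal{C}^{2-\tau}_{-\eta}$ (as in Theorem~\ref{theorem_existence1}) and $Y \supset (m^2-\Delta)X$. The $t$-dependence is affine, hence entire. For the $\bar{\phi}$-dependence, fix $\bar{t}\in K$ and expand
\[ \partial V_B\bigl(\phi_{\bar{t}}(x)+h(x)\bigr) = \sum_{k\ge 0}\frac{1}{k!}\,\partial^{k+1}V_B(\phi_{\bar{t}}(x))\bigl[h(x)^{\otimes k}\bigr]. \]
By the factorial bound, this series converges uniformly in $x\in\mathbb{R}^2$ for complex $h$ with $\|h\|_\infty < 1/(2C)$, and after multiplication by $f$ (which decays exponentially at infinity) the resulting series converges in the $Y$-norm. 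Hence $G$ extends to a holomorphic map on a complex neighborhood of $(\bar{t},\bar{\phi}_{\bar{t}})$ in $\mathbb{C}\times X_\mathbb{C}$ with complex radius of analyticity at least $1/(2C)$ in the second variable.

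The linearization $D_{\bar{\phi}}G(t,\bar{\phi}_t) = (m^2-\Delta) + t f\,\partial^2 V_B(\phi_t)$ is then boundedly invertible from $X$ to $Y$ with norm $\le c_K < \infty$ uniformly for $t\in K$: the zeroth-order matrix coefficient $m^2\mathrm{id}+tf\,\partial^2 V_B(\phi_t)$ is bounded below by some $\delta_K>0$ (this is precisely the coercivity which determines the interval $I$ and which already gave uniqueness of $\bar{\phi}_t$), and the maximum principle then yields $\|L_t^{-1}g\|_\infty \le \delta_K^{-1}\|g\|_\infty$. Invoking the quantitative analytic implicit function theorem (e.g.\ Deimling, \emph{Nonlinear Functional Analysis}, Thm.~15.3) produces a holomorphic extension $t \mapsto \bar{\phi}_t$ on a complex disc of radius $\rho_K = \rho_K(C,c_K)>0$ about every $\bar{t}\in K$, uniform in $\bar{t}$; Cauchy's estimates on this disc in the sup norm give the required bound $\sup_x|\partial_t^k\bar{\phi}_t(x)|/k! \le A_K\rho_K^{-k}$, yielding the holomorphy of $S_t$ on $|r|<\rho_K$ uniformly in $t\in K$. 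The main technical obstacle is verifying the Banach-space holomorphy of the Nemytskii nonlinearity: because $\mathcal{I}\xi$ grows at infinity like $|x|^\eta$, one must carefully choose the function spaces $X,Y$ so that the sup-norm Taylor expansion of $\partial V_B(\phi_{\bar{t}}+h)$ actually converges in the target norm, a step in which the exponential decay of $f$ and the \emph{strict} factorial bound (as opposed to $C^\infty$-smoothness alone) are both essential.
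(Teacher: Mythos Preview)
Your approach is correct and is a genuinely different route from the paper's. The paper proceeds by the classical method of majorants: it differentiates the equation $k$ times in $t$ to obtain a linear elliptic equation for $\partial_t^k\bar{\phi}_t$ with right-hand side depending only on lower-order $t$-derivatives, applies the maximum principle to get
\[ \|\partial_t^k\bar{\phi}_t\|_\infty \le \frac{1}{m^2-|t|(\lambda_{\mathrm{sign}(t)}\wedge 0)}\,\bigl\|\text{(lower-order terms)}\bigr\|_\infty, \]
and then packages these recursive bounds into a differential inequality for the generating function $S_t(r)$, namely
\[ (m^2-|t|(\lambda_{\mathrm{sign}(t)}\wedge 0)-r\bar V^2(S_t(r)))\,\partial_r S_t(r)\le \bar V^1(S_t(r)), \]
whose solution is a holomorphic majorant. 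Your argument instead invokes the analytic implicit function theorem directly, using the factorial bound to make the Nemytskii map Banach-analytic and the same coercivity $m^2+t\lambda>0$ to invert the linearization. Both arguments ultimately rest on the identical two ingredients (factorial growth of $\partial^k V_B$ for convergence, and positivity of $m^2+t\partial^2V_B$ for the linear step), so neither is deeper than the other; the paper's version is more self-contained and yields an explicit majorizing ODE, while yours is cleaner conceptually and avoids writing out the Fa\`a--di--Bruno combinatorics, at the cost of the function-space bookkeeping you correctly flag as the main technical chore.
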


\begin{proof}
  We define the following functions
  \[ \bar{V}^1 (r) \assign \sum_{k \geqslant 0} \frac{\| \partial^{k + 1} V_B
     \|_{\infty}}{k!} r^k, \qquad \bar{V}^2 (r) \assign \sum_{k \geqslant 0}
     \frac{\| \partial^{k+ 2} V_B \|_{\infty}}{k!} r^k . \]
  We have that the partial derivative $\partial^k_t \bar{\phi}_t$ solves the
  following equation
  \begin{eqnarray}
    (m^2 - \Delta) \partial^k_t \bar{\phi} + t \partial^2 V_B (\bar{\phi}_t)
    \cdot \partial_t^k \bar{\phi}_t & = & - \partial^{k - 1}_t (\partial V_B
    (\bar{\phi}_t) + t \partial^2 V_B (\bar{\phi}_t) \cdot \partial_t
    \bar{\phi}_t) + \nonumber\\
    &  & + t \partial^2 V_B (\bar{\phi}_t) \cdot \partial_t^k \bar{\phi}_t
    \nonumber
  \end{eqnarray}
  Using a reasoning similar to the one of Lemma~\ref{lemma_bound}, it is easy
  to prove that
  \[ \| \partial^k_t \bar{\phi}_t \|_{\infty} \leqslant \frac{\| -
     \partial^{k - 1}_t (\partial V_B (\bar{\phi}_t) + t \partial^2 V_B
     (\bar{\phi}_t) \cdot \partial_t \bar{\phi}_t) + t \partial^2 V_B
     (\bar{\phi}_t) \cdot \partial_t^k \bar{\phi}_t \|_{\infty}}{m^2 - | t |
     (\lambda_{\tmop{sign} (t)} \wedge 0)}, \]
  where it is important to note that the right hand side of the previous
  inequality depends only on the derivatives of order at most $k - 1$. The
  previous inequality and the method of majorants (see
  {\cite{Van2003majorants}}) of holomorphic functions permit to get the
  following differential inequality for $S_t (r)$
  \begin{equation}
    (m^2 - | t | (\lambda_{\tmop{sign} (t)} \wedge 0) - r \bar{V}^2 (S_t (r)))
    \partial_r (S_t) (r) \leqslant \bar{V}^1 (S_t (r)) .
    \label{equationmajorant}
  \end{equation}
  From the previous inequality we obtain that $S_t (r)$ is majorized by the
  holomorphic function $F_t (r)$ that is the solution of the differential
  equation {\eqref{equationmajorant}} (where the symbol $\leqslant$ is
  replaced by $=$) depending parametrically on $t$ with initial condition $F_t
  (0) = 0$. Since $F_t (r)$ is majorized by $F_k (r)$ or by $F_{- k} (r)$ if
  $| t | \leqslant k$ the thesis follows. 
\end{proof}

\begin{remark}
  \label{remark_serie1}An example of potential satisfying the hypotheses of
  Lemma~\ref{lemma_serie1} is given by the family of trigonometric polynomials
  in $\mathbb{R}^n$.
\end{remark}

\begin{lemma}
  \label{lemma_serie2}Under the hypotheses of Lemma~\ref{lemma_serie1} with $V
  = V_B$ and assuming that $h$ is an entire function we have that $G_0 (t) =
  H_0 (t)$ for any $t \in \left( - \frac{m^2}{| \lambda_+ \wedge 0 |},
  \frac{m^2}{| \lambda_- \wedge 0 |} \right)$. In other words the thesis of
  Theorem~\ref{theorem_main1} holds if $\lambda = 0$, $V_B$ satisfies
  Hypothesis~C as well as the hypotheses of Lemma~\ref{lemma_serie1}.
\end{lemma}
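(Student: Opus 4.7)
The strategy is to show that $G_0$ and $H_0$ are both real analytic on the stated interval and coincide on a neighborhood of $t=0$, so that the identity principle for real analytic functions forces equality throughout. For each $t$ in the open interval $\left( - \tfrac{m^2}{| \lambda_+ \wedge 0 |}, \tfrac{m^2}{| \lambda_- \wedge 0 |} \right)$ the rescaled potential $tV_B$ still satisfies Hypothesis~C (by the very choice of the interval, $tV_B + m^2|y|^2$ is strictly convex). Hence by Corollary~\ref{corollary_uniqueness1} there is a unique strong solution $\phi_t$ of $(m^2-\Delta)\phi_t + t f \partial V_B(\phi_t) = \xi$, and $\bar\phi_t = \phi_t - \mathcal{I}\xi$ is the classical solution produced by Lemma~\ref{lemma_serie1}. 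Applying the change of variables formula of Lemma~\ref{theorem_lambda2} to $tV_B$, one rewrites
\[
G_0(t) = \int_{\mathcal{W}} h(\mathcal{I}w(0)) \Lambda_{tU}(w) \Upsilon_f^{tV_B}(\mathcal{I}w)\, d\mu(w) = \mathbb{E}\!\left[ h(\phi_t(0))\, \exp\!\left( 4t \int f'(x) V_B(\phi_t(x))\, dx \right) \right].
\]

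Next I would prove that $G_0$ is real analytic on the interval. Lemma~\ref{lemma_serie1} majorizes the Taylor coefficients of $\bar\phi_t$ by a series $F_t(r)$ whose radius of convergence depends only on the factorial bound on the derivatives of $V_B$ and on $t$ through $\lambda_\pm$, and hence is uniform in the realization $\xi$. Since $h$ is entire, $V_B$ and all its derivatives are globally bounded, and $f'\in L^1(\mathbb{R}^2)$ decays exponentially, the integrand on the right-hand side extends holomorphically in $t$ to a complex neighborhood of the real interval, with a dominating envelope (in particular $|\Upsilon_f^{tV_B}| \leq \exp(4|t|\, \|V_B\|_\infty\, \|f'\|_{L^1})$) that is uniformly integrable with respect to $\mu$. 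Differentiation under the expectation then yields real analyticity of $G_0$. On the other hand, $H_0$ is manifestly entire since $V_B$ is bounded and $h$ is entire.

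To close the loop, I would apply Proposition~\ref{proposition_chi} to the rescaled potential $\epsilon V_B$, where $\epsilon < m^2/(2\|\partial^2 V_B\|_\infty)$ ensures $\|\partial^2(\epsilon V_B)\|_\infty < m^2/2$. Using $Q_0(\epsilon V_B, f) = \epsilon\, Q_0(V_B, f)$ and unfolding the parameter, this gives $G_0(s) = H_0(s)$ for all $s \in [-\epsilon, \epsilon]$. Since $G_0$ and $H_0$ are real analytic on the connected open interval containing $[-\epsilon, \epsilon]$ and agree on this neighborhood of zero, the identity principle delivers equality throughout the full interval, yielding the lemma.

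The main obstacle is step two: justifying the real analyticity of $G_0$ up to the boundary of the uniqueness interval. Pathwise analyticity of $\phi_t$ comes directly from Lemma~\ref{lemma_serie1}, but interchanging $\partial_t^n$ with the expectation over $\xi$ requires $\xi$-independent control of the holomorphic extension of the integrand. The factorial control $\|\partial^k V_B\|_\infty \leq C^k k!$ in the hypothesis of Lemma~\ref{lemma_serie1} is exactly what makes the majorant $F_t(r)$ independent of $\xi$, and the boundedness of $V_B$ together with $f'\in L^1$ is what bounds the cut-off exponential uniformly in $\xi$; without either ingredient the analytic continuation in $t$ would not survive the integration against $\mu$.
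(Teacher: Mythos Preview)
Your proof follows the paper's approach: rewrite $G_0(t)$ as an expectation over the noise via the unique strong solution, use Lemma~\ref{lemma_serie1} together with the entirety of $h$ and the boundedness of $V_B$ (so that the exponential cut-off factor is uniformly bounded) to obtain real analyticity on the full interval, and invoke Proposition~\ref{proposition_chi} near $t=0$ to conclude by analytic continuation. The paper's proof is just a more compressed version of exactly this.

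One citation should be corrected. The rewriting
\[
G_0(t)=\mathbb{E}\!\left[h(\phi_t(0))\,\exp\!\left(4t\int f'(x)V_B(\phi_t(x))\,\mathd x\right)\right]
\]
is what the paper obtains from Corollary~\ref{corollary_uniqueness2}: under Hypothesis~C for $tV_B$ the solution map is a bijection, so $\Lambda_{tU}\,\mathd\mu=|\Lambda_{tU}|\,\mathd\mu=S_{t,*}\mu$, and the integral against $\Lambda_{tU}\,\mathd\mu$ becomes the stated expectation. Lemma~\ref{theorem_lambda2}, which you cite, is formulated under Hypothesis~$V_\lambda$ (hence $\lambda>0$) and delivers the degree identity $\int g\circ T\,\Lambda_U\,\mathd\mu=\int g\,\mathd\mu$, which is not the pushforward statement you actually use.
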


\begin{proof}
  By Proposition~\ref{proposition_chi} we need only to prove that $G_0$ is
  real analytic in the required set. By Corollary~\ref{corollary_uniqueness2}
  we have that
  \[ G_0 (t) =\mathbb{E} \left[ h (\mathcal{I} \xi (0) + \bar{\phi}_t (0))
     e^{4 \int t V_B (\mathcal{I} \xi (x) + \bar{\phi}_t (x)) f' (x) \mathd x}
     \right] . \]
  Then the thesis follows from Lemma~\ref{lemma_serie1} and the analyticity of
  $h$ and of the exponential.
\end{proof}

Let $V$ be a potential satisfying the Hypothesis~$V_{\lambda}$ then there
exist $V_B$ such that $V = V_B + \lambda V_U$ and we define
\[ V_{t, \lambda} = t V_B + \lambda V_U, \]
for any $t \in \mathbb{R}$. Denote by $U_{t, \lambda}$ the corresponding map
from $\mathcal{W}$ into $\mathcal{H}$. Let $h : \mathbb{R} \rightarrow
\mathbb{R}$ be a continuous bounded function. We write
\[ G_{0, \lambda} (t) \assign \int_{\mathcal{W}} h (\mathcal{I} w (0)) \det_2
   (I_{\mathcal{H}} + \nabla U_{t, \lambda}) \times \]
\[ \times \exp \left( - \delta (U_{t, \lambda}) - \frac{1}{2} \|U_{t, \lambda}
   \|^2_{\mathcal{H}} + 4 \int_{\mathbb{R}^2} V_{t, \lambda} (\mathcal{I} w
   (x)) f' (x) \mathd x \right) \mathd \mu \]
and
\[ H_{0, \lambda} (t) \assign Z_f \int_{\mathbb{R}^n} h (y) \exp \left( - 4
   \pi \left( m^2 \frac{| y |^2}{2} + t V_B (y) + \lambda V_U (y) \right)
   \right) \mathd y. \]
It is evident that the thesis of Theorem~\ref{theorem_main1} is equivalent to
prove that
\[ G_{0, \lambda} (t) = H_{0, \lambda} (t) \]
for any bounded potential $V_B$, any $h$ continuous and bounded and any $t \in
\left( - \frac{m^2}{| \lambda_+ \wedge 0 |}, \frac{m^2}{| \lambda_- \wedge 0
|} \right)$. This fact is the result of the next proposition.

\begin{proposition}
  \label{proposition_serie3}Under Hypothesis~$V_{\lambda}$ we have that $G_{0,
  \lambda} (t) = H_{0, \lambda} (t)$ for any $t \in \left( - \frac{m^2}{|
  \lambda_+ \wedge 0 |}, \frac{m^2}{| \lambda_- \wedge 0 |} \right)$. In other
  words the thesis of Theorem~\ref{theorem_main1} holds if $V$ satisfies also
  Hypothesis~C.
\end{proposition}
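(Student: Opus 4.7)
The plan is to upgrade Lemma~\ref{lemma_serie2} along three axes—add the quartic term $\lambda V_U$, relax $V_B$ from trigonometric polynomial to bounded smooth with bounded derivatives, and relax $h$ from entire to continuous bounded—by combining analyticity in $t$ (and in $\lambda$) with a truncation-and-limit procedure. I first establish analyticity of $t\mapsto G_{0,\lambda}(t)$ on the full interval in the statement, assuming for now that $V_B$ is a trigonometric polynomial. By Corollary~\ref{corollary_uniqueness2}, Hypotheses~C and~$V_\lambda$ give the pathwise representation
\[ G_{0,\lambda}(t) = \mathbb{E}\!\left[ h(\mathcal{I}\xi(0) + \bar{\phi}_{t,\lambda}(0))\, \exp\!\left(4\int V_{t,\lambda}(\mathcal{I}\xi + \bar{\phi}_{t,\lambda})f'\,\mathd x\right)\right], \]
where $\bar{\phi}_{t,\lambda}$ is the unique strong solution of the shifted PDE. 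I would then rerun the majorant argument of Lemma~\ref{lemma_serie1}; the only new ingredient is an extra term $\lambda f\partial^2 V_U(\bar{\phi}_{t,\lambda} + \mathcal{I}\xi)\cdot\partial_t^k\bar{\phi}_{t,\lambda}$ in the linearised equation for $\partial_t^k\bar{\phi}_{t,\lambda}$, which, since $\partial^2 V_U\geq 0$, sits on the left-hand side with a favourable sign and only enlarges the maximum-principle denominator $m^2 - |t|(\lambda_{\mathrm{sign}(t)}\wedge 0)$, preserving the radius of convergence of Lemma~\ref{lemma_serie1}. Real analyticity of $H_{0,\lambda}(t)$ is immediate from the explicit finite-dimensional integral.

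Second, I would anchor the equality at $\lambda=0$: by Lemma~\ref{lemma_serie2}, $G_{0,0}(t)=H_{0,0}(t)$ whenever $V_B$ is trigonometric and $h$ is entire. To transfer this to positive $\lambda$ I would promote $\lambda$ to a second analytic parameter and prove joint real analyticity of both sides in $(t,\lambda)$ on a neighbourhood of $\{0\}\times[0,\varepsilon)$; the identity theorem then extends the equality off the line $\lambda=0$. The technical device is a smooth bounded convex truncation $V_U^{(m)}$ of $V_U$ agreeing with $V_U$ on $\{|y|\leq m\}$: the potential $tV_B + \lambda V_U^{(m)}$ is bounded with bounded derivatives, and after a harmless rescaling of $t$ falls under the scope of Proposition~\ref{proposition_chi}, which yields the corresponding equality for the truncated potential. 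Passing $m\to\infty$ relies on Lemma~\ref{lemma_Lp} below and Hypothesis~CO for uniform $L^p$ bounds on $\Lambda_{U^{(m)}}$, together with the convexity-driven uniform control of $\bar{\phi}_{t,\lambda}$ from Lemma~\ref{lemma_bound}. Combining analyticity in $t$ with this anchor yields the desired equality on the full $t$-interval, still under the auxiliary assumptions that $V_B$ is trigonometric and $h$ is entire.

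To remove those assumptions I would approximate a general bounded smooth $V_B$ with bounded derivatives by trigonometric polynomials $V_B^{(n)}$ converging to $V_B$ together with first and second derivatives uniformly on compacts and uniformly bounded; stability of the pathwise solution $\bar{\phi}_{t,\lambda}$ with respect to $V_B$, in the spirit of Lemma~\ref{lemma_reduction2}, together with dominated convergence, passes the equality to the limit. An analogous approximation of continuous bounded $h$ by polynomials on large compacts concludes the argument. The main obstacle throughout is the anchoring step: no earlier result applies directly to a potential with quartic growth, so one must design the truncation of $V_U$ to simultaneously preserve strict convexity (so that Corollary~\ref{corollary_uniqueness2} continues to give the pathwise formula), keep the Hessian under control (to engage Proposition~\ref{proposition_chi} after rescaling), and support the $L^p$ bounds needed to commute limit and expectation.
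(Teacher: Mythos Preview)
Your plan diverges from the paper's and contains a genuine gap in the anchoring step. The paper does \emph{not} keep $V_B$ and $\lambda V_U$ separate and then try to pass from $\lambda=0$ to $\lambda>0$. Instead it approximates the \emph{full} potential $V=V_B+\lambda V_U$ (satisfying Hypotheses~$V_\lambda$ and~C) directly by bounded trigonometric polynomials $P^N$ satisfying Hypothesis~C, using the cut--off/periodise/Fourier--truncate construction of Lemma~\ref{lemma_reduction3}. Each $P^N$ is bounded with factorially controlled derivatives, so Lemma~\ref{lemma_serie2} applies to it verbatim (with its own eigenvalue interval, which contains $t=1$ because $P^N$ was chosen to satisfy~C). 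One then passes to the limit in $N$ via Lemma~\ref{lemma_reduction1}, Lemma~\ref{lemma_reduction2} and Corollary~\ref{corollary_uniqueness2}, together with density of entire $h$ in $L^p(\mu)$. No separate treatment of $\lambda$ is needed.

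The specific hole in your scheme is the ``identity theorem'' step. Knowing $G_{0,0}(t)=H_{0,0}(t)$ on a $t$--interval and joint real analyticity in $(t,\lambda)$ on an open set does \emph{not} force equality off the line $\lambda=0$: a two--variable real--analytic function can vanish on a line without vanishing identically (take $F(t,\lambda)=\lambda$). You would need all mixed $\partial_t^j\partial_\lambda^k$ derivatives to match at some point, and nothing in your plan supplies the $\lambda$--derivatives. Your fallback, applying Proposition~\ref{proposition_chi} to $tV_B+\lambda V_U^{(M)}$, does not rescue this: since $V_U^{(M)}$ agrees with the quartic on $\{|y|\le M\}$ one has $\|\partial^2 V_U^{(M)}\|_\infty\gtrsim M^2$, so the Hessian constraint $\|\partial^2(\cdot)\|_\infty<m^2/2$ forces $\lambda\lesssim m^2/M^2\to 0$ as $M\to\infty$, and the limiting equality collapses back to $\lambda=0$. (Your Step~1 also needs more care: for $k\ge 2$ the right--hand side of the linearised equation picks up $\lambda f\,\partial^3 V_U(\phi_{t,\lambda})(\partial_t\bar\phi)^2$ and similar terms, so the majorants must be built from $|\partial^k V_U(\phi_{t,\lambda})|$, which are $\xi$--dependent; this is handleable but is not the ``only new ingredient has a favourable sign'' situation you describe.) The paper's route---truncate the whole $V$, not just $V_U$---avoids all of this.
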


\begin{proof}
  By Lemma~\ref{lemma_serie2} we know that Theorem~\ref{theorem_main1} holds
  for any $\lambda = 0$ and for any bounded potential satisfying Hypothesis~C
  and the hypothesis of Lemma~\ref{lemma_serie1}. Thus if we are able to
  approximate any potential $V$ satisfying Hypothesis~$V_{\lambda}$ and
  Hypothesis~C by potentials of the form requested by Lemma~\ref{lemma_serie2}
  the thesis is proved.
  
  We can use the methods of the proof of Lemma~\ref{lemma_reduction3} for
  approximating a potential $V$ satisfying Hypothesis~$V_{\lambda}$ by a
  sequence of potentials $V_{B, N}$ satisfying the hypothesis of
  Lemma~\ref{lemma_serie1}. More in detail, using the notations of
  Lemma~\ref{lemma_reduction3}, we have that the sequence of functions $V^N$
  is composed by smooth, bounded functions and, if $V$ satisfies
  Hypothesis~$V_{\lambda}$, they are identically equal to $N$ outside a
  growing sequence of squares $Q_N \subset \mathbb{R}^2$. This means that
  $V^{N, p}$, which is the periodic extension of $V^N$ outside the square
  $Q_N$, is a smooth function for any $N \in \mathbb{N}$. Since $V^{N, p}$ is
  periodic it can be approximated with any precision we want by a
  trigonometric polynomial $P^N$. Furthermore since $V$ satisfies
  Hypothesis~C, also $V^{N, p}$ satisfies Hypothesis~C and we can choose the
  trigonometric polynomial $P^N$ satisfying Hypothesis~C too. In this way we
  construct a sequence of potentials $V_{B, N} = P^N$ satisfying the
  hypotheses of Lemma~\ref{lemma_serie1} and converging to $V$ uniformly on
  compact sets. Thus the thesis follows from Lemma~\ref{lemma_reduction1},
  Lemma~\ref{lemma_reduction2}, Corollary~\ref{corollary_uniqueness2} and the
  fact that the functions of the form $L (\mathcal{I} \xi (0) + \bar{\phi}_t
  (0))$, where $L$ is an entire function, are dense in the set of measurable
  functions in $\mathcal{I} \xi (0) + \bar{\phi}_t (0)$ with respect to the
  $L^p (\mu)$ norm.
\end{proof}

\subsection{Potentials satisfying only Hypothesis $V_{\lambda}$}

\begin{lemma}
  \label{theorem_det}Under the Hypothesis~$V_{\lambda}$ we have $\det_2 (I +
  \nabla U (w)) \in L^{\infty} (\mu)$.
\end{lemma}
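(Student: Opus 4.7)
Plan.

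The plan is to recast $\det_2(I+\nabla U(w))$ as a product over the real spectrum of a self-adjoint operator, and then exploit the convexity of $V_U(y) = \sum_i (y^i)^4$ appearing in Hypothesis~$V_\lambda$. Let $g(x) := f(x)\,\partial^2 V(\mathcal{I}w(x))$, a symmetric matrix-valued multiplication operator. The non-zero spectrum of $\nabla U(w) = M_g\,\mathcal{I}$ coincides with that of the self-adjoint operator $T(w) := \mathcal{I}^{1/2} M_g \mathcal{I}^{1/2}$ on $\mathcal{H} = L^2(\mathbb{R}^2;\mathbb{R}^n)$, so
\[
\det_2(I_{\mathcal{H}}+\nabla U(w)) \;=\; \det_2(I_{\mathcal{H}}+T(w)) \;=\; \prod_n (1+\lambda_n(w))\,e^{-\lambda_n(w)},
\]
where $\{\lambda_n(w)\}$ is the spectrum of $T(w)$. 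Split $g = g_B + \lambda g_U$ with $g_B := f\,\partial^2 V_B(\mathcal{I}w)$ and $g_U := f\,\partial^2 V_U(\mathcal{I}w) = 12\,f\,\mathrm{diag}((\mathcal{I}w^i)^2) \geq 0$ pointwise (by the convexity of $V_U$); correspondingly $T(w) = T_B(w) + \lambda T_U(w)$ with $T_U \geq 0$ self-adjoint.

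Next I would establish two uniform-in-$w$ bounds on $T_B$. The operator bound $\|T_B(w)\|_{\mathrm{op}} \leq C_B := n\,\|f\|_\infty\,\|\partial^2 V_B\|_\infty / m^2$ follows from $\|\mathcal{I}\|_{\mathrm{op}} = 1/m^2$ and from the pointwise matrix bound on $g_B$ provided by the boundedness of $\partial^2 V_B$ in Hypothesis~$V_\lambda$. The Hilbert--Schmidt bound $\|T_B(w)\|_{HS}^2 \leq C_{HS}^2$ is obtained from the cyclic identity $\|T_B\|_{HS}^2 = \mathrm{tr}(g_B\,\mathcal{I}\,g_B\,\mathcal{I}) \lesssim \|\partial^2 V_B\|_\infty^2 \iint f(x)\,f(z)\,\mathcal{G}_1(x-z)^2\,\mathd x\,\mathd z \lesssim \|f\|_{L^2}^2\,\|\mathcal{G}_1\|_{L^2}^2$, using $f\in L^2$ by the exponential decay in Hypothesis~CO and $\mathcal{G}_1\in L^2(\mathbb{R}^2)$ by explicit Fourier analysis, together with Young's inequality. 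By the Weyl/min--max principle, adding $\lambda T_U \geq 0$ preserves or increases every eigenvalue, so $\lambda_n(T(w)) \geq \lambda_n(T_B(w)) \geq -C_B$, and the number of eigenvalues $\leq -1$ is dominated by the corresponding number for $T_B$, which is at most $\|T_B(w)\|_{HS}^2 \leq C_{HS}^2$ since each eigenvalue $\lambda \leq -1$ contributes $\lambda^2 \geq 1$ to the Hilbert--Schmidt norm squared.

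Finally, the elementary observation is that $\lambda \mapsto (1+\lambda)e^{-\lambda}$ attains its maximum value $1$ at $\lambda = 0$ on $(-1,\infty)$, while on $[-C_B, -1]$ one has $|(1+\lambda)e^{-\lambda}| \leq C_B\,e^{C_B}$. Splitting the Fredholm product over $\{n : \lambda_n(w) > -1\}$ (each factor of modulus at most $1$) and $\{n : \lambda_n(w) \leq -1\}$ (at most $C_{HS}^2$ factors, each bounded by $C_B\,e^{C_B}$) gives
\[
|\det_2(I_{\mathcal{H}} + \nabla U(w))| \;\leq\; (C_B\,e^{C_B})^{C_{HS}^2},
\]
a $w$-independent constant, and hence $\det_2(I+\nabla U) \in L^\infty(\mu)$. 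The main obstacle is the uniform Hilbert--Schmidt bound on $T_B(w)$, which relies on both the uniform boundedness of $\partial^2 V_B$ and the $L^2$-integrability of $f$; crucially, no bound on $\|T_U\|_{HS}$ is needed, since that quantity grows quartically in $\mathcal{I}w$ and is only in $L^p(\mu)$ for finite $p$, whereas the positivity of $T_U$ alone supplies the required spectral control.
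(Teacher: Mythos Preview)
Your argument is correct. The strategy is essentially the same as the paper's: conjugate $\nabla U(w)$ to a self-adjoint Hilbert--Schmidt operator on $\mathcal{H}$, then exploit that the ``negative part'' of this operator is uniformly bounded in Hilbert--Schmidt norm (independently of $w$) to control the product $\prod_n(1+\lambda_n)e^{-\lambda_n}$.

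The packaging differs in two respects. First, the paper decomposes $\partial^2 V(y)=V_+(y)-V_-(y)$ pointwise into its positive and negative matrix parts, and then invokes Lemma~3.3 of Klein--Landau--Perez, which states $|\det_2(I+O)|\le\exp(2\|O^-\|_2^2)$ for $O=O^+-O^-$ with $O^\pm\ge0$; the uniform bound on $\|O^-\|_2$ comes from the fact that under Hypothesis~$V_\lambda$ the eigenvalues of $V_-(y)$ lie in $[0,|\lambda_-\wedge0|]$. You instead use the structural decomposition $V=V_B+\lambda V_U$ directly, writing $T=T_B+\lambda T_U$ with $T_U\ge0$, and then argue via the min--max principle that the negative eigenvalues of $T$ are controlled by those of $T_B$. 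Second, your eigenvalue-counting bound $(C_B e^{C_B})^{C_{HS}^2}$ uses both the operator norm and the Hilbert--Schmidt norm of $T_B$, whereas the Klein lemma bound $\exp(2\|O^-\|_2^2)$ uses only the Hilbert--Schmidt norm of the negative part. Your route is more self-contained (no external lemma), at the mild cost of a slightly coarser constant; the paper's route is more concise but defers the eigenvalue analysis to the cited reference.
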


\begin{proof}
  We follow the same reasoning proposed in~{\cite{Klein1984}} for polynomials.
  First of all, by the invariance property of the determinant with respect to
  conjugation, we have that
  \[ \det_2 (I + \nabla U (w)) = \det_2 (I + O (w)) \]
  where $O (w)$ is the selfadjoint operator given by
  \begin{equation}
    O^i_j (w) [h] = (m^2 - \Delta)^{- \frac{1}{2}} (f \partial_{\phi^i
    \phi^j}^2 V (\mathcal{I}w) \cdot (m^2 - \Delta)^{- \frac{1}{2}} h) .
    \label{equationdet1}
  \end{equation}
  Since $V$ satisfies the Hypothesis~QC the eigenvalues of the symmetric
  matrix $\partial^2 V (y)$ (where $y \in \mathbb{R}^n$) are bounded from
  below. Furthermore we can write the matrix $\partial^2 V (y)$ as the
  difference of two commuting matrices $\partial^2 V (y) = V_+ (y) - V_- (y)$
  where $V_+ (y), V_- (y)$ are symmetric, they have only eigenvalues greater
  or equal to zero and $\ker V_+ (y) \cap \ker V_- (y) = \{ 0 \}$. We denote
  by $O^+, O^-$ the two operators defined as $O$ in equation
  {\eqref{equationdet1}} replacing $\partial^2 V$ by $V_+$ and $V_-$
  respectively. Obviously $O^+$ and $O^-$ are positive definite and $O = O^+ -
  O^-$. By Lemma~3.3 {\cite{Klein1984}} we have that
  \[ | \det_2 (I + O (w)) | \leqslant \exp (2 \| O^- (w) \|^2_2) . \]
  Using a reasoning similar to the one of Proposition~\ref{proposition_C1H}
  and the fact that, under the Hypothesis~$V_{\lambda}$, the minimum
  eigenvalue $\lambda (y)$ of $\partial^2 V (y)$ has a finite infimum
  $\lambda_{_-}$ that is the same as the one for $V_-$ we obtain
  \[ | \det_2 (I + \nabla U (w)) | = | \det_2 (I + O (w)) | \leqslant \exp (C
     \lambda_0 \| f \|_2^2) \]
  for some positive constant $C$. In particular we have $\det_2 (I + \nabla U
  (w)) \in L^{\infty} .$
\end{proof}

In order to prove that $\exp (- \delta (U)) \in L^p$ we split $U$ into two
pieces. First of all if $\lambda (y)$ is the minimum eigenvalue of $\partial^2
V (y)$ we recall that $\lambda_- = \inf_{y \in \mathbb{R}^n} \lambda (y)$.
Moreover we shall set
\[ \bar{U} \assign U - (\lambda_- \wedge 0) f\mathcal{I} (w), \]
and $\hat{U} \assign U - \bar{U}$. We also set $W : = V + \frac{\lambda_-}{2}
| y |^2$. We introduce a useful approximation of $\bar{U} (w)$ for proving
Theorem~\ref{lemma_Lp}. Let $P_n$ the projection of an $L^2 (\mathbb{R}^2)$
function on the momenta less then $n$, i.e.
\[ P_n (h) = \int_{| k | < n} e^{i k \cdot x} \hat{h} (k) \mathd k, \]
where $\hat{h}$ is the Fourier transform of $h$ defined on $\mathbb{R}^2$. We
can uniquely extend the operator $P_n$ to all tempered distributions. In this
way we define $U_n (w)$ as
\begin{equation}
  U_n (w) \assign P_n [f \partial V (\mathcal{I}P_n w)]  \label{equationLp1}
\end{equation}
We shall denote by $\bar{U}_n$ the expression corresponding to
{\eqref{equationLp1}} where $V$ is replaced by $W$.

\begin{lemma}
  \label{lemma_inequality2}Under the Hypothesis~$V_{\lambda}$ there exist two
  positive constants $C, \alpha$ independent on $p \geqslant 2$ and $n \in
  \mathbb{N}$ such that
  \begin{equation}
    \mathbb{E} [| \delta (\bar{U}_n - \bar{U}) |^p] \leqslant C (p - 1)^{2 p}
    n^{- \alpha} . \label{equationLp2}
  \end{equation}
  Furthermore a similar bound holds also for $\mathbb{E} [| \| \nabla U_n
  \|^2_2 -\| \nabla U\|^2_2 |^p]$ and $\mathbb{E} [| \|
  \mathcal{I}w\|^2_{\mathcal{H}} -\|P_n (\mathcal{I}w) \|^2_{\mathcal{H}}
  |^p]$.
\end{lemma}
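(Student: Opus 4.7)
The plan is to reduce the $L^p(\mu)$ bound on the Skorokhod integral $\delta(\bar U_n-\bar U)$ to $L^p$ estimates of the integrand and its Malliavin derivative via Meyer's inequality, and then to extract the rate $n^{-\alpha}$ from the smoothing properties of the operator $\mathcal I$ combined with the fact that, under Hypothesis~$V_\lambda$, $\partial W$ and $\partial^2 W$ have polynomial growth, so that they interact well with the exponentially decaying $f$ and the Gaussian integrability of $\mathcal{I}w$.

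First I would invoke the Meyer inequality for the divergence on the abstract Wiener space $(\mathcal{W},\mathcal{H},\mu)$, in the form
\[
\|\delta(K)\|_{L^p(\mu)}\leqslant c_{p}\bigl(\|K\|_{L^p(\mu;\mathcal{H})}+\|\nabla K\|_{L^p(\mu;\mathcal{H}\otimes\mathcal{H})}\bigr),
\]
with $K=\bar U_n-\bar U$, where $c_p$ grows polynomially in $p$; after raising to the $p$-th power this produces the desired $(p-1)^{2p}$ factor (combining the Meyer constant with a Gaussian hypercontractive loss when passing from $L^2(\mu)$-based to $L^p(\mu)$-based norms of polynomially growing functionals of $\mathcal{I}w$). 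Thus the problem is reduced to proving $L^p(\mu;\mathcal H)$ bounds of order $n^{-\alpha/p}$ for $\bar U_n-\bar U$ and for $\nabla(\bar U_n-\bar U)$.

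Next I would split
\[
\bar U_n-\bar U=(P_n-I)[f\,\partial W(\mathcal{I}P_nw)]+f\bigl[\partial W(\mathcal{I}P_nw)-\partial W(\mathcal{I}w)\bigr].
\]
For the first summand, Fourier localization gives $\|(P_n-I)g\|_{L^2}\lesssim n^{-s}\|g\|_{H^s}$, and since $f$ is smooth and compactly integrable while $\partial W(\mathcal{I}P_nw)$ has $\mu$-moments of all orders (Hypothesis~$V_\lambda$ makes $\partial V$ of at most cubic growth, and $\mathcal{I}P_nw$ is a Gaussian field with uniformly bounded covariance), the function $f\,\partial W(\mathcal{I}P_nw)$ lies in a fixed Sobolev space with $L^p(\mu)$-bounded norm, uniformly in $n$. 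For the second summand, a first-order Taylor expansion writes it as $\int_0^1 f\,\partial^2 W(\mathcal I(w+s(P_n-I)w))\cdot\mathcal I(P_n-I)w\,\mathrm ds$, and again $\|\mathcal I(P_n-I)w\|_{L^\infty_x}\lesssim n^{-\alpha}\|w\|_{\text{regularity}}$ by the smoothing of $\mathcal I=(m^2-\Delta)^{-1}$, while $\partial^2 W$ has quadratic growth. The Malliavin derivative $\nabla(\bar U_n-\bar U)$ is handled by the same splitting after differentiating once more, producing factors of $\partial^2 W$ and $\partial^3 W$ which remain of polynomial growth under Hypothesis~$V_\lambda$.

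The main obstacle is keeping the $p$-dependence sharp enough to end at $(p-1)^{2p}$: this requires that the Meyer inequality be applied to a random variable whose Wiener-chaos decomposition is of fixed finite order, or equivalently that one appeal to Gaussian hypercontractivity so that the $L^p$ norms of $\partial W(\mathcal{I}P_nw)$ and its derivatives are controlled by $L^2$ norms with a loss that is polynomial in $p$ rather than exponential; the restriction to Hypothesis~$V_\lambda$ (cubic $\partial V_U$, quadratic $\partial^2 V_U$, bounded corrections) is what makes this possible. The bounds for $\mathbb{E}[|\|\nabla U_n\|_2^2-\|\nabla U\|_2^2|^p]$ and $\mathbb{E}[|\|\mathcal Iw\|_{\mathcal H}^2-\|P_n\mathcal Iw\|_{\mathcal H}^2|^p]$ then follow by the same approximation scheme without invoking Meyer: the former by the Hilbert--Schmidt continuity of $K\mapsto\|K\|_2^2$ and the already-established convergence of $\nabla U_n$ to $\nabla U$, and the latter directly from a Fourier computation using the exponential decay of the covariance kernel of $\mathcal Iw$ in frequency and standard Gaussian moment bounds.
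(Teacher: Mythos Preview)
Your route via Meyer's inequality is different from the paper's and, as written, has a gap for the bounded part of the potential.

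The paper does not decompose $\bar U_n-\bar U$ along the projection as you do; it splits according to $V=V_B+\lambda V_U$, writing $\bar U=U_B+\bar U_U$ with $U_B=f\,\partial V_B(\mathcal Iw)$, and treats the two pieces by different tools. For the polynomial piece $\bar U_U-\bar U_{U,n}$ (a cubic in the Gaussian field) the paper does invoke hypercontractivity, in line with your plan. For the bounded piece $U_B-U_{B,n}$ it uses neither Meyer nor hypercontractivity but the \"Ust\"unel--Zakai exponential inequality~\eqref{equationinequalitynew},
\[
\mathbb E\Bigl[\cosh\Bigl(\tfrac{\sqrt\rho}{2\sqrt2}\,\delta(K)\Bigr)\Bigr]\leqslant\bigl(\mathbb E[e^{\rho\|K\|_{\mathcal H}^2}]\bigr)^{1/4}\bigl(\mathbb E[e^{\frac{\rho}{1-\rho c}\|\nabla K\|_2^2}]\bigr)^{1/4},
\]
applicable because $\|\nabla(U_B-U_{B,n})\|<1$ uniformly; after showing the right side is $1+O(n^{-2})$ the paper reads off $\mathbb E[(\delta(U_B-U_{B,n}))^{2k}]\lesssim(2k)!\,\epsilon^{-2k}n^{-1/2}$ from the Taylor coefficients of $\cosh$.

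Your plan breaks precisely here. You write that the $(p-1)^{2p}$ control ``requires that the Meyer inequality be applied to a random variable whose Wiener-chaos decomposition is of fixed finite order'', but $U_B-U_{B,n}$ involves $\partial V_B(\mathcal Iw)$ with $V_B$ merely smooth and bounded; this functional lives in \emph{all} chaoses, so the hypercontractive estimate $\|F\|_p\leqslant(p-1)^{k/2}\|F\|_2$ is unavailable for it. The paper flags this explicitly in the closing remark of Section~\ref{section:supersymmetry}: the lemma ``is a trivial consequence of hypercontractivity when the potential $V$ is polynomial but is based on the non-trivial inequality~\eqref{equationinequalitynew}\ldots\ for general potentials''. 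Your Meyer route can still be salvaged for $U_B$, but by a mechanism you do not mention: since $\partial V_B,\partial^2 V_B$ are bounded one has the \emph{deterministic} bound $\|U_B-U_{B,n}\|_{\mathcal H}+\|\nabla(U_B-U_{B,n})\|_2\leqslant M$ uniformly in $w$ and $n$, and interpolating this $L^\infty(\mu)$ control against the $L^2(\mu)$ decay yields $\mathbb E[\|U_B-U_{B,n}\|_{\mathcal H}^p]\leqslant M^{p-2}\cdot Cn^{-2}$, after which Meyer's inequality (with its polynomial-in-$p$ constant) closes the estimate.
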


\begin{proof}
  First of all we write $\bar{U} = U_B + \bar{U}_U$ where $U_B = f \partial
  V_B (\mathcal{I}w)$, and we consider the corresponding decomposition for
  $\bar{U}_n$. If we prove that an inequality analogous to
  {\eqref{equationLp2}} holds for $U_B - U_{B, n}$ and $\bar{U}_U -
  \bar{U}_{U, n}$ separately then the inequality {\eqref{equationLp2}} holds.
  
  In order to prove the lemma we use the following inequality (proven in
  {\cite{Ustunel2000}} Proposition~B.8.1)
  \begin{equation}
    \begin{aligned}
      \mathbb{E} \left[ \cosh \left( \frac{\sqrt{\rho}}{2 \sqrt{2}} \delta (K)
      \right) \right] \leqslant & (\mathbb{E} [\exp (\rho \| K
      \|^2_{\mathcal{H}})])^{\frac{1}{4}} \times\\
      & \times \left( \mathbb{E} \left[ \exp \left( \frac{\rho}{1 - \rho c}
      \| \nabla K \|^2_2 \right) \right] \right)^{\frac{1}{4}}
    \end{aligned}
    \label{equationinequalitynew}
  \end{equation}
  that holds when $\| \nabla K \|^2_2 \in L^{\infty}$, $\| \nabla K \|
  \leqslant c < 1$ and $0 \leqslant \rho < \frac{1}{2 c^2}$. Putting $K =
  \bar{\epsilon} (U_B - U_{B, n})$ for $\bar{\epsilon}$ small enough, since
  $\| \nabla (U_{B, n} - U_B) \|^2_2, \| \nabla (U_{B, n} - U_B) \| \in
  L^{\infty}$ with a bound uniform in $n$, we have that
  \begin{equation}
    \begin{aligned}
      \mathbb{E} [\cosh (\epsilon \delta (U_B - U_{B, n}))] \leqslant &
      (\mathbb{E} [\exp (\epsilon' \| U_B - U_{B, n}
      \|^2_{\mathcal{H}})])^{\frac{1}{4}} \times\\
      & \times (\mathbb{E} [\exp (\epsilon' \| \nabla (U_B - U_{B, n})
      \|^2_2)]),
    \end{aligned}
    \label{equationLp3}
  \end{equation}
  for suitable $\epsilon, \epsilon' > 0$ and for all $n \in \mathbb{N}$. First
  of all we want to give a bound for the right hand side of
  {\eqref{equationLp3}} providing a precise convergence rate to the constant 1
  of the upper bound for the right hand side as $n \rightarrow + \infty$. We
  first note that
  \begin{equation}
    \mathbb{E} [\exp (\epsilon' \| U_B - U_{B, n} \|^2_{\mathcal{H}})] =
    \sum_{k = 1}^{\infty} \frac{\epsilon^{\prime k}}{k!} \mathbb{E} [\| U_B -
    U_{B, n} \|^{2 k}_{\mathcal{H}}] . \label{equationLp4}
  \end{equation}
  Using a reasoning like the one in the proof of
  Proposition~\ref{proposition_C1H} we have that
  \[ \| U_B - U_{B, n} \|^2_{\mathcal{H}} \lesssim \| \partial V_B
     \|^2_{\infty} \| Q_n (f) \|_{\mathcal{H}}^2 + \| \partial^2 V_B
     \|^2_{\infty} \int_{\mathbb{R}^2} (f (x) Q_n (\mathcal{I}w) (x))^2 \mathd
     x, \]
  where $Q_n = I - P_n$. From the previous inequality and the
  hypercontractivity of Gaussian random fields we obtain that
  \begin{eqnarray*}
    \mathbb{E} [\| U_B - U_{B, n} \|^{2 k}_{\mathcal{H}}] & \lesssim & k
    \left( \| Q_n (f) \|^{2 k}_{\mathcal{H}} + \int_{\mathbb{R}^2} f (x)^k
    \mathbb{E} [(Q_n (\mathcal{I}w) (x))^2] \mathd x \right)\\
    & \lesssim & k \| Q_n (f) \|^{2 k}_{\mathcal{H}} + k (2 k - 1)^k \| f^k
    \|_1 \mathbb{E} [(Q_n (\mathcal{I}w) (x))^2]^k,
  \end{eqnarray*}
  where the constants implied by the symbol $\lesssim$ do not depend on $k$.
  The right hand side converges then for $n \rightarrow + \infty$ to 1 as we
  have announced. Using the Fourier transform, the fact that $f$ is a Schwartz
  function, and the fact that $\mathcal{I}w$ is equivalent to a white noise
  transformed by the operator $(m^2 - \Delta)$ it is simple to obtain that $\|
  Q_n (f) \|^2, \mathbb{E} [(Q_n (\mathcal{I}w) (x))^2] \lesssim \frac{1}{n^2}
  .$ Then using the fact that $(2 k - 1)^k \lesssim C_1^k k!$ and inserting
  the previous inequality in equation {\eqref{equationLp4}} we obtain
  \[ \mathbb{E} [\exp (\epsilon' \| U_B - U_{B, n} \|^2)] \leqslant 1 + C_3
     \frac{\frac{\epsilon'}{n^2}}{\left( 1 - \frac{C_2 \epsilon'}{n^2}
     \right)^2}, \]
  that holds when $\epsilon' > 0$ is small enough and for two positive
  constants $C_2, C_3$. Using similar methods it is possible to prove a
  similar estimate for $\mathbb{E} [\exp (\epsilon' \| \nabla (U_B - U_{B, n})
  \|^2_2)]$. Inserting these estimates in the inequality
  {\eqref{equationLp3}}, we obtain
  \begin{equation}
    \mathbb{E} [\cosh (\epsilon \delta (U_B - U_{B, n}))] - 1 \lesssim
    \frac{\epsilon'}{n^2}, \label{equationLp5}
  \end{equation}
  where the constants implied by the symbol $\lesssim$ do not depend on $n$
  and on $\epsilon'$, when $\epsilon'$ is smaller than a suitable $\epsilon'_0
  > 0$. Using the inequality {\eqref{equationLp5}} we obtain that
  \begin{multline*}
    \sum_{k, n = 1}^{+ \infty} \frac{n^{1 / 2} \epsilon^{2 k}}{(2 k) !}
    \mathbb{E} [(\delta (U_B - U_{B, n}))^{2 k}] = \sum_{n = 1}^{+ \infty} n^{\frac{1}{2}} (\mathbb{E} [\cosh (\epsilon
    \delta (U_B - U_{B, n}))] - 1) \lesssim \sum_{n = 1}^{\infty}
    \frac{\epsilon'}{n^{\frac{3}{2}}} < + \infty .
  \end{multline*}
  Since the terms of an absolutely convergent series are bounded we obtain
  \[ \mathbb{E} [(\delta (U_B - U_{B, n}))^{2 k}] \lesssim \frac{(2 k)
     !}{\epsilon^{2 k} n^{\frac{1}{2}}} \lesssim (2 k - 1)^{4 k} n^{-
     \frac{1}{2}} . \]
  Using Young inequality we obtain that the inequality {\eqref{equationLp2}}
  holds for any $p \geqslant 2$. The estimate for $\delta (\bar{U}_U -
  \bar{U}_{U, n})$ follows from the fact that $\bar{U}_U$ is a polynomial of
  at most third degree and from hypercontractivity estimates for polynomial
  expressions of Gaussian random fields.
  
  The result for $\| \nabla U \|^2_2 - \| \nabla U_n \|^2_2$ can be proved
  using the same decomposition of $U$ and $U_n$ and following a similar
  reasoning. The result for $\mathbb{E} [| \| f\mathcal{I}w\|^2_{\mathcal{H}}
  -\|f P_n (\mathcal{I}w) \|^2_{\mathcal{H}} |^p]$ can be proved using
  hypercontractivity for polynomial expressions of Gaussian random fields. \ 
\end{proof}

In the following we write $c_n = \tmop{Tr} (P_n \circ \mathcal{I})$. It is
important to note that
\[ c_n = \int_{| x | < n} \frac{1}{| x |^2 + m^2} \mathd x \lesssim \log (n),
\]
where the integral is taken on the ball $| x | < n$ on $\mathbb{R}^2$.

\begin{lemma}
  \label{lemma_inequality3}There exists a $\lambda_0 > 0$ depending only on
  $f$ and $m^2$ such that for any $0 < \lambda < \lambda_0$ and $V$ satisfying
  the Hypothesis~$V_{\lambda}$ there exist some constants $\alpha, C_1, C_2 >
  0$ such that
  \[ \delta (\bar{U}_n) - R \int_{\mathbb{R}^2} f (P_n \mathcal{I}w)^2 \mathd
     x -\| \nabla U_n \|^2_2 \geqslant - C_1 - C_2 c_n^{\alpha} \]
  for any $R \in \mathbb{R}_+$.
\end{lemma}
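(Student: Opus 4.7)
The plan is to expand the Skorokhod integral via the Itô--Wick identity $\delta(\bar U_n) = \langle \bar U_n, w\rangle_{\mathcal H} - \operatorname{Tr}(\nabla \bar U_n)$, which is legitimate because the sharp momentum cut-off $P_n$ in the definition of $\bar U_n$ and $\varphi_n := P_n\mathcal I w$ renders $\nabla \bar U_n$ trace class. Using that the kernel of $P_n\mathcal I$ is translation invariant with diagonal value $c_n$, one obtains
\[
\operatorname{Tr}(\nabla \bar U_n) \;=\; c_n \int f(x)\, \operatorname{tr}\bigl(\partial^2 W(\varphi_n(x))\bigr)\, dx .
\]
For the pairing, since $P_n w = (m^2-\Delta)\varphi_n$, two integrations by parts in $x$ yield
\[
\langle \bar U_n, w\rangle_{\mathcal H} \;=\; m^2 \!\int f\, \partial W(\varphi_n)\!\cdot\! \varphi_n\, dx - \int \Delta f\, W(\varphi_n)\, dx + \int f\, \partial^2 W(\varphi_n)(\nabla\varphi_n,\nabla\varphi_n)\, dx .
\]
The third term is non-negative by the convexity of $W$ and will simply be discarded.

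Next I would exploit the quartic structure of $V_U$. The homogeneity identity $\partial V_U(y)\!\cdot\! y = 4 V_U(y)$, together with the boundedness of $V_B$ and $\partial V_B$ and a trivial Young estimate on the $\gamma|y|^2$ contribution in $W$, gives the pointwise inequality
\[
m^2\, \partial W(y)\!\cdot\! y \,-\, b^2\, W(y) \;\geq\; (4m^2 - b^2)\,\lambda\, V_U(y) \,-\, C(1+|y|^2),
\]
with a constant depending only on $V_B$, $m^2$, $\gamma$. Combining with Hypothesis~CO (which gives $-\int \Delta f\, W \geq -b^2\int f W$ since $W\geq 0$) one reduces the first two integrals above to the bound $(4m^2-b^2)\lambda \int f V_U(\varphi_n)\, dx - C\int f(1+|\varphi_n|^2)\, dx$.

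For the remaining pieces, a Fourier/Plancherel estimate of the kernel of $\nabla U_n$, using $\int_{|k|<n} dk/(|k|^2+m^2)^2 \leq \pi/m^2$ and $\partial^2 V = \partial^2 V_B + 12\lambda \operatorname{diag}(\varphi_n^2)$, gives $\|\nabla U_n\|_2^2 \leq C\int f^2(1+\lambda^2 |\varphi_n|^4)\, dx$, and likewise $|\operatorname{tr}\partial^2 W(\varphi_n)|\leq C(1+|\varphi_n|^2)$. Since $V_U(\varphi_n)\geq |\varphi_n|^4/n$ by Cauchy--Schwarz in the components, the entire left-hand side is bounded below by
\[
\Bigl[\tfrac{(4m^2-b^2)\lambda}{n} - C\lambda^2\|f\|_\infty\Bigr] \int f |\varphi_n|^4\, dx \;-\; (C + R + C\lambda c_n)\!\int f |\varphi_n|^2\, dx \;-\; C'(1 + c_n).
\]
Choose $\lambda_0 := (4m^2-b^2)/(Cn\|f\|_\infty)$ so that for $\lambda<\lambda_0$ the bracket is strictly positive. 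Using $(\int f |\varphi_n|^2)^2 \leq \|f\|_1 \int f |\varphi_n|^4$, the first two terms form a coercive quadratic in $X := \int f |\varphi_n|^2$; minimizing produces an error of order $-(R+\lambda c_n)^2$, which together with the $-C'c_n$ term yields the claimed bound with $\alpha = 2$.

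The main obstacle is the rigorous justification of the Itô--Wick identity $\delta(\bar U_n) = \langle \bar U_n, w\rangle - \operatorname{Tr}(\nabla \bar U_n)$ in this infinite-dimensional setting: because $\bar U_n$ depends on $w$ only through the smooth momentum-truncated field $\varphi_n$, this should reduce essentially to the classical finite-dimensional integration by parts, but one must verify that $\bar U_n$ lies in the domain of the divergence and that the Fubini-style trace computation is permissible. Everything else is a careful bookkeeping of constants that balances the coercive quartic term against the $c_n$-weighted quadratic and the $\|\nabla U_n\|_2^2$ contribution, which is precisely what forces the smallness threshold $\lambda_0$.
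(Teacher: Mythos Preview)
Your approach is correct and follows the same overall strategy as the paper: expand $\delta(\bar U_n)$ via the trace formula (which the paper also invokes directly, so your worry about justifying it is overstated --- with the sharp momentum cut-off $\varphi_n$ is a smooth cylindrical functional and $\nabla\bar U_n$ is visibly trace class), integrate by parts in $x$, drop the gradient term by convexity of $W$, use $\Delta f\le b^2 f$ from Hypothesis~CO, and then balance the coercive $\lambda|y|^4$ contribution against the trace, the $R|y|^2$ term and $\|\nabla U_n\|_2^2$.

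The one methodological difference is in the final balancing. The paper reduces everything to a \emph{pointwise} lower bound: it uses Young's inequality $c_n\cdot\operatorname{tr}\partial^2W(y)\le c_n^p/p + (\operatorname{tr}\partial^2W(y))^q/q$ with $q<2$, so that the $|y|^{2q}$ growth is strictly sub-quartic and can be absorbed directly into $(4m^2-b^2)\lambda V_U(y)$; the only leftover in $n$ is the single term $c_n^p/p$. You instead keep $c_n\operatorname{tr}\partial^2W(\varphi_n)\lesssim c_n(1+|\varphi_n|^2)$ intact, pass to the integrated quantity $X=\int f|\varphi_n|^2$, and minimize a quadratic $aX^2-bX$ with $b\sim (R+c_n)$, obtaining $\alpha=2$. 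Both are valid; the paper's pointwise route is slightly cleaner (no Cauchy--Schwarz step $X^2\le\|f\|_1\!\int f|\varphi_n|^4$ is needed, and the $R$-dependence stays linear inside the $y$-expression), while yours makes the $\lambda_0$ threshold more explicit. Note that in your minimization the coercivity constant $[\text{bracket}]$ is $\sim\lambda$ for small $\lambda$, so your $C_1$ blows up as $\lambda\to 0$; this is harmless because the statement allows $C_1,C_2$ to depend on the fixed $V$ (hence on $\lambda$), but it is worth flagging.
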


\begin{proof}
  If $\tmop{Tr} (| \nabla K |) < + \infty$ and $K \in \mathcal{W}$ we have
  that $\delta (K) = \langle K, w \rangle_{\mathcal{H}} - \tmop{Tr} (\nabla
  K)$. Using this relation we obtain that
  
  \begin{multline*}
    \delta (\bar{U}_k) = \sum_{i = 1}^n \int_{\mathbb{R}^2} P_k (f
    \partial_{\phi^i} W (P_k \mathcal{I}w)) (x) w^i (x) \mathd x - \sum_{i = 1}^n \tmop{Tr}_{L^2} (P_k (f \partial_{\phi^i \phi^i}^2 W (P_k
    \mathcal{I}w) \cdot P_k (m^2 - \Delta)))
  \end{multline*}
  
  From this we obtain the lower bound
  \begin{eqnarray}
    \int_{\mathbb{R}^2} P_k (f \partial_{\phi^i} W (P_k \mathcal{I}w)) w^i
    \mathd x & = & \int_{\mathbb{R}^2} f \partial_{\phi^i} W (P_k
    \mathcal{I}w) (m^2 - \Delta) (P_k \mathcal{I}w^i) \mathd x \nonumber\\
    & = & \int_{\mathbb{R}^2} f \partial_{\phi^i} W (\mathcal{I}w_k) (m^2 -
    \Delta) (\mathcal{I}w_k^i) \mathd x \nonumber\\
    & = & \int_{\mathbb{R}^2} f \partial_{\phi^i \phi^r} W (\mathcal{I}w_k)
    \nabla \mathcal{I}w^i_k \cdot \nabla \mathcal{I}w^r_k \mathd x +
    \nonumber\\
    &  & + m^2 \int_{\mathbb{R}^2} f\mathcal{I}w^i_k \partial_{\phi^i} W
    (\mathcal{I}w_k) \mathd x + \nonumber\\
    &  & - \int_{\mathbb{R}^2} (\Delta f) W (\mathcal{I}w_k) \mathd x
    \nonumber\\
    & \geqslant & \int_{\mathbb{R}^2} f (m^2 \mathcal{I}w^i_k
    \partial_{\phi^i} W (\mathcal{I}w_k) - b^2 W (\mathcal{I}w_k)) \mathd x
    \nonumber
  \end{eqnarray}
  On the other hand we have
  \[ \tmop{Tr}_{L^2} (P_k (f \partial_{\phi^i \phi^i}^2 W (\mathcal{I}w_k)
     \cdot P_k (m^2 - \Delta))) = c_n \int_{\mathbb{R}^2} \partial_{\phi^i
     \phi^i}^2 W (\mathcal{I}w_k) f \mathd x \]
  \[ \leqslant \frac{c_n^p}{p} + \frac{1}{q} \int_{\mathbb{R}^2}
     (\partial_{\phi^i \phi^i}^2 W (\mathcal{I}w_k) (\mathcal{I}w_k))^q f
     \mathd x, \]
  where $\frac{1}{q} + \frac{1}{p} = 1$ and $q < 2$. Furthermore we have that
  
  \begin{multline*}
    \| \nabla U_k \|^2_2 \leqslant \int_{\mathbb{R}^2} \frac{1}{(| x |^2 +
    m^2)^2} \mathd x \int_{\mathbb{R}^2} (\partial_{\phi^i \phi^i}^2 V
    (\mathcal{I}w_k))^2 f \mathd x = \ell \int_{\mathbb{R}^2} (\partial_{\phi^i \phi^i}^2 V
    (\mathcal{I}w_k))^2 f \mathd x,
  \end{multline*}
  
  where $\ell = \int_{\mathbb{R}^2} \frac{1}{(| x |^2 + m^2)^2} \mathd x$.
  Using the previous inequality we obtain that
  \begin{eqnarray}
    &  & \delta (\bar{U}_n) - R \int_{\mathbb{R}^2} f | \mathcal{I}w_k |^2
    \mathd x - \| \nabla U_n \|^2_2 \nonumber\\
    &  & \geqslant - \frac{c_n^p}{p} + \int_{\mathbb{R}^2} f (m^2
    \mathcal{I}w^i_k \partial_{\phi^i} W (\mathcal{I}w_k) - b^2 W
    (\mathcal{I}w_k)) \mathd x + \nonumber\\
    &  & \qquad - \int_{\mathbb{R}^2} f \left( \frac{(\partial_{\phi^i
    \phi^i}^2 W (\mathcal{I}w_k))^q}{q} + \ell (\partial_{\phi^i \phi^i}^2 (V)
    (\mathcal{I}w_k))^2 + R | \mathcal{I}w_k |^2 \right) \mathd x \nonumber
  \end{eqnarray}
  It is simple to see that there exists a $\lambda_0 > 0$ (depending only on
  $b^2$ and $m^2$) such that for any potential $V$ satisfying the
  Hypothesis~$V_{\lambda}$ with $\lambda < \lambda_0$ the expression
  \begin{equation}
    m^2 y^i_k \partial_{\phi^i} W (y) - b^2 W (y) - \frac{(\partial_{\phi^i
    \phi^i}^2 W (y))^q}{q} - \ell (\partial_{\phi^i \phi^i}^2 V
    (\mathcal{I}w_k))^2 - R | y |^2 \label{eq:boundedbelow}
  \end{equation}
  is bounded from below and thus the thesis of the lemma holds.
\end{proof}

\begin{remark}
  \label{remark_hypotheses}Lemma and \ref{lemma_inequality2} Lemma
  \ref{lemma_inequality3} are the only places where Hypothesis~CO and
  Hypothesis~$V_{\lambda}$ are used in an essential way.
  
  Indeed we are able to obtain the estimate {\eqref{equationLp2}}, using the
  technique of the proof of Lemma \ref{lemma_inequality2}, only if $V$ is a
  sum of a bounded function and a polynomial. Furthermore we can obtain that
  the expression {\eqref{eq:boundedbelow}} is bounded from below, for
  $\lambda$ small enough and for any $R > 0$, only if the expression $y^i_k
  \partial_{\phi^i} W (y)$ is positive at infinity and it is able to
  compensate the growth of all the other terms in expression
  {\eqref{eq:boundedbelow}}.
  
  The previous conditions are satisfied only if $b^2 < 4 m^2$ and $V$ is a sum
  of a bounded function and a polynomial of fourth degree (not less because of
  the presence of $- R | y |^2$, and no more since in the other cases the
  growth at infinity of $\ell (\partial_{\phi^i \phi^i}^2 V
  (\mathcal{I}w_k))^2$ would have been strictly stronger than the growth at
  infinity of $y^i_k \partial_{\phi^i} W (y)$). This is the main reason for
  the restriction on $b^2$ in Hypothesis~CO and for the special form of $V$
  required by Hypothesis~$V_{\lambda}$. \ 
\end{remark}

\begin{lemma}
  \label{lemma_inequality4}Given a $p \in [1, + \infty) $there is a $R > 0$
  big enough such that
  \[ \exp \left( - \delta (\hat{U}) - R \int_{\mathbb{R}^2} f (x) |
     \mathcal{I}w (x) |^2 \mathd x \right) \in L^p (\mu) . \]
\end{lemma}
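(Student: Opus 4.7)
The plan is to exploit the fact that $\hat U$ is \emph{linear} in $w$, so $\delta(\hat U)$ lives in the second inhomogeneous Wiener chaos. Write $c \assign \lambda_-\wedge 0 \leqslant 0$, so that $\hat U(w)^i = c\, f(x)\mathcal{I}w^i(x)$. If $c=0$ the statement is trivial since $\exp(-R\int f|\mathcal{I}w|^2\,\mathrm{d}x)\leqslant 1$; assume henceforth $c<0$. I would first give a clean ``Wick'' formula for $\delta(\hat U)$. Using the projections $P_n$ of Lemma~\ref{lemma_inequality2} one has $\delta(P_n(fP_n\mathcal{I}w)) = \int f\,P_n\mathcal{I}w\cdot P_nw\,\mathrm{d}x - \mathrm{Tr}(P_nfP_n\mathcal{I})$, and the diagonal subtraction is precisely Wick renormalisation: $\delta(\hat U_n) = c\,{:}{\int f\, P_n\mathcal{I}w\cdot P_nw\,\mathrm{d}x}{:}$. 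Running the approximation argument of Lemma~\ref{lemma_inequality2} (now for the simpler linear integrand $f\mathcal{I}w$) shows $\delta(\hat U_n)\to\delta(\hat U)$ in every $L^p(\mu)$. Observe also that $\int f|\mathcal{I}w|^2\,\mathrm{d}x - \mathfrak c_f = {:}\int f|\mathcal{I}w|^2:$ where $\mathfrak c_f = \|f\|_{L^1}\cdot n\int(|k|^2+m^2)^{-2}(2\pi)^{-2}\,\mathrm{d}k$ is finite in $d=2$.

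With these formulas, the centred exponent is $X - \mathbb{E}[X] = \langle A_R w, w\rangle_{\text{Wick}}$ where
$$A_R \;=\; |c|\cdot \tfrac12(f\mathcal{I}+\mathcal{I}f) \;-\; R\,\mathcal{I}f\mathcal{I}$$
is a bounded self-adjoint operator on $\mathcal{H}$ (both summands are Hilbert--Schmidt in dimension $2$). The key operator bound is then obtained by Cauchy--Schwarz and weighted AM--GM: for $u\in\mathcal{H}$,
$$\int f(x)\,u(x)\,\mathcal{I}u(x)\,\mathrm{d}x \;\leqslant\; \frac{|c|}{2R}\int f u^2\,\mathrm{d}x + \frac{R}{2|c|}\int f|\mathcal{I}u|^2\,\mathrm{d}x,$$
which yields
$$\langle A_R u, u\rangle \;\leqslant\; \frac{|c|^2}{2R}\int f u^2\,\mathrm{d}x - \frac{R}{2}\int f|\mathcal{I}u|^2\,\mathrm{d}x \;\leqslant\; \frac{|c|^2\,\|f\|_\infty}{2R}\,\|u\|_{\mathcal H}^2.$$
Hence the positive part of $A_R$ has operator norm at most $|c|^2\|f\|_\infty/(2R)$.

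The conclusion follows from the classical hypercontractivity criterion: a centred second-chaos variable $\langle Aw,w\rangle_{\text{Wick}}$ with $A$ Hilbert--Schmidt and self-adjoint satisfies $\exp(\langle Aw,w\rangle_{\text{Wick}})\in L^p(\mu)$ as soon as $2p\,\|A^+\|_{\mathrm{op}}<1$. Taking $R > p\,|c|^2\,\|f\|_\infty$ gives the required bound, and restoring the finite mean $\mathbb{E}[X] = -R\mathfrak c_f$ does not affect integrability. The main technical obstacle is the rigorous interpretation of $\delta(\hat U)$: because the kernel $f(x)G(x-y)$ is \emph{not} trace class in two dimensions (the diagonal $G(0)$ is logarithmically divergent), one must pass through the $P_n$-approximation and verify both the Wick identification of $\delta(\hat U_n)$ and its $L^p$-stability as $n\to\infty$, in the same spirit as Lemma~\ref{lemma_inequality2}; once this is done, the operator estimate above makes the Gaussian exponential integrability essentially automatic.
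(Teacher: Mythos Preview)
Your argument is correct. The paper does not give its own proof of this lemma but simply refers to Lemma~3.2 of Klein--Landau--Perez~\cite{Klein1984}; your write-up is therefore more detailed than what appears in the paper. The strategy you use---observe that $\hat U(w)=c\,f\mathcal I w$ with $c=\lambda_-\wedge 0$ is \emph{linear}, so that $-\delta(\hat U)-R\int f|\mathcal I w|^2$ is a (renormalised) element of the second Wiener chaos with symmetric Hilbert--Schmidt kernel $A_R=\tfrac{|c|}{2}(f\mathcal I+\mathcal I f)-R\,\mathcal I f\mathcal I$, bound the positive part of $A_R$ via weighted Young to get $\|A_R^+\|_{\mathrm{op}}\leqslant |c|^2\|f\|_\infty/(2R)$, and then invoke the Gaussian criterion $\mathbb E[\exp(p\, I_2(A))]<\infty$ iff $2p\sup_j\lambda_j(A)<1$---is exactly the standard route for exponential integrability of second-chaos variables and is essentially what the cited reference does. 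Your care about the logarithmic divergence of $\mathrm{Tr}(f\mathcal I)$ in $d=2$ (forcing the Wick/$P_n$ identification of $\delta(\hat U)$) and the verification that $A_R$ is Hilbert--Schmidt (so the infinite product converges also on the negative side) are the right technical checks.
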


\begin{proof}
  This lemma is proven in {\cite{Klein1984}} Lemma~3.2. 
\end{proof}

\begin{lemma}
  \label{lemma_Lp}Suppose that $f$ satisfies the Hypotheses CO, then there
  exists $\lambda_0 > 0$ depending only on $f$ and $m^2$ such that for any
  $\lambda < \lambda_0$ and any $V$ satisfying the Hypothesis~$V_{\lambda}$ we
  have that
  \[ \exp (- \delta (U) + (1 + \| \nabla U \|^2_2)) \in L^p (\mu) \]
  for any $p \in [1, + \infty) .$
\end{lemma}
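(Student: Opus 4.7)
The proof combines the three preparatory Lemmas~\ref{lemma_inequality2}, \ref{lemma_inequality3} and~\ref{lemma_inequality4} via a H\"older decomposition of the exponent. Introduce a free parameter $R > 0$ and write
\[
-\delta(U) + 1 + \|\nabla U\|_2^2 \;=\; 1 \;+\; G_1^R \;+\; G_2^R,
\]
with $G_1^R := -\delta(\hat U) - R\!\int\! f|\mathcal{I}w|^2\,\mathd x$ and $G_2^R := -\delta(\bar U) + \|\nabla U\|_2^2 + R\!\int\! f|\mathcal{I}w|^2\,\mathd x$. By H\"older's inequality with conjugate exponents $a, b > 1$, the required $L^p$-bound reduces to the two separate exponential estimates $\mathbb{E}[e^{ap G_1^R}]<\infty$ and $\mathbb{E}[e^{bp G_2^R}]<\infty$. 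The first one is exactly Lemma~\ref{lemma_inequality4}, provided $R$ is taken sufficiently large (depending on $ap$).

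The second bound is the heart of the proof. For each $n\in\mathbb{N}$, Lemma~\ref{lemma_inequality3} furnishes the pointwise deterministic estimate
\[
G_2^{R,n} \;:=\; -\delta(\bar U_n) + \|\nabla U_n\|_2^2 + R\!\int\! f\,(P_n\mathcal{I}w)^2\,\mathd x \;\leq\; C_1 + C_2 c_n^\alpha .
\]
Splitting $G_2^R = G_2^{R,n} + \Delta_n$ with $\Delta_n$ the natural remainder (the sum of the three differences $\delta(\bar U_n) - \delta(\bar U)$, $\|\nabla U\|_2^2 - \|\nabla U_n\|_2^2$ and $R\int f[|\mathcal{I}w|^2 - (P_n\mathcal{I}w)^2]$), one obtains
\[
\mathbb{E}[e^{bp G_2^R}] \;\leq\; e^{bp(C_1 + C_2 c_n^\alpha)}\, \mathbb{E}[e^{bp \Delta_n}].
\]
Lemma~\ref{lemma_inequality2} controls each piece of $\Delta_n$ in every $L^k$ with rate $n^{-\alpha}$ (and the exponent $\alpha$ can be pushed as close to $1$ as one likes by sharpening the summation step in that proof). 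For fixed $p$ it then suffices to exhibit some $n = n(p)$ for which $\mathbb{E}[e^{bp\Delta_n}]$ is finite; the resulting $c_n^\alpha$ in the prefactor is harmlessly a finite constant.

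Establishing this exponential integrability of $\Delta_n$ requires a further decomposition along $V = V_B + \lambda V_U$ from Hypothesis~$V_\lambda$. The $V_B$-contribution to $\Delta_n$ is a Skorokhod integral with a uniformly bounded Malliavin derivative, for which the $\cosh$-inequality~\eqref{equationinequalitynew} applied as in the proof of Lemma~\ref{lemma_inequality2}, together with the $O(1/n^2)$ bound derived there, forces the corresponding exponential moment to be arbitrarily close to $1$ as $n \to \infty$. The $V_U$-contribution lies in a finite Wiener chaos of degree at most four whose $L^2$-norm vanishes at the same rate; by hypercontractivity the Taylor expansion of $\mathbb{E}[e^{bp \Delta_n^{V_U}}]$ converges provided $\|\Delta_n^{V_U}\|_{L^2}$ is below a critical threshold depending on $bp$, a condition met by taking $n$ large enough. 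The main obstacle lies precisely here: since the cubic/quartic chaos coming from $\partial V_U$ is not exponentially integrable for an arbitrary coefficient, one must exploit the quantitative $n$-smallness from Lemma~\ref{lemma_inequality2}, available only because of the restriction $\lambda < \lambda_0$ inherited from Lemma~\ref{lemma_inequality3}. This delicate interplay between the $c_n^\alpha$ blow-up and the $n^{-\alpha}$ decay is what closes the argument.
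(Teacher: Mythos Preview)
Your decomposition $G_1^R + G_2^R$ and the treatment of $G_1^R$ via Lemma~\ref{lemma_inequality4} are fine, and the idea of using Lemma~\ref{lemma_inequality3} to bound the $n$-regularized piece $G_2^{R,n}$ deterministically is also correct. The gap is in your final step, where you claim that the Taylor expansion of $\mathbb{E}[e^{bp\,\Delta_n^{V_U}}]$ converges once $\|\Delta_n^{V_U}\|_{L^2}$ falls below some threshold. This is false: the $V_U$-contribution to $\Delta_n$ lives in Wiener chaos of order up to four, and a nonzero element $X$ of the $q$-th chaos with $q\geq 3$ never has a finite exponential moment, no matter how small $\|X\|_2$ is. Hypercontractivity gives $\mathbb{E}[|X|^k]\leq (k-1)^{qk/2}\|X\|_2^k$, and for $q\geq 3$ the factor $(k-1)^{qk/2}/k!$ grows like $k^{(q/2-1)k}$, so the series $\sum_k t^k\mathbb{E}[|X|^k]/k!$ diverges for every $t>0$. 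Thus there is no ``critical threshold'' and your argument cannot be closed this way.

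This is precisely why the paper does not attempt to control $\mathbb{E}[e^{bp\,\Delta_n}]$ directly. Instead it invokes the standard Nelson argument (Lemma~V.5 in \cite{Simon1974}): from Lemmas~\ref{lemma_inequality2} and~\ref{lemma_inequality3} one extracts a tail bound of the form
\[
\mu\bigl(\{\,G_2^R \geq \beta\log N\,\}\bigr)\;\leq\; e^{-N^{\alpha}},
\]
obtained by choosing, for each level $K$, the cutoff $n=n(K)$ so that the deterministic bound $C_1+C_2 c_n^{\alpha}$ absorbs half of $K$, and then using the polynomial moment bounds of Lemma~\ref{lemma_inequality2} with a $p$ optimized against $n$ to control $\mu(|\Delta_n|>K/2)$. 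Such a tail estimate immediately yields $e^{G_2^R}\in L^p$ for every $p$, without ever asking for an exponential moment of a high-chaos variable. In short, your $L^p$ input from Lemma~\ref{lemma_inequality2} is the right raw material, but it has to be fed into a tail/layer-cake argument rather than a direct moment-generating-function estimate.
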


\begin{proof}
  The thesis follows from Lemma~\ref{theorem_det},
  Lemma~\ref{lemma_inequality2}, Lemma~\ref{lemma_inequality3} and
  Lemma~\ref{lemma_inequality4} using a standard reasoning due to Nelson (see
  Lemma~V.5 of {\cite{Simon1974}} or {\cite{Glimm1987}}) due to the fact that
  from the previous results it follows that there exist two constants $\alpha,
  \beta > 0$ independent on $N$ such that
  \[ \mu (\{ w \in \mathcal{W}| \delta (U^N) (w) \geqslant \beta (\log (N))
     \}) \leqslant e^{- N^{\alpha}} . \]
\end{proof}

\begin{proof*}{Proof of Theorem~\ref{theorem_main1}}
  By Proposition~\ref{proposition_serie3} in order to prove the theorem it
  remains only to prove that $G_{0, \lambda} (t)$ is real analytic for any $t
  \in \mathbb{R}$. The proof of this fact easily follows from
  Lemma~\ref{lemma_Lp} exploiting a reasoning similar to the one used in
  Lemma~\ref{lemma_chi2}.
\end{proof*}

\section{Supersymmetry}\label{section:supersymmetry}

At this point our main result is reduced to check the claim of
Theorem~\ref{th:pol-eq}, namely that for all polynomials $p, P : \mathbb{R}^n
\rightarrow \mathbb{R}$ and all $n \geqslant 0$ and all $\chi > 0$ we have the
equivalence
\begin{equation}
  \langle p (\varphi (0)) (Q_{\chi} (P, f))^n \rangle_{\chi} = \langle p
  (\varphi (0)) (- 4 \pi P (\varphi (0)))^n \rangle_{\chi} .
  \label{eq:pol-eq-bis}
\end{equation}
Since the expressions in the expectations are polynomials in the fields
$\varphi, \omega, \psi \comma \bar{\psi}$ which are ``free'', namely satisfy
either the bosonic or fermionic version of Wick's theorem (see, e.g.,
{\cite{Fetter2012}} Chapter~3 Section~8) the claim could be checked by
explicit computations. However this is still not trivial and a better
understanding of the structure of the required computations can be obtained
introducing a supersymmetric formulation involving the \tmtextit{superspace}
$\mathfrak{S}$ and the \tmtextit{superfield} $\Phi$. This new formulation
exposes a symmetry of the problem which is not obvious from the expressions we
obtained so far.

\

For an introduction to the mathematical formalism of supersymmetry see
e.g.~{\cite{dewitt_supermanifolds_1992,arai_supersymmetric_1993,rogers_supermanifolds_2007,de_goursac_noncommutative_2015}}.
The details of the rigorous implementation of the ideas exposed here is the
main goal of the paper of Klein et al.~{\cite{Klein1984}} and of the
modifications we implement here in order to overcome a gap in their proof.

\subsection{The superspace}

Formally the superspace $\mathfrak{S}$ can be thought as the set of points
$(x, \theta, \bar{\theta})$ where $x \in \mathbb{R}^2$ and $\theta,
\bar{\theta}$ are two additional anticommuting coordinates. A more concrete
construction is to understand $\mathfrak{S}$ via the algebra of smooth
functions on it.

Let $\mathfrak{G} (\theta_1, \ldots, \theta_n)$ be the (real) Grassmann
algebra generated by the symbols $\theta_1, \ldots, \theta_n$, i.e.
$\mathfrak{G} (\theta_1, \ldots, \theta_n) = \text{span} (1, \theta_i,
\theta_i \theta_j, \theta_i \theta_j \theta_k, \ldots, \theta_1 \cdots
\theta_n)$ with the relations $\theta_i \theta_j = - \theta_j \theta_i$.

A $C^{\infty}$ function $F : \mathbb{R}^2 \rightarrow \mathfrak{G} (\theta,
\bar{\theta})$ is just a quadruplet $(f_{\emptyset}, f_{\theta},
f_{\bar{\theta}}, f_{\theta \bar{\theta}}) \in (C^{\infty} (\mathbb{R}^2))^4$,
via the identification
\begin{equation}
  F (x) = f_{\emptyset} (x) + f_{\theta} (x) \theta + f_{\bar{\theta}} (x)
  \bar{\theta} + f_{\theta \bar{\theta}} (x) \theta \bar{\theta} .
  \label{eq:theta}
\end{equation}
The function $F$ can be considered as a function $F : \mathfrak{S} \rightarrow
\mathbb{R}$ by formally writing
\[ F (x, \theta, \bar{\theta}) = F (x) . \]
In particular we identify $C^{\infty} (\mathfrak{S})$ with $C^{\infty}
(\mathbb{R}^2 ; \mathfrak{G} (\theta, \bar{\theta}))$. $C^{\infty}
(\mathfrak{S})$ is a non-commutative algebra on which we can introduce a
linear functional defined by
\[ F \mapsto \int F (x, \theta, \bar{\theta}) \mathd x \mathd \theta \mathd
   \bar{\theta} \assign - \int_{\mathbb{R}^2} f_{\theta \bar{\theta}} (x)
   \mathd x, \]
where $f_{\theta \bar{\theta}} (x)$ as in equation {\eqref{eq:theta}}, induced
by the standard Berezin integral on $\mathfrak{S}$ satisfying
\[ \int \mathd \theta \mathd \bar{\theta} = \int \theta \mathd \theta \mathd
   \bar{\theta} = \int \bar{\theta} \mathd \theta \mathd \bar{\theta} = 0,
   \qquad \int \theta \bar{\theta} \mathd \theta \mathd \bar{\theta} = - 1. \]
\begin{remark}
  A norm on $C^{\infty} (\mathfrak{S})$ can be defined by
  \[ \| F \|_{C (\mathfrak{G})} = \sup_{x \in \mathbb{R}^2} (| f_{\emptyset}
     (x) | + | f_{\theta} (x) | + | f_{\bar{\theta}} (x) | + | f_{\theta
     \bar{\theta}} (x) |), \]
  and an involution by
  \[ \bar{F} (x, \theta, \bar{\theta}) = \overline{f_{\emptyset}} (x) +
     \overline{f_{\theta}} (x) \theta + \overline{f_{\bar{\theta}}} (x)
     \bar{\theta} + \overline{f_{\theta \bar{\theta}}} (x) \theta
     \bar{\theta}, \]
  where the bar on the right hand side denotes complex conjugation.
\end{remark}

Given $r \in C^1 (\mathbb{R}; \mathbb{R})$ we define the composition $r \circ
F : \mathfrak{S} \rightarrow \mathbb{R}$ by
\[ r (F (x, \theta, \bar{\theta})) \assign r (f_{\emptyset} (x)) + r'
   (f_{\emptyset} (x)) f_{\theta} (x) \theta + r' (f_{\emptyset} (x))
   f_{\bar{\theta}} (x) \bar{\theta} + r' (f_{\emptyset} (x)) f_{\theta
   \bar{\theta}} (x) \theta \bar{\theta}, \]
in accordance with the same expression one would get if $r$ were a monomial.
Moreover we can define similarly the space of Schwartz superfunctions
$\mathcal{S} (\mathfrak{S})$ and the Schwartz superdistributions $\mathcal{S}'
(\mathfrak{S}) =\mathcal{S}' (\mathbb{R}^2 ; \mathfrak{G} (\theta,
\bar{\theta}))$ where $T \in \mathcal{S}' (\mathfrak{S})$ can be written $T =
T_{\emptyset} + T_{\theta} \theta + T_{\bar{\theta}} \bar{\theta} + T_{\theta
\bar{\theta}} \theta \bar{\theta}$ with $T_{\emptyset}, T_{\theta},
T_{\bar{\theta}}, T_{\theta \bar{\theta}} \in \mathcal{S}' (\mathbb{R}^2)$ and
duality pairing
\[ T (f) = - T_{\emptyset} (f_{\theta \bar{\theta}}) + T_{\theta}
   (f_{\bar{\theta}}) - T_{\bar{\theta}} (f_{\theta}) - T_{\theta
   \bar{\theta}} (f_{\emptyset}), \qquad f_{\emptyset}, f_{\theta},
   f_{\bar{\theta}}, f_{\theta \bar{\theta}} \in \mathcal{S} (\mathbb{R}^2) .
\]

\subsection{The superfield}

We take the generators $\theta, \bar{\theta}$ to anticommute with the the
fermionic fields $\psi, \bar{\psi}$, and introduce the complex Gaussian field
\[ \omega \assign - \varpi ((m^2 - \Delta) \varphi + i \mathcal{I}^{\chi}
   \eta) \]
and put all our fields together in a single object defining the
\tmtextit{superfield}
\[ \Phi (x, \theta, \bar{\theta}) \assign \varphi (x) + \bar{\psi} (x) \theta
   + \psi (x) \bar{\theta} + \omega (x) \theta \bar{\theta}, \]
where $x \in \mathbb{R}^2$. We also define
\begin{eqnarray*}
  V (\Phi (x, \theta, \bar{\theta})) & = & V (\varphi (x)) + \partial V
  (\varphi (x)) (\bar{\psi} (x) \theta + \psi (x) \bar{\theta}) +\\
  &  & + [\partial V (\varphi (x)) \omega (x) + \partial^2 V (\varphi (x))
  \psi (x) \bar{\psi} (x)] \theta \bar{\theta}
\end{eqnarray*}
and since
\[ \tilde{f} (| x |^2 + 4 \theta \bar{\theta}) = \tilde{f} (| x |^2) + 4
   \tilde{f}' (| x |^2) \theta \bar{\theta}, \]
where $\tilde{f} : \mathbb{R}_+ \rightarrow \mathbb{R}$ is the smooth function
such that $f (x) = \tilde{f} (| x |^2)$ and $f' (x) = \tilde{f}' (| x |^2)$
(see Section \ref{section_introduction}), we observe that
\[ - \int V (\Phi (x, \theta, \bar{\theta})) \tilde{f} (| x |^2 + 4 \theta
   \bar{\theta}) \mathd x \mathd \theta \mathd \bar{\theta} = \int f (x)
   \partial V (\varphi (x)) \omega (x) \mathd x + \]
\[ + \int [f (x) \partial^2 V (\varphi (x)) \psi (x) \bar{\psi} (x) + 4 V
   (\varphi (x)) f' (x)] \mathd x = Q_{\chi} (V, f) . \]
By introducing the superspace distribution $\theta \bar{\theta} \delta_0
(\mathd x)$ we have also, by similar computations:
\[ p (\varphi (0)) = - \int p (\Phi (x, \theta, \bar{\theta})) \theta
   \bar{\theta} \delta_0 (\mathd x) \mathd \theta \mathd \bar{\theta} . \]
As a consequence we can rewrite $\langle p (\varphi (0)) (Q_{\chi} (P, f))^n
\rangle_{\chi}$ as an average over the superfield $\Phi$:
\begin{equation}
  \begin{aligned}
    \Xi_{\chi} (p) \assign & \langle p (\varphi (0)) (Q_{\chi} (P, f))^n
    \rangle_{\chi} =\\
    = & \left\langle \left( - \int p (\Phi (x, \theta, \bar{\theta})) \theta
    \bar{\theta} \delta_0 (\mathd x) \mathd \theta \mathd \bar{\theta} \right)
    \right.\left. \left( - \int P (\Phi (x, \theta, \bar{\theta}))
    \tilde{f} (| x |^2 + 4 \theta \bar{\theta}) \mathd x \mathd \theta \mathd
    \bar{\theta} \right)^n \right\rangle_{\chi}
  \end{aligned}
  \label{eq:super-form}
\end{equation}
While all these rewritings are essentially algebraic, the supersymmetric
formulation~{\eqref{eq:super-form}} makes appear a symmetry of the expression
for $\Xi_{\chi} (p)$ which was not clear from the original formulation. In
some sense the reader can think of the superspace $(x, \theta, \bar{\theta})$
and of the superfield $\Phi (x, \theta, \bar{\theta})$ as a convenient
bookkeeping procedure for a series of relations between the quantities one is
manipulating.

\

A crucial observation is that the superfield $\Phi$ is a free field with mean
zero, namely all its correlation functions can be expressed in terms of the
two-point function $\langle \Phi (x, \theta, \bar{\theta}) \Phi (x, \theta',
\bar{\theta}') \rangle_{\chi}$ via Wick's theorem. A direct computation of
this two point function gives:
\begin{align*}
  \langle \Phi (x, \theta, \bar{\theta}) \Phi (x, \theta', \bar{\theta}')
  \rangle_{\chi} = & \langle \varphi (x) \varphi (x') \rangle_{\chi} - \langle
  \bar{\psi} (x) \psi (x') \rangle_{\chi} \theta \bar{\theta}' - \langle \psi
  (x) \bar{\psi} (x') \rangle_{\chi} \bar{\theta} \theta'\\
  & + \langle \varphi (x) \omega (x') \rangle_{\chi} \theta' \bar{\theta}' +
  \langle \omega (x) \varphi (x') \rangle_{\chi} \theta \bar{\theta} +\\
  & + \langle \omega (x) \omega (x') \rangle_{\chi} \theta \bar{\theta}
  \theta' \bar{\theta}'\\
  = & \mathcal{G}_{2 + 2 \chi} (x - x') + \varpi \mathcal{G}_{1 + 2 \chi} (x -
  x') (\theta \bar{\theta}' - \bar{\theta} \theta') +\\
  & - \varpi (m^2 - \Delta) \mathcal{G}_{2 + 2 \chi} (x - x') (\theta'
  \bar{\theta}' + \theta \bar{\theta}) +\\
  & + ((m^2 - \Delta)^2 \mathcal{G}_{2 + 2 \chi} (x - x') -\mathcal{G}_{2
  \chi} (x - x')) \theta \bar{\theta} \theta' \bar{\theta}' .
\end{align*}
Upon observing that $(m^2 - \Delta) \mathcal{G}_{2 + 2 \chi} =\mathcal{G}_{1 +
2 \chi}$, $(m^2 - \Delta)^2 \mathcal{G}_{2 + 2 \chi} =\mathcal{G}_{2 \chi}$
and that \ $- \theta \bar{\theta}' + \bar{\theta} \theta' + \theta'
\bar{\theta}' + \theta \bar{\theta} = (\theta - \theta') (\bar{\theta} -
\bar{\theta}')$ we conclude
\begin{equation}
  \langle \Phi (x, \theta, \bar{\theta}) \Phi (x, \theta', \bar{\theta}')
  \rangle = C_{\Phi} (x - x', \theta - \theta', \bar{\theta} - \bar{\theta}')
  \label{eq:correlation}
\end{equation}
where
\[ C_{\Phi} (x, \theta, \bar{\theta}) \assign \mathcal{G}_{2 + 2 \chi} (x) -
   \varpi \mathcal{G}_{1 + 2 \chi} (x) \theta \bar{\theta} . \]
\begin{remark}
  Note that when $\chi = 0$, the superfield $\Phi$ corresponds to the formal
  functional integral
  \[ e^{- \frac{1}{2} \int [\Phi (m^2 - \Delta_S) \Phi] \mathd x \mathd \theta
     \mathd \bar{\theta}} \mathcal{D} \Phi \]
  where $\mathcal{D} \Phi =\mathcal{D} \psi \mathcal{D} \bar{\psi} \mathcal{D}
  \varphi \mathcal{D} \eta$ and where $\Delta_S = \Delta + \partial_{\theta}
  \partial_{\bar{\theta}}$ is the superlaplacian, where $\partial_{\theta},
  \partial_{\bar{\theta}}$ are the Grassmannian derivative such that
  $\partial_{\theta} (\theta) = \partial_{\bar{\theta}} (\bar{\theta}) = - 1$,
  $\partial_{\theta} (\bar{\theta}) = \partial_{\bar{\theta}} (\theta) = 0$,
  $\partial_{\theta} (\bar{\theta} \theta) = - \bar{\theta}$ and
  $\partial_{\bar{\theta}} (\bar{\theta} \theta) = \theta$ (see, e.g,
  {\cite{Wegner2016}} Chapter 20 or {\cite{Zinn1993}} Section 16.8.4).
  
  Then
  \[ \frac{1}{2} \int [\Phi (m^2 - \Delta_S) \Phi] \mathd x \mathd \theta
     \mathd \bar{\theta} = \frac{1}{2} \int [- 2 \bar{\psi} (m^2 - \Delta)
     \psi - \omega \omega + 2 \omega (m^2 - \Delta) \varphi] \mathd x \]
  
  \[ = \frac{1}{2} \int [- 2 \psi (m^2 - \Delta) \bar{\psi} + ((m^2 - \Delta)
     \varphi)^2 + \eta^2] \mathd x \]
  and this indeed corresponds to the action functional appearing in the formal
  functional integral for $(\psi, \bar{\psi}, \varphi, \eta)$. This is in
  agreement with the fact that the two point function satisfies the equation
  \[ (m^2 - \Delta_S) C_{\Phi} (x, \theta, \bar{\theta}) = \delta_0 (x) \delta
     (\theta) \delta (\bar{\theta}), \]
  where $\delta (x) \delta (\theta) \delta (\bar{\theta})$ is the distribution
  such that
  \[ \int F (x, \theta, \bar{\theta}) \delta_0 (x) \delta (\theta) \delta
     (\bar{\theta}) \mathd x \mathd \theta \mathd \bar{\theta} = f_{\emptyset}
     (0), \]
  namely, $C_{\Phi}$ is the Green's function for $(m^2 - \Delta_S)$.
\end{remark}

\subsection{The supersymmetry}

On $C^{\infty} (\mathfrak{S})$ one can introduce the (graded) derivations
\[ Q \assign 2 \theta \nabla + x \partial_{\bar{\theta}}, \qquad \bar{Q}
   \assign 2 \bar{\theta} \nabla - x \partial_{\theta}, \]
where $x \in \mathbb{R}^2$, $\nabla$ (and in the following also $\Delta =
\tmop{div} (\nabla \cdot)$) acts only on the space variables $x \in
\mathbb{R}^2$,which are such that
\[ Q (| x |^2 + 4 \theta \bar{\theta}) = \bar{Q} (| x |^2 + 4 \theta
   \bar{\theta}) = 0, \]
namely they annihilate the quadratic form $| x |^2 + 4 \theta \bar{\theta}$.
Moreover if $Q F = \bar{Q} F = 0$, for $F$ as in equation {\eqref{eq:theta}},
then we must have
\[ 0 = Q F (x, \theta, \bar{\theta}) = 2 \theta \nabla f_{\emptyset} (x) + x
   f_{\bar{\theta}} (x) + 2 \nabla f_{\bar{\theta}} (x) \theta \bar{\theta} -
   x f_{\theta \bar{\theta}} (x) \theta \]
\[ 0 = \bar{Q} F (x, \theta, \bar{\theta}) = 2 \bar{\theta} \nabla
   f_{\emptyset} (x) + x f_{\theta} (x) - 2 \nabla f_{\theta} (x) \theta
   \bar{\theta} + x f_{\theta \bar{\theta}} (x) \bar{\theta} \]
and therefore
\[ \nabla f_{\emptyset} (x) = \frac{x}{2} f_{\theta \bar{\theta}} (x) \qquad
   \text{and} \qquad f_{\theta} (x) = f_{\bar{\theta}} (x) = 0. \]
If we also request that $F$ is invariant with respect to $\mathbb{R}^2$
rotations in space, then there exists an $f$ such that $f (| x |^2) =
f_{\emptyset} (x)$ from which we deduce that $2 x f' (| x |^2) = \nabla f (| x
|^2) = \nabla f_{\emptyset} (x) = \frac{x}{2} f_{\theta \bar{\theta}} (x)$
which implies
\[ f (| x |^2 + 4 \theta \bar{\theta}) = f (| x |^2) + 4 f' (| x |^2) \theta
   \bar{\theta} = f_{\emptyset} (x) + f_{\theta \bar{\theta}} (x) \theta
   \bar{\theta} = F (x, \theta, \bar{\theta}) . \]
Namely any function satisfying these two equations can be written in the form
\[ F (x, \theta, \bar{\theta}) = f (| x |^2 + 4 \theta \bar{\theta}) . \]
Observe that if we introduce the linear transformations
\[ \tau (b, \bar{b}) \left(\begin{array}{c}
     x\\
     \theta\\
     \bar{\theta}
   \end{array}\right) = \left(\begin{array}{c}
     x + 2 \bar{b} \theta \rho + 2 b \bar{\theta} \rho\\
     \theta - (x \cdot b) \rho\\
     \bar{\theta} + (x \cdot \bar{b}) \rho
   \end{array}\right) \in \mathfrak{G} (\theta, \bar{\theta}, \rho) \]
for $b, \bar{b} \in \mathbb{R}^2$ and where $\rho$ is a new odd variable
anticommuting with $\theta, \bar{\theta}$ and itself, then we have
\[ \left. \frac{\mathd}{\mathd t} \right|_{t = 0} \tau (t b, t \bar{b}) F (x,
   \theta, \bar{\theta}) = \left. \frac{\mathd}{\mathd t} \right|_{t = 0} F
   (\tau (t b, t \bar{b}) (x, \theta, \bar{\theta})) = (b \cdot \bar{Q} +
   \bar{b} \cdot Q) F (x, \theta, \bar{\theta}) \]
so $\tau (b, \bar{b}) = \exp (b \cdot \bar{Q} + \bar{b} \cdot Q)$ and $\tau (t
b, t \bar{b}) \tau (s b, s \bar{b}) = \tau ((t + s) b, (t + s) \bar{b})$. In
particular $F \in C^{\infty} (\mathfrak{S})$ is supersymmetric if and only if
$F$ is invariant with respect to rotations in space and for any $b, \bar{b}
\in \mathbb{R}^2$ we have $\tau (b, \bar{b}) (F) = F$.

By duality the operators $Q, \bar{Q}$ and $\tau (b, \bar{b})$ also act  on the
space $\mathcal{S}' (\mathfrak{S})$ and we say that the distribution $T \in
\mathcal{S}' (\mathfrak{S})$ is supersymmetric if it is invariant with respect
to rotations in space and $Q (T) = \bar{Q} (T) = 0$. For supersymmetric
functions and distribution the following fundamental theorem holds.

\begin{theorem}
  \label{theorem_supersymmetry1}Let $F \in \mathcal{S} (\mathfrak{S})$ and $T
  \in \mathcal{S}' (\mathfrak{S})$ such that $T_0$ is a continuous function.
  If both $F$ and $T$ are supersymmetric, then we have the reduction formula
  \begin{equation}
    \int T (x, \theta, \bar{\theta}) \cdot F (x, \theta, \bar{\theta}) \mathd
    x \mathd \theta \mathd \bar{\theta} = 4 \pi T_{\emptyset} (0)
    F_{\emptyset} (0) . \label{eq:key}
  \end{equation}
\end{theorem}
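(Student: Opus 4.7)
The plan is to reduce the supersymmetric integral on $\mathfrak{S}$ to a one-dimensional integration by parts on $\mathbb{R}_+$, after identifying both $F$ and $T$ with functions/distributions of the single variable $|x|^2 + 4 \theta \bar{\theta}$.

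First, by the argument given immediately before the theorem statement, the rotation invariance and supersymmetry of $F \in \mathcal{S} (\mathfrak{S})$ force $F_\theta = F_{\bar{\theta}} = 0$, $F_\emptyset (x) = f (|x|^2)$ and $F_{\theta \bar{\theta}} (x) = 4 f' (|x|^2)$ for a single Schwartz function $f$ on $\mathbb{R}_+$. I would run the same argument for the distribution $T$ by duality: $Q T = \bar{Q} T = 0$ yields $T_\theta = T_{\bar{\theta}} = 0$ in $\mathcal{S}' (\mathbb{R}^2)$ together with the algebraic relation $2 \nabla T_\emptyset = x \, T_{\theta \bar{\theta}}$. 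Combining this with rotation invariance of $T$ and the hypothesis that $T_\emptyset$ is a continuous function, one obtains $T_\emptyset (x) = t (|x|^2)$ for some continuous $t$ on $\mathbb{R}_+$, and $T_{\theta \bar{\theta}}$ identifies with the tempered distribution $4 t' (|x|^2)$, where $t'$ denotes the distributional derivative.

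Next I would substitute these representations into the duality pairing defined in the excerpt. Since $T_\theta = T_{\bar{\theta}} = 0$,
\[
\int T (x, \theta, \bar{\theta}) \cdot F (x, \theta, \bar{\theta}) \, \mathd x \, \mathd \theta \, \mathd \bar{\theta} = T (F) = - T_\emptyset (F_{\theta \bar{\theta}}) - T_{\theta \bar{\theta}} (F_\emptyset).
\]
Applying the polar identity $\int_{\mathbb{R}^2} g (|x|^2) \, \mathd x = \pi \int_0^\infty g (r) \, \mathd r$ to both terms yields
\[
T (F) = - 4 \pi \int_0^\infty t (r) f' (r) \, \mathd r - 4 \pi \langle t', f \rangle_{\mathbb{R}_+} = - 4 \pi \int_0^\infty (t f)' (r) \, \mathd r,
\]
by the Leibniz rule for distributional derivatives. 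Since $f$ is Schwartz and $t$ is continuous at $0$, the fundamental theorem of calculus in distributional form gives $\int_0^\infty (t f)' (r) \, \mathd r = - t (0) f (0)$, whence $T (F) = 4 \pi T_\emptyset (0) F_\emptyset (0)$, as required.

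The main technical subtlety lies in the distributional characterization of $T_{\theta \bar{\theta}}$ in the first step. The algebraic identity $x (T_{\theta \bar{\theta}} - 4 t' (|x|^2)) = 0$ only determines $T_{\theta \bar{\theta}}$ modulo a rotation-invariant tempered distribution supported at the origin, i.e.\ a combination of $\delta_0$ and its iterated Laplacians. Ruling these contributions out requires using the full supersymmetric structure together with the continuity of $T_\emptyset$, or equivalently defining $4 t' (|x|^2)$ unambiguously as the unique tempered distribution on $\mathbb{R}^2$ whose radial pairing with Schwartz functions of $|x|^2$ reproduces $4 \langle t', \cdot \rangle_{\mathbb{R}_+}$. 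Once this is settled, the remainder of the argument reduces to the one-line integration by parts above.
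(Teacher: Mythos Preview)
The paper does not actually prove this theorem: it simply cites Lemma~4.5 of Klein--Landau--Perez and the localisation result of Schwarz--Zaboronsky. Your proposal is essentially the classical Parisi--Sourlas argument, reducing both $F$ and $T$ to functions of the single radial variable $|x|^2+4\theta\bar\theta$ and then integrating by parts on $\mathbb{R}_+$. That is precisely the route taken in the cited references, so at the level of strategy your sketch matches the literature.

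The technical subtlety you flag is genuine and in fact sharper than you state. The supersymmetry conditions $QT=\bar QT=0$ together with rotation invariance yield only $T_\theta=T_{\bar\theta}=0$ and $x\,T_{\theta\bar\theta}=2\nabla T_\emptyset$; they do \emph{not} by themselves pin down $T_{\theta\bar\theta}$ modulo $c\,\delta_0$. Concretely, $T_\emptyset\equiv 1$ and $T_{\theta\bar\theta}=c\,\delta_0$ satisfy every hypothesis of the theorem as written (including continuity of $T_\emptyset$), yet $\int T\cdot F=(4\pi-c)F_\emptyset(0)$, which contradicts the claimed formula when $c\neq 0$. So the ``full supersymmetric structure together with the continuity of $T_\emptyset$'' is not enough to rule out the $\delta_0$ piece; one needs either an explicit regularity hypothesis on $T_{\theta\bar\theta}$ (e.g.\ that it is locally integrable near $0$, which is how Klein--Landau--Perez effectively proceed) or to work only with the specific distributions $\mathcal{H}^k$ arising in Lemma~\ref{lemma_supersymmetry1}, for which $T_{\theta\bar\theta}$ is given by an explicit Wick expansion and carries no point mass. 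Your instinct to isolate this step is correct; the resolution is not an argument from supersymmetry but an additional input about $T_{\theta\bar\theta}$.
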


\begin{proof}
  The proof can be found in~{\cite{Klein1984}}, Lemma~4.5 (see
  also~{\cite{Zaboronsky1997}} for a general proof on supermanifolds).
\end{proof}

Let us note that
\[ Q C_{\Phi} (x, \theta, \bar{\theta}) = \bar{Q} C_{\Phi} (x, \theta,
   \bar{\theta}) = 0, \]
indeed we can check that
\begin{align*}
  \nabla \mathcal{G}_{2 + 2 \chi} (x) = & \int_{\mathbb{R}^2} \frac{\mathd
  k}{(2 \pi)^2}  \frac{(i k) e^{i k \cdot x}}{(m^2 + | k |^2)^{2 + 2 \chi}}\\
  = & - \frac{i}{2 (1 + 2 \chi)} \int_{\mathbb{R}^2} \frac{\mathd k}{(2
  \pi)^2} e^{i k \cdot x} \nabla_k  \frac{1}{(m^2 + | k |^2)^{1 + 2 \chi}}\\
  = & \frac{i}{2 (1 + 2 \chi)} \int_{\mathbb{R}^2} \frac{\mathd k}{(2 \pi)^2} 
  \frac{(i x) e^{i k \cdot x}}{(m^2 + | k |^2)^{1 + \delta}} = - \frac{x}{2 (1
  + 2 \chi)} \mathcal{G}_{1 + 2 \chi} (x)\\
  = & - \frac{x \varpi}{2} \mathcal{G}_{1 + 2 \chi} (x)
\end{align*}
As a consequence expectation values of polynomials over the superfield $\Phi$
are invariant under the supersymmetry generated by any linear combinations of
$Q, \bar{Q}$.

\begin{remark}
  \label{remark_supersymmetry}The previous discussion implies that
  \begin{equation}
    \tau (b, \bar{b}) C_{\Phi} (x, \theta, \bar{\theta}) = C_{\Phi} (x,
    \theta, \bar{\theta}) . \label{eq:invariance}
  \end{equation}
  As a consequence, the superfield $\Phi' \assign \tau (b, \bar{b}) \Phi$ is a
  Gaussian free field and has the same correlation function $C_{\Phi'}$ as
  $\Phi$ given by equation~{\eqref{eq:correlation}}. However it is important
  to stress that this does not imply that $\Phi'$ has the same ``law'' as
  $\Phi$, namely that $\langle F (\Phi') \rangle = \langle F (\Phi) \rangle$
  for nice arbitrary functions. Indeed the correlation function given in
  equations~{\eqref{eq:correlation}} involves only the product $\langle \Phi
  (x, \theta, \bar{\theta}) \Phi (x, \theta', \bar{\theta}') \rangle$ of the
  complex superfield $\Phi$ and not also the product $\langle \Phi (x, \theta,
  \bar{\theta}) \bar{\Phi} (x, \theta', \bar{\theta}') \rangle$ of $\Phi$ with
  its complex conjugate $\bar{\Phi}$. The law of $\Phi$ would have been
  invariant with respect super transformations if and if only $\langle \Phi
  (x, \theta, \bar{\theta}) \Phi (x, \theta', \bar{\theta}') \rangle$ and
  $\langle \Phi (x, \theta, \bar{\theta}) \bar{\Phi} (x, \theta',
  \bar{\theta}') \rangle$ had been both supersymmetric. Unfortunately the
  function $\langle \Phi (x, \theta, \bar{\theta}) \bar{\Phi} (x, \theta',
  \bar{\theta}') \rangle$ is not invariant with respect to super
  transformations.
\end{remark}

\subsection{Expectation of supersymmetric polynomials}

As explained in Remark~\ref{remark_supersymmetry}, the law of $\Phi$ is not
supersymmetric. Nevertheless we can deduce important consequences from the
supersymmetry of the correlation function $C_{\Phi}$. More precisely, since
$\Phi$ is a free field Wick's theorem (see, e.g.,~{\cite{Fetter2012}}
Chapter~3 Section~8) hold and
\begin{eqnarray}
  & \begin{array}{c}
    \left\langle \prod_{i = 1}^{2 n} \Phi (x_i, \theta_i, \bar{\theta}_i)
    \right\rangle_{\chi} =\\
    = \sum_{\{ (i_k, j_k) \} _k} \prod_{k = 1}^n C_{\Phi} (x_{i_k} -
    x_{j_k}^{}, \theta_{i_k} - \theta_{j_k}, \bar{\theta}_{i_k} -
    \bar{\theta}_{j_k}),
  \end{array} &  \label{equationsupersymmetry3}\\
  & \left\langle \prod_{i = 1}^{2 n + 1} \Phi (x_i, \theta_i, \bar{\theta}_i)
  \right\rangle_{\chi} = 0. & 
\end{eqnarray}
By the supersymmetry of $C_{\Phi} (x - x', \theta - \bar{\theta}, \theta -
\bar{\theta}')$ and of its products, we obtain that
\[ \left\langle \prod_{i = 1}^{2 n} \tau (b, \bar{b}) (\Phi) (x_i, \theta_i,
   \bar{\theta}_i) \right\rangle_{\chi} = \left\langle \prod_{i = 1}^{2 n}
   \Phi (x_i, \theta_i, \bar{\theta}_i) \right\rangle_{\chi} . \]
The previous equality implies that
\begin{equation}
  \begin{aligned}
    \left\langle \prod_{i = 1}^n \int P_i (\Phi) \cdot \tau (b, \bar{b}) (F^i)
    \mathd x \mathd \theta \mathd \bar{\theta} \right\rangle_{\chi} = &
    \left\langle \prod_{i = 1}^n \int \tau (b, \bar{b}) (P_i (\Phi)) \cdot F^i
    \mathd x \mathd \theta \mathd \bar{\theta} \right\rangle_{\chi}\\
    = & \left\langle \prod_{i = 1}^n \int P_i (\tau (b, \bar{b}) (\Phi)) \cdot
    F^i \mathd x \mathd \theta \mathd \bar{\theta} \right\rangle_{\chi}\\
    = & \left\langle \prod_{i = 1}^n \int P_i (\Phi) \cdot F^i \mathd x \mathd
    \theta \mathd \bar{\theta} \right\rangle_{\chi},
  \end{aligned}
  \label{equationsupersymmetry5}
\end{equation}
where $P_1, \ldots, P_n$ are arbitrary polynomials and $F^1, \ldots, F^n$
arbitrary functions on superspace.

\begin{lemma}
  \label{lemma_supersymmetry1}Let $F^1, \ldots ., F^n \in \mathcal{S}
  (\mathfrak{S})$ be supersymmetric smooth functions and $P_1, \ldots, P_n$ be
  $n$ polynomials then
  \[ \left\langle \prod^n_{i = 1} \int P_i (\Phi) (x, \theta, \bar{\theta})
     \cdot F^i (x, \theta, \bar{\theta}) \mathd x \mathd \theta \mathd
     \bar{\theta} \right\rangle_{\chi} = (4 \pi)^n \left\langle \prod_{i =
     1}^n f^i_{\emptyset} (0) P_i (\phi (0)) \right\rangle_{\chi} . \]
\end{lemma}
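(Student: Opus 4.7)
The plan is to establish the Lemma by induction on $n$, in a slightly strengthened form which allows an additional polynomial insertion $R(\varphi(0))$ at the origin; the statement of the Lemma is recovered by taking $R \equiv 1$. Explicitly, I will prove for every polynomial $R : \mathbb{R}^n \to \mathbb{R}$ and every $m \geqslant 0$ that
\[ \left\langle R(\varphi(0))\prod_{i=1}^m\int P_i(\Phi)(x,\theta,\bar\theta) F^i(x,\theta,\bar\theta)\, \mathd x\, \mathd \theta\, \mathd \bar\theta\right\rangle_\chi = (4\pi)^m\left\langle R(\varphi(0))\prod_{i=1}^m f^i_\emptyset(0) P_i(\varphi(0))\right\rangle_\chi. \]
The base case $m=0$ is trivial.

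For the inductive step I would first use Fubini to bring the $\chi$-expectation inside and write the left-hand side as $\int T^R_m(\vec x,\vec\theta,\vec{\bar\theta})\prod_i F^i\, \mathd^m x\, \mathd^m\theta\, \mathd^m\bar\theta$, with $T^R_m(\vec x,\vec\theta,\vec{\bar\theta}) := \langle R(\varphi(0))\prod_i P_i(\Phi(x_i,\theta_i,\bar\theta_i))\rangle_\chi$. By Wick's theorem for the free superfield $\Phi$, $T^R_m$ is a finite sum of products of two-point functions $C_\Phi$. Since $C_\Phi(x,\theta,\bar\theta) = g(|x|^2 + 4\theta\bar\theta)$ for a smooth $g$ and the origin is a fixed point of $\tau(b,\bar b)$, each such summand is invariant under the diagonal super-transformation acting simultaneously on $(x_1,\ldots,x_m)$. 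Equivalently, the diagonal super-generators $\tilde Q := \sum_{i=1}^m Q_{x_i}$ and $\tilde{\bar Q} := \sum_i \bar Q_{x_i}$ annihilate $T^R_m$.

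The heart of the induction is to extract the last variable. I would define
\[ H(x,\theta,\bar\theta) := \int T^R_m(x_1,\ldots,x_{m-1},x,\theta_1,\ldots,\theta_{m-1},\theta,\bar\theta_1,\ldots,\bar\theta_{m-1},\bar\theta)\prod_{i<m}F^i(x_i,\theta_i,\bar\theta_i)\prod_{i<m}\mathd x_i\mathd\theta_i\mathd\bar\theta_i, \]
and show $H$ is supersymmetric. Using $\tilde Q T^R_m = 0$ I rewrite $Q_x T^R_m = -\sum_{i<m}Q_{x_i} T^R_m$; then Grassmannian and ordinary integration by parts in the $(x_i,\theta_i,\bar\theta_i)$ (boundary terms vanish since $F^i \in \mathcal{S}(\mathfrak{S})$ and $T^R_m$ is polynomial in the $\theta_i,\bar\theta_i$ and smooth in the $x_i$) transfer $Q_{x_i}$ onto $F^i$, where it is killed by the supersymmetry of $F^i$. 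The same argument works for $\bar Q$, and rotation invariance of $H$ is an immediate change-of-variables argument using the joint rotation invariance of $T^R_m$ together with the individual rotation invariance of each $F^i$.

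With $H$ supersymmetric, Theorem~\ref{theorem_supersymmetry1} applied to the pair $(H, F^m)$ gives $\mathrm{LHS} = 4\pi H(0,0,0) F^m_\emptyset(0)$. Unpacking the definition of $H$ at the origin and using that $T^R_m$ reduces at the argument $(x_m,\theta_m,\bar\theta_m) = 0$ to $\langle R(\varphi(0)) P_m(\varphi(0))\prod_{i<m} P_i(\Phi(x_i,\theta_i,\bar\theta_i))\rangle_\chi$, one obtains
\[ H(0,0,0) = \left\langle R(\varphi(0))P_m(\varphi(0))\prod_{i<m}\int P_i(\Phi) F^i\, \mathd x\, \mathd\theta\, \mathd\bar\theta\right\rangle_\chi, \]
which is precisely the strengthened left-hand side with $m-1$ integrated factors and with $R$ replaced by $R\cdot P_m$; the inductive hypothesis closes the induction. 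The main obstacle I anticipate is the careful bookkeeping of Grassmannian signs in the integration by parts and the verification that the diagonal super-generator genuinely annihilates $T^R_m$ (for which the key combinatorial input is the interplay between Wick pairings and the supersymmetric form of $C_\Phi$); neither step is deep, but both require patience.
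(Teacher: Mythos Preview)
Your proposal is correct and follows essentially the same approach as the paper: process one factor at a time, show the resulting functional of the remaining variable is supersymmetric (the paper packages this as equation~\eqref{equationsupersymmetry5}, which is exactly your diagonal-invariance-plus-integration-by-parts argument), and apply Theorem~\ref{theorem_supersymmetry1}. The only cosmetic difference is that the paper, instead of strengthening the inductive hypothesis with an extra $R(\varphi(0))$ factor, replaces each processed $F^i$ by the supersymmetric distribution $K = 4\pi\,\delta_0(\mathd x)\,\theta\bar\theta$ so that the remaining expression retains the same form; your strengthening achieves the same bookkeeping.
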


\begin{proof}
  We define the distribution $\mathcal{H}^1 \in \mathcal{S}' (\mathfrak{G})$
  in the following way:
  
  \begin{multline*}
    \left. \mathcal{H}^1 (G) \assign \left\langle \int P_1 (\Phi) (x, \theta,
    \bar{\theta}) \cdot G (x, \theta, \bar{\theta}) \mathd x \mathd \theta
    \mathd \bar{\theta} \right. \right. \left. \left. \times \prod^n_{i = 2} \int P_i (\Phi) (x, \theta,
    \bar{\theta}) \cdot F^i (x, \theta, \bar{\theta}) \mathd x \mathd \theta
    \mathd \bar{\theta} \right\rangle_{\chi} \right.
  \end{multline*}
  
  for any $G \in \mathcal{S} (\mathfrak{G})$. Using the fact that $F^2,
  \ldots, F^n$ are supersymmetric and
  relation~{\eqref{equationsupersymmetry5}} we have that
  \begin{eqnarray}
    & \mathcal{H}^1 (\tau (b, \bar{b}) (G)) = \left\langle \int P_1 (\Phi)
    \cdot \tau (b, \bar{b}) (G) \mathd x \mathd \theta \mathd \bar{\theta}
    \prod^n_{i = 2} \int P_i (\Phi) \cdot F^i \mathd x \mathd \theta \mathd
    \bar{\theta} \right\rangle_{\chi} & _{} \nonumber\\
    & = \left\langle \int P_1 (\Phi) \cdot \tau (b, \bar{b}) (G) \mathd x
    \mathd \theta \mathd \bar{\theta} \prod^n_{i = 2} \int P_i (\Phi) \cdot
    \tau (b, \bar{b}) (F^i) \mathd x \mathd \theta \mathd \bar{\theta}
    \right\rangle_{\chi} = \mathcal{H}^1 (G) . &  \nonumber
  \end{eqnarray}
  This means that $\mathcal{H}^1$ is supersymmetric and since $F^1$ is also
  supersymmetric, by Theorem~\ref{theorem_supersymmetry1} we conclude
  \begin{multline*}
    \mathcal{H}^1 (F^1) = f^1_{\emptyset} (0) \mathcal{H}^1_0 (0) = (4 \pi) \left\langle f^1_{\emptyset} (0) P_i (\phi (0)) \prod^n_{i = 2}
    \int F^i \cdot P_i (\Phi) \mathd x \mathd \theta \mathd \bar{\theta}
    \right\rangle_{\chi} = \mathcal{H}^1 (K)
  \end{multline*}
  where $K \assign (4 \pi) \delta_0 (\mathd x) \theta \bar{\theta}$. Setting
  \begin{multline*}
    \mathcal{H}^2 (G) \assign \left. \left\langle \left( \int P_i (\Phi) K
    \mathd x \mathd \theta \mathd \bar{\theta} \right) \right. \right.  \left. \left. \times \left( \int P_i (\Phi) G \mathd x \mathd \theta
    \mathd \bar{\theta} \right) \prod^n_{i = 3} \int P_i (\Phi) F^i \mathd x
    \mathd \theta \mathd \bar{\theta} \right\rangle \right._{\chi}
  \end{multline*}
  and reasoning similarly we also conclude that $\mathcal{H}^2 (F^2)
  =\mathcal{H}^2 (V)$. Proceeding by transforming each subsequent factor, we
  can deduce that
  \begin{multline*}
    \left\langle \prod^n_{i = 1} \int P_i (\Phi) F^i \mathd x \mathd \theta
    \mathd \bar{\theta} \right\rangle_{\chi} = \left\langle \prod^n_{i = 1} \int P_i (\Phi) K \mathd x \mathd \theta
    \mathd \bar{\theta} \right\rangle_{\chi} = (4 \pi)^n \left\langle \prod_{i
    = 1}^n f^i_{\emptyset} (0) P_i (\phi (0)) \right\rangle_{\chi} .
  \end{multline*}
\end{proof}

\begin{proof*}{Proof of Theorem~\ref{th:pol-eq}}
  It is enough to use Lemma~\ref{lemma_supersymmetry1} with $P_1 = p$, $P_2 =
  \cdots = P_{n + 1} = P$, \ $F_1 = - \theta \bar{\theta} \delta_0 (x)$ and
  $F_2 = \cdots = F_{n + 1} = \tilde{f} (| x |^2 + 4 \theta \bar{\theta})$ to
  conclude.
\end{proof*}

\begin{remark}
  The dimensional reduction proof via supersymmetry is already present
  in~{\cite{Klein1984}} and indeed our result is analogous, under different
  hypotheses, to Theorem~II in~{\cite{Klein1984}}. The proofs of
  Lemma~\ref{theorem_det}, Lemma~\ref{lemma_inequality3} and
  Lemma~\ref{lemma_inequality4} above follows the same ideas of Lemma~3.1,
  Lemma~3.2 and Lemma~3.3 in~{\cite{Klein1984}}. We decided to propose a
  detailed proof of Theorem~\ref{theorem_main1} mainly for two reasons:
  \begin{enumeratenumeric}
    \item The first reason is that the hypotheses on the potential $V$ of
    Theorem~\ref{theorem_main1} and of Theorem~II in~{\cite{Klein1984}} are
    quite different. Indeed in~{\cite{Klein1984}} only polynomial potentials
    are considered while Hypothesis~$V_{\lambda}$ permits to consider
    polynomial of at most fourth degree perturbed by any bounded function. In
    order to prove the boundedness of $\Lambda_U$ in $L^p (\mu)$ under these
    different hypotheses \ we need to prove Lemma~\ref{lemma_inequality2}
    which is a trivial consequence of hypercontractivity when the potential
    $V$ is polynomial but is based on the non-trivial
    inequality~{\eqref{equationinequalitynew}} (proven
    in~{\cite{Ustunel2000}}) for general potentials $V$.
    
    \item The second main reason is the difference in the use of supersymmetry
    and of the supersymmetric representation of the
    integral~{\eqref{equationintegral1}}. Indeed, in our opinion there is a
    little gap in the proof of Theorem~III of {\cite{Klein1984}} that cannot
    be fixed without developing a longer proof. More precisely, in the proof
    of Theorem~III of {\cite{Klein1984}} it is tacitly assumed that the
    expression
    \[ \Psi (F) \assign \left\langle g (\varphi (0)) \exp \left( - \int V
       (\Phi) F \mathd \theta \mathd \bar{\theta} \mathd x \right)
       \right\rangle_{\chi}, \]
    is supersymmetric with respect to the function $F$, i.e. if $F$ is a
    smooth function in $\mathcal{S} (\mathfrak{S})$ and $\tau (b, \bar{b})$ is
    a supersymmetric transformation, then we have that $\Psi (\tau (b,
    \bar{b}) (F)) = G (F)$. In our opinion this fact is non-trivial since the
    law of $\Phi$ is not supersymmetric (see
    Remark~\ref{remark_supersymmetry}). What can be easily proven is only that
    the expressions
    \[ \Psi^n (F) \assign \left\langle g (\varphi (0)) \left( \int V (\Phi) F
       \mathd \theta \mathd \bar{\theta} \mathd x \right)^n
       \right\rangle_{\chi} \]
    are supersymmetric in $F$ (see Theorem~\ref{th:pol-eq} above). This fact
    alone does not easily imply that $\Psi (F)$ is supersymmetric. Indeed for
    the discussion in Section~\ref{sec:super}, we cannot guarantee that the
    series~{\eqref{eq:expansion}}, which is equivalent to $\Psi (F) = \sum_{n
    \geqslant 0} \frac{1}{n!} \Psi^n (F)$, converges absolutely when $V$
    growth at infinity at least as a polynomial of fourth degree (and we do
    not know under which conditions on $V$ and $F$ it converges relatively).
    In order to overcome this problem we propose a proof of
    Theorem~\ref{theorem_main1} which exploits only indirectly the
    supersymmetric representation of the integral {\eqref{equationintegral1}}
    in a way which permits to use only the supersymmetry of the expressions
    $\Psi^n (F)$ and avoiding the proof of the supersymmetry of the expression
    $\Psi (F)$.
  \end{enumeratenumeric}
\end{remark}

\section{Removal of the spatial cut-off}

\label{sec:removal}In this section we prove Theorem~\ref{theorem_cutoff1} on
the removal of the spatial cut-off in the setting of Hypothesis~C. It is
important to note that, differently from Theorem~\ref{theorem_reduction2}, we
explicitly require that the potential $V$ satisfies Hypothesis~C and not only
Hypothesis~QC. This is not due to problems in proving the existence of
solutions to equation {\eqref{equationcutoff1}} or in proving the convergence
of the cut-offed solution to the non-cut-offed one without the Hypothesis~C
(see Lemma~\ref{lemma_existencecutoff}). The main difficulty is instead to
prove the convergence of $\Upsilon_f (\phi) / Z_f$ to $1$. Indeed the previous
factor does not actually converge and what we can reliably expect is that
\begin{equation}
  \lim_{f \rightarrow 1} Z_f^{- 1} \mathbb{E} [\Upsilon_f (\phi_f) | \sigma
  (\phi_f (0))] \rightarrow 1, \label{eq:decorr}
\end{equation}
where hereafter $\phi_f$ denotes the solution to the
equation~{\eqref{equation2d1}} with cut-off $f$, i.e. $\Upsilon_f (\phi_f) /
Z_f$ becomes independent with respect to the $\sigma$-algebra generated by
$\phi_f (0)$.

To prove~{\eqref{eq:decorr}} directly is quite difficult due to the
non-linearity of the equation or equivalently to the presence of the
regularized Fredholm determinant in the
expressions~{\eqref{equationreduction5}} and~{\eqref{equationgaussian1}}
(which is a strongly non-local operator). For this reason we want to exploit a
reasoning similar to the one used in Section~\ref{sec:super}. With this aim we
introduce the equation
\begin{equation}
  (m^2 - \Delta) \phi_{f, t} + t f \partial V (\phi_{f, t}) = \xi
  \label{equationparameter}
\end{equation}
and the functions
\[ F^L_f (t) \assign Z_f^{- 1} \mathbb{E} \left[ L (\phi_{f, t} (0)) e^{4 t
   \int_{\mathbb{R}^2} f' (x) V (\phi_{f, t} (x)) \mathd x} \right], \]
where $t$ is taken such that $t \partial^2 V (y) + m^2$ is positive definite,
and $F^L (t) =\mathbb{E} [L (\phi_t (0))]$ (where $\phi_t$ is the solution to
{\eqref{equationparameter}} with $f \equiv 1$). By Lemma~\ref{lemma_serie1}
(whose proof does not use in any point the cut-off $f$) $F^L (t)$ is real
analytic whenever $V$ is a trigonometric polynomial, $t \partial^2 V (y) +
m^2$ is definite positive for any $y \in \mathbb{R}^n$ and $L$ is an entire
bounded function. Furthermore, by Theorem~\ref{theorem_reduction2}, $F^L_f (t)
= H^L (t)$ (where $H^L (t) = \int L (y) \mathd \kappa_t (y)$, see
Section~\ref{sec:super}) which is real analytic. Thus if we are able to prove
that $\lim_{f \rightarrow 1} \partial^n_t F^L_f (0) = \partial^n_t F^L (0)$ we
have that $H^L (t) = F^L (t)$ whenever $t \partial^2 V + m^2$ is definite
positive proving that Theorem~\ref{theorem_cutoff1} when $V$ is a
trigonometric polynomial satisfying Hypothesis~C. The idea, then, is to apply
a generalization of Lemma~\ref{lemma_reduction1},
Lemma~\ref{lemma_reduction2}, Lemma~\ref{lemma_reduction3} and the reasoning
in the proof of Proposition~\ref{proposition_serie3} and in the proof of
Theorem~\ref{theorem_reduction2} in order to obtain
Theorem~\ref{theorem_cutoff1}.

\begin{remark}
  Hypothesis~C is required in an essential way in the proof of the holomorphy
  of $F^L (t)$, in particular in Lemma~\ref{lemma_serie1}. The fact that the
  cutoff is removed does not allow to reason by approximation as we did in
  Theorem~\ref{theorem_main1}. 
\end{remark}

Since the proof is composed by many steps which are a straightforward
generalization of the results of the previous sections of the paper, we write
here only some details of the parts of the proof of
Theorem~\ref{theorem_cutoff1} which largely differ from what has been obtained
before.

Hereafter we denote by $\omega_{\beta} (x)$ the function
\[ \omega_{\beta} (x) \assign \exp (- \beta \sqrt{(1 + | x |^2)}) \]
and introduce the space $\mathcal{W}_{\beta}$ where $\beta > 0$ in the
following way
\[ \mathcal{W}_{\beta} \assign (- \Delta + 1) C^0_{\exp \beta} (\mathbb{R}^2 ;
   \mathbb{R}^n), \]
where $C^0_{\exp \beta}$ is the space of continuous function with respect to
the weighted $L^{\infty}$ norm
\[ \| g \|_{\infty, \exp \beta} \assign \sup_{x \in \mathbb{R}^2} |
   \omega_{\beta} (x) g (x) | . \]
The triple $(\mathcal{W}_{\beta}, \mathcal{H}, \mu)$ is an abstract Wiener
space. We introduce the obvious generalization of equation
{\eqref{equation2d2}}
\begin{equation}
  (m^2 - \Delta) \bar{\phi} + \partial V (\bar{\phi} +\mathcal{I} \xi) = 0,
  \label{equationcutoff2}
\end{equation}
where $\bar{\phi} = \phi -\mathcal{I} \xi .$

Now we want to prove a result that can replace Lemma~\ref{lemma_bound}.
Indeed Lemma~\ref{lemma_bound} plays a central role in the previous sections
of the paper, allowing to prove the existence of strong solutions to equation
{\eqref{equation2d1}}, the characterization of weak solutions in
Theorem~\ref{theorem_weaksolution2} and Theorem~\ref{theorem_weaksolution} and
finally allowing to show the convergence of weak solutions using the
convergence of potentials in Lemma~\ref{lemma_reduction1},
Lemma~\ref{lemma_reduction2}.

\begin{lemma}
  \label{lemma_existencecutoff}Suppose that $V$ satisfies the Hypothesis~QC
  and suppose that $\bar{\phi}$ is a classical solution to equation
  {\eqref{equationcutoff2}}, then there exists a $\beta_0$ depending only on
  $m^2$ such that, for any $\beta < \beta_0$
  \begin{equation}
    \| \bar{\phi} \|_{\infty, \exp \beta} \lesssim \| \exp (\alpha |
    \mathcal{I} \xi |) \|_{\infty, \exp \beta}, \label{equationweight1}
  \end{equation}
  where $\| \exp (\alpha | \mathcal{I} \xi |) \|_{\infty, \exp \beta}$ is
  almost surely finite and the constants implied by the symbol $\lesssim$
  depend only on $H$ and $m^2$. Furthermore for any $U$ open and bounded we
  have
  \begin{equation}
    \| \bar{\phi} \|_{\mathcal{C}^{2 - \tau} (U)} \lesssim \| \exp (\alpha p |
    \mathcal{I} \xi |) \|_{U_{\epsilon}, \infty} \exp (\alpha p \| \bar{\phi}
    \|_{\infty, \exp \beta} \| \omega_{\beta}^{- 1} \|_{U_{\epsilon}, \infty})
    \label{equationweight2}
  \end{equation}
  where $U_{\epsilon} \assign \{ x \in \mathbb{R}^2 | \exists y \in U, | y - x
  | \leqslant \epsilon \}$ and $\epsilon > 0$.
\end{lemma}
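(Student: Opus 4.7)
The approach is to adapt the global maximum principle of Lemma \ref{lemma_bound} to the weighted setting. For $\beta < \beta_0$, I would study the quantity $g(x) \assign |\bar{\phi}(x)|^2 \omega_\beta^2(x)$, which is bounded since $\bar{\phi} \in C^0_{\exp \beta}$. To ensure $g$ attains its supremum at an interior point (necessary to apply the maximum principle), a technical step is to first argue with $\beta' = \beta + \varepsilon < \beta_0$ in place of $\beta$: then $|\bar{\phi}|\omega_{\beta'} = (|\bar{\phi}|\omega_{\beta}) \cdot \omega_{\varepsilon}$ decays at infinity, guaranteeing an interior maximum, and the estimate at $\beta$ is recovered by monotone convergence as $\varepsilon \to 0$.

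At a maximum point $\bar{x}$ of $g$, setting $F = |\bar{\phi}|^2$ and $G = \omega_\beta^2$, the identity $\nabla g(\bar{x}) = 0$ gives $\nabla F(\bar{x}) = -(F/G)(\bar{x})\nabla G(\bar{x})$, and $\Delta g(\bar{x}) \leq 0$ combined with $\Delta F = 2|\nabla \bar{\phi}|^2 + 2\bar{\phi}\cdot \Delta\bar{\phi}$ and the equation $\Delta \bar{\phi} = m^2 \bar{\phi} + \partial V(\bar{\phi} + \mathcal{I}\xi)$ lead, after using Hypothesis QC in the form $-\bar{\phi}(\bar{x})\cdot \partial V(\bar{\phi}(\bar{x}) + \mathcal{I}\xi(\bar{x})) \leq |\bar{\phi}(\bar{x})|H(\mathcal{I}\xi(\bar{x}))$, to
\[ |\bar{\phi}(\bar{x})|^2 \bigl(2 m^2 - 2 |\nabla G|^2/G^2 + \Delta G/G\bigr)(\bar{x}) \leq 2 |\bar{\phi}(\bar{x})|\, H(\mathcal{I}\xi(\bar{x})). \]
With $\rho(x) = \sqrt{1+|x|^2}$, a direct computation yields $|\nabla G|^2/G^2 \leq 4\beta^2$ and $|\Delta G/G| \lesssim \beta + \beta^2$ uniformly in $x$, so the parenthesised factor is bounded below by $2m^2 - C(\beta+\beta^2)$, which is positive for $\beta < \beta_0$ where $\beta_0$ depends only on $m^2$. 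Dividing by $|\bar{\phi}(\bar{x})|$, multiplying by $\omega_\beta(\bar{x})$, and using the exponential growth of $H$ yields
\[ \|\bar{\phi}\|_{\infty,\exp \beta} = |\bar{\phi}(\bar{x})|\omega_\beta(\bar{x}) \lesssim e^{\alpha|\mathcal{I}\xi(\bar{x})|}\omega_\beta(\bar{x}) \leq \|e^{\alpha|\mathcal{I}\xi|}\|_{\infty, \exp\beta}, \]
which is \eqref{equationweight1}. The almost sure finiteness of the right-hand side follows from the fact that $\mathcal{I}\xi$ has at most polynomial growth at infinity (standard Fernique-type estimates for the Gaussian field $\mathcal{I}\xi$), so that $\alpha|\mathcal{I}\xi(x)| - \beta\sqrt{1+|x|^2}\to -\infty$ almost surely under $\mu$.

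The bound \eqref{equationweight2} is a purely local elliptic regularity statement. Rewriting the equation as $\bar{\phi} = -\mathcal{I}(\partial V(\bar{\phi}+\mathcal{I}\xi))$ and applying the Besov/Schauder mapping properties of $\mathcal{I} = (m^2-\Delta)^{-1}$ with a smooth cutoff supported in $U_\varepsilon$ (as in the argument after Lemma \ref{lemma_bound}), one reduces $\|\bar{\phi}\|_{\mathcal{C}^{2-\tau}(U)}$ to $\|\partial V(\bar{\phi}+\mathcal{I}\xi)\|_{L^\infty(U_\varepsilon)}$ up to lower-order terms controlled by the first bound. The exponential growth of $\partial V$, combined with the pointwise inequality $|\bar{\phi}(x)| \leq \|\bar{\phi}\|_{\infty, \exp\beta}\,\|\omega_\beta^{-1}\|_{L^\infty(U_\varepsilon)}$ valid on $U_\varepsilon$, then produces the stated bound \eqref{equationweight2}. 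The main obstacle in my view is the technicality of the supremum-attainment step and keeping the constant in the first bound uniform in $\beta$ as $\varepsilon \to 0$; the rest is essentially a weighted reprise of the bookkeeping done in Lemma \ref{lemma_bound}.
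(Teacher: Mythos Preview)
Your proposal is correct and follows essentially the same weighted maximum-principle argument as the paper: both study $|\bar\phi|^2\omega_{2\beta}$ (equivalently your $|\bar\phi|^2\omega_\beta^2$) at its maximum, control the weight-error terms $\Delta\omega_{2\beta}/\omega_{2\beta}$ and $|\nabla\omega_{2\beta}|^2/\omega_{2\beta}^2$ by $O(\beta+\beta^2)$, and then invoke Hypothesis~QC exactly as in Lemma~\ref{lemma_bound}. The only notable difference is how the maximum is guaranteed to be attained: the paper simply asserts ``without loss of generality (using the result of Lemma~\ref{lemma_bound})'' that $|\bar\phi|^2\omega_{2\beta}\to 0$, whereas you give the cleaner argument of working at $\beta'=\beta+\varepsilon$ (so that decay is forced by $\bar\phi\in C^0_{\exp\beta}$) and passing to the limit with uniform constants---this is a minor technical improvement rather than a different route.
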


\begin{proof}
  The proof is very similar to the proof of Lemma~\ref{lemma_bound}. We report
  here only the passages having the main differences. For any $\epsilon > 0$
  there is a $\beta_{\epsilon} > 0$ and for any $\beta < \beta_{\epsilon}$ \
  we have
  \[ \left| \frac{\Delta (\omega_{2 \beta} (x))}{\omega_{2 \beta} (x)} -
     \frac{| \nabla \omega_{2 \beta} (x) |^2}{\omega_{4 \beta} (x)} \right| <
     \epsilon, \qquad x \in \mathbb{R}^2 . \]
  Without loss of generality (using the result of Lemma~\ref{lemma_bound}) we
  have that $\lim_{x \rightarrow \infty} | \bar{\phi} (x) |^2 \omega_{2 \beta}
  (x) = 0$ and so $x \mapsto | \bar{\phi} (x) |^2 \omega_{2 \beta} (x)$ has a
  positive maximum at $\bar{x} \in \mathbb{R}^2 .$ This means that $- \Delta
  (| \bar{\phi} |^2 \omega_{2 \beta}) (\bar{x}) \geqslant 0$ and $\nabla
  \bar{\phi} = - \frac{\bar{\phi}}{2 \omega_{2 \beta}} \nabla \omega_{2
  \beta}$ we have that
  \[ (m^2 - \epsilon) | \bar{\phi} (\bar{x}) |^2 \omega_{2 \beta} (\bar{x})
     \leqslant \frac{- \Delta (| \bar{\phi} |^2 \omega_{2 \beta})
     (\bar{x})}{2} + m^2 | \bar{\phi} (\bar{x}) |^2 \omega_{2 \beta} (\bar{x})
  \]
  \[ \leqslant - \omega_{2 \beta} (\bar{x}) (\bar{\phi} (\bar{x}) \cdot
     \partial V (\mathcal{I} \xi (\bar{x}) + \bar{\phi} (\bar{x}))) . \]
  Using a reasoning similar to the one of Lemma~\ref{lemma_bound} the thesis
  follows.
\end{proof}

Since the bounds~{\eqref{equationweight1}} and~{\eqref{equationweight2}} in
$C_{\exp \beta}^0$ and $\mathcal{C}^{2 - \tau}_{\tmop{loc}}$ imply the
compactness in $C^0_{\exp \beta'}$ when $\beta' < \beta$,
Lemma~\ref{lemma_existencecutoff} permits to prove the existence of strong
solutions to equation {\eqref{equationcutoff1}}, their uniqueness when $V$
satisfies Hypothesis~C and the generalization of Lemma~\ref{lemma_reduction1},
Lemma~\ref{lemma_reduction2}, Lemma~\ref{lemma_reduction3},
Proposition~\ref{proposition_serie3} and Theorem~\ref{theorem_reduction2}
needed in order to prove Theorem~\ref{theorem_cutoff1}.

\

At this point the proof of Theorem~\ref{theorem_cutoff1} requires only the
following additional statement.

\begin{theorem}
  \label{theorem_gaussian1}Let $V$ be a trigonometric polynomial, let $L$ be a
  polynomial and let $f_r$ be a sequence of cut-offs satisfying Hypothesis~CO,
  such that $f_r \equiv 1$ on the ball of radius $r \in \mathbb{N}$ and such
  that $f'_r (x) \leqslant C_1 \exp (- C_2 (| x | - r))$ for some positive
  constants $C_{1,} C_2 \in \mathbb{R}_+$ independent on $r$, then
  \[ \partial_t^k H^L (0) = \lim_{r \rightarrow + \infty} \partial^k_t
     F^L_{f_r} (0) = \partial^k_t F^L (0) . \]
\end{theorem}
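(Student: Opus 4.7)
The first equality is essentially free: since $V$ is a trigonometric polynomial, each rescaled potential $tV$ satisfies Hypothesis~QC uniformly in $t$, so Theorem~\ref{theorem_reduction2} applied to $tV$ with cut-off $f_r$ gives the identity $F^L_{f_r}(t) = H^L(t)$ in a common $t$-neighborhood of $0$, for every $r \in \mathbb{N}$. In particular $\partial_t^k F^L_{f_r}(0) = \partial_t^k H^L(0)$ for every $r$, so the first equality in the statement is trivial. The substantive assertion is the second equality $\partial_t^k F^L(0) = \lim_{r \to +\infty} \partial_t^k F^L_{f_r}(0)$, which I plan to establish by a direct computation of both sides \emph{without} invoking dimensional reduction on the cut-off free object.

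Decompose $\phi_{f_r, t} = \mathcal{I}\xi + \bar{\phi}_{f_r, t}$ and similarly $\phi_t = \mathcal{I}\xi + \bar{\phi}_t$. The argument of Lemma~\ref{lemma_serie1} is uniform in the cut-off $f_r$ because its only input is the intrinsic bound $\| \partial^k V \|_\infty \leqslant C^k k!$, so the map $t \mapsto \bar{\phi}_{f_r, t}$ is real analytic in a neighborhood of $0$ independent of $r$, with Taylor coefficients obeying the recursion
$$
\bar{\phi}_{f_r, j} \assign \partial_t^j \bar{\phi}_{f_r, t} |_{t = 0} = - \mathcal{I}\bigl( f_r \, P_j(\mathcal{I}\xi; \bar{\phi}_{f_r, 1}, \ldots, \bar{\phi}_{f_r, j - 1}) \bigr),
$$
where each $P_j$ is a universal polynomial in derivatives of $V$ evaluated at $\mathcal{I}\xi$ and in the lower-order coefficients. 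A straightforward induction based on dominated convergence, using that $V$ and all its derivatives are uniformly bounded and that $\mathcal{G}_2$ is integrable, then yields $\bar{\phi}_{f_r, j}(0) \to \bar{\phi}_j(0)$ in $L^p(\mu)$ for every $p \geqslant 1$ and every fixed $j$.

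Next I would apply Fa\`a~di~Bruno to the ratio
$$
F^L_{f_r}(t) = \frac{\mathbb{E}\bigl[ L(\phi_{f_r, t}(0)) \, e^{4 t \rho_{f_r}(t)} \bigr]}{\mathbb{E}\bigl[ e^{4 t \rho_{f_r}(t)} \bigr]}, \qquad \rho_{f_r}(t) \assign \int_{\mathbb{R}^2} f'_r(x) \, V(\phi_{f_r, t}(x)) \mathd x.
$$
The cancellation between numerator and denominator forces $\partial_t^k F^L_{f_r}(0)$ to decompose as a finite sum of \emph{connected} (cumulant) expectations of the schematic form
$$
\Bigl\langle L^{(m)}(\mathcal{I}\xi(0)) \, \prod_{a} \bar{\phi}_{f_r, j_a}(0) \,;\, \prod_{b} \rho_{f_r}^{(i_b)}(0) \Bigr\rangle^{\!c},
$$
where $\langle \cdot \,;\, \cdot \rangle^c$ denotes the truncated correlation with respect to the $\rho$-group. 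Terms carrying no $\rho$-factor converge, by the previous paragraph, to their cut-off free analogues and reassemble precisely into $\partial_t^k F^L(0)$.

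\textbf{Main obstacle.} The hard part is the vanishing, as $r \to +\infty$, of every term that contains at least one $\rho^{(i_b)}_{f_r}(0)$ factor. Each $\bar{\phi}_{f_r, j_a}(0)$ is a Green-function convolution against $f_r$ times a bounded functional of $\mathcal{I}\xi$, whereas each $\rho^{(i_b)}_{f_r}(0)$ is an integral against $f'_r$, supported in $\{|x| \geqslant r\}$ with the exponential profile $|f'_r(x)| \leqslant C_1 e^{- C_2(|x| - r)}$. Since $\mathcal{I}\xi$ is a stationary Gaussian field with massive covariance $\mathcal{G}_2$, and $V, L$ together with their derivatives yield bounded or polynomial-growth functionals, the Wick expansion of the truncated correlation decays uniformly as $O(e^{- c r})$ with $c > 0$ depending only on $m$; integrating against the exponentially decaying weights $f'_r$ and $\mathcal{G}_2$ then forces these terms to zero. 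Carrying out these cumulant estimates rigorously, uniformly in the order $k$ of differentiation, is the core technical step: trigonometricity of $V$ (hence boundedness of all its derivatives) is precisely what guarantees the Wick expansion has the correct clustering properties.
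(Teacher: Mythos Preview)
Your approach is essentially the paper's: both Leibniz-expand $\partial_t^k F^L_{f_r}(0)$, isolate the main term $\mathbb{E}[\partial_t^k L(\phi_{f_r,t}(0))|_{t=0}]$ (which converges to $\partial_t^k F^L(0)$) from the covariance corrections involving $\int f'_r\, V(\phi_{f_r,t})\,\mathd x$, and kill the latter by a Gaussian decorrelation estimate---the paper packages the iterated Green-function integrals in a tree-indexed formalism and records the decorrelation as Lemma~\ref{lemma_gaussian1}, obtaining only $O(r^{-2})$ via a crude Fourier bound rather than your sharper exponential rate from the massive covariance. The one technical point the paper makes explicit and your sketch leaves implicit is the majorant argument (replacing $V$ by $Ce^{\alpha|y|}$ and using positivity of $\mathcal{G}$) needed to exchange the infinite Taylor series of the trigonometric $V$ with expectation and with the $r\to\infty$ limit.
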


To make the proof easy to follow we restrict ourselves to the scalar case,
i.e. the case where $n = 1$. The general case is a straightforward
generalization. We will also need certain results about iterated Gaussian
integrals. So let us introduce first some notations.

\

We denote by $\mathcal{T}$ the set of rooted trees with at least a external
vertex which is not the root. We denote by $\tau_0$ the tree with only one
vertex other than the root. In this set we introduce two operations: if $\tau
\in \mathcal{T}$ we denote by $[\tau]$ the tree obtained from $\tau$ by adding
a new vertex at the root which becomes the new root, and if $\tau' \in
\mathcal{T}$ we denote by $\tau \cdot \tau'$ the tree obtained by identifying
the root of $\tau$ and $\tau'$. It is possible to obtain any element of
$\mathcal{T}$ by applying iteratively a finite number of times the previous
operations to $\tau_0$. Furthermore we define $\mathcal{I}^f_{\tau} (x) \in
C^0 (\mathbb{R}^2)$ by induction in the following way
\[ \mathcal{I}^f_{\tau_0} (x) \assign \mathcal{I} \xi, \qquad
   \mathcal{I}^f_{[\tau]} (x) \assign \int_{\mathbb{R}^2} \mathcal{G} (x - y)
   f (y) \mathcal{I}^f_{\tau} (y) \mathd y, \]
\[ \mathcal{I}^f_{\tau \cdot \tau'} (x) \assign \mathcal{I}^f_{\tau} (x) \cdot
   \mathcal{I}^f_{\tau'} (x), \]
where $\mathcal{G} (x)$ is the Green function of the operator $\mathcal{I}=
(m^2 - \Delta)^{- 1}$. We need also to introduce the following notation.
Suppose that $\tau, \tau' \in \mathcal{T}$ and let $\mathcal{P}_{\tau, \tau'}$
be the set of all possible pairing between the external vertices (excepted
their roots) of the forest $\tau \sqcup \tau'$ and let
$\mathcal{P}^{\tmop{int}}_{\tau, \tau'} \subset \mathcal{P}_{\tau, \tau'}$ the
set of all possible pairing involving separately the vertices of $\tau$ and
$\tau'$. If $\pi \in \mathcal{P}$ we write
\[ \mathfrak{I}^{\pi, f}_{\tau, \tau'} (x, y) =\mathbb{E}
   [\widetilde{\mathcal{I}}^{\pi, f}_{\tau} (x) \cdot
   \widetilde{\mathcal{I}}^{\pi, f}_{\tau'} (y)], \]
where $\widetilde{\mathcal{I}}^{\pi, f}_{\tau} (x),
\widetilde{\mathcal{I}}^{\pi, f}_{\tau'} (y)$ are the expression
$\mathcal{I}^f_{\tau} (x)$ where $\xi$ is replaced by some copies of Gaussian
white noises $\xi_V$ one for each \ vertex $V$ of $\tau$ and $\tau'$ which
have correlation $0$ if $(V, V') \not{\in} \pi$ and are identically correlated
otherwise.

\begin{lemma}
  \label{lemma_gaussian1}With the notations and the hypotheses of
  Theorem~\ref{theorem_gaussian1} we have that for any $\tau, \tau' \in
  \mathcal{T}$
  
  \begin{multline*}
    \lim_{r \rightarrow + \infty} \left. \left( \mathbb{E} \left[
    \mathcal{I}^{f_r}_{\tau} (0) \cdot \prod_{i = 1}^p \int f'_r (x)
    \mathcal{I}^{f_r}_{\tau_i} (x) \mathd x \right] \right. \right.    \left. \left. -\mathbb{E} [\mathcal{I}^{f_r}_{\tau} (0)] \cdot \mathbb{E}
    \left[ \prod_{i = 1}^p \int f'_r (x) \mathcal{I}^{f_r}_{\tau_i} (x) \mathd
    x \right] \right) \right. = 0.
  \end{multline*}
\end{lemma}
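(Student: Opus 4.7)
The plan is to apply Wick's theorem to both expectations, isolate the crossing pairings whose sum gives the difference, and bound each crossing contribution using the exponential decay of the massive Green's function $\mathcal{G}$ together with the concentration of $f'_r$ near $\{|x|=r\}$.

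First I would unfold $\mathcal{I}^{f_r}_{\tau}(0)$ and each $\mathcal{I}^{f_r}_{\tau_i}(x)$ into iterated convolutions of $\mathcal{G}$ against the underlying white noise $\xi$, using the recursive identities $\mathcal{I}^{f_r}_{[\tau]} = \mathcal{I}(f_r \mathcal{I}^{f_r}_\tau)$ and $\mathcal{I}^{f_r}_{\tau \cdot \tau'} = \mathcal{I}^{f_r}_\tau \cdot \mathcal{I}^{f_r}_{\tau'}$. This realises
\[
A_r \assign \mathcal{I}^{f_r}_\tau(0),\qquad B_r \assign \prod_{i=1}^p \int f'_r(x)\,\mathcal{I}^{f_r}_{\tau_i}(x)\,\mathd x
\]
as polynomials in $\xi$ whose monomials are labelled by the non-root external vertices of $\tau$ and of $\tau_1,\dots,\tau_p$. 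By Wick's theorem, $\mathbb{E}[A_r B_r]$ expands as a sum over all complete pairings $\pi \in \mathcal{P}_{\tau,\tau_1\sqcup\cdots\sqcup\tau_p}$ of those leaves, while $\mathbb{E}[A_r]\,\mathbb{E}[B_r]$ is exactly the sub-sum over the interior pairings $\mathcal{P}^{\mathrm{int}}$, i.e.\ those that never match a leaf of $\tau$ with a leaf of some $\tau_i$. Subtracting, the difference reduces to the sum over the \emph{crossing} pairings $\pi \in \mathcal{P}\setminus\mathcal{P}^{\mathrm{int}}$.

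For each such $\pi$, the amplitude $\mathfrak{I}^{\pi,f_r}$ contains at least one propagator factor $\mathcal{G}(x-y)$ linking, through the internal edges of the trees, a vertex geometrically anchored at the origin to a vertex integrated against $f'_r$. Combining the exponential decay $|\mathcal{G}(z)| \lesssim e^{-m|z|/2}$ for large $|z|$ (with an integrable logarithmic singularity at $z=0$), the bound $|f'_r(y)| \le C_1 e^{-C_2(|y|-r)}$, and $0 \le f_r \le 1$, a standard Young-type convolution estimate in exponentially weighted spaces yields
\[
|\mathfrak{I}^{\pi,f_r}| \le C^{\pi}_{\tau,\tau_1,\dots,\tau_p}\, e^{-\delta r}
\]
for some $\delta > 0$ depending only on $m$ and $C_2$: the chain of propagators from the origin to the support of $f'_r$ forces a factor $e^{-\delta r}$, while the remaining propagators contribute only an $r$-independent constant. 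Since the set of crossing pairings is finite (depending only on the combinatorial data), summing gives an $O(e^{-\delta r})$ bound on the difference, which tends to $0$ as $r \to \infty$.

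The main obstacle is the diagrammatic bookkeeping: one must identify, for each crossing pairing, a concrete chain of propagators realising the geometric connection from $0$ to $\{|y|\gtrsim r\}$ and control the remaining integrations against the kernels $f_r$. This is best organised by induction along the recursive tree operations $[\cdot]$ and $\cdot$\,, with base case $\tau = \tau_0$ where $\mathcal{I}^{f_r}_{\tau_0} = \mathcal{I}\xi$ is linear in $\xi$ and the Wick pairing becomes explicit.
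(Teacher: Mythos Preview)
Your outline matches the paper's strategy: reduce via Wick's theorem to a finite sum over crossing pairings $\pi\in\mathcal{P}\setminus\mathcal{P}^{\mathrm{int}}$ and show each crossing amplitude vanishes as $r\to\infty$ because at least one propagator chain links the origin to the region where $f'_r$ lives. The paper carries this out for $p=1$ (stating the general case is a straightforward extension), but with two concrete simplifications in place of your ``Young-type convolution estimate in exponentially weighted spaces''. First, it treats the linear trees $\tau_k=[\cdots[\tau_0]\cdots]$ explicitly: here $\mathfrak{I}^{\pi,f_r}_{\tau_k,\tau_{k'}}(0,x)$ is dominated by the iterated convolution $\mathcal{G}^{*(k+k')}(x)$, and a single integration by parts in Fourier (applying $(-\Delta_l+1)$) gives $|\mathfrak{I}^{\pi,f_r}(0,x)|\le C/(1+|x|^2)$, which after integration against $f'_r$ yields $O(r^{-2})$. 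Second, for arbitrary trees it bounds every factor $\mathcal{C}=\mathcal{G}*\mathcal{G}$ arising from a pairing \emph{other than} one fixed crossing pair by $\|\mathcal{C}\|_\infty$, and every $f_r$ by $1$; this collapses the general amplitude to a constant times a linear-tree amplitude, and the previous bound applies.

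So the paper obtains only polynomial decay, not the exponential $e^{-\delta r}$ you claim. Your exponential rate is in fact achievable (the massive Green's function in two dimensions does decay like $e^{-m|z|}$), and would give a sharper statement; but making it rigorous requires exactly the bookkeeping you flag as the ``main obstacle'', namely controlling the branches off the distinguished chain. The paper sidesteps this entirely by the crude bound $\mathcal{C}\le\|\mathcal{C}\|_\infty$ on the non-chain factors, trading the exponential rate for a two-line reduction to a one-dimensional convolution.
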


\begin{proof}
  We present the proof only for the case $p = 1$, since the general case is a
  straightforward generalization. Since $\mathcal{I}^{f_r}_{\tau}$ are
  Gaussian random variables depending polynomially with respect to the white
  noise $\xi$, using the notation previously introduced we have
  
  \begin{multline*}
    \mathbb{E} \left[ \mathcal{I}^{f_r}_{\tau} (0) \cdot \int f'_r (x)
    \mathcal{I}^{f_r}_{\tau'} (x) \mathd x \right] -\mathbb{E}
    [\mathcal{I}^{f_r}_{\tau} (0)] \cdot \mathbb{E} \left[ \int f'_r (x)
    \mathcal{I}^{f_r}_{\tau'} (x) \mathd x \right] =\\
    = \sum_{\pi \in \mathcal{\mathcal{} \mathcal{P}}_{\tau, \tau'} \setminus
    \mathcal{P}^{\tmop{int}}_{\tau, \tau'}} \int_{\mathbb{R}^2}
    \mathfrak{I}^{\pi, f_r}_{\tau, \tau'} (0, x) f'_r (x) \mathd x.
  \end{multline*}
  
  Let us consider the simplest case when $\tau = \tau_k : = [\ldots [\tau_0]
  \ldots]$ $k$ times and $\tau' = \tau_{k'} = [\ldots [\tau_0] \ldots]$ $k'$
  times. In this case we have
  
  \begin{multline*}
    \mathfrak{I}^{\pi, f_r}_{\tau, \tau'} (0, x) = \int \mathcal{G} (0 - y_1)
    f_r (y_1) \ldots \mathcal{G} (y_k - x_1) \times\\
    \times \mathcal{G} (x_1 - x_2) f_r (x_2) \ldots f_r (x_{k'}) \mathcal{G}
    (x_{k'} - x) \mathd y_1 \cdots \mathd y_k \mathd x_1 \cdots \mathd x_k .
  \end{multline*}
  
  In particular, since $\mathcal{C} (x) = \mathcal{G} \ast \mathcal{G}$, which
  is the Green function of $\mathcal{I}^2 = (m^2 - \Delta)^{- 2}$, is bounded
  and positive, and since $\mathcal{G}$ is positive we obtain that
  \[ | \mathfrak{I}^{\pi, f_r}_{\tau, \tau'} (0, x) | \leqslant
     \underbrace{\mathcal{G} \asterisk \ldots \asterisk \mathcal{G}}_{k + k'
     \quad \tmop{times}} (0 - x) = \int_{\mathbb{R}^2} \frac{e^{- i l \cdot
     x}}{(| l |^2 + m^2)^{k + k'}} \mathd l. \]
  Thus we get
  \[ | \mathfrak{I}^{\pi, f_r}_{\tau, \tau'} (0, x) | \cdot (| x |^2 + 1)
     \leqslant \left| \int_{\mathbb{R}^2} (- \Delta_l + 1) \frac{e^{- i l
     \cdot x}}{(| l |^2 + m^2)^{k + k'}} \mathd l. \right| \leqslant C_3, \]
  where $C_3 \in \mathbb{R}_+$. Thus
  \begin{multline*}
    \int_{\mathbb{R}^2} \mathfrak{I}^{\pi, f_r}_{\tau, \tau'} (0, x) f'_r (x)
    \mathd x   \leqslant \int_{B_r^c} \frac{C_3}{(| x |^2 + 1)} C_1 \exp (- C_2 (| x | -
    r)) \mathd x \lesssim_{C_1, C_2, C_3} \frac{1}{r^2 + 1} \rightarrow 0.
  \end{multline*}
  For the general case let us note that $\mathfrak{I}^{\pi, f_r}_{\tau, \tau'}
  (0, x)$ is built by taking the product and the convolution with the
  functions $\mathcal{G}, f_r$ and $\mathcal{C} = \mathcal{G} \asterisk
  \mathcal{G}$. We note that $\mathcal{C}$ appears one time for every pair of
  vertices $(V_1, V_2) \in \pi$. Then, since $\pi \not{\in}
  \mathcal{P}^{\tmop{int}}_{\tau, \tau'}$ there is at least a couple $(V, V')
  \in \pi$ such that $V$ is a vertex of $\tau$ and $V'$ is a vertex of
  $\tau'$. Now we can bound the function $\mathcal{C}$ with a constant $C_4$
  for all pairs of vertices $(V_1, V_2) \not{=} (V, V')$ and $f_r$ by 1
  obtaining, for any $x \in \mathbb{R}^2$, that
  \[ \mathfrak{I}^{\pi, f_r}_{\tau, \tau'} (0, x) \lesssim C_4^{k_1}
     \mathfrak{I}^{f_r}_{\tau_{k_2}, \tau_{k 3}} (0, x) \]
  for some $k_1, k_2, k_3 \in \mathbb{N}$. The thesis follows from the
  previous inequality and the bounds obtained on
  $\mathfrak{I}^{f_r}_{\tau_{k_2}, \tau_{k 3}} (0, x) .$
\end{proof}

\begin{proof*}{Proof of Theorem~\ref{theorem_gaussian1}}
  We write
  \[ \mathfrak{L}_{f_r} (t) \assign L (\phi_{f_r, t} (0)) \qquad
     \mathfrak{E}_{f_r} (t) \assign \exp \left( 4 t \int_{\mathbb{R}^2} f'_r
     (x) V (\phi_{f_r, t} (x)) \mathd x \right) . \]
  We have
  \begin{eqnarray*}
    \partial^k_t F^L_{f_r} (t) & = & \sum_{0 \leqslant l \leqslant k}
    \binom{k}{l} \mathbb{E} \left[ \mathfrak{L}_{f_r}^{(k - l)} (0) \left.
    \partial^l_t \left( \frac{\mathfrak{E}_{f_r} (t)}{\mathbb{E}
    [\mathfrak{E}_{f_r} (t)]} \right) \right|_{t = 0} \right]\\
    & = & \mathbb{E} [\mathfrak{L}_{f_r}^{(k)} (0) \mathfrak{}] + \sum_{1
    \leqslant l \leqslant k} \sum_{0 \leqslant p \leqslant l - 1} \binom{k}{l}
    \binom{l}{p} (\mathbb{E}[\mathfrak{L}^{(k - l)}_{f_r} (0) \cdot
    \mathfrak{E}^{(l - p)}_{f_r} (0)] + \nobracket\\
    &  & \nobracket -\mathbb{E}[\mathfrak{L}^{(k - l)}_{f_r}
    (0)]\mathbb{E}[\mathfrak{E}^{(l - p)}_{f_r} (0)]) \cdot \left.
    \partial^p_t \left( \frac{1}{\mathbb{E} [\mathfrak{E}_{f_r} (t)]} \right)
    \right|_{t = 0},
  \end{eqnarray*}
  where we used the Leibniz rule for the derivative of the product and the
  relation
  \[ \left. \partial^l_t \left( \frac{1}{\mathbb{E} [\mathfrak{E}_{f_r} (t)]}
     \right) \right|_{t = 0} = - \sum_{0 \leqslant p \leqslant l - 1}
     \binom{l}{p} \mathbb{E} [\mathfrak{E}^{(l - p)}_{f_r} (0)] \cdot \left.
     \partial^p_t \left( \frac{1}{\mathbb{E} [\mathfrak{E}_{f_r} (t)]} \right)
     \right|_{t = 0} . \]
  Since $\left. \partial^p_t \left( \frac{1}{\mathbb{E} [\mathfrak{E}_{f_r}
  (t)]} \right) \right|_{t = 0}$ is bounded from above and below when $r
  \rightarrow + \infty$ if we are able to prove that $\mathbb{E}
  [\mathfrak{L}_{f_r}^{(k)} (0) \mathfrak{}] \rightarrow \partial^k_t F^L (0)$
  and $\mathbb{E} [\mathfrak{L}^{(k - l)}_{f_r} (0) \cdot \mathfrak{E}^{(l -
  p)}_{f_r} (0)] -\mathbb{E} [\mathfrak{L}^{(k - l)}_{f_r} (0)] \mathbb{E}
  [\mathfrak{E}^{(l - p)}_{f_r} (0)] \rightarrow 0$ the theorem is proven.
  
  First of all we note that
  \begin{equation}
    (m^2 - \Delta) \partial^k_t \phi_{f_r, t} |_{t = 0} = k f_r \partial^{k -
    1}_t (V (\phi_{f_r, t})) |_{t = 0} \label{equationcutoff4}
  \end{equation}
  for $k > 0$ and $\phi_{f_r, 0} =\mathcal{I} \xi$ for $k = 0$. This means
  that $\mathfrak{L}^{(k - l)}_{f_r} (0), \mathfrak{E}^{(l - p)}_{f_r} (0)$
  are given by a finite combination of convolutions and products between the
  function $\mathcal{G}$ (i.e. the Green function of $\mathcal{I}$), the
  functions of the form $V^{(l)} (\phi_{f_r, 0})$ (where $V^{(l)}$ is the
  $l$-th derivative of $V$), the cut-off $f_r$ and $f'_r$. Since $V$ is a
  trigonometric polynomial, by developing $V$ and its derivative by Taylor
  series, we obtain the following formal expressions
  \begin{equation}
    \begin{aligned}
      \mathfrak{L}^{(k)}_{f_r} (0) = & \sum_{\tau \in \mathcal{T}} A^k_{\tau}
      \mathcal{I}_{\tau}^{f_r} (0),\\
      \mathfrak{E}^{(k)}_{f_r} (0) = & \sum_l \sum_{\tau_1, \ldots, \tau_l \in
      \mathcal{T}} B^{k, l}_{(\tau_1, \ldots, \tau_l)} \prod_{i = 1}^l
      \int_{\mathbb{R}^2} f'_r (x) \mathcal{I}_{\tau_i}^{f_r} (x) \mathd x.
    \end{aligned}
    \label{equationcutoff5}
  \end{equation}
  The previous series are not only formal but they are actually absolutely
  convergent series. Furthermore we can change the integral, the expectation
  and the limit with the series.
  
  In order to prove this we now note that there exist two positive constants
  $C, \alpha > 0$ such that the function $V$ is majorized (in the meaning of
  the majorants method) by $C \exp (\alpha x)$ and let $\tilde{L}$ be the
  polynomial which majorizes the polynomial $L$. We now consider
  $\widetilde{\mathfrak{L}}_{f_r} (t) = \tilde{L} (\phi_{f_r, f} (0))$ and
  $\widetilde{\mathfrak{E}}_{f_r} (t) = \left( t C \int_{\mathbb{R}^2} f'_r
  \exp (\alpha \phi_{f_r, t} (x)) \mathd x \right)$. For what we said,
  $\widetilde{\mathfrak{L}}_{f_r}^{(k)} (0)$ and
  $\widetilde{\mathfrak{E}}_{f_r}^{(p)} (0)$ are a finite combination of
  convolutions and products between $\mathcal{G}$, the functions of the form
  $V^{(l)} (\phi_{f_r, 0})$ (where $V^{(l)}$ is the $l$-th derivative of $V$),
  the cut-off $f_r$ and $f'_r$. Let $\widehat{\mathfrak{L}}^k_{f_r}$ and
  $\widehat{\mathfrak{E}}^k_{f_r}$ be some random variables having the same
  expression of $\widetilde{\mathfrak{L}}_{f_r}^{(k)} (0)$ and
  $\widetilde{\mathfrak{E}}_{f_r}^{(p)} (0)$ where we replace every appearance
  of $V (\phi_{f_r, 0} (x))$ by $C \exp (\alpha | \phi_{f_r, 0} (x) |)$, every
  appearance of $V' (\phi_{f_r, 0} (x))$ with $C \alpha \exp (\alpha |
  \phi_{f_r, 0} (x) |)$ and so on. We introduce the following functions
  dependent on $\tau \in \mathcal{T}$ and defined recursively as follows
  \[ \mathcal{J}_{\tau_0}^{f_r} (x) \assign | \mathcal{I}_{\tau_0}^{f_r} (x) |
     \qquad \mathcal{J}_{[\tau]}^{f_r} (x) \assign \int_{\mathbb{R}^2}
     \mathcal{G} (x - y) f_r (y) \mathcal{J}_{\tau}^{f_r} (y) \mathd y \]
  \[ \mathcal{J}_{\tau \cdot \tau'}^{f_r} (x) \assign \mathcal{J}_{\tau}^{f_r}
     (x) \cdot \mathcal{J}^{f_r}_{\tau'} (x) . \]
  We, then, obtain that
  \[ \mathfrak{\hat{L}}^{(k)}_{f_r} = \sum_{\tau \in \mathcal{T}}
     \hat{A}^k_{\tau} \mathcal{J}_{\tau}^{f_r} (0) \qquad
     \mathfrak{\hat{E}}^{(k)}_{f_r} = \sum_l \sum_{\tau_1, \ldots, \tau_l \in
     \mathcal{T}} \hat{B}^{k, l}_{(\tau_1, \ldots, \tau_l)} \bigsqcap_{i =
     1}^l \int_{\mathbb{R}^2} f'_r (x) \mathcal{J}_{\tau_i}^{f_r} (x) \mathd
     x. \]
  By our construction we have that $\hat{A}^k_{\tau}, \hat{B}^{k, l}_{\tau,
  i}$ are all greater or equal than zero and also the following inequalities
  hold $| A^k_{\tau} | \leqslant \hat{A}^k_{\tau}, | B^{k, l}_{(\tau_1,
  \ldots, \tau_l)} | \leqslant \hat{B}^{k, l}_{(\tau_1, \ldots, \tau_l)}$.
  Furthermore we have $| \mathcal{I}_{\tau}^{f_r} (x) | \leqslant
  \mathcal{J}_{\tau}^{f_r} (x)$. Finally $\mathbb{E} [|
  \mathfrak{\hat{L}}^{(k)}_{f_r} |^p], \mathbb{E} [|
  \mathfrak{\hat{G}}^{(k)}_{f_r} |^p]$ are finite for any $p$, since the $x_1,
  \ldots, x_l$ function
  \[ \mathbb{E} \left[ \exp \left( \beta \alpha \sum_{i = 1}^l | \phi_{f_r,
     0} (x_i) | \right) \right] \leqslant + \infty, \]
  for any $\beta > 0$. Since $\mathcal{G}$ is positive the bounds on
  $\mathbb{E} [| \mathfrak{\hat{L}}^{(k)}_{f_r} |^p], \mathbb{E} [|
  \mathfrak{\hat{G}}^{(k)}_{f_r} |^p]$ can \ be chosen uniformly on $r$. This
  implies that the series {\eqref{equationcutoff5}} are absolutely convergent
  and by Lebesgue's dominated convergence theorem we can exchange the series
  with the summation and the limit. This means that
  \begin{eqnarray*}
    & \lim_{r \rightarrow + \infty} \mathbb{E} [\mathfrak{L}^{(k)}_{f_r} (0)
    \cdot \mathfrak{E}^{(l)}_{f_r} (0)] -\mathbb{E} [\mathfrak{L}^{(k)}_{f_r}
    (0)] \mathbb{E} [\mathfrak{E}^{(l)}_{f_r} (0)] = & \\
    & = \lim_{r \rightarrow + \infty} \sum_{l \in \mathbb{N}} \sum_{\tau,
    \tau_1, \ldots, \tau_l \in \mathcal{T}} A^k_{\tau} B^{k, l}_{(\tau_1,
    \ldots, \tau_l)} \left( \mathbb{E} \left[ \mathcal{I}^{f_r}_{\tau} (0)
    \cdot \prod_{i = 1}^l \int f'_r (x) \mathcal{I}^{f_r}_{\tau_i} (x) \mathd
    x \right] \right. + & \\
    & \left. -\mathbb{E} [\mathcal{I}^{f_r}_{\tau} (0)] \cdot \mathbb{E}
    \left[ \prod_{i = 1}^l \int f'_r (x) \mathcal{I}^{f_r}_{\tau_i} (x) \mathd
    x \right] \right) = & \\
    & = \sum_{l \in \mathbb{N}} \sum_{\tau, \tau_1, \ldots, \tau_l \in
    \mathcal{T}} A^k_{\tau} B^{k, l}_{(\tau_1, \ldots, \tau_l)} \lim_{r
    \rightarrow + \infty} \left( \mathbb{E} \left[ \mathcal{I}^{f_r}_{\tau}
    (0) \cdot \prod_{i = 1}^l \int f'_r (x) \mathcal{I}^{f_r}_{\tau_i} (x)
    \mathd x \right] + \right. & \\
    & \left. -\mathbb{E} [\mathcal{I}^{f_r}_{\tau} (0)] \cdot \mathbb{E}
    \left[ \prod_{i = 1}^l \int f'_r (x) \mathcal{I}^{f_r}_{\tau_i} (x) \mathd
    x \right] \right) = 0, & 
  \end{eqnarray*}
  where in the last line we used Lemma~\ref{lemma_gaussian1}. In a similar way
  it is simple to prove that
  \[ \mathbb{E} [\mathfrak{L}_{f_r}^{(k)} (0) \mathfrak{}] \rightarrow
     \partial^k_t F^L (0), \]
  and this concludes the proof.
\end{proof*}

\begin{proof*}{Proof of Theorem~\ref{theorem_cutoff1}}
  Using the bounds~{\eqref{equationweight1}} and~{\eqref{equationweight2}} we
  can prove the existence of strong solutions to equation
  {\eqref{equationcutoff1}}, and their uniqueness when $V$ satisfies
  Hypothesis~C.
  
  Furthermore using again the bounds~{\eqref{equationweight1}}
  and~{\eqref{equationweight2}} and a suitable generalization of
  Lemma~\ref{lemma_reduction1}, Lemma~\ref{lemma_reduction2},
  Lemma~\ref{lemma_reduction3} we can prove that Theorem~\ref{theorem_cutoff1}
  holds for any potential satisfying Hypothesis~C if and only if
  Theorem~\ref{theorem_cutoff1} holds for trigonometric potentials satisfying
  Hypothesis~C.
  
  \ The fact that Theorem~\ref{theorem_cutoff1} holds for trigonometric
  potentials, satisfying Hypothesis~C, is a consequence of
  Theorem~\ref{theorem_gaussian1}.
\end{proof*}

\appendix\section{Transformations in abstract Wiener
spaces}\label{appendix_wienerspace}

This appendix summarizes the results of~{\cite{Ustunel2000}} which are used in
the paper and establish the related notations. Hereafter we consider an
abstract Wiener space $(W, H, \mu)$ where $W$ is a separable Banach space, $H$
is an Hilbert space densely and continuously embedded in $W$ (with inclusion
map denoted by $i : H \rightarrow W$) called Cameron-Martin space and $\mu$ is
the Gaussian measure on $W$ associated with the Cameron-Martin space, i.e.
$\mu$ is the centered Gaussian measure on $W$ such that for any $w^{\asterisk}
\in W^{\asterisk}$ we have $\hat{\mu} (w^{\asterisk}) = \int_W \exp (i \langle
w^{\asterisk}, w \rangle) \mathd \mu (w) = \exp \left( - \frac{\|
i^{\asterisk} (w^{\asterisk}) \|_H^2}{2} \right)$ where $i^{\asterisk} :
W^{\asterisk} \rightarrow H$ is the dual operator of $i$.

If $u : W \rightarrow \mathbb{R}$ is a measurable non-linear functional we
denote by $\nabla u : W \rightarrow H^{}$ the following linear operator
\[ \nabla u (w) [h] = \langle \nabla u (w), h \rangle_H \assign
   \lim_{\epsilon \rightarrow 0} \frac{u (w + \epsilon h) - u (w)}{\epsilon} .
\]
The operator $\nabla$ is called Malliavin derivative and it is possible to
prove that it is closable (with unique closure) on the set of measurable $L^p
(\mu)$ functions. We denote the domain of $\nabla$ in $L^p (\mu)$ by
$\mathbb{D}_{p, 1}$. The previous operation can be extended for functional
$\mathfrak{u} : W \rightarrow H^{\otimes k}$ where $\nabla \mathfrak{u} : W
\rightarrow H^{\otimes k + 1}$ with its natural topology. Also this extension
of the operator $\nabla$ is closable.

If the measurable non-linear operator $F : W \rightarrow H$, where $| F |_H
\in L^p (\mu)$, is such that $\mathbb{E} [\langle F, \nabla u \rangle_H]
=\mathbb{E} [\tilde{F} u]$ for some $\tilde{F} \in L^p (\mu)$, we say that $F$
is in the domain of the operator $\delta$ and we denote by $\delta (F) =
\tilde{F} \in L^p (\mu)$ the Skorokhod integral of the measurable operator
$F$. The following expression for $\delta (F)$ used in the following holds:
suppose that $F (w) \in i^{\asterisk} (W^{\ast})$ and that $\nabla F (w)$ is a
trace class operator on $H$ for $\mu$ almost every $w \in W$ then
\begin{equation}
  \delta (F) (w) = \langle i^{\asterisk, - 1} (F (w)), w \rangle - \tmop{Tr}
  (\nabla F (w)) . \label{equationwienerspace1}
\end{equation}
We introduce a definition for studying the random transformations defined on
abstract Wiener spaces.

\begin{definition}
  Let $U : W \rightarrow H$ be a measurable map. We say that $U$ is a $H -
  C^1$ map if for $\mu$ almost every $w \in W$ the map $U_w : H \rightarrow
  H$, defined as $h \longmapsto U_w (h) \assign U (w + h)$, is a Fr{\'e}chet
  differentiable function in $H$ and if $\nabla U_w : H \rightarrow H^{\otimes
  2}$, defined as $h \longmapsto \nabla U_w (h) \assign \nabla U (w + h)$
  where $\nabla$ is the Malliavin derivative, is continuous for almost every
  $w \in W$ and with respect to the natural (Hilbert-Schmidt) topology of
  $H^{\otimes 2}$.
\end{definition}

We introduce the shift $T : W \rightarrow W$ associated with $U$, i.e. the map
defined as $T (w) = w + U (w)$, and the non-linear functional $\Lambda_U : W
\rightarrow \mathbb{R}$ as follows
\begin{equation}
  \Lambda_U (w) = \det_2 (I_H + \nabla U (w)) \exp \left( - \delta (U) (w) -
  \frac{1}{2} | U (w) |_H^2 \right), \label{equationwienerspace2}
\end{equation}
where $\det_2 (I_H + K)$ is the regularized Fredholm determinant
(see~{\cite{Simon2005}} Chapter 9) that it is well defined for any
Hilbert-Schmidt operator $K$. In particular if $K$ is self adjoint we have
\[ \det_2 (I + K) = \prod_{i \in \mathbb{N}} (1 + \lambda_i) e^{- \lambda_i},
\]
where $\lambda_i$ are the eigenvalues of the operator $K$.

Suppose that $U (w) \in i^{\asterisk} (W)$ and that $\nabla U (w)$ is a trace
class operator for almost any $w \in W$, then using the expression
{\eqref{equationwienerspace1}} and the properties of $\det_2$ we obtain
\begin{equation}
  \Lambda_U (w) = \det (I_H + \nabla U (w)) \exp \left( - \langle
  i^{\asterisk, - 1} (U (w)), w \rangle_W - \frac{1}{2} | U (w) |^2_H \right)
  \label{equationwienerspace3},
\end{equation}
where $\det (I_H + K)$ is the standard Fredholm determinant. The functional
$\Lambda_U$ is closely related to the transformation of the measure $\mu$ with
respect to the transformation $T$. Indeed suppose that $W$ is finite
dimensional then we have
\[ \mathd \mu = \exp \left( - \frac{1}{2} \langle w, w \rangle_H \right)
   \frac{\mathd x}{Z} = \exp \left( - \frac{1}{2} \langle i^{\asterisk, - 1}
   (w), w \rangle_W \right) \frac{\mathd x}{Z}, \]
where $Z \in \mathbb{R}_+$ is a suitable normalization constant and $\mathd x$
is the Lebesgue measure on $W$. Thus, if $T$ is a diffeomorphism on $W$, we
evidently have, thanks to equation {\eqref{equationwienerspace3}},

\begin{multline*}
  \frac{\mathd T_{\asterisk} (\mu)}{\mathd \mu} = \left. \left. \left| \det (I
  + \nabla U (w)) \exp \left( - \langle i^{\asterisk, - 1} (U (w)), w
  \rangle_W + \phantom{\frac{1}{2}} \right. \right. \right. \right.\\
  \left. \left. \left. \left. - \frac{1}{2} \langle i^{\asterisk, - 1} (U
  (w)), U (w) \rangle_W \right) \right| \right. \right. = | \Lambda_U (w) | .
\end{multline*}

The previous relation can be extended to the case where $W$ and $H$ are
infinite dimensional and the transformation $T$ is not a diffeomorphism but it
is only a $H - C^1$ map.

First of all we need the following generalization to the abstract Wiener
space context of the finite dimensional Sard Lemma.

\begin{proposition}
  \label{proposition_Sard}Let $T (w) = w + U (w)$ be a $H - C^1$ map and let
  $M \subset \mathcal{W}$ be the set of the zeros of $\det_2 (I + \nabla U
  (w))$, then the $\mu$ measure of the set $T (M)$ is zero, i.e. $\mu (T (M))
  = 0$.
\end{proposition}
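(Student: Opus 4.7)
The plan is to follow the Üstünel--Zakai approach, reducing the infinite-dimensional Sard property to the classical finite-dimensional Sard theorem via cylindrical approximation of $U$.

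First, I would fix an orthonormal basis $\{e_k\}_{k \in \mathbb{N}}$ of $\mathcal{H}$ contained in $i^*(\mathcal{W}^*)$ and let $\pi_n$ denote the orthogonal projection onto $H_n := \operatorname{span}(e_1, \ldots, e_n)$, extended in the canonical way to $\mathcal{W}$. Set $U_n(w) := \pi_n U(\pi_n w)$. Because $U$ is an $H$-$C^1$ map, standard Malliavin-calculus approximation arguments give $U_n \to U$ in $\mathcal{H}$ and $\nabla U_n \to \nabla U$ in the Hilbert--Schmidt norm, both $\mu$-a.s. By the continuity of $\det_2$ on Hilbert--Schmidt operators (Simon, Theorem~9.2), this yields $\det_2(I + \nabla U_n(w)) \to \det_2(I + \nabla U(w))$ $\mu$-a.s.

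Second, for each fixed $n$ the map $T_n := I + U_n$ acts as the identity on $(I - \pi_n)\mathcal{W}$, so it reduces to a $C^1$ map $T_n^{\mathrm{fin}} : H_n \to H_n$ in the first $n$ cylindrical variables. The critical set $M_n := \{\det_2(I + \nabla U_n) = 0\}$ is then the cylinder over the classical critical set of $T_n^{\mathrm{fin}}$, and the classical Sard lemma in $\mathbb{R}^n$ gives that $T_n^{\mathrm{fin}}(M_n^{\mathrm{fin}})$ has Lebesgue measure zero. Since the pushforward of $\mu$ under the first $n$ cylindrical coordinates is standard Gaussian, equivalent to Lebesgue on bounded subsets, one concludes $\mu(T_n(M_n)) = 0$ for every $n$.

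Third, I would pass to the limit by an $\varepsilon$-thickening argument. Define $M_{n,\varepsilon} := \{w : |\det_2(I + \nabla U_n(w))| \leqslant \varepsilon\}$ and correspondingly $M_\varepsilon := \{w : |\det_2(I + \nabla U(w))| \leqslant \varepsilon\}$. A Chebyshev-type estimate applied to the finite-dimensional area formula for $T_n$ shows $\mu(T_n(M_{n,\varepsilon})) \to 0$ as $\varepsilon \to 0$, uniformly in $n$ after localizing to a compact set where the convergence $\nabla U_n \to \nabla U$ is uniform (via Egorov). Since $M \subset M_\varepsilon$ and, by pointwise convergence of $\det_2$, $M_\varepsilon \subset \liminf_n M_{n,2\varepsilon}$ on the Egorov set, combining with $T_n \to T$ pointwise gives $T(M) \subset \bigcap_\varepsilon \overline{\liminf_n T_n(M_{n,2\varepsilon})}$ modulo null sets, whence $\mu(T(M)) = 0$.

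The main obstacle will be the limit passage in the third step: the critical sets $M_n$ do not converge set-theoretically to $M$ in any clean sense, so one must control the measures $\mu(T_n(M_{n,\varepsilon}))$ uniformly in $n$ as $\varepsilon \to 0$. This uniform control ultimately rests on the Hilbert--Schmidt continuity of $\nabla U$ built into the $H$-$C^1$ hypothesis (established for the present $U$ in Proposition~\ref{proposition_C1H}), which is exactly what allows the nonlinear functional $\det_2$ to behave well under the approximation, and on the finite-dimensional change-of-variables formula to convert $\varepsilon$-smallness of the determinant into $\varepsilon$-smallness of the image measure.
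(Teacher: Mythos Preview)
The paper does not prove this proposition; it simply cites Theorem~4.4.1 of {\"U}st{\"u}nel--Zakai, and your cylindrical-approximation strategy is indeed the route taken there. Steps one and two are fine. The gap is entirely in step three, and it is a real one rather than missing polish. Your set containment is misdirected: from $w \in M$ you get $w \in M_{n,2\varepsilon}$ for all large $n$ and $T_n(w) \to T(w)$, which places $T(w)$ in $\bigcap_N \overline{\bigcup_{n \geqslant N} T_n(M_{n,2\varepsilon})}$ --- a $\limsup$-type set, not a $\liminf$ --- and in any event closures of null sets can have full measure in a non-locally-compact space, so even a corrected containment would not conclude $\mu(T(M)) = 0$. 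Separately, your ``Chebyshev-type estimate'' from the area formula reads $\mu(T_n(M_{n,\varepsilon})) \leqslant \varepsilon \int \exp(-\delta(U_n) - \tfrac{1}{2}\|U_n\|^2_{\mathcal{H}}) \, \mathrm{d}\mu$, and the integral on the right is not bounded uniformly in $n$ from the $H$-$C^1$ hypothesis alone; Egorov on a compact does not supply this. You have correctly identified the obstacle but not overcome it.

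The fix in {\"U}st{\"u}nel--Zakai is to avoid limits of measures of image sets altogether. One option is to prove the area formula (their Theorem~4.4.1, which the paper also invokes as Theorem~\ref{theorem_Lambda}) directly for $H$-$C^1$ maps by passing to the limit in the \emph{integrals} $\int f \circ T_n \cdot g \cdot |\Lambda_{U_n}| \, \mathrm{d}\mu$, where Fatou and dominated convergence are available; Sard then follows by taking $g = \mathbf{1}_M$, since $\Lambda_U = 0$ on $M$ annihilates the left side while the right side dominates $\mu(T(M))$. The other option is to localize: partition $\mathcal{W}$ into countably many measurable pieces on each of which $\nabla U$ lies in a fixed Hilbert--Schmidt neighbourhood of a fixed finite-rank operator, reduce to a finite-dimensional Sard statement on each piece via the Cameron--Martin theorem, and sum. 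Either way the limit is taken in quantities that are stable under approximation, not in image sets.
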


\begin{proof}
  See Theorem~4.4.1 {\cite{Ustunel2000}}.
\end{proof}

The following is the change of variable theorem for (generally not invertible)
$H - C^1$ maps. \

\begin{theorem}
  \label{theorem_Lambda}Let $T (w) = w + U (w)$ be an $H - C^1$ map and let
  $f, g$ be two positive measurable functions then
  \begin{equation}
    \int_{\mathcal{W}} f \circ T (w) g (w) | \Lambda_U (w) | \mathd \mu (w) =
    \int_{\mathcal{W}} f (w) \left( \sum_{y \in T^{- 1} (w)} g (y) \right)
    \mathd \mu (w) . \label{equationwienerspace4}
  \end{equation}
  In particular if $K \subset W$ is a measurable subset where $\nobracket T
  |_K$ is invertible we
  \[ \int_K f \circ T (w) | \Lambda_U (w) | \mathd \mu (w) = \int_{T (K)} f
     (w) \mathd \mu (w) . \]
\end{theorem}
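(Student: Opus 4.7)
The plan is to reduce the formula to the bijective case on a countable measurable partition of $\mathcal{W}$, invoke the classical injective change-of-variables identity on each piece, and reassemble. The strategy mirrors the one already used to prove Theorem~\ref{theorem_weaksolution} above: that proof essentially ran in reverse the argument I am about to propose.

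First, by Proposition~\ref{proposition_Sard}, the critical image $T(M)$ has $\mu$-measure zero, where $M=\{w:\det_2(I_H+\nabla U(w))=0\}$. Since $|\Lambda_U|\equiv 0$ on $M$ by definition, the integrand on the left-hand side of~\eqref{equationwienerspace4} vanishes on $T^{-1}(T(M))\supset M$, and both sides of~\eqref{equationwienerspace4} are unaffected if we restrict integration to $\mathcal{W}_0\assign \mathcal{W}\setminus T^{-1}(T(M))$. On $\mathcal{W}_0$ the operator $I_H+\nabla U(w)$ is invertible, hence, by the $H$-$C^1$ implicit function theorem on abstract Wiener space (see~{\cite{Ustunel2000}}, Ch.~3), the map $T$ is a local $H$-$C^1$ diffeomorphism from a neighbourhood of any $w\in\mathcal{W}_0$ onto a neighbourhood of $T(w)$. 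In particular $T^{-1}(w)\cap \mathcal{W}_0$ is discrete; combined with standard properness arguments (analogous to Theorem~\ref{theorem_weaksolution2}) it is at most countable.

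Next I would stratify $\mathcal{W}_0$ according to the cardinality of the fibre, setting $\mathcal{W}_0^{(n)}\assign \mathcal{W}_0\cap T^{-1}(\{N=n\})$ for $n\in\mathbb{N}\cup\{+\infty\}$, and then, iteratively applying the Kuratowski--Ryll-Nardzewski selection theorem (as in the proof of Theorem~\ref{theorem_weaksolution}), decompose each $\mathcal{W}_0^{(n)}$ into measurable subsets $\mathcal{W}_0^{(n)}=\bigsqcup_{i=1}^n \mathcal{W}_{0,i}^{(n)}$ on which $T$ is injective. On each such injective piece $K$, the classical Üstünel--Zakai (Ramer--Kusuoka) bijective change-of-variables formula yields
\[
\int_{K} \varphi\circ T(w)\,|\Lambda_U(w)|\,\mathd\mu(w)=\int_{T(K)} \varphi(w)\,\mathd\mu(w)
\]
for every positive measurable $\varphi:\mathcal{W}\to\mathbb{R}$. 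Applying this with $\varphi(w)=f(w)\,g(T^{-1}|_K(w))\,\mathbb{I}_{T(K)}(w)$ and summing over all pieces of the partition gives
\[
\int_{\mathcal{W}_0} f\circ T(w)\,g(w)\,|\Lambda_U(w)|\,\mathd\mu(w)=\int_{\mathcal{W}} f(w)\Big(\sum_{y\in T^{-1}(w)\cap\mathcal{W}_0} g(y)\Big)\,\mathd\mu(w),
\]
and the right-hand sum coincides $\mu$-a.e.\ with the one in~\eqref{equationwienerspace4} because $\mu(T(M))=0$. The corollary for an injective restriction $T|_K$ then follows by taking $g=\mathbb{I}_K$ (so that $\sum_{y\in T^{-1}(w)} g(y)=\mathbb{I}_{T(K)}(w)$).

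The main obstacle is the bijective identity itself, which encapsulates the infinite-dimensional Radon--Nikodym computation for $T_\ast\mu$; fortunately this is precisely the content of Theorem~4.1.2/4.2.1 of~{\cite{Ustunel2000}}, so in the present context it may be invoked as a black box. The only genuinely new book-keeping is the measurable-selection argument that produces the countable partition into injective pieces, but since $\mathcal{W}_0$ is a Borel subset of a Polish space and the graph $\{(w,w'):T(w)=T(w')\}\cap(\mathcal{W}_0\times\mathcal{W}_0)$ is Borel with countable vertical sections, the Lusin--Novikov theorem (equivalently, the iterated Kuratowski--Ryll-Nardzewski selection used above) supplies it directly.
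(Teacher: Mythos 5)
The paper does not actually prove this theorem; it simply cites Theorem~4.4.1 of {\"U}st{\"u}nel and Zakai~{\cite{Ustunel2000}}. Your reconstruction by partitioning into injective pieces and applying the bijective change-of-variables identity piecewise is structurally the right idea, but there is a concrete error in the set-up.

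You discard the set $T^{-1}(T(M))$, justifying this by the assertion that the left-hand integrand vanishes on $T^{-1}(T(M))\supset M$. That claim is false: $|\Lambda_U|$ vanishes on $M$, where the regularized determinant is zero, but on $T^{-1}(T(M))\setminus M$ the operator $I_H+\nabla U(w)$ is invertible and $|\Lambda_U(w)|$ is generically nonzero; moreover $T^{-1}(T(M))$ can have positive $\mu$-measure even though $\mu(T(M))=0$. So the claim that both sides of~\eqref{equationwienerspace4} are unaffected by restricting to $\mathcal{W}_0$ is unsupported for the left-hand side. The remedy is to discard only $M$, not its saturation $T^{-1}(T(M))$. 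On $M$ the left-hand integrand genuinely vanishes, and on $\mathcal{W}\setminus M$ the map $T$ is a local $H - C^1$ diffeomorphism, so the fibres $T^{-1}(w)\cap(\mathcal{W}\setminus M)$ are discrete, hence countable by separability of $\mathcal{W}$, and the Lusin--Novikov (equivalently, iterated Kuratowski--Ryll-Nardzewski) selection supplies a countable measurable partition on which $T$ is injective. Summing the bijective identity over the pieces gives
\[
\int_{\mathcal{W}\setminus M} f\circ T(w)\, g(w)\, |\Lambda_U(w)|\,\mathd\mu(w)=\int_{\mathcal{W}} f(w)\Bigl(\sum_{y\in T^{-1}(w)\setminus M}g(y)\Bigr)\mathd\mu(w),
\]
and since $T^{-1}(w)\setminus M=T^{-1}(w)$ for every $w\notin T(M)$ while $\mu(T(M))=0$ by Proposition~\ref{proposition_Sard}, the right-hand side agrees with that of~\eqref{equationwienerspace4}. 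With this substitution the rest of your outline, including the injective corollary via $g=\mathbb{I}_K$, goes through.
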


\begin{proof}
  See Theorem~4.4.1 {\cite{Ustunel2000}}.
\end{proof}

In order to prove Theorem~\ref{theorem_main1}, and so the relationship between
the weak solutions to equation~{\eqref{equation2d1}} and the integrals with
respect to the signed measure $\Lambda_U \mathd \mu$, it is not enough to
consider Theorem~\ref{theorem_Lambda} but we need a relationship analogous
to~{\eqref{equationwienerspace4}} with $| \Lambda_U |$ replaced by
$\Lambda_U$. In order to achieve this result we need some more hypotheses on
the map $U$:
\begin{description}
  \item[Hypothesis~DEG1] The map $U : W \rightarrow H \hookrightarrow W$ is a
  Fr{\'e}chet differentiable map from $W$ into itself and furthermore it is
  $C^1$ with respect to the usual topology of $W$;
  
  \item[Hypothesis~DEG2] The map $T$ is proper (i.e. inverse images of compact
  subsets are compact) and the equation $T^{- 1} (y) = w$ has a finite number
  of solution $y$ for $\mu$ almost every $w \in W$. 
\end{description}
Under the Hypothesis~DEG1 and DEG2 we can define the following number
\[ \tmop{DEG} (w, T) \assign \sum_{y \in T^{- 1} (w)} \tmop{sign} (\det_2 (I_W
   + \nabla U (y))) . \]
This index is a suitable modification of the Leray-Schauder degree of a
Fredholm non-linear operator described, for example, in {\cite{Berger1977}}
Section 5.3C, where the following definition is given: if $B$ is a bounded set
of $W$ such that $T^{- 1} (w) \cap \partial B = \emptyset$ and $\nabla T (y)
\not{=} 0$ for $y \in T^{- 1} (w) \cap B$ we have
\[ \tmop{DEG}_B (w, T) = \sum_{y \in T^{- 1} (w) \cap B} (- 1)^{\left(
   \text{number of negative eigenvalues of $\nabla T (y)$} \right)} . \]
It is evident that under the Hypothesis~DEG2 and, as a consequence of
Proposition~\ref{proposition_Sard}, we have
\[ \lim_{B \rightarrow W} \tmop{DEG}_B (w, T) = \tmop{DEG} (w, T) \]
for almost all $w \in W$.

\begin{theorem}
  \label{theorem_wienerspace3}Under the Hypotheses DEG1 and DEG2 we have that
  $\tmop{DEG} (w, T)$ is $\mu$ almost surely equal to the constant $\tmop{DEG}
  (T) \in \mathbb{Z}$ independent of $w$ and for any bounded function $f$ such
  that $f \circ T \cdot \Lambda_U \in L^1 (\mu)$ we have
  \[ \int_W f \circ T (w) \Lambda_U (w) \mathd \mu (w) = \tmop{DEG} (T) \cdot
     \int_W f (w) \mathd \mu (w) . \]
\end{theorem}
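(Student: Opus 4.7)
The plan is to prove (i) the integral formula as a signed refinement of the change of variables in Theorem~\ref{theorem_Lambda}, (ii) local constancy of $\tmop{DEG}(\cdot, T)$ on $W \setminus T(M)$ via the implicit function theorem and properness, and finally (iii) promote this to global almost-sure constancy using the thinness of $T(M)$.

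\textbf{Signed change of variables.} By Proposition~\ref{proposition_Sard}, the set $W \setminus T^{-1}(T(M))$ has full $\mu$-measure. Following the same strategy used in the proof of Theorem~\ref{theorem_weaksolution}, I would use iterated Kuratowski--Ryll-Nardzewski selection together with the local diffeomorphism property established in step~(ii) below to decompose $W \setminus T^{-1}(T(M))$ into countably many measurable pieces $\{\mathbb{W}_i\}$ on each of which $\nobracket T|_{\mathbb{W}_i}$ is a measurable bijection onto its image and $\varepsilon_i \assign \tmop{sign}(\det_2 (I_H + \nabla U(\cdot)))$ is constant. Theorem~\ref{theorem_Lambda} applied piecewise then yields
\[ \int_{\mathbb{W}_i} (f \circ T) \Lambda_U \, \mathd \mu = \varepsilon_i \int_{\mathbb{W}_i} (f \circ T) | \Lambda_U | \, \mathd \mu = \varepsilon_i \int_{T(\mathbb{W}_i)} f \, \mathd \mu, \]
and summing over $i$ gives $\int_W (f \circ T) \Lambda_U \, \mathd \mu = \int_W f(w) \, \tmop{DEG}(w, T) \, \mathd \mu(w)$ whenever $f \circ T \cdot \Lambda_U \in L^1(\mu)$.

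\textbf{Local constancy.} For $w_0 \notin T(M)$, Hypothesis~DEG2 gives a finite preimage $\{y_1, \ldots, y_n\}$, and at each $y_j$ the operator $I_H + \nabla U(y_j)|_H$ is invertible. By Hypothesis~DEG1 and the Banach implicit function theorem, $T$ restricts to a $C^1$-diffeomorphism of a neighborhood $V_j$ of $y_j$ onto a neighborhood of $w_0$. Since $T$ is proper, shrinking if necessary there is a neighborhood $\mathcal{O}$ of $w_0$ with $T^{-1}(\mathcal{O}) = \bigsqcup_j V_j$, and on each $V_j$ the sign of $\det_2(I_H + \nabla U(\cdot))$ is locally constant by continuity. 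Hence $\tmop{DEG}(\cdot, T)$ is locally constant on $W \setminus T(M)$.

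\textbf{Global constancy and the main obstacle.} Since $T$ is continuous and proper it is a closed map, so $T(M)$ is closed; combined with $\mu(T(M)) = 0$ and the fact that $\mu$ charges every non-empty open subset of $W$, $T(M)$ has empty interior. The integer-valued $\tmop{DEG}(\cdot, T)$, being locally constant on the open dense set $W \setminus T(M)$, is constant on each connected component. The delicate final step is to show that this constant takes the same value on all components of positive $\mu$-measure: this is where I expect the main obstacle to lie, since in a Banach space a closed nowhere-dense set may in principle disconnect the space (e.g.\ a closed hyperplane). The obstruction can be overcome either by a direct path-connectivity argument exploiting the infinite-dimensional structure of $W$ (perturbing any straight-line segment between two regular values transversally into the complement of the Gaussian-null set $T(M)$ via the many available Cameron--Martin directions), or by invoking the homotopy invariance of the Leray--Schauder--Smale degree for proper $C^1$ Fredholm perturbations of the identity, which provides a well-defined integer $\tmop{DEG}(T)$ independent of the regular value. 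Combining this constancy with step~(i) produces the announced identity $\int_W (f \circ T) \Lambda_U \, \mathd \mu = \tmop{DEG}(T) \int_W f \, \mathd \mu$.
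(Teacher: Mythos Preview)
The paper does not actually prove this theorem: its ``proof'' is a bare citation to Theorems~9.4.1 and~9.4.6 of {\"U}st{\"u}nel--Zakai. Your proposal is therefore not competing with an argument in the paper but reconstructing the one behind the cited reference, and it does so along the correct lines: the signed area formula obtained by stratifying Theorem~\ref{theorem_Lambda} according to the sign of $\det_2(I+\nabla U)$, local constancy of the degree via the implicit function theorem plus properness, and then global constancy. This is essentially the {\"U}st{\"u}nel--Zakai route.

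Two remarks on execution. First, in your step~(iii) the clean resolution is your option~(b), and in fact the paper has already prepared this for you: in the paragraph preceding the theorem it identifies $\tmop{DEG}(w,T)$ with the limit of the Leray--Schauder degree $\tmop{DEG}_B(w,T)$, whose independence from the regular value $w$ is a standard property of the degree. Invoking that directly is preferable to the Cameron--Martin path-perturbation argument, which would require additional work to make precise. Second, your step~(i) as written needs $\sum_i |\varepsilon_i \int_{T(\mathbb W_i)} f\,\mathd\mu|<\infty$ to justify the interchange of sum and integral, and the hypothesis $f\circ T\cdot\Lambda_U\in L^1(\mu)$ alone gives only the convergence of $\sum_i \int_{\mathbb W_i}(f\circ T)\Lambda_U\,\mathd\mu$, not of the absolute values on the image side (this would follow from $f\circ T\cdot|\Lambda_U|\in L^1$, which is not assumed). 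The cleanest fix is to establish constancy of $\tmop{DEG}(\cdot,T)$ first; then $f\cdot\tmop{DEG}(T)$ is bounded, the right-hand side is automatically in $L^1(\mu)$, and the identity follows by grouping the preimages over each level set $\{N=n\}$ where only finitely many sheets contribute.
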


\begin{proof}
  The proof can be found in~{\cite{Ustunel2000}} Theorem~9.4.1 and
  Theorem~9.4.6.
\end{proof}

In general is not simple to compute $\tmop{DEG} (T)$ but this computation
simplified under the following Hypothesis:
\begin{description}
  \item[Hypothesis~DEG3] The map $T_{\epsilon} (w) = w + \epsilon U (w)$ has
  bounded level set uniformly in $\epsilon \in [0, 1]$, i.e. if $B \subset W$
  is bounded $\bigcup_{\epsilon \in [0, 1]} T^{- 1}_{\epsilon} (B)$ is a
  bounded set in $W$.
\end{description}
\begin{theorem}
  \label{theorem_wienerspace4}Under the Hypotheses DEG1, DEG2 and DEG3 we have
  that, for any $\epsilon \in [0, 1]$:
  \[ \tmop{DEG} (T) = \tmop{DEG} (w, T) = \tmop{DEG} (w, T_{\epsilon}) = 1. \]
\end{theorem}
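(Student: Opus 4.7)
The plan is to establish the result by invoking the homotopy invariance of the Leray-Schauder degree along the family $T_\epsilon = I_W + \epsilon U$ for $\epsilon \in [0,1]$, which connects $T_0 = I_W$ to $T_1 = T$. The setting is the classical one for such a degree theory: the inclusion $H \hookrightarrow W$ of an abstract Wiener space is compact, and by Hypothesis DEG1 the map $U \colon W \to H$ is $C^1$ and sends bounded sets of $W$ to bounded sets of $H$; composing with the compact inclusion shows that each $T_\epsilon$ is a compact $C^1$ perturbation of the identity on $W$.

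First, I would fix a point $w \in W$ on which all objects are well-behaved simultaneously. By Proposition~\ref{proposition_Sard} applied to each $T_\epsilon$ (and a countable density argument in $\epsilon$ together with continuity) we may assume that $w$ is a regular value for every $T_\epsilon$, i.e.\ $T_\epsilon^{-1}(w)$ is finite and $\mathrm{id}_H + \epsilon \nabla U(y)|_H$ is invertible at each $y \in T_\epsilon^{-1}(w)$. Hypothesis DEG3 then provides a bounded ball $B \subset W$ such that $\bigcup_{\epsilon \in [0,1]} T_\epsilon^{-1}(\{w\}) \subset B$, and properness of each $T_\epsilon$ (inherited from DEG2 via the same compact-perturbation argument as in Theorem~\ref{theorem_weaksolution2}) together with a uniform-in-$\epsilon$ argument based on DEG3 allows us to enlarge $B$ if necessary to ensure $w \notin T_\epsilon(\partial B)$ for every $\epsilon \in [0,1]$.

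With this nondegeneracy condition along $\partial B$ secured, the classical Leray-Schauder homotopy invariance (cf.\ \cite{Berger1977}, Section~5.3C) applies to the family $T_\epsilon$ on $B$, and yields that $\epsilon \mapsto \mathrm{DEG}_B(w, T_\epsilon)$ is constant on $[0,1]$. At $\epsilon = 0$ we have $T_0 = I_W$, so $T_0^{-1}(w) = \{w\} \subset B$ and $\nabla T_0(w) = I_W$ has no negative eigenvalues, giving $\mathrm{DEG}_B(w, T_0) = 1$. Passing to the limit $B \to W$, which is justified by DEG3 because the preimages never escape, produces $\mathrm{DEG}(w, T_\epsilon) = 1$ for every $\epsilon \in [0,1]$. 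Theorem~\ref{theorem_wienerspace3} then identifies this common value with the global degree $\mathrm{DEG}(T)$.

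The main obstacle is the uniform control needed to apply homotopy invariance rigorously in the Wiener space setting: one must simultaneously keep preimages inside a fixed bounded ball and keep $w$ away from the moving image of its boundary, over the whole $\epsilon$-interval. Hypothesis DEG3 is tailored precisely to supply the first, and combined with the compactness of $H \hookrightarrow W$ and the properness of each $T_\epsilon$ it gives the second. Once these uniform bounds are in hand, the degree computation at $\epsilon = 0$ is trivial and the result follows.
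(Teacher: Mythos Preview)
Your proposal is correct and follows essentially the same route as the paper's proof: homotopy invariance of the Leray--Schauder degree $\mathrm{DEG}_B$ along the family $T_\epsilon$, with Hypothesis~DEG3 furnishing a single bounded set $B$ containing all preimages $T_\epsilon^{-1}(w)$ for $\epsilon\in[0,1]$, followed by the trivial computation $\mathrm{DEG}(w,T_0)=\mathrm{DEG}(w,\mathrm{id}_W)=1$. The paper's argument is just three sentences and omits the auxiliary points you spell out (compact-perturbation structure, regularity of $w$, boundary condition $w\notin T_\epsilon(\partial B)$), so your version is a more explicit rendering of the same idea rather than a different approach.
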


\begin{proof}
  The proposition follows from the invariance of $\tmop{DEG}_B$ under
  homotopies of the operator $T$. In other words for any $B$ such that $T^{-
  1}_{\epsilon} (w) \cap \partial B = \emptyset$ we have $\tmop{DEG}_B (w,
  T_{\epsilon}) = \tmop{DEG}_B (w, T)$. Under the Hypothesis~DEG3 we can
  choose $B$ big enough such that $\tmop{DEG}_B (w, T_{\epsilon}) = \tmop{DEG}
  (w, T_{\epsilon})$ for any~$\epsilon \in [0, 1]$. Since $\tmop{DEG} (w, T_0)
  = \tmop{DEG} (w, \tmop{id}_W) = 1$ the thesis follows.
\end{proof}

\section{Fermionic fields }\label{appendix_grasmannian}

In this appendix we introduce the notion of fermionic fields used in Section
\ref{sec:super} and Section \ref{section:supersymmetry}.

\

We consider a quantum probability space $(\mathfrak{H}, \rho)$, where
$\mathfrak{H}$ is a separable Hilbert space and $\rho$ is a positive trace
class operator. If $K \in \mathcal{B} (\mathfrak{H})$ (where $\mathcal{B}
(\mathfrak{H})$ is the Hilbert space of bounded operators defined on
$\mathfrak{H}$) we define $\langle K \rangle = \tmop{Tr} (K \cdot \rho)$.

Let $H$ be a Hilbert space, we consider two continuous linear maps $\psi,
\bar{\psi} : H \rightarrow \mathcal{B} (\mathfrak{H})$ such that for any $f_1,
f_2 \in H$ we have
\[ \{ \psi (f_1), \psi (f_2) \} = \{ \bar{\psi} (f_1), \bar{\psi} (f_2) \} =
   \{ \psi (f_1), \bar{\psi} (f_2) \} = 0 \]
where $\{ K_1, K_2 \} = K_1 \cdot K_2 + K_2 \cdot K_1$ is the anticommutator
of the operators $K_1, K_2 \in \mathcal{B} (\mathfrak{H})$.

\begin{definition}
  \label{def:psi}Using the previous notations, the two antisymmetric fields
  $\psi, \bar{\psi} : H \rightarrow \mathcal{B} (\mathfrak{H})$ are called
  fermionic fields associated with the Hilbert space $H$ if we have that
  \begin{equation}
    \langle \bar{\psi} (f_1) \psi (g_1) \ldots \bar{\psi} (f_n) \psi (g_n)
    \rangle = \det (\langle f_i, g_j \rangle) . \label{eq:det}
  \end{equation}
\end{definition}

The following theorem ensure the existence of a pair of fermionic fields for
each separable Hilbert space $H$.

\begin{theorem}
  \label{theorem_existence12}For any separable Hilbert space $H$ there exists
  a quantum probability space $(\mathfrak{H}, \rho)$ and two continuous linear
  maps $\psi, \bar{\psi} : H \rightarrow \mathcal{B} (\mathfrak{H})$ such that
  $\psi, \bar{\psi}$ are a pair of fermionic fields associated with $H$.
  Furthermore, we have
  \begin{equation}
    \| \psi (f) \|_{\mathcal{B} (\mathfrak{H})}, \| \bar{\psi} (f)
    \|_{\mathcal{B} (\mathfrak{H})} \leq 2 \| f \|_H . \label{eq:nonfunziona}
  \end{equation}
  (we use the notation $\| \cdot \|_{\mathfrak{K}}$ for the norm in a Hilbert
  space $\mathfrak{K}$).
\end{theorem}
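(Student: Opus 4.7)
The plan is to exploit the standard fermionic (antisymmetric) Fock space machinery, combined with a doubling of the underlying Hilbert space and a Jordan--Wigner grading operator which forces all anticommutators to vanish. The determinantal expectation will then be realized as the vector state attached to a BCS-type squeezed vacuum on the doubled Fock space.

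First, let $H_{\mathbb{C}}$ denote the complexification of $H$ and form the antisymmetric Fock space $\mathfrak{F} \assign \Lambda(H_{\mathbb{C}}) = \bigoplus_{n \geq 0} \wedge^n H_{\mathbb{C}}$ with its standard creation and annihilation operators $c^*(f), c(f)$ obeying $\{c(f), c^*(g)\} = \langle f, g\rangle$ and $\{c(f), c(g)\} = \{c^*(f), c^*(g)\} = 0$, with operator norms $\|c(f)\| = \|c^*(f)\| = \|f\|_H$. Set $\mathfrak{H} \assign \mathfrak{F} \otimes \mathfrak{F}$, let $c_j, c_j^*$ denote the corresponding operators acting on the $j$-th tensor factor ($j = 1, 2$), and introduce the grading $\Gamma \assign (-1)^{N_1} \otimes 1$, where $N_1$ is the number operator on the first factor. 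Define
\[
\psi(f) \assign c_1(f) \otimes 1, \qquad \bar{\psi}(f) \assign \Gamma \cdot (1 \otimes c_2(f)).
\]
A direct computation gives $\{\psi(f), \psi(g)\} = 0$ and $\{\bar{\psi}(f), \bar{\psi}(g)\} = \Gamma^2 \otimes \{c_2(f), c_2(g)\} = 0$; moreover, since $c_1(f)$ is an odd operator, one has $\{\Gamma, c_1(f)\} = 0$, so that $\{\psi(f), \bar{\psi}(g)\} = \{c_1(f), \Gamma\} \otimes c_2(g) = 0$. The norm bound is then immediate: $\|\psi(f)\| = \|c_1(f)\| = \|f\|_H$ and $\|\bar{\psi}(f)\| \leq \|\Gamma\| \cdot \|c_2(f)\| = \|f\|_H$, so~\eqref{eq:nonfunziona} holds with room to spare.

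It remains to construct the density operator $\rho$. When $\dim H < \infty$, I would take the normalized BCS-type vector
\[
|\Psi\rangle \assign \mathcal{N}^{-1/2} \exp\!\left( \sum_i c_1^*(e_i) \otimes c_2^*(e_i) \right) |\Omega \otimes \Omega\rangle,
\]
where $\{e_i\}$ is an orthonormal basis of $H_{\mathbb{C}}$ and $|\Omega\rangle$ is the Fock vacuum; a direct calculation using the CAR shows that $|\Psi\rangle$ is a quasi-free vector whose only nonvanishing two-point function amongst $\psi, \bar{\psi}$ is $\langle \Psi, \bar{\psi}(f)\psi(g) \Psi\rangle = \langle f, g\rangle_H$, after which~\eqref{eq:det} follows from the fermionic Wick theorem for quasi-free states. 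Setting $\rho \assign |\Psi\rangle\langle\Psi|$ completes the proof in finite dimensions. For separable, infinite-dimensional $H$, I would fix an orthonormal basis $\{e_i\}_{i \in \mathbb{N}}$, run the construction above on each finite-dimensional truncation $H_n \assign \mathrm{span}(e_1, \ldots, e_n)$, and pass to the GNS representation of the inductive-limit state on the abstract $*$-algebra generated by $\{\psi(f), \bar{\psi}(f) : f \in H\}$ prescribed by~\eqref{eq:det}.

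The main obstacle is precisely this infinite-dimensional step: the formal BCS vector fails to be normalizable when $\dim H = \infty$ (the would-be normalization constant $\mathcal{N}$ diverges), so $\rho$ cannot be realized directly as a rank-one projection inside $\mathfrak{F} \otimes \mathfrak{F}$. The clean resolution is instead to verify that the linear functional prescribed by~\eqref{eq:det} on monomials in $\psi, \bar{\psi}$, extended unambiguously by the vanishing anticommutators, is a positive state on this abstract $*$-algebra; positivity is inherited from the fact that its restriction to each $H_n$-subalgebra is the vector state attached to the finite-dimensional $|\Psi\rangle_n$. The GNS theorem then produces a separable Hilbert space $\mathfrak{H}$ together with a cyclic vector $\Omega$ and bounded representatives of $\psi, \bar{\psi}$ with $\rho = |\Omega\rangle\langle\Omega|$; the bound~\eqref{eq:nonfunziona} transfers since $\|\psi(f)\|, \|\bar{\psi}(f)\| \leq \|f\|_H$ already holds in each finite-dimensional approximation and the GNS representation is a $*$-homomorphism with norm at most that of the pre-GNS operators.
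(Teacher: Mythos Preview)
Your construction of the anticommuting operators is fine, but the state is wrong, and in fact no state on your $\mathfrak{H}=\mathfrak{F}\otimes\mathfrak{F}$ can repair it. Take $\dim H=1$ with unit vector $e$. Then $\bar{\psi}(e)\psi(e)=\Gamma(c_1\otimes c_2)$ acts on the four--dimensional space $\mathrm{span}\{|\Omega\Omega\rangle,|e\Omega\rangle,|\Omega e\rangle,|ee\rangle\}$ as the rank--one partial isometry $|\Omega\Omega\rangle\langle ee|$. This operator has numerical radius $\tfrac12$, so $|\mathrm{Tr}(\rho\,\bar{\psi}(e)\psi(e))|\le\tfrac12$ for \emph{every} density matrix $\rho$, whereas Definition~\ref{def:psi} requires the value $\langle e,e\rangle_H=1$. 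In particular your BCS vector gives exactly $\tfrac12$ (not $1$), so the claim ``a direct calculation \ldots shows $\langle\Psi,\bar\psi(f)\psi(g)\Psi\rangle=\langle f,g\rangle_H$'' is off by a factor of two, and the positivity argument for the GNS step --- which you deduce from these finite--dimensional vectors --- collapses with it.

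The structural reason is that you have built both $\psi$ and $\bar{\psi}$ out of annihilation operators alone, so $\bar{\psi}\psi$ is a pure double annihilation and has no ``diagonal'' piece to pick up a vacuum expectation of size $1$. The paper avoids this by taking two \emph{independent} CAR pairs $(a,a^*)$ and $(b,b^*)$ on the Fock vacuum and setting
\[
\psi(f)=a^*(f)+b(f),\qquad \bar{\psi}(f)=b^*(f)-a(f).
\]
Because each field mixes a creation operator from one pair with an annihilation operator from the other, the anticommutators still vanish (the $\langle f,g\rangle$ and $\langle g,f\rangle$ contributions cancel), while the product $\bar{\psi}(f)\psi(g)$ contains a term $b(f)b^{*}(g)$ (resp.\ $-a(f)a^{*}(g)$) whose vacuum expectation is $\langle f,g\rangle$; the determinant~\eqref{eq:det} then follows by a short induction. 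The norm bound $\le 2\|f\|_H$ is just the triangle inequality. If you want to keep your doubled--Fock picture, the minimal fix is to replace one of $c_1,c_2$ by its adjoint (and adjust the grading accordingly), which is exactly this mechanism in disguise.
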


\begin{proof}
  By standard results of quantum fields theory (see, e.g.,~{\cite{Segal1992}}
  Chapter~2), there are four operators $a, a^{\ast}, b, b^{\ast} : H
  \rightarrow \mathcal{B} (\mathfrak{H})$ (formed by two independent pairs of
  anticommuting creation $a, b$ and anticommuting adjoint annihilation
  $a^{\ast}, b^{\ast}$ operators) \ such that
  \begin{eqnarray}
    & \{ a (f), a (g)^{} \} = \{ b (f), b (g)^{} \} = 0 &  \nonumber\\
    & \{ a (f), b (g)^{} \} = \{ a^{\ast} (f), b (g)^{} \} = 0 &  \nonumber\\
    & \{ a^{\ast} (g), a (f) \} = \{ b^{\ast} (g), b (f) \} = \langle f, g
    \rangle_H I_{\mathfrak{H}}, &  \nonumber
  \end{eqnarray}
  and such that
  \[ \langle a (f) K \rangle = \langle K a^{\ast} (f) \rangle = \langle b (f)
     K \rangle = \langle K b^{\ast} (f) \rangle = 0 \]
  for any $f, g \in H$ and any bounded operator $K \in \mathcal{B}
  (\mathfrak{H})$. Consider now
  \[ \psi (f) = a^{\ast} (f) + b (f), \quad \bar{\psi} (f) = b^{\ast} (f) - a
     (f), \]
  where $f \in H$. We want to prove that $\psi, \bar{\psi}$ are the two
  fermionic fields associated with $H$ fields of the thesis of the theorem.
  Obviously $\{ \psi (f), \bar{\psi} (g) \} = \{ \psi (f), \psi (g) \} = \{
  \bar{\psi} (f), \bar{\psi} (g) \} = 0$, so we have only to prove that $\psi,
  \bar{\psi}$ satisfy equality {\eqref{eq:det}} and inequality
  {\eqref{eq:nonfunziona}}.
  
  We prove equality {\eqref{eq:det}} by induction on $n$. By the properties
  of $a, a^{\ast}, b, b^{\ast}$ we have
  
  \begin{align*}
    \langle \bar{\psi} (f_1) \psi (g_1) \rangle = & \langle b^{\ast} (f_1)
    a^{\ast} (g_1) \rangle + \langle b^{\ast} (f_1) b (g_1) \rangle - \langle
    a (f_1) a^{\ast} (g_1) \rangle +\\
    & - \langle a (f_1) b (g_1) \rangle = \langle f_1, g_1 \rangle_H .
  \end{align*}
  
  Suppose that $\langle \bar{\psi} (f_1) \psi (g_1) \ldots \bar{\psi} (f_{n -
  1}) \psi (g_{n - 1}) \rangle = \det (\langle f_i, g_j \rangle_H)$ we want to
  prove the same equality for $n$ operators. We have
  \begin{multline*}
    \langle \bar{\psi} (f_1) \psi (g_1) \ldots \bar{\psi} (f_n) \psi (g_n)
    \rangle = \langle b^{\ast} (f_1) \psi (g_1) \ldots \bar{\psi} (f_n) \psi
    (g_n) \rangle =\\
    = \sum^n_{k = 1} (- 1)^k \langle b^{\ast} (f_1) b (g_k) \rangle \langle
    \bar{\psi} (f_2) \psi (g_1) \ldots \bar{\psi} (f_k) \psi (g_k) \ldots
    \bar{\psi} (f_n) \psi (g_n) \rangle\\
    = \sum^n_{k = 1} (- 1)^k \langle f_1, g_k \rangle_H \det \left( \langle
    f_i, g_j \rangle |_{i \not{=} 1, j \not{=} k} \right) = \det (\langle f_i,
    g_j \rangle)
  \end{multline*}
  where we use the commutation relations of $a^{\ast}$ with $a, b, b^{\ast}$,
  the induction hypothesis and the properties of determinant. Since
  \[ \| a (f) \|_{\mathcal{B} (\mathfrak{H})} = \| a^{\asterisk} (f)
     \|_{\mathcal{B} (\mathfrak{H})} = \| b (f) \|_{\mathcal{B}
     (\mathfrak{H})} = \| b^{\asterisk} (f) \|_{\mathcal{B} (\mathfrak{H})} =
     \| f \|_H, \]
  $\psi, \bar{\psi}$ satisfy inequality~{\eqref{eq:nonfunziona}}.
\end{proof}

Suppose that $i : H \hookrightarrow C^0 (\mathbb{R}^2)$ for some continuous
injection $i$, then by the identification of $H$ with its dual we have that
$i^{\ast} (\delta_x) \in H$, where $\delta_x \in (C^0 (\mathbb{R}^2))^{\ast}$
is the Dirac delta with mass in $x \in \mathbb{R}^2$. In this way we can
define $\psi, \bar{\psi}$ as continuous functions of the point $\mathbb{R}^2$
in the following way
\[ \psi (x) : = \psi (i^{\ast} (\delta_x)) \qquad \bar{\psi} (x) : =
   \bar{\psi} (i^{\ast} (\delta_x)) \]
and the corresponding covariance function as
\[ S (x ; x') = \langle \bar{\psi} (x') \psi (x) \rangle . \]
Hereafter we suppose that $S (x ; x')$ is a continuous function of the form $S
(x ; x') = S (x - x') \geqslant 0$. In this case, if $g \in L^1
(\mathbb{R}^2)$, by Theorem \ref{theorem_existence12} we have $\| \psi (x)
\bar{\psi} (x) \|_{\mathcal{B} (\mathfrak{H})} \leq 2 S (0)$ and thus
$\int_{\mathbb{R}^2} g (x) \bar{\psi} (x) \psi (x) \mathd x$ is a bounded well
defined operator.

Under the previous condition the operator $\mathfrak{K}_g : L^2 (\mathbb{R}^2)
\rightarrow L^2 (\mathbb{R}^2)$, defined as $\mathfrak{K}_g (h) (x) = \int g
(x) S (x - x') h (x') \mathd x'$, is trace class since
\[ \tmop{Tr} (| \mathfrak{K}_g |) \leq \int_{\mathbb{R}^2} | g (x) |
   \tmop{Tr} (| \bar{\psi} (x) \psi (x) \rho |) \mathd x \leq 2 S (0) \| g
   \|_{L^1 (\mathbb{R}^2)} < + \infty . \]
This means that the Fredholm determinant (see {\cite{Simon2005}} Chapter 3)
$\det (I + \mathfrak{K}_g)$ is well defined and finite. Furthermore, we have
the following representation.

\begin{theorem}
  \label{theorem_representation}Under the previous hypotheses and notations we
  have
  \[ \left\langle \exp \left( \int_{\mathbb{R}^2} g (x) \bar{\psi} (x) \psi
     (x) \mathd x \right) \right\rangle = \det (I + \mathfrak{K}_g) . \]
\end{theorem}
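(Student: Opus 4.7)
The plan is to prove the identity by expanding both sides as absolutely convergent power series in $g$ and matching $n$-th terms. First, since $\|\bar\psi(x)\psi(x)\|_{\mathcal{B}(\mathfrak{H})}\leq 2S(0)$ and $g\in L^1(\mathbb{R}^2)$, the operator $A\assign\int g(x)\bar\psi(x)\psi(x)\,\mathrm dx$ is bounded with $\|A\|\leq 2S(0)\|g\|_{L^1}$, so $\exp(A)=\sum_{n\geq 0}A^n/n!$ converges in operator norm. Since $\langle\cdot\rangle=\operatorname{Tr}(\rho\,\cdot)$ is a continuous linear functional on $\mathcal{B}(\mathfrak{H})$, I can exchange it with the sum to obtain $\langle\exp(A)\rangle=\sum_{n\geq 0}\langle A^n\rangle/n!$.

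Next, I would use Fubini (justified by the uniform operator norm bound and the $L^1$ integrability of $g$, together with the continuity of the state) to bring $\langle\cdot\rangle$ inside the iterated integral,
\[
\langle A^n\rangle=\int g(x_1)\cdots g(x_n)\,\bigl\langle\bar\psi(x_1)\psi(x_1)\cdots\bar\psi(x_n)\psi(x_n)\bigr\rangle\,\mathrm dx_1\cdots\mathrm dx_n,
\]
and then apply the fermionic Wick-type formula \eqref{eq:det} of Definition~\ref{def:psi}, extended to the pointwise fields $\psi(x)=\psi(i^\ast\delta_x)$ and $\bar\psi(x)=\bar\psi(i^\ast\delta_x)$. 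Because $i\colon H\hookrightarrow C^0(\mathbb{R}^2)$ we have $i^\ast\delta_x\in H$, and $\langle i^\ast\delta_{x'},i^\ast\delta_x\rangle_H=S(x;x')=S(x-x')$, so \eqref{eq:det} gives
\[
\bigl\langle\bar\psi(x_1)\psi(x_1)\cdots\bar\psi(x_n)\psi(x_n)\bigr\rangle=\det\bigl(S(x_i-x_j)\bigr)_{i,j=1}^n,
\]
where I use that $\det M=\det M^{T}$ to absorb the asymmetry between the $\bar\psi$ and $\psi$ arguments.

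Finally, I would invoke the classical Plemelj--Smithies/Fredholm series for the determinant of a trace class integral operator with continuous kernel $K(x,y)=g(x)S(x-y)$ (see~{\cite{Simon2005}}, Chapter~3):
\[
\det(I+\mathfrak K_g)=\sum_{n\geq 0}\frac{1}{n!}\int\det\bigl(K(x_i,x_j)\bigr)_{i,j=1}^n\,\mathrm dx_1\cdots\mathrm dx_n=\sum_{n\geq 0}\frac{1}{n!}\int\prod_{i=1}^n g(x_i)\,\det\bigl(S(x_i-x_j)\bigr)_{i,j=1}^n\,\mathrm dx_1\cdots\mathrm dx_n,
\]
where I have pulled the factor $g(x_i)$, which depends only on the row index $i$, out of the determinant. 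Comparing with the expansion obtained above matches the two series term by term and concludes the proof. The only subtle step is the extension of \eqref{eq:det} to the pointwise fields; this passes by a density/limiting argument since $\delta_x$ can be approximated in an appropriate sense by elements for which \eqref{eq:det} holds directly, and both sides of \eqref{eq:det} are continuous multilinear functions of their arguments thanks to the continuity of $S$ and the bound \eqref{eq:nonfunziona}.
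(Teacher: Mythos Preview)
Your proof is correct and follows essentially the same route as the paper: expand both sides as series, apply the determinantal identity~\eqref{eq:det} on the fermionic side, and match with the Fredholm--Plemelj--Smithies expansion from~{\cite{Simon2005}}, Chapter~3. One small remark: the ``subtle step'' you flag at the end is not actually subtle, because by hypothesis $i^\ast\delta_x\in H$ and the pointwise fields are \emph{defined} as $\psi(x)=\psi(i^\ast\delta_x)$, $\bar\psi(x)=\bar\psi(i^\ast\delta_x)$; hence~\eqref{eq:det} applies directly with $f_i=g_i=i^\ast\delta_{x_i}$ and no density or limiting argument is needed.
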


\begin{proof}
  By Definition \ref{def:psi} and the definition of the function $S$, we have
  that
  
  \begin{multline*}
    \left\langle \left( \int_{\mathbb{R}^2} g (x) \bar{\psi} (x) \psi (x)
    \mathd x \right)^n \right\rangle =\\
    = \int_{\mathbb{R}^{2 n}} g (x_1) \ldots g (x_n) \det (S (x_i - x_j))
    \mathd x_1 \ldots \mathd x_n =\\
    = \int_{\mathbb{R}^{2 n}} \det \left( \begin{array}{ccc}
      g (x_1) S (x_1 - x_1) & \ldots & g (x_1) S (x_1 - x_n)\\
      \vdots & \ddots & \vdots\\
      g (x_n) S (x_n - x_1) & \ldots & g (x_n) S (x_n - x_n)
    \end{array} \right) \mathd x_1 \ldots \mathd x_n .
  \end{multline*}
  
  On the other hand, when $S$ is continuous, by Theorem~3.10
  of~{\cite{Simon2005}}, we have that
  
  \begin{multline*}
    \det \left( I +\mathfrak{K}_g \right) = \sum_{n = 0}^{+ \infty} \frac{1}{n!} \int_{\mathbb{R}^{2 n}} \det \left(
    \begin{array}{ccc}
      g (x_1) S (x_1 - x_1) & \ldots & g (x_1) S (x_1 - x_n)\\
      \vdots & \ddots & \vdots\\
      g (x_n) S (x_n - x_1) & \ldots & g (x_n) S (x_n - x_n)
    \end{array} \right) \mathd x_1 \ldots \mathd x_n .
  \end{multline*}
  
  The thesis follows.
\end{proof}

\begin{remark}
  \label{remark_representation}The fermionic fields considered in Section
  \ref{sec:super} and Section \ref{section:supersymmetry}, where $S = \varpi
  \mathcal{G}_{1 + 2 \chi} (x - x')$, $H = W^{1 +2 \chi, 2} (\mathbb{R}^2)$
  with norm $\| f \|_H^2 = \int (- \Delta + m^2)^{1 + 2 \chi} (f) (x) f (x)
  \mathd x$, satisfies all the hypotheses of Theorem
  \ref{theorem_representation}.
\end{remark}

\end{document}